\documentclass[reqno]{amsart}
\usepackage{amsmath,amssymb,amsthm,amsfonts}
\usepackage{hyperref}
\usepackage{mathrsfs}
\usepackage{appendix}
\usepackage{graphicx}
\usepackage{geometry}
\allowdisplaybreaks[4]%³¤¹«Ê½»»Ò³

\usepackage{setspace}%ʹÓüä¾àºê°ü
\usepackage{color}

\newtheorem{lemma}{Lemma}[section]
\newtheorem{theorem}{Theorem}[section]

\newtheorem{proposition}{Proposition}[section]
\newtheorem{remark}{Remark}[section]

\numberwithin{equation}{section}
\newcommand{\beq}{\begin{equation}}
\newcommand{\eeq}{\end{equation}}
\newcommand{\ben}{\begin{eqnarray}}
\newcommand{\een}{\end{eqnarray}}
\newcommand{\beno}{\begin{eqnarray*}}
\newcommand{\eeno}{\end{eqnarray*}}

\begin{document}
\title[ Stability of 3D Couette flow in Boussinesq system]{Nonlinear stability of 3-D plane Couette flow for Boussinesq system in a finite channel}

\author[Jinkai Li]{Jinkai Li}
\address[J. Li]{South China Research Center for Applied Mathematics and Interdisciplinary Studies, South China Normal University, Guangzhou, 510631, China}
\email{jklimath@gmail.com;jklimath@m.scnu.edu.cn}
\author[Zhilin Lin]{Zhilin Lin}
\address[Z. Lin]{School of Mathematical Sciences, South China Normal University,
Guangzhou, 510631, China}
\email{zllin@m.scnu.edu.cn}
\author[Dong Wang]{Dong Wang}
\address[D. Wang]{South China Research Center for Applied Mathematics and Interdisciplinary Studies, South China Normal University, Guangzhou, 510631, China}
\email{WDongter@m.scnu.edu.cn}

%\keywords{stability; 3D Boussinesq equation; Couette flow; Nonslip boundary condition.}
%\subjclass[2010]{26D10, 35Q35, 35Q86, 76D03, 76D05, 86A05, 86A10.}

%%% ----------------------------------------------------------------------

\begin{abstract}
In this paper, we study the hydrodynamics stability of the 3-D plane Couette flow $v_s=(y,0,0)$ with constant temperature state $\rho_s$ for the Boussinesq system in a finite channel $\mathbb{T}\times [-1,1]\times \mathbb{T}$ with non-slip boundary condition in $y$ direction. We prove that if the initial velocity field $v_{in}$ and initial temperature $\rho_{in}$ satisfy
\begin{align*}
\|v_{in}-(y,0,0)\|_{H^2} \leq c_0 \nu, \ \ \|\rho_{in}-\rho_s\|_{H^2} \leq c_0 \nu^2,
\end{align*}
for some $c_0>0$ independent of the viscosity and thermal diffusion $\nu$, then the solutions to 3D Boussinesq system do not transit from the Couette flow and constant temperature state. To our best knowledge, this is the first result about the nonlinear stability of plane Couette flow for 3-D Boussinesq system in the domain with non-slip boundary condition. This result, together with \cite{cuilw}, implies that the strong boundary layer effect does not lead to strong instability.
\end{abstract}

%%% ----------------------------------------------------------------------
\maketitle
\tableofcontents
\allowdisplaybreaks

%\tableofcontents

\section{Introduction}
\label{sec1}

In this paper, we study the hydrodynamic stability and transition threshold problem for the 3-D Boussinesq system in finite channel at high Reynolds number ${\rm Re}\gg 1$. The 3-D Boussinesq system in a finite channel $(x,y,z)\in \Omega:=\mathbb{T}\times [-1,1]\times \mathbb{T}$ read as
\begin{align}\label{3dbeq}
\begin{cases}
\partial_t v-\nu \Delta v+(v\cdot \nabla v)+\nabla p=\rho g {\bf e}_2,\\
\partial_t \rho-\mu\Delta \rho +(v\cdot \nabla )\rho=0,\\
\nabla \cdot v=0, \quad (v,\rho)|_{t=0}=(v_{in}, \rho_{in})(x,y,z),
\end{cases}
\end{align}
where $v=(v^1,v^2,v^3) \in \mathbb{R}^3$, $p\in \mathbb{R}$, and $\rho \in \mathbb{R}_+$ denote the velocity of the fluid, pressure, and the temperature, respectively. The constant $g$ is the gravitational constant, ${\bf e}_2=(0,1,0)$ represents the unit vector, $\nu={\rm Re}^{-1}$ is the viscosity coefficient, while $\mu$ denotes the thermal diffusion coefficient. For simplicity, we focus on the case of $\nu=\mu$ and $g=1$ in this paper.

The plane Couette flow with constant temperature state $(v_s,p_s,\rho_s)$ with
\begin{align}\label{pcf}
v_s=(y,0,0), \ \ p_s=y+c, \ \ \rho_s=1/g
\end{align}
 is the steady solution to \eqref{3dbeq}. Our interest is to understand the hydrodynamics stability of \eqref{pcf} at large Reynolds number ${\rm Re}\gg 1$ (or equivalently, $0<\nu \ll 1$).

To formulate the problem, let us introduce the perturbation around the steady state:
$$u=v-v_s,\ P=p-p_s,\ \theta=\rho-\rho_s,$$
then $(u,\theta,P)$ enjoys
\begin{align}
\label{eqBou}
\begin{cases}
\partial_t u-\nu \Delta u+y\partial_xu+\left(
\begin{array}{c}  u^2 \\ 0 \\ 0\end{array}\right)+(u\cdot \nabla) u+\nabla P= \left(
\begin{array}{c} 0 \\ \theta \\ 0\end{array}\right),\\
\partial_t \theta-\nu\Delta \theta+y\partial_x\theta +u\cdot \nabla \theta=0,\\
\nabla \cdot u=0, \\
 (u,\theta)|_{t=0}=(u_{in}, \theta_{in}).
\end{cases}
\end{align}
On the boundaries $\{y=\pm 1\}$, the following boundary conditions are imposed on $u,\theta$:
\begin{align}
\label{bdd}
u(t,x, \pm 1,z)=\theta(t,x,\pm 1,z)=0.
\end{align}
The non-slip boundary condition is imposed on the velocity field and the isothermal boundary condition is imposed on the temperature.

In \eqref{eqBou}, the pressure is decomposed into three parts
\begin{align}\label{pressure-decom}
P=P^{L}+P^{NL}+P^{\theta},
\end{align}
 in which $P^{L}, P^{NL}, P^{\theta}$ are determined by
\begin{align}
\label{eqP}
\begin{cases}
\Delta P^L=-2\partial_x u^2,\ \Delta P^\theta=\partial_y \theta\\
\Delta P^{NL}=-\nabla\cdot (u\cdot\nabla u)=-\partial_iu_j\partial_ju_i,\\
(\partial_yP^L-\nu\Delta u^2)|_{y=\pm1}=0, (\partial_yP^{NL},\partial_y P^\theta)|_{y=\pm 1}=(0,0).
\end{cases}
\end{align}
Compared with the case of Navier-Stokes (NS) equations \cite{chenwz}, there is an additional part $P^\theta$ in \eqref{pressure-decom}, which is resulted from the appearance of temperature.

The Boussinesq system is one of the most important models in the atmospheric and oceanic dynamics, which is used to describe heat transfer phenomena, see, e.g., \cite{conP-DoeC1, conP-DoeC2, GetlAV1, GillA1, Majda1, Pedl1}. In particular, if $\rho=0$, the Boussinesq system \eqref{3dbeq} (or the perturbation \eqref{eqBou})  is reduced to the classic Navier-Stokes equations. The stability of laminar flows in the NS equations has been studied for several years. Indeed, to understand the mechanism for the stability and transition of laminar flow, Trefethen et al. \cite{Tre} suggested that one may focus on the transition threshold problem of laminar flow, of which the mathematical version was introduced by Bedrossian, Germain and Masmoudi \cite{BedGM2}:

{\it Given a norm $\|.\|_X$, find a number $\beta=\beta(X)$ so that}
\begin{align*}
&\|v_{in}-v_s\|\lesssim \nu^{\beta}\Longrightarrow {stability};\\
&\|v_{in}-v_s\|\gg \nu^{\beta}\Longrightarrow {instability}.
\end{align*}
The exponent $\beta$ is referred to as the transition threshold in the applied literature.

The transition threshold problem concerning different kind of shear flows in the Navier-Stokes equations has been studied in recent years.  In particular, there are lots of works about the plane Couette flow. If the effect of physical boundary is ignored, for the 3-D plane Couette flow, $X$ is of either Gevrey class or Sobolev space, the transition threshold satisfies $\beta\leq 1$ for the case without boundaries, see \cite{BedGM1,BedGM3,WeiZ2}. Results for the 2-D plane Couette flow can be found in \cite{BedMV1,BedMWang,MasZ1,WeiZ1}.
In the domain with physical boundaries, the boundary layer effect (weak or strong sense) should be considered. For the 2-D Couette flow in a channel with non-slip boundary condition,  Chen, Li, Wei and Zhang \cite{ChenLwz1} proved $\beta\leq \frac{1}{2}$. For the 2-D Couette flow in a channel with Naver-slip boundary condition, Wei and Zhang \cite{Weiz3} showed that $\beta\leq \frac{1}{3}$. For the 3-D Couette flow in a channel with non-slip boundary condition, Chen, Wei and Zhang \cite{chenwz} showed that $\beta\leq 1$ in the Sobolev spaces. For the transition threshold problem for the Poiseuille flow, Kolmogorov flow and other flows, we refer to \cite{BeekH1, chendlz,coti1,Delz1,dingl1,Lih1,LiWz1,weizz} and the references therein.

 Motivated by \cite{Tre,BedGM2}, we focus on the transition threshold problem of the laminar flow $v_s$ with temperature state $\rho_s$ to the Boussinesq system, which
can be formulated as follows:

 {\it Given a norm $\|.\|_{X_i}(i=1,2)$, find $\beta_i=\beta_i(X_1,X_2)$ so that}
\begin{align*}
& \| v_{in}-v_s\|_{X_1}\lesssim \min\{\nu,\mu\}^{\beta_1}, \ \| \rho_{in}-\rho_s\|_{X_2}\lesssim \min\{\nu,\mu\}^{\beta_2} \Longrightarrow {stability};\\
&{ Otherwise}\Longrightarrow {probably \ instability}.
\end{align*}

The above transition threshold problem to the Boussinesq equations has been studied for several flows. For the 2D Couette flow in Boussinesq system, Masmoudi, Said Houari and Zhao \cite{masSz} proved that the flow is stable in $\mathbb{T}\times\mathbb{R}$ without thermal diffusion in Gevrey-$\frac{1}{s}$ spaces with $\frac{1}{3}<s\leq 1$. In the case with thermal diffusion, for the Couette flow with constant temperature state, Zhang and Zi \cite{zhangz} showed that the transition thresholds are $\beta_1\leq \frac{1}{3},\beta_2 \leq \frac{5}{6}$ respectively. Recently, Niu and Zhao \cite{Niuz} improved the transition threshold of temperature from $\beta_2 \leq \frac{5}{6}$ to $\beta_2 \leq \frac{2}{3}$. More results can be found in \cite{BedBC,DengW,zhaiz,Renw} and the references therein.

The results for the 3-D case are few. For the 3-D stratified Couette flow in $\mathbb{T}\times \mathbb{R}\times \mathbb{T}$, Coti Zelati-Del Zotto \cite{coti2} found that the buoyancy forces around the linear stratification will lead to the dispersive effect, which can suppress the 3-D lift-up effect. With this effect, Coti Zelati, Del Zotto and Widmayer \cite{coti3} show that the nonlinear transition threshold for 3-D stratified Couette flow is less than 1. Indeed, they considered the background solutions are $v_s=(y,0,0), \rho_s=1+ay$ for the 3-D Boussinesq system, and then they proved that $\beta_1, \beta_2\leq \frac{11}{12}$ as $b=\sqrt{ag}>\frac{1}{2}$ and $\beta_1, \beta_2\leq \frac{8}{9}$ as $b=\sqrt{ag}>c_2\nu^{-\frac{1}{2}}$. For the 3-D Couette flow with constant temperature in $\mathbb{T}\times \mathbb{R}\times \mathbb{T}$, Cui, Wang and Wang \cite{cuilw} showed that $\beta_1 \leq 1$ and $\beta_2 \leq 2$. Note that there is no stratification effect to the temperature, thus there is no dispersive effect to suppress the 3-D lift-up effect. The transition thresholds $\beta_1 \leq 1$ and $\beta_2 \leq 2$ are larger than those for the 3-D stratified Couette flow.

Note that no boundary effects are considered in the works stated in the above two paragraphs.
It is natural to consider the effect of physical boundary in the problem. Our goal of this paper is to understand the stability for the Couette flow with constant temperature state \eqref{pcf} of the Boussinesq system in $\mathbb{T}\times [-1,1]\times\mathbb{T}$ subject to non-slip boundary condition on velocity field, which may lead to strong boundary layer. Compared with the NS flow, due to the appearance of temperature, additional to 3-D lift-up effect, there is an additional growth in time for the velocity field, which yields that the size of the temperature is smaller than that of the velocity, see Remark \ref{rk1} for a formal analysis.
In addition, compared with \cite{cuilw}, there is a strong boundary layer effect due to the non-slip boundary condition on the velocity field. This effect will lead to singularity near boundaries. This work establishes the same transition thresholds $\beta_1 \leq 1,\beta_2 \leq 2$ as in \cite{cuilw} where they considered the problem in $\mathbb{T}\times \mathbb{R}\times \mathbb{T}$. This implies that the strong boundary layer effect does not lead to strong instability.

To state the main result, let us define
\begin{align*}
\mathbb{P}_0 f(t,y,z)= \overline{f}(t,y,z)=\frac{1}{2\pi}\int_{\mathbb{T}}f(t,x,y,z) {\rm d}x,\quad \mathbb{P}_{\neq} f=f_{\neq} =f-\mathbb{P}_0 f .
\end{align*}

The main result is stated as follows.
\begin{theorem}
  \label{the1}
Assume that $(u_{in},\theta_{in})\in H^2\cap H^1_0$ with ${\rm div}  u_{in}=0$. There exist $\nu_0, c_0, \epsilon, C>0,$ independent of $\nu$ so that if $\|u_{in}\|_{H^2}\leq c_0 \nu$ and $\|\theta_{in}\|_{H^2}\leq c_0 \nu^2$, $0<\nu\leq \nu_0$, then the solution $(u,\theta)$ of \eqref{eqBou} with \eqref{bdd} is global in time and enjoys the following estimates:

{\rm (i)} uniform bounds and decay of background streak solutions
\begin{align}
& \|\bar{u}^1(t)\|_{H^2}+\|\bar{u}^1(t)\|_{L^\infty}\leq C\nu^{-1}\min\{\nu t+\nu^{\frac{2}{3}}, {\rm e}^{-\nu t}\} \left(\|u_{in}\|_{H^2}+\nu^{-1}\|\theta_{in}\|_{H^2}\right) ,\label{dynamics-1}\\
&\|\bar{u}^2(t)\|_{H^2}+\|\bar{u}^3(t)\|_{H^1}+\|(\bar{u}^2,\bar{u}^3)(t)\|_{L^\infty}
\leq C {\rm e}^{-\nu t} \left(\|u_{in}\|_{H^2}+\nu^{-1}\|\theta_{in}\|_{H^2}\right),\label{dynamics-2}\\
& \|\bar{\theta}(t)\|_{H^1}+\|\partial_z\nabla \bar{\theta}(t)\|_{L^2}\leq C {\rm e}^{-\nu t} \nu \left(\|u_{in}\|_{H^2}+\nu^{-1}\|\theta_{in}\|_{H^2}\right) ,\label{dynamics-3}
\end{align}
for any $t\in (0, \infty)$, and

{\rm (ii)} rapid convergence to the streak
\begin{align}
&\|(\partial_x,\partial_z)\partial_xu_{\neq} (t)\|_{L^2}+\|(\partial_x,\partial_z)\nabla u_{\neq}^2(t)\|_{L^2}+\|(\partial_x^2+\partial_z^2)u_{\neq}^3(t)\|_{L^2} +\nu^{\frac{1}{4}}\|u_{\neq}^2(t)\|_{H^2} \nonumber\\
&\qquad\qquad\qquad
+\nu^{\frac{1}{3}}\|(u_{\neq}^1,u_{\neq}^3)(t)\|_{H^1}+\|u_{\neq}^2(t)\|_{L^\infty}
+\nu^\frac{1}{6}\|(u_{\neq}^1,u_{\neq}^3)(t)\|_{L^\infty}\nonumber\\
&\qquad\qquad\qquad\qquad\qquad\qquad\qquad \qquad\leq
C{\rm e}^{-2\epsilon \nu^{\frac{1}{3}}t}\left(\|u_{in}\|_{H^2}+\nu^{-1}\|\theta_{in}\|_{H^2}\right),\label{dynamics-4}
\\
&\|(\partial_x,\partial_z) \nabla\theta_{\neq}(t)\|_{L^2}+\|\theta_{\neq}(t)\|_{L^\infty}\leq
C\nu{\rm e}^{-2\epsilon \nu^{\frac{1}{3}}t}\left(\|u_{in}\|_{H^2}+\nu^{-1}\|\theta_{in}\|_{H^2}\right),\label{dynamics-5}
\end{align}
for any $t\in (0, \infty)$.
\end{theorem}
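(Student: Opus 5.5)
The proof is a bootstrap (continuity) argument modeled on Chen--Wei--Zhang's treatment of the 3-D Couette flow for Navier--Stokes in a channel, with an extra layer for the temperature. Set $E_0:=\|u_{in}\|_{H^2}+\nu^{-1}\|\theta_{in}\|_{H^2}\le 2c_0\nu$. We introduce time-weighted energy functionals that encode exactly the quantities on the left-hand sides of \eqref{dynamics-1}--\eqref{dynamics-5}:
\begin{itemize}
\item $E_1$ collects the zero-mode quantities $\|\bar u^1\|_{H^2}$, $\|(\bar u^2,\bar u^3)\|$ and $\nu^{-1}\|\bar\theta\|$, with their weights $\min\{\nu t+\nu^{2/3},e^{-\nu t}\}^{-1}$ and $e^{\nu t}$;
\item $E_2$ collects the nonzero-mode quantities in \eqref{dynamics-4}, weighted by $e^{2\epsilon\nu^{1/3}t}$;
\item $E_3=\nu^{-1}e^{2\epsilon\nu^{1/3}t}\big(\|(\partial_x,\partial_z)\nabla\theta_{\neq}\|_{L^2}+\|\theta_{\neq}\|_{L^\infty}\big)$.
\end{itemize}
The bootstrap hypothesis is $E_1+E_2+E_3\le 2CE_0$ on $[0,T]$. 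The goal is to improve this to $\le CE_0$ using the smallness $c_0\nu$, and then close by continuity together with local well-posedness in $H^2\cap H^1_0$.

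\textbf{Zero modes.} Averaging \eqref{eqBou} in $x$ gives
$$\partial_t\bar\theta-\nu\Delta\bar\theta=-\overline{u\cdot\nabla\theta},\qquad \partial_t(\bar u^2,\bar u^3)-\nu\Delta(\bar u^2,\bar u^3)+\nabla_{y,z}\bar P=(\bar\theta,0)-\overline{u\cdot\nabla u},$$
$$\partial_t\bar u^1-\nu\Delta\bar u^1=-\bar u^2-\overline{u\cdot\nabla u^1}.$$
\begin{itemize}
\item For $\bar\theta$, the Dirichlet heat semigroup decays like $e^{-\nu t}$ because of the Poincar\'e inequality on $[-1,1]$. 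Its size is $\nu^{-1}\|\theta_{in}\|\cdot\nu\lesssim\nu E_0$. The nonlinear source is treated with Duhamel and splits into the parts $\bar u^{2,3}\cdot\nabla\bar\theta$ and $\overline{u_{\neq}\cdot\nabla\theta_{\neq}}$.
\item For the 2-D Stokes system in $(y,z)$, the forcing $\bar\theta$ integrates over time to $\nu^{-1}\cdot\nu E_0\sim E_0$. This is exactly the source of the $\nu^{-1}\|\theta_{in}\|$ in $E_0$ and of the exponent $\beta_2=2$.
\item For $\bar u^1$, the lift-up term $-\bar u^2$ produces growth $\int_0^t e^{-\nu(t-s)}e^{-\nu s}ds\sim te^{-\nu t}$, which gives the factor $\nu^{-1}\min\{\nu t+\nu^{2/3},e^{-\nu t}\}$.
\end{itemize}
Closing these estimates requires $\nu^{-1}E_0\ll1$, since the term $\bar u^{2,3}\cdot\nabla\bar u^1$ carries a factor $\nu^{-1}E_0$. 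This is the reason for the threshold $c_0\nu$.

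\textbf{Nonzero modes.} For $u_{\neq}$ we work with the Chen--Wei--Zhang reformulation in the variables $\Delta u^2_{\neq}$ and $\omega^2_{\neq}=\partial_zu^1_{\neq}-\partial_xu^3_{\neq}$. The linearized operator $\partial_t-\nu\Delta+y\partial_x$ with no-slip boundary conditions gives enhanced dissipation $e^{-c\nu^{1/3}t}$ together with the boundary-layer space-time estimates of that paper. I take those resolvent-based linear estimates as given, in the form stated in \cite{chenwz}. The new ingredients are as follows.
\begin{itemize}
\item The buoyancy force enters the $\Delta u^2$ equation as $(\partial_x^2+\partial_z^2)\theta_{\neq}$, via $\theta\mathbf e_2$ and $P^\theta$. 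It is bounded by $E_3\nu$ and is integrated against the enhanced-dissipation weight, which loses at most $\nu^{-1/3}$ or a comparable power. This loss is acceptable because $\theta_{\neq}\sim\nu E_0$.
\item The temperature equation for $\theta_{\neq}$ is a passive scalar advected by the Couette flow with Dirichlet data. The same linear theory, now without a pressure, gives dissipation at rate $\nu^{1/3}$ and yields $E_3$.
\end{itemize}
The nonlinear terms $u\cdot\nabla u$ and $u\cdot\nabla\theta$ are decomposed into zero--nonzero and nonzero--nonzero interactions as in \cite{chenwz}. The interactions with $\bar\theta$ and with $\bar u$ are controlled by the bounds of the previous step.

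\textbf{Main obstacle.} The hardest part will be the interaction of the large streak $\bar u^1\sim\nu^{-1}E_0\min\{\nu t+\nu^{2/3},\dots\}$ with the nonzero modes, through the terms $\bar u^1\partial_xu_{\neq}$ and $\bar u^1\partial_x\theta_{\neq}$. Treating these as perturbations fails, since $\bar u^1$ is only $O(c_0)$. Following \cite{chenwz}, I would absorb $\bar u^1$ into the linear operator and use linear estimates for the shear $y+\bar u^1(t,y,z)$, which depends on $z$ and $t$. This requires $\|\bar u^1\|_{H^2}\lesssim c_0$ to keep that shear a small perturbation of Couette flow near the no-slip walls.

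The genuinely new check is for $\theta_{\neq}$ under this modified transport. It has to absorb the boundary-layer loss in $\partial_y\theta$ while retaining the $\nu$ factor required by \eqref{dynamics-5}. I expect the key computation here to be an $H^1$-type energy estimate for $(\partial_x,\partial_z)\theta_{\neq}$, combined with the Dirichlet boundary condition, so that no pressure-type boundary term appears.
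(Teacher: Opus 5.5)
Your overall architecture matches the paper: a bootstrap, zero modes by energy and Duhamel arguments, and nonzero velocity modes through $(\Delta u^2,\omega^2)$ with the linear theory of \cite{chenwz}. There is a genuine gap, however, at exactly the step you call new: obtaining the $e^{-2\epsilon\nu^{1/3}t}$ decay of $\theta_{\neq}$ once the streak is kept in the transport.

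\textbf{Why the proposed $H^1$ energy estimate fails.} A plain $H^1$-type energy estimate for $(\partial_x,\partial_z)\theta_{\neq}$ cannot give this decay.
\begin{itemize}
\item In an $L^2$ energy identity the transport $V\partial_x$ is skew-adjoint and drops out. Only heat dissipation survives, so you get decay at rate $\nu$, not $\nu^{1/3}$.
\item The $\partial_z$-derivative produces the commutator $\partial_z\bar u^1\,\partial_x\theta_{\neq}$. Here $\|\partial_z u^{1,0}\|_{L^\infty}$ is only bounded by $E_{1,0}\min\{(\nu t)^{1/2},1\}$, so it can be of size $c_0$ for $t\gtrsim\nu^{-1}$. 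Integrated against a $\partial_x\theta_{\neq}$ that decays only like $e^{-\nu t}$, this costs a factor $c_0\nu^{-1}$.
\item Without an enhanced-dissipation space-time bound for $\theta_{\neq}$, the buoyancy forcing $(\partial_x^2+\partial_z^2)\theta_{\neq}$ cannot be inserted into the $e^{2\epsilon\nu^{1/3}t}$-weighted space-time estimates for $u^2_{\neq}$. So the velocity part fails to close as well.
\end{itemize}
The Dirichlet condition on $\theta$ acts like a Navier-slip condition and creates no strong boundary layer, so the difficulty is not a boundary-layer loss in $\partial_y\theta$.

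Absorbing all of $\bar u^1$ into the linear operator is also incompatible with the linear theory you want to borrow. The resolvent estimates need a time-independent $V$ with $\|V-y\|_{H^4}<\varepsilon_0$ as in \eqref{eqVcon}. But $\bar u^1$ is only controlled in $H^2$. Its part driven by $\overline{u_{\neq}\cdot\nabla u^1_{\neq}}$ is neither controlled in $H^4$ nor known to vary slowly in time.

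\textbf{What the paper does instead.} The missing piece is the paper's quasilinear scheme for the temperature, in four steps.
\begin{enumerate}
\item Split $\bar u^1=u^{1,0}+u^{1,1}$ as in \eqref{equ10}. The part $u^{1,0}$ is forced only by $\bar u^2$; it is bounded in $H^4$ and varies slowly, with $\|\partial_tu^{1,0}\|_{H^2}\le\nu E_{1,0}$. The part $u^{1,1}$ satisfies $\|u^{1,1}\|_{H^2}\le\nu^{2/3}E_1$, which is small enough to treat as a source.
\item Prove space-time estimates for $\partial_t-\nu\Delta+V\partial_x$ with Dirichlet data and $V$ satisfying \eqref{eqVcon} (Propositions \ref{space-na-V} and \ref{space-time-V21}). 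These come from the resolvent bounds, using Lemma \ref{Gearhart} for the homogeneous part and Plancherel in time for the forced part.
\item Freeze $V_j=y+u^{1,0}(t_j)$ on intervals of length $\nu^{-1/3}$ and decompose $\theta_{\neq}=\sum_j\theta_{[j]}$. Sum the errors $(V_i-V_j)\partial_x^3\theta_{[i]}$ against the exponential weights, using $\|V_i-V_j\|_{L^\infty}\lesssim\nu^{2/3}|i-j|E_1$. This controls $\partial_x^2\theta_{\neq}$ with the stronger weight $e^{3\epsilon\nu^{1/3}t}$.
\item Estimate $\partial_z^2\theta_{\neq}$ with the plain Couette operator, treating $\partial_z^2(u^{1,0}\partial_x\theta_{\neq})$ as a source. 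This closes because the extra weight on $\partial_x^2\theta_{\neq}$ absorbs the growth $\|u^{1,0}\|_{H^2}\lesssim(\nu^{2/3}+\nu t)E_1$, via $\|\partial_x\partial_z\nabla f\|_{L^2}^2\le\|\partial_x^2\nabla f\|_{L^2}\|\partial_z^2\nabla f\|_{L^2}$ (Proposition \ref{proE41}).
\end{enumerate}
Only once this bound on $E_{4,1}$ is in hand can a Couette-type estimate with $\bar u^1\partial_x\theta_{\neq}$ as a source deliver \eqref{dynamics-5}, roughly as you envisage.

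\textbf{The bootstrap quantities.} Your bootstrap functionals cannot be just the pointwise norms appearing in the theorem. The nonlinear estimates use space-time norms, for example $\nu^{1/2}\|e^{a\nu^{1/3}t}\nabla f\|_{L^2L^2}$ and $\nu^{1/6}\|e^{a\nu^{1/3}t}f\|_{L^2L^2}$. They also use the quasilinear quantity $E_5=\nu^{1/6}\|e^{3\epsilon\nu^{1/3}t}\partial_x^2(u^2_{\neq},u^3_{\neq})\|_{L^2L^2}$, which handles the term $\partial_x^2u_{\neq}\cdot\nabla\bar\theta$ in the $\partial_x^2\theta_{\neq}$ estimate.
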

\begin{remark}
It is noted that we have assumed that $\nu =\mu$ in this paper, and then $\min\{\nu,\mu\}=\nu$. Indeed, the main conclusions still hold for $\nu \neq \mu$.
\end{remark}

\begin{remark}\label{rk1}
The transition threshold estimates $\|u_{in}\|_{H^2}\leq c_0 \nu$ and $\|\theta_{in}\|_{H^2}\leq c_0 \nu^2$ seem to be optimal. Let us give the formal analysis in the linear level. Indeed, the zero modes for linear equations enjoy
 \begin{align*}
\begin{cases}
(\partial_t-\nu \Delta )\bar{u}^1+\bar{u}^2=0,\\
(\partial_t-\nu \Delta )\bar{u}^2-\overline{\theta}+\partial_y \overline{P^\theta}=0,\\
(\partial_t-\nu \Delta )\bar{u}^3+\partial_z \overline{P^\theta}=0,\\
(\partial_t-\nu \Delta )\overline{\theta}=0,\ \Delta P^\theta=\partial_y \theta,
\end{cases}
\end{align*}
which gives that
\begin{align}\label{0-mode-solu}
\begin{cases}
\bar{u}^1(t) = {\rm e}^{\nu t \Delta}\bar{u}^1 (0)-t {\rm e}^{\nu t \Delta} \bar{u}^2 (0) - \frac{t^2}{2} {\rm e}^{\nu t \Delta}\left( \overline{\theta} (0)-\partial_y^2 \Delta^{-1} \overline{\theta} (0) \right)  ,\\
\bar{u}^2 (t) = {\rm e}^{\nu t \Delta} \bar{u}^2 (0)+t {\rm e}^{\nu t \Delta} \left( \overline{\theta} (0)-\partial_y^2 \Delta^{-1} \overline{\theta} (0) \right),\\
\bar{u}^3 (t) =  {\rm e}^{\nu t \Delta} \bar{u}^3 (0)-t {\rm e}^{\nu t \Delta} \partial_y\partial_z  \Delta^{-1} \overline{\theta} (0) ,\\
\overline{\theta}(t) =  {\rm e}^{\nu t \Delta} \overline{\theta} (0) .
\end{cases}
\end{align}
 Following the asymptotic analysis by Chapman \cite{Chapman} (see also \cite{chenwz}), due to the 3-D lift-up effect, the growth term involving $t{\rm }e^{\nu t\Delta}$ for $\bar{u}^1$ will cause a loss of $\nu^{-1}$ in space-time evolution and $\|u_{in}\|_{H^2} \leq c_0 \nu$ in \cite{chenwz}. In addition, due to the appearance of temperature, there are growth $t{\rm e}^{\nu t\Delta}$ for $(\bar{u}^2,\bar{u}^3)$ and $t^2{\rm e}^{\nu t\Delta}$ for $\bar{u}^1$, which lead to the loss of $\nu^{-1}$ and $\nu^{-2}$ for $(\bar{u}^2,\bar{u}^3)$ and $\bar{u}^1$ respectively. Thus, the scaling law for the initial data holds:
\begin{align*}
\bar{u}^1_{in} \sim \nu^{-1} ( \bar{u}^2_{in}, \bar{u}^3_{in}) \sim \nu^{-2} \bar{\theta}_{in}.
\end{align*}
The above analysis suggests formally that the transition thresholds $\|u_{in}\|_{H^2}\leq c_0 \nu$ and $\|\theta_{in}\|_{H^2}\leq c_0 \nu^2$ seems to be optimal in this setting.
\end{remark}

\begin{remark}\label{rk2}
From \eqref{dynamics-1}-\eqref{dynamics-5}, we can obtain that
\begin{align*}
\|u^1(t) \|_{L^\infty}+ \nu^{-1} \|(u^2,u^3)(t)\|_{L^\infty}+\nu^{-2} \| \theta (t) \|_{L^\infty} \leq C c_0 {\rm e}^{-\nu t} \to 0 \ \ {\text as} \ \ t \to +\infty,
\end{align*}
which implies that the plane Couette flow with constant temperature state \eqref{pcf} is nonlinear stable in $L^\infty$ sense.
\end{remark}

\begin{remark}\label{rk3}
Note that in this paper, we deal with the problem in $\mathbb{T}\times [-1,1]\times \mathbb{T}$. It is an interesting problem to study the counterpart in the domain $\mathbb{R}\times [-1,1]\times \mathbb{T}$, for which, the long wave effect will play key role.
\end{remark}

\section{Sketch and key ideas of the proof}\label{sec2}
\subsection{Stability/instability mechanisms and key ideas}
As in the Navier-Stokes flow, the main effects affecting the transition threshold are the 3-D lift-up, enhanced dissipation, inviscid damping and boundary layer, see \cite{BedGM1,BedGM2,BedGM3,BGM3,ChenLwz1,chenwz} and the references therein. Among these effects, the enhanced dissipation and inviscid damping are stability mechanisms, while the 3-D lift-up and boundary layer are instability mechanisms.

 In addition, due to the buoyancy effect (caused by the appearance of temperature), there are additional growth for velocity fields, see Remark \ref{rk1} for detail.

Let us recall that in the NS flow \cite{BedGM1,BedGM2,BedGM3,BGM3,chenwz}, due to the 3-D lift-up effect, there is a factor $t {\rm e}^{\nu t\Delta}$ for $\bar{u}^1$, which leads to the growth of $\bar{u}^1$, but not for $(\bar{u}^2,\bar{u}^3)$ in the NS flow. However, in our problem, \eqref{0-mode-solu} implies that there are growth for all components of the velocity field, which is very different from that for the NS equations.

In addition, the growth factors are $t^2 {\rm e}^{\nu t\Delta}$ and $t {\rm e}^{\nu t\Delta}$ for $\bar{u}^1$ and $(\bar{u}^2,\bar{u}^3)$, respectively, which is faster than those in the NS equations. The growths are due to the 3-D lift-up effect and the coupling effect between the velocity and temperature. The growths are faster than those of NS case and lead to the losses of $\nu^{-1},\nu^{-2}$ in space-time evolution, which along with the result of \cite{chenwz} ($u_{in}\sim o(\nu)$ for velocity field), suggest that the initial perturbations may enjoy $\nu^{-1} \theta_{in} \sim u_{in} \sim o(\nu),$ that is, $u_{in} \sim o(\nu),\ \theta_{in} \sim o(\nu^2).$
This also suggests that the transition thresholds are to be optimal in some sense.

Recall that the perturbations around \eqref{pcf} enjoy
\begin{align*}
\begin{cases}
\partial_t u-\nu \Delta u+y\partial_xu+\left(
\begin{array}{c}  u^2 \\ 0 \\ 0\end{array}\right)+(u\cdot \nabla u)+\nabla P^{L}+\nabla P^{NL}+\nabla P^{\theta}= \left(
\begin{array}{c} 0 \\ \theta \\ 0\end{array}\right),\\
\partial_t \theta-\nu\Delta \theta+y\partial_x\theta +(u\cdot \nabla )\theta=0.
\end{cases}
\end{align*}
The $x$-zero modes $(\bar{u},\bar{\theta})$ satisfy
\begin{align}\label{0-mode}
\begin{cases}
\partial_t \bar{u}^1-\nu \Delta \bar{u}^1+\bar{u}^2+\overline{u\cdot \nabla u^1}=0,\\
\partial_t \bar{u}^2-\nu \Delta \bar{u}^2+\partial_y \overline{P}+(\bar{u}^2 \partial_y +\bar{u}^3 \partial_z) \bar{u}^2+\overline{u_{\neq}\cdot \nabla u^2_{\neq}} =\bar{\theta},\\
\partial_t \bar{u}^3-\nu \Delta \bar{u}^3+\partial_z \overline{P}+(\bar{u}^2 \partial_y +\bar{u}^3 \partial_z) \bar{u}^3+\overline{u_{\neq}\cdot \nabla u^3_{\neq}} =0,\\
\partial_t \bar{\theta}-\nu \Delta \bar{\theta}+\overline{u\cdot \nabla \theta}=0.
\end{cases}
\end{align}
To handle the nonzero modes, we consider the following system
\begin{align}\label{non0-mode}
\begin{cases}
\partial_t\Delta u^2-\nu\Delta^2u^2+y\partial_x\Delta u^2+(\partial_x^2+\partial_z^2)(u\cdot \nabla u^2)-\partial_y[\partial_x(u\cdot\nabla u^1)+\partial_z(u\cdot\nabla u^3)]\\
\quad\quad=(\partial_x^2+\partial_z^2)\theta,\\
\partial_t\omega^2-\nu\Delta \omega^2+y\partial_x\omega^2+\partial_zu^2+\partial_z(u\cdot\nabla u^1)-\partial_x(u\cdot\nabla u^3)=0,\\
\partial_t\theta_{\neq}-\nu \Delta \theta_{\neq}+y\partial_x\theta_{\neq}+(u\cdot \nabla \theta)_{\neq}=0,\\
(\partial_yu^2, u^2, \omega^2,\theta_{\neq})|_{y=\pm1}=(0,0,0,0),\\
 u^2|_{t=0}=u^2(0), \omega^2|_{t=0}=\omega^2 (0), \theta_{\neq}|_{t=0}=\theta_{\neq}(0),
\end{cases}
\end{align}
here $\omega^2 =\partial_z u^1-\partial_x u^3$. In the above system \eqref{non0-mode}, the boundary condition for $u^2$ is the non-slip boundary condition (if $u^2$ is viewed as the stream function), and the boundary condition for $\omega^2,\theta_{\neq}$ can be viewed as Navier-slip boundary condition, which do not result in the strong boundary layer effect.

The 3-D lift-up effect will lead to the secondly instability. To achieve the desired transition thresholds, inspired by \cite{chenwz}, we use the toy model which is around a class of more general laminar flow near the Couette flow $V(y,z)$ with \eqref{eqVcon}, see \cite{chenwz} for details. The linear analysis and resolvent estimates for the linearized system, space-time estimates for toy models are established in \cite{chenwz}, and we do not perform them here.

The main new difference is the treatment of the temperature. Due to the coupling effect between the velocity and temperature, the 3-D lift-up will affect the transition threshold of temperature. Although the boundary condition for $\theta_{\neq}$ does not lead to strong boundary layer effect, the secondly instability will affect the temperature via the coupling effect (by the term $\bar{u}^1 \partial_x \theta_{\neq}$). Precisely, if one consider the toy model of the form
\begin{align*}
(\partial_t-\nu \Delta +y \partial_x)\theta_{\neq}=-\bar{u}^1 \partial_x \theta_{\neq}+\cdots,
\end{align*}
then the term $\bar{u}^1 \partial_x \theta_{\neq}$ is viewed as a perturbation term. However, due to the 3-D lift-up, this term is not small and the transition threshold for temperature is larger than $\beta_2 >2$ in this way.

To achieve $\beta_2 \leq 2$, one should treat with the secondly instability in temperature. Or precisely, the term $\bar{u}^1 \partial_x \theta_{\neq}$ can not be viewed as a perturbation. Inspired by \cite{chenwz}, the toy model for temperature used in this paper is
\begin{align*}
(\partial_t -\nu \Delta +V(t,y,z)\partial_x) \theta_{\neq}={\rm good \ source}, \ V(t,y,z)=y+{u}^{1,0}(t,y,z).
\end{align*}
where $u^{1,0}$ is determined by the decomposition $\bar{u}^1=u^{1,0}+u^{1,1}$ which solves \eqref{equ10}.
The above toy model will be applied to handle $\theta_{\neq}$ with freezing coefficient in time method.
It should be noted that although the boundary condition for $\omega^2$ is the same as that for $\theta_{\neq}$, and we do not use the above arguments for $\omega^2$. This is due to the coupling effect between velocity and temperature.

To apply the above toy model to handle temperature, the resolvent estimates and space-time estimates about it are needed.
Indeed, the main task is to show the flow $V$ is stable with Navier-slip boundary condition, see Proposition \ref{prores1} and Proposition \ref{spaceV} for details, see also \cite{chenwz}. With the resolvent estimates and space-time estimates at hands, one can handle the temperature via freezing coefficient in time method, see subsection \ref{E-4:E-5} for details.

\subsection{Energy functionals}
The main ideas of construction are similar to that in \cite{chenwz}. The main difference is the treatment of temperature. Before introducing the norms, let us introduce
\begin{align*}
&\|f\|_{X_a}=\|{\rm e}^{a \nu^{\frac{1}{3}}t}f\|_{L^\infty L^2}
+\nu^{\frac{1}{2}}\|{\rm e}^{a \nu^{\frac{1}{3}}t}\nabla f\|_{L^2 L^2}
+\nu^{\frac{1}{6}}\|{\rm e}^{a \nu^{\frac{1}{3}}t}f\|_{L^2 L^2},\\
&\|f\|_{Y_a}=\|{\rm e}^{a \nu^{\frac{1}{3}}t}f\|_{L^\infty L^2}+\nu^{\frac{1}{2}}\|{\rm e}^{a \nu^{\frac{1}{3}}t}\nabla f\|_{L^2 L^2}.
\end{align*}

 To handle $\bar{u}^1$, inspired by \cite{chenwz}, we decompose $\bar{u}^1$ as $\bar{u}^1=u^{1,0}+u^{1,1}$, which solve
\begin{align}
\label{equ10}
\begin{cases}
(\partial_t-\nu\Delta) u^{1,0}+\bar{u}^2+\bar{u}^2\partial_yu^{1,0}+\bar{u}^3\partial_zu^{1,0}=0,\\
(\partial_t-\nu\Delta) u^{1,1}+\bar{u}^2\partial_yu^{1,1}+\bar{u}^3\partial_zu^{1,1}+\overline{u_{\neq}\cdot\nabla u_{\neq}^1}=0,\\
u^{1,0}|_{t=0}=0, u^{1,1}|_{t=0}=\bar{u}^1 (0), (u^{1,0}, u^{1,1})|_{y=\pm 1}=(0,0).
\end{cases}
\end{align}
Thus, $u^{1,1}\partial_x$ could be viewed as a good perturbation.
Inspired by \cite{chenwz}, to handle the 3-D lift-up effect, let us introduce
\begin{align}\label{E-1}
E_1=E_{1,0}+\nu^{-\frac{2}{3}}E_{1,1},
\end{align}
where
\begin{align*}
E_{1,0}=&\|u^{1,0}\|_{L^\infty H^4}+\nu^{-1}\|\partial_tu^{1,0}\|_{L^\infty H^2}+\nu^{-\frac{1}{2} }\|\partial_tu^{1,0}\|_{L^2 H^3},\\
E_{1,1}=&\|u^{1,1}\|_{L^\infty H^2}+\nu^{\frac{1}{2}}\|\nabla u^{1,1}\|_{L^2 H^2}.
\end{align*}

The following functional $E_2$ is introduced to control $(\bar{u}^2,\bar{u}^3)$:
\begin{align*}
E_2=&\|\Delta \bar{u}^2\|_{Y_0}+\nu^{\frac{1}{2}}\|\Delta \bar{u}^2\|_{L^2 L^2}+\nu^{-\frac{1}{2}}\|\partial_t\nabla \bar{u}^2\|_{L^2L^2}
+\|\nabla \bar{u}^3\|_{Y_0}+\nu^{\frac{1}{2}}\|\nabla \bar{u}^3\|_{L^2 L^2}\\
&+\nu^{-\frac{1}{2}}\|\partial_t\bar{u}^3\|_{L^2L^2}+\left\|\min\{(\nu^{\frac{2}{3}}+\nu t)^{\frac{1}{2}}, 1-y^2\}\Delta \bar{u}^3\right\|_{Y_0}\\
&+\nu^{-\frac{1}{2}}\left\|\min\{(\nu^{\frac{2}{3}}+\nu t)^{\frac{1}{2}}, 1-y^2\}\partial_t\nabla \bar{u}^3\right\|_{L^2L^2}.
\end{align*}

To handle the non-zero modes of $(\Delta u^2_{\neq}, \omega^2_{\neq})$, let us introduce
$$E_3=E_{3,0}+E_{3,1},$$
where
\begin{align*}
E_{3,0}=&\|(\partial_x,\partial_z)\nabla u^2_{\neq}\|_{Y_2}+\|(\partial_x^2,\partial_z^2)u^3_{\neq}\|_{Y_2}\\
&+\nu^{\frac{3}{4}}\| {\rm e}^{2\epsilon \nu^{\frac{1}{3}}t}\Delta\nabla u^2_{\neq}\|_{L^2L^2}+\|{\rm e}^{2 \epsilon \nu^{\frac{1}{3}}t}\partial_x\nabla u^2_{\neq}\|_{L^2L^2},\\
E_{3,1}=&\nu^{\frac{1}{3}}\|\nabla \omega^2_{\neq}\|_{Y_2}.
\end{align*}

To handle the temperature $\theta$, we introduce
$$E_4=E_{4,0}+E_{4,1},$$
 where
\begin{align*}
E_{4,0}=\|\bar{\theta}\|_{Y_0}+\|\nabla\bar{\theta}\|_{Y_0}+\|\partial_z\nabla\bar{\theta}\|_{Y_0},\ \
E_{4,1}=\|\partial_x^2\theta_{\neq}\|_{X_3}+\|\partial_z^2\theta_{\neq}\|_{X_2}.
\end{align*}
In the above norms, $E_{4,0}$ is to handle the zero mode of temperature $\overline{\theta}$, $E_{4,1}$ is to control the non-zero mode of temperature $\theta_{\neq}$. The control of $E_{4,1}$ is based on the space-time estimates and freezing coefficient in time, see subsection \ref{E-4:E-5} for details.

Inspired by \cite{chenwz}, let us introduce
\begin{align*}
E_5=&\nu^{\frac{1}{6}}\|{\rm e}^{3\epsilon \nu^{\frac{1}{3}}t}\partial_x^2 (u^2_{\neq}, u^3_{\neq}) \|_{L^2L^2}.
\end{align*}
The estimate for $E_5$ is based on the quasilinear argument \cite{chenwz}.

With the above energy functionals and nonlinear estimates, the nonlinear stability follows from the continuity argument.

\section{Resolvent estimates for some linearized systems}\label{sec3}
\subsection{Resolvent estimates with Navier-slip boundary condition}
In this subsection, we consider the following problem
\begin{align}
\label{eqres}
\begin{cases}
-\nu\Delta w+{\rm i} k(V(y,z)-\lambda)w-a(\nu k^2)^{\frac{1}{3}}w=F,\\
w|_{y=\pm 1}=0, \partial_xw={\rm i} kw, \partial_xF={\rm i} kF, \\
\Delta \varphi=w, \varphi|_{y=\pm 1}=0,
\end{cases}
\end{align}
which $V(y,z)$ satisfies
\begin{align}
\|V-y\|_{H^4}<\varepsilon_0,(V-y)|_{y=\pm 1}=0,\label{eqVcon}
\end{align}
with $\varepsilon_0$ small enough determined later. In addition, we always assume that $\lambda\in \mathbb{R}$ and $a\in [0, \epsilon_1]$ for $\epsilon_1$ small enough determined later.

The following proposition states the resolvent estimates for \eqref{eqres}, which can be found in \cite{chenwz}.
\begin{proposition}\label{prores1}
Let $w\in H^2(\Omega)$ be a solution to \eqref{eqres} with $F\in H^1(\Omega)$. Then it holds that
\begin{align*}
&\nu^{\frac{2}{3}}|k|^{\frac{1}{3}}\|\nabla w\|_{L^2}+(\nu k^2)^{\frac{1}{3}}\|w\|_{L^2}+\nu\|\Delta w\|_{L^2}+|k|\|(V-\lambda)w\|_{L^2} \leq C\|F\|_{L^2},\\
&\nu^{\frac{2}{3}}|k|^{\frac{1}{3}}\|\nabla w\|_{L^2}+(\nu k^2)^{\frac{1}{3}}\|w\|_{L^2}+\nu\|\Delta w\|_{L^2} \leq C\nu^{\frac{1}{6} }|k|^{-\frac{2}{3}}\|\nabla F\|_{L^2},\\
&\nu^{\frac{2}{3}}|k|^{\frac{1}{3}}\|\nabla w\|_{L^2}+(\nu k^2)^{\frac{1}{3}}\|w\|_{L^2} \leq C\nu^{-\frac{1}{3}}|k|^{\frac{1}{3}}\|F\|_{H^{-1}}.
\end{align*}
If $\nu k^2\leq 1$, then it holds that
\begin{align*}
&\nu^{\frac{1}{6}}|k|^{\frac{4}{3}}\|\nabla \varphi\|_{L^2}+\nu^{\frac{1}{6}} |k|^{\frac{11}{6}}\|\nabla[(V-\lambda)\varphi]\|_{L^2_{x,z}L^\infty_y}\leq C\|F\|_{L^2}.
\end{align*}
\end{proposition}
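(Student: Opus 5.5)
The plan is to follow the resolvent argument of \cite{chenwz}, since \eqref{eqres} has exactly the Navier-slip structure treated there. We work at fixed frequency $k\neq0$ and use $\|V-y\|_{H^4}<\varepsilon_0$ to treat $V$ as a small perturbation of $y$, keeping $\partial_yV\approx1$ and $\nabla V$ bounded. The key point is that $w|_{y=\pm1}=0$ is a Dirichlet condition for $w$ itself. Hence integrating by parts in $\langle F,w\rangle$ produces no boundary terms from $\nu\Delta w$, and there is no strong boundary layer.

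\textbf{Basic energy estimates.} First take the real part of $\langle F,w\rangle$:
\[
\nu\|\nabla w\|_{L^2}^2-a(\nu k^2)^{1/3}\|w\|_{L^2}^2\le\|F\|_{L^2}\|w\|_{L^2}.
\]
Next take the imaginary part, tested against $w$ and against $(V-\lambda)w$, or a cutoff version of it. This controls $|k|\|(V-\lambda)w\|_{L^2}$ outside a layer $\{|V-\lambda|\lesssim\delta\}$. Since $\partial_yV\ge1/2$, that layer has $y$-width $O(\delta)$ uniformly in $z$. Inside the layer we use the one-dimensional Poincaré/interpolation bound
\[
\|w\|_{L^2(|V-\lambda|<\delta)}\lesssim\delta^{1/2}\|w\|_{L^2}^{1/2}\|\partial_yw\|_{L^2}^{1/2}.
\]
Choosing $\delta=(\nu/|k|)^{1/3}$ and balancing gives $(\nu k^2)^{1/3}\|w\|_{L^2}\le C\|F\|_{L^2}$, for $a\le\epsilon_1$ small. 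The gradient bound then follows from the real part, and $\nu\|\Delta w\|_{L^2}$ follows from the equation.

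The terms coming from $\nabla V$ that arise when commuting with the weight $V-\lambda$ are harmless, because $\nabla(V-y)$ is small in $L^\infty$ by $H^4\hookrightarrow W^{1,\infty}$.

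\textbf{Estimates with $\nabla F$ and $F\in H^{-1}$.} For the second estimate we write $\langle F,w\rangle$ and split it into the layer and non-layer regions. In the layer we use the $\nabla F$ control to exploit the thinness $\delta$; outside it we divide by $V-\lambda$ and integrate by parts. This gains the factor $\nu^{1/6}|k|^{-2/3}$. For the third estimate we bound $|\langle F,w\rangle|\le\|F\|_{H^{-1}}\|w\|_{H^1}$ (using the Dirichlet condition on $w$) and redo the same balancing.

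\textbf{Stream-function bound ($\nu k^2\le1$).} This is the main obstacle. We test the equation against $\varphi$, or against $\overline{\varphi}/(V-\lambda)$ away from the critical layer, using $\Delta\varphi=w$. Integrating $\langle{\rm i}k(V-\lambda)w,\varphi\rangle$ by parts gives
\[
-{\rm i}k\int(V-\lambda)|\nabla\varphi|^2-{\rm i}k\int\nabla V\cdot\nabla\varphi\,\overline{\varphi}.
\]
This is an inviscid-damping-type identity, and it has to be combined with the $w$ bounds from the first step.

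The $L^2_{x,z}L^\infty_y$ estimate of $\nabla[(V-\lambda)\varphi]$ is obtained by a one-dimensional argument in $y$ at each fixed $z$. Writing $\partial_y^2\varphi=w-\partial_z^2\varphi+k^2\varphi$, we use Gagliardo--Nirenberg in $y$ together with the weighted bound on $(V-\lambda)w$. The delicate part is the $z$-dependence of $V$: the Rayleigh-type structure is not purely one-dimensional, so the commutator terms involving $\partial_zV$ must be absorbed using $\varepsilon_0$ small. I would follow \cite{chenwz}, where these estimates are proved for precisely this class of $V$, and in practice cite them directly there.
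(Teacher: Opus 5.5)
Your proposal is correct and follows the paper's route. The paper gives no independent argument for Proposition~\ref{prores1}; it simply cites Proposition 4.1 of \cite{chenwz}, which is where you also end up deferring, and your energy/critical-layer sketch with $\delta=(\nu/|k|)^{1/3}$ is consistent with that reference. If you ever write out the stream-function part, note that the non-coercive identity from testing against $\varphi$ is not what gives $\|\nabla\varphi\|_{L^2}$. The bound comes most directly from $\|\nabla\varphi\|_{L^2}^2=-\langle w,\varphi\rangle$ split across $\{|V-\lambda|<\delta\}$, using $|k|\|(V-\lambda)w\|_{L^2}\lesssim\|F\|_{L^2}$ outside that set and $(\nu k^2)^{1/3}\|w\|_{L^2}\lesssim\|F\|_{L^2}$ inside it.
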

\begin{proof}
See Proposition 4.1 of \cite{chenwz}.
\end{proof}
\begin{proposition}
\label{prores2}
Let $w\in H^2(\Omega)$ be a solution of \eqref{eqres} with $F\in H^1(\Omega)$. If $F=F_1+F_2+\partial_xf_1+\partial_yf_2+\partial_zf_3$ and $F_2|_{y=\pm 1}=0$, then it holds
\begin{align*}
\nu^{\frac{2}{3}}\|\nabla w\|_{L^2}+(\nu |k|)^{\frac{1}{3}}\|w\|_{L^2} \leq C\left(|k|^{-\frac{1}{3}}\|F_1\|_{L^2}+\nu^{\frac{1}{6}}|k|^{-1}\|\nabla F_2\|_{L^2}+\nu^{-\frac{1}{3}}\|(f_1, f_2, f_3)\|_{L^2} \right).
\end{align*}
\end{proposition}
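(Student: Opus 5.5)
Since \eqref{eqres} is linear in $F$, I will split $w=w_1+w_2+w_3$, where $w_1,w_2,w_3$ solve \eqref{eqres} with right-hand sides $F_1$, $F_2$ and $\partial_xf_1+\partial_yf_2+\partial_zf_3$ respectively, all with the same Dirichlet condition. Each piece is then bounded separately in the norm $\nu^{\frac23}\|\nabla w\|_{L^2}+(\nu|k|)^{\frac13}\|w\|_{L^2}$, and the three bounds are summed. Throughout I assume $|k|\ge1$, which holds for nonzero Fourier modes in $x\in\mathbb{T}$; this is what turns the $k$-weighted estimates of Proposition \ref{prores1} into the stated powers.

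For $w_1$, the first estimate of Proposition \ref{prores1} gives $\nu^{\frac23}|k|^{\frac13}\|\nabla w_1\|_{L^2}+(\nu k^2)^{\frac13}\|w_1\|_{L^2}\le C\|F_1\|_{L^2}$. Dividing by $|k|^{\frac13}$ and using $|k|\ge1$ gives $\nu^{\frac23}\|\nabla w_1\|_{L^2}+(\nu|k|)^{\frac13}\|w_1\|_{L^2}\le C|k|^{-\frac13}\|F_1\|_{L^2}$. For $w_3$, the third estimate of Proposition \ref{prores1} gives the same left-hand side, times $|k|^{\frac13}$, bounded by $C\nu^{-\frac13}|k|^{\frac13}\|F\|_{H^{-1}}$. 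After dividing by $|k|^{\frac13}$, I only need $\|\partial_xf_1+\partial_yf_2+\partial_zf_3\|_{H^{-1}}\le\|(f_1,f_2,f_3)\|_{L^2}$. Here $H^{-1}$ is the dual of $H^1_0$, which is the space appropriate for the Dirichlet problem, and the bound follows directly from integration by parts against test functions vanishing at $y=\pm1$.

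For $w_2$, the second estimate of Proposition \ref{prores1} gives $\nu^{\frac23}|k|^{\frac13}\|\nabla w_2\|_{L^2}+(\nu k^2)^{\frac13}\|w_2\|_{L^2}\le C\nu^{\frac16}|k|^{-\frac23}\|\nabla F_2\|_{L^2}$. Dividing by $|k|^{\frac13}$ yields exactly $C\nu^{\frac16}|k|^{-1}\|\nabla F_2\|_{L^2}$, as required.

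The main obstacle is checking that the second and third estimates of Proposition \ref{prores1} apply as stated. I expect the second one (the $\nabla F$ bound) to use the condition $F_2|_{y=\pm1}=0$ when integrating by parts against $(V-\lambda)\bar w$-type multipliers; this is why the hypothesis is imposed on $F_2$ but not on $F_1$. If the proposition in \cite{chenwz} is stated only for $F\in H^1$ without that boundary condition, I would instead redo its proof: test \eqref{eqres} with $\bar w$ and $(V-\lambda)\bar w$, integrate by parts in $\nabla F_2$ using the vanishing trace, and interpolate. Likewise, the $H^{-1}$ bound requires $f_j$ to be merely $L^2$, so I would approximate by smooth $f_j$ and pass to the limit, using the linearity and uniqueness of the resolvent problem.
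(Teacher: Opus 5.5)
Your proposal is correct and is precisely the ``direct consequence of Proposition \ref{prores1}'' that the paper asserts without giving details: you split $w$ by linearity and apply the first, second and third estimates of Proposition \ref{prores1} to the pieces driven by $F_1$, $F_2$ and $\partial_xf_1+\partial_yf_2+\partial_zf_3$. You then divide by $|k|^{\frac13}$ and use $\|\partial_xf_1+\partial_yf_2+\partial_zf_3\|_{H^{-1}}\le\|(f_1,f_2,f_3)\|_{L^2}$ with $H^{-1}=(H^1_0)'$; two small remarks are that the division by $|k|^{\frac13}$ yields the stated powers exactly (so $|k|\ge1$ is never used), and that you should first replace $F_1,F_2,f_j$ by their $e^{{\rm i}kx}$ Fourier components in $x$ (this increases no norm and preserves $F_2|_{y=\pm1}=0$) so that each piece is an admissible right-hand side for \eqref{eqres}.
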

\begin{proof}
This proposition is a direct consequence of Proposition \ref{prores1}, and we omit the proof.
\end{proof}
\subsection{Resolvent estimate for the linear system}
In this subsection, we consider the following linearized Navier-Stokes system:
\begin{align}
\label{eqres2}
\begin{cases}
-\nu\Delta W+{\rm i} k(V(y,z)-\lambda)W-a(\nu k^2)^{1/3}W+(\partial_y+\kappa\partial_z)p^{L1}\\
\quad\quad+G_1+\nu(\Delta\kappa)U+2\nu \nabla\kappa\cdot\nabla U=0,\\
-\nu\Delta U+{\rm i} k(V(y,z)-\lambda)U-a(\nu k^2)^{1/3}U+G_2+\partial_zp^{L1}=0,\\
W|_{y=\pm 1}=\partial_yW|_{y=\pm1}=U|_{y=\pm1}=0,
\end{cases}
\end{align}
where
\begin{align*}
\Delta p^{L1}=-2 {\rm i} k\partial_yV \cdot W, \quad \partial_xW={\rm i} kW,\quad\partial_xU={\rm i} kU,\quad \partial_xp^{L1}={\rm i} kp^{L1},
\end{align*}
and $\lambda\in \mathbb{R}, a\in [0,\epsilon_1]$, $V$ satisfies \eqref{eqVcon}.

The resolvent estimates for \eqref{eqres2} are stated as follows.
\begin{proposition}\label{proWU}
Let $W\in H^4(\Omega)$, $U\in H^2(\Omega)$ be a solution of \eqref{eqres2}. Then it holds that
\begin{align*}
&\nu^{\frac{1}{3} }\left(\|\partial_x^2U\|_{L^2}^2+\|\partial_x(\partial_z-\kappa\partial_y)U\|_{L^2}^2\right)
+\nu\left(\|\nabla\partial_x^2U\|_{L^2}^2+\|\nabla\partial_x(\partial_z-\kappa\partial_y)U\|_{L^2}^2\right)\\
&\quad\quad+\nu^{\frac{1}{3}}\|\partial_x\nabla W\|_{L^2}^2+\nu\|\partial_x\Delta W\|_{L^2}^2+\nu^{\frac{5}{3}}\|\partial_x\Delta W\|_{L^2}^2\leq C\nu^{-1}\left(\|\nabla G_1\|_{L^2}^2+\|\partial_xG_2\|_{L^2}^2\right).
\end{align*}
\end{proposition}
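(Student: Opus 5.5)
This estimate is essentially Proposition 4.4 of \cite{chenwz}, and I would follow that argument. The system \eqref{eqres2} is their linearized system for $(\Delta u^2, \omega^2)$, written in variables adapted to the modified shear flow $V$. Here $\kappa$ is the corrector (in \cite{chenwz}, $\kappa=\partial_zV/\partial_yV$), chosen so that $(\partial_z-\kappa\partial_y)V=0$. The plan is to treat the $W$ and $U$ equations separately, using the resolvent estimates already stated, and then couple them.

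\textbf{Step 1: estimate $W$.} The $W$ equation is a Orr--Sommerfeld-type problem with non-slip conditions $W=\partial_yW=0$. The delicate terms are the pressure $(\partial_y+\kappa\partial_z)p^{L1}$ and the lower-order terms $\nu(\Delta\kappa)U+2\nu\nabla\kappa\cdot\nabla U$. I would first decompose $W=W_{\rm nav}+W_{\rm bl}$. The piece $W_{\rm nav}$ solves \eqref{eqres} with the Navier-slip condition, so Propositions \ref{prores1}--\ref{prores2} apply to it. The piece $W_{\rm bl}$ is a boundary-layer corrector that restores $\partial_yW=0$.

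For $W_{\rm bl}$, I would use the Airy-type boundary-layer bounds from \cite{chenwz}, namely thickness $(\nu/|k|)^{1/3}$ and size controlled by $\nu^{1/6}|k|^{-1}$-weighted norms of the source. Testing the equation against $\overline{W}$-type multipliers then gives
\begin{align*}
\nu^{\frac13}\|\partial_x\nabla W\|_{L^2}^2+\nu\|\partial_x\Delta W\|_{L^2}^2 \lesssim \nu^{-1}\|\nabla G_1\|_{L^2}^2 + \text{(terms in } \nu\nabla U,\ \nu U).
\end{align*}
The pressure $p^{L1}$ is handled by elliptic estimates from $\Delta p^{L1}=-2{\rm i}k\partial_yV\,W$. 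Since $\|V-y\|_{H^4}<\varepsilon_0$, it contributes a perturbation of the Couette case in which the smallness of $\varepsilon_0$ lets these terms be absorbed.

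\textbf{Step 2: estimate $U$.} The $U$ equation has Dirichlet conditions and source $G_2+\partial_zp^{L1}$. I would apply $\partial_x$ and $\partial_x(\partial_z-\kappa\partial_y)$ to it. The commutators with $V$ vanish to leading order because $(\partial_z-\kappa\partial_y)V=0$; this is exactly why that combination appears in the statement. Proposition \ref{prores2} then yields the $\nu^{1/3}$ and $\nu$ weighted bounds for $\partial_x^2U$ and $\partial_x(\partial_z-\kappa\partial_y)U$.

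In that application, the terms involving $\partial_zp^{L1}$ and derivatives of $\kappa$ go into $F_1$ or the divergence-form pieces $f_i$. They are bounded by the $W$-norms from Step 1, losing at most the factor $\nu^{-1}$ allowed on the right-hand side.

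\textbf{Step 3: close the coupled system.} The $W$ estimate involves $\nu\nabla U$, which is small after multiplying by the $\nu^{1/3}$-type weights and using $\varepsilon_0$ small. Conversely, the $U$ estimate involves $W$ only through $p^{L1}$ and $\kappa$-terms, which carry the factor $\varepsilon_0$. Adding the two estimates and absorbing these cross terms gives the claim.

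The extra $\nu^{5/3}\|\partial_x\Delta W\|^2$ term in the statement is dominated by the $\nu\|\partial_x\Delta W\|^2$ term already controlled. So it needs no separate argument.

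\textbf{Main obstacle.} I expect the boundary-layer part of $W$ in Step 1 to be the hardest step. There the non-slip condition prevents integrating by parts freely, and the pressure boundary contribution $(\partial_y+\kappa\partial_z)p^{L1}$ at $y=\pm1$ must be controlled. My first attempt would be the \cite{chenwz} approach: use the weak-type resolvent estimate in $\|F\|_{H^{-1}}$ from Proposition \ref{prores1}, together with an explicit corrector built from the homogeneous solution of the Navier-slip problem.
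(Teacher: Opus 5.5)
At the level of strategy you do what the paper does: its proof is only a pointer to \cite{chenwz}. The reference there is Proposition 9.1, not 4.4; Section 4 of \cite{chenwz} is the Navier-slip estimate quoted here as Proposition \ref{prores1}.

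The argument you sketch, however, would not close, because it misreads the system and the role of the pressure. System \eqref{eqres2} is not a $(\Delta u^2,\omega^2)$ system. Here $W=u^2+\kappa u^3$ and $U=u^3$ are velocity components of the linearization around $(V,0,0)$, the pressure is kept, and $\Delta p^{L1}=-2{\rm i}k\,\partial_yV\,W=-2{\rm i}k(\partial_yV\,u^2+\partial_zV\,u^3)$.

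In your Step 1, $p^{L1}$ cannot be bounded by elliptic estimates and absorbed using $\varepsilon_0$. The coefficient $\partial_yV\approx1$ carries no small factor. Its Neumann data at $y=\pm1$ is $\partial_yp^{L1}=\nu\partial_y^2W-G_1-2\nu\,\partial_y\kappa\,\partial_yU$, which involves the boundary trace of $\nu\partial_y^2W$. Meanwhile the resolvent bounds only gain $(\nu k^2)^{1/3}\ll1$. Already for $V=y$ the pressure is what creates the Orr--Sommerfeld structure, since $\Delta[{\rm i}k(y-\lambda)W]+\partial_y\Delta p^{L1}={\rm i}k(y-\lambda)\Delta W$.

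The missing idea for general $V$ is the corresponding exact cancellation. After applying $\Delta$ to the $W$-equation, the pressure appears only through $(\partial_y+\kappa\partial_z)\Delta p^{L1}$ and the commutator $(\Delta\kappa)\partial_zp^{L1}+2\nabla\kappa\cdot\nabla\partial_zp^{L1}$, and
\begin{align*}
\Delta\big[{\rm i}k(V-\lambda)W\big]+(\partial_y+\kappa\partial_z)\Delta p^{L1}
={\rm i}k(V-\lambda)\Delta W+2{\rm i}k(\partial_zV-\kappa\partial_yV)\partial_zW
+{\rm i}k\big(\Delta V-2(\partial_y+\kappa\partial_z)\partial_yV\big)W.
\end{align*}
The middle term vanishes identically because $(\partial_z-\kappa\partial_y)V=0$. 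The last coefficient is $O(\varepsilon_0)$ and is zero for $V=y$. This is why the unknown is $u^2+\kappa u^3$ and not $u^2$. With $u^2$ alone one is left with first-order terms $2{\rm i}k\,\partial_zV(\partial_zu^2-\partial_yu^3)$ of size $\varepsilon_0|k|$, which are not perturbative against a gain of $(\nu k^2)^{1/3}$. So $(\partial_z-\kappa\partial_y)V=0$ is needed in the $W$-step, not only in the $U$-step.

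Step 3 has a related flaw. Only the coupling $U\to W$ carries $\varepsilon_0$; it enters through $\nu(\Delta\kappa)U+2\nu\nabla\kappa\cdot\nabla U$ and the $\nabla\kappa$-commutators of the pressure. By contrast, $W$ drives $U$ through $\partial_zp^{L1}$ at order one (already for $V=y$, $\Delta p^{L1}=-2{\rm i}kW$). The closure must therefore be triangular: bound $W$ up to $\varepsilon_0$ times $U$-norms, then bound $U$ in terms of $G_2$ and $W$, then absorb.

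For that absorption you must check that the $U$-norms on the left control the cross term in the norm in which the $W$-estimate sees it. The gradient of $2\nu\nabla\kappa\cdot\nabla U$ contains $2\nu\,\partial_y\kappa\,\partial_y^2U$. None of the norms of $\partial_x^2U$ or $\partial_x(\partial_z-\kappa\partial_y)U$ controls this, so ``small after multiplying by the $\nu^{1/3}$-type weights'' is not a justification.

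Relatedly, the last term on the left is not redundant. The printed $\nu^{5/3}\|\partial_x\Delta W\|_{L^2}^2$ is a misprint for $\nu^{5/3}\|\partial_x\Delta U\|_{L^2}^2$, as the definition of $\|\cdot\|_{Z^2_{[V]}}$ shows. The left side is $\|v\|_{Z^2_{[V]}}^2$ and the right side is $\|g\|_{Z^1_{[V]}}^2$, with $W=v^2+\kappa v^3$, $U=v^3$, $G_1=g^2+\kappa g^3$, $G_2=g^3$; this is the form in which Proposition \ref{proZ1-1} appears. That term is the only piece of the estimate that sees normal second derivatives of $U$, and it has to be proved rather than discarded.
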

\begin{proof}
See Proposition 9.1 in \cite{chenwz}.
\end{proof}

\section{Space-time estimates of the linearized systems}\label{sec4}
In this section, we establish some space-time estimates for the linearized equation.
\subsection{Space-time estimates for linearized system around the Couette flow with Navier-slip boundary condition}
In this subsection, we study the following system
\begin{align}\label{spacey}
\begin{cases}
\partial_t\omega-\nu(\partial_y^2-\eta^2)\omega+ {\rm i} ky\omega=-{\rm i} kf_1-\partial_yf_2-{\rm i} lf_3-f_4,\\
\omega|_{y=\pm1}=0,\omega|_{t=0}=\omega_{in},
\end{cases}
\end{align}
where $(k,l)\in \mathbb{Z}^2, k\neq 0, \eta^2=k^2+l^2$.

The following proposition comes from \cite{chenwz}.
\begin{proposition}\label{space-na-y}
Let $\omega$ be a solution of \eqref{spacey} with $f_4(t,\pm 1)=0$ and $\omega_{in}(\pm1)=0$. Then there exists $a>0$ such that
\begin{align*}
&\|{\rm e}^{a\nu^{\frac{1}{3}}t}\omega\|_{L^\infty L^2}^2+\nu\|{\rm e}^{a\nu^{\frac{1}{3}}t}\omega'\|_{L^2 L^2}^2+\left(\nu \eta^2+(\nu k^2)^{\frac{1}{3}}\right)\|{\rm e}^{a\nu^{\frac{1}{3}}t}\omega\|_{L^2 L^2}^2\\
&\leq C\left(\|\omega_{in}\|_{L^2}^2+\nu^{-1}\|{\rm e}^{a\nu^{\frac{1}{3}}t}f_2\|_{L^2 L^2}^2+(|\eta||k|)^{-1}\|{\rm e}^{a\nu^{\frac{1}{3}}t}\partial_yf_4\|_{L^2 L^2}^2
+|\eta| |k|^{-1}\|{\rm e}^{a\nu^{\frac{1}{3}}t}f_4\|_{L^2 L^2}^2\right.\\
&\quad\quad\left. +\min\{(\nu \eta^2)^{-1}, (\nu k^2)^{-\frac{1}{3}}\}\|{\rm e}^{a\nu^{\frac{1}{3}}t}(kf_1+lf_3)\|_{L^2 L^2}^2\right).
\end{align*}
Moreover, we have
\begin{align*}
&\|{\rm e}^{a\nu^{\frac{1}{3}}t}\omega'\|_{L^\infty L^2}^2+\nu\|{\rm e}^{a\nu^{\frac{1}{3}}t}\omega''\|_{L^2 L^2}^2+\nu \eta^2\|{\rm e}^{a\nu^{\frac{1}{3}}t}\omega'\|_{L^2 L^2}^2\\
&\leq C\|\omega_{in}'\|_{L^2}^2+C\nu^{-\frac{2}{3}}|k|^{\frac{2}{3}}\left(\|\omega_{in}\|_{L^2}^2+(\eta|k|)^{-1}\|e^{a\nu^{\frac{1}{3}}t}\partial_yf_4\|_{L^2 L^2}^2
+\eta|k|^{-1}\|{\rm e}^{a\nu^{\frac{1}{3}}t}f_4\|_{L^2 L^2}^2\right)\\
&\quad\quad+C\nu^{-1}\left(\|{\rm e}^{a\nu^{\frac{1}{3}}t}(kf_1+lf_3)\|_{L^2 L^2}^2+\nu^{-\frac{2}{3}}|k|^{\frac{2}{3}}\|{\rm e}^{a\nu^{\frac{1}{3}}t}f_2\|_{L^2 L^2}^2+\|{\rm e}^{a\nu^{\frac{1}{3}}t}\partial_yf_2\|_{L^2 L^2}^2\right).
\end{align*}
\end{proposition}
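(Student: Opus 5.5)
The plan is to prove Proposition \ref{space-na-y} by weighted energy estimates on \eqref{spacey}, supplemented by a resolvent-based argument that produces the enhanced dissipation rate. First set $\tilde\omega={\rm e}^{a\nu^{\frac{1}{3}}t}\omega$ and likewise for the $f_i$. The equation becomes
\[
\partial_t\tilde\omega-\nu(\partial_y^2-\eta^2)\tilde\omega+{\rm i}ky\tilde\omega-a\nu^{\frac{1}{3}}\tilde\omega=\tilde F.
\]
Since $|k|\ge1$, we have $\nu^{\frac{1}{3}}\le(\nu k^2)^{\frac{1}{3}}$, so the extra term $a\nu^{\frac13}\tilde\omega$ is absorbed once $a$ is small relative to the $(\nu k^2)^{\frac13}$ decay.

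The basic energy identity, testing with $\bar{\tilde\omega}$ and using $\tilde\omega|_{y=\pm1}=0$, gives the $L^\infty L^2$, $\nu\|\omega'\|^2_{L^2L^2}$ and $\nu\eta^2\|\omega\|^2_{L^2L^2}$ bounds. The forcing terms are handled as follows:
\begin{itemize}
\item $f_2$: integrate by parts to get $\nu^{-1}\|f_2\|^2$.
\item $kf_1+lf_3$: use $\|\cdot\|\,\|\omega\|$ and absorb with $\nu\eta^2\|\omega\|^2$, producing $(\nu\eta^2)^{-1}$.
\item $f_4$: use the Navier-slip structure. Write $\omega=(\partial_y^2-\eta^2)\varphi$ with $\varphi$ Dirichlet in spirit, or more simply pair $f_4$ against $\omega$ through the decomposition $|\langle f_4,\omega\rangle|\le\|f_4\|\|\omega\|$. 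The sharp weight $|\eta||k|^{-1}$, $(|\eta||k|)^{-1}$ should come from combining with the $(\nu k^2)^{1/3}$ term and an interpolation $\|f_4\|_{L^2}\|\omega\|$ versus $\|\partial_yf_4\|\|\varphi\|$-type bounds. Here the hypothesis $f_4(\pm1)=0$ permits integrating by parts without boundary terms.
\end{itemize}

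The key step, and the main obstacle, is the enhanced dissipation term $(\nu k^2)^{\frac13}\|\omega\|^2_{L^2L^2}$ with the $\min\{(\nu\eta^2)^{-1},(\nu k^2)^{-\frac13}\}$ weight. I would obtain it through the resolvent estimate of Proposition \ref{prores1} with $V=y$ (so $\varepsilon_0=0$), via the time Fourier transform (Plancherel) argument. Extend $\tilde\omega$ by zero for $t<0$, so the initial datum becomes a source $\omega_{in}\delta_{t=0}$. The linear solution from $\omega_{in}$ is treated separately by the energy estimate on a short interval together with semigroup decay. Taking the Fourier transform in $t$ with frequency $\lambda k$ turns \eqref{spacey} into \eqref{eqres} with $F=\hat{\tilde F}$. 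Proposition \ref{prores1} (the $L^2$ and $H^{-1}$ versions) and Proposition \ref{prores2} then give
\[
(\nu k^2)^{\frac13}\|\hat w\|\lesssim \|F_1\|,\qquad (\nu k^2)^{\frac13}\|\hat w\|\lesssim \nu^{-\frac13}|k|^{\frac13}\|(f_1,f_2,f_3)\|,
\]
and integrating in $\lambda$ yields the $L^2L^2$ bounds. The $\eta$-dependence arises because the $x,z$ Fourier variables are frozen, so $\Delta=\partial_y^2-\eta^2$; the $\nu\eta^2$ part is already controlled by energy. For the initial data contribution, I would use the standard argument: by Duhamel, write $\omega=S(t)\omega_{in}$ and estimate $\int_0^\infty\|S(t)\omega_{in}\|^2$ by applying the resolvent bound to $\chi(t)S(t)\omega_{in}$ with a cutoff $\chi$ at scale $(\nu k^2)^{-1/3}$. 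The commutator $\chi'S\omega_{in}$ is bounded by the energy.

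For the second estimate (on $\omega'$), test the equation with $-\partial_y^2\bar{\tilde\omega}$. The boundary term $\nu\partial_y\omega\,\partial_t\omega|_{\pm1}$ vanishes because $\partial_t\omega=0$ at the boundary. The transport term produces $\mathrm{Re}\,{\rm i}k\langle\omega,\omega'\rangle$, which is bounded by $|k|\|\omega\|\|\omega'\|$. Absorbing half into $\nu\|\omega''\|^2$ is not possible directly, so instead I bound $|k|\|\omega\|\|\omega'\|\le \nu^{\frac13}|k|^{\frac43}\|\omega'\|^2\cdot(\ldots)$. Concretely, Young's inequality gives
\[
|k|\|\omega\|\|\omega'\|\le \tfrac12\nu^{\frac13}|k|^{\frac23}\|\omega'\|^2+C\nu^{-\frac13}|k|^{\frac43}\|\omega\|^2,
\]
and the first estimate gives $\nu^{-\frac13}|k|^{\frac43}\|\omega\|^2_{L^2L^2}\lesssim\nu^{-\frac23}|k|^{\frac23}\times$(RHS of the first estimate). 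This matches the stated factor $\nu^{-\frac23}|k|^{\frac23}$. The leftover $\nu^{\frac13}|k|^{\frac23}\|\omega'\|^2_{L^2L^2}$ is handled by the $\nu\|\omega'\|^2$ bound times $\nu^{-\frac23}|k|^{\frac23}$. The remaining sources are handled as follows: $f_1,f_3$ by $\nu^{-1}$ after pairing with $\omega''$; $f_2$ through $\partial_yf_2$ and $\omega''$; and $f_4$ via the already bounded $\|\omega\|$ terms.
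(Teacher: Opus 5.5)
Your handling of the $f_1,f_2,f_3$ forcing is sound: the energy estimate together with Plancherel in time and Propositions \ref{prores1}--\ref{prores2} gives the right weights. The time-cutoff argument for the initial-data part and the $\omega'$ estimate obtained by testing with $-\omega''$ also work. The genuine gap is the $f_4$ term, which is exactly where the hypothesis $f_4(t,\pm1)=0$ has to enter.

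The weights $(|\eta||k|)^{-1}$ and $|\eta||k|^{-1}$ in front of $f_4$ contain no power of $\nu$, but every mechanism you propose pays a negative power of $\nu$. Absorbing $\|f_4\|\|\omega\|$ into $\nu\eta^2\|\omega\|^2$ or into $(\nu k^2)^{1/3}\|\omega\|^2$ produces $(\nu\eta^2)^{-1}\|f_4\|^2$ or $(\nu k^2)^{-1/3}\|f_4\|^2$. Either is bounded by $|\eta||k|^{-1}\|f_4\|^2$ only when $|\eta|\gtrsim(|k|/\nu)^{1/3}$. In the main regime $|k|\le|\eta|\ll(|k|/\nu)^{1/3}$ (e.g.\ $l=0$ and $\nu\to0$) your bound blows up.

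No choice of Young's inequality repairs this. Write $\omega(T)=\int_0^T S(T-s)f_4(s)\,ds$, test against $h\in L^2$, and express $S(T-s)^*h=(\partial_y^2-\eta^2)\psi$. You then see that the bound $\|\omega\|_{L^\infty L^2}^2\lesssim(|\eta||k|)^{-1}\|(\partial_y,\eta)f_4\|_{L^2L^2}^2$ is, by duality, equivalent to a $\nu$-uniform space-time inviscid damping estimate for the adjoint flow. It comes from mixing by the shear, not from dissipation. This $\nu$-independent form is exactly what Proposition \ref{proE30} needs, where $f_4={\rm i}lu^2_{k,l}$.

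The missing ingredient is a stream-function estimate. Let $(\partial_y^2-\eta^2)\varphi=\omega$ with $\varphi(\pm1)=0$. Because $f_4(\pm1)=0$, one has $\langle f_4,\omega\rangle=-\langle\partial_yf_4,\partial_y\varphi\rangle-\eta^2\langle f_4,\varphi\rangle$, so
\[
\int_0^t|\langle f_4,\omega\rangle|\,ds\le\delta|k\eta|\,\|(\partial_y,\eta)\varphi\|_{L^2L^2}^2+C_\delta(|k\eta|)^{-1}\|(\partial_y,\eta)f_4\|_{L^2L^2}^2 .
\]
This produces precisely the stated weights, but only once you have proved the $\nu$-uniform bound $|k\eta|\,\|{\rm e}^{a\nu^{1/3}t}(\partial_y,\eta)\varphi\|_{L^2L^2}^2\lesssim$ (right-hand side). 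That requires resolvent bounds for $\varphi$ with no loss in $\nu$ and with the correct $\eta$-dependence; this is the role of the stream-function part of the resolvent theory in \cite{chenwz}, to which the paper defers. Your plan never supplies this bound.

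Proposition \ref{prores2} with $F_2=f_4$ does not even give the enhanced-dissipation part. It yields $(\nu k^2)^{1/3}\|\omega\|_{L^2L^2}^2\lesssim k^{-2}\|(\partial_y,\eta)f_4\|_{L^2L^2}^2$, which is weaker than claimed by the factor $|\eta|/|k|$. It also says nothing about the $L^\infty L^2$ and $\nu\|\omega'\|^2$ parts.

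A minor point in the second estimate: $f_4$ is not controlled through the $\|\omega\|$ terms. Integrate by parts, $\langle f_4,\omega''\rangle=-\langle\partial_yf_4,\omega'\rangle$, using $f_4(\pm1)=0$. Then absorb into $\nu\eta^2\|\omega'\|^2$ or into $\nu^{1/3}|k|^{2/3}\|\omega'\|^2$, according to whether $|\eta|$ lies above or below $(|k|/\nu)^{1/3}$.
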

\begin{proof}
See Proposition 10.1 in \cite{chenwz}.
\end{proof}
\subsection{Space-time estimate for linearized system around general flow $V$ with Navier-slip boundary condition }\label{V-slip}
The main new trouble is to control the 3-D lift-up effect affect the temperature via the coupling effect between the velocity and temperature. To this end, we will use the quasi-linear method, this idea has been introduced in  \cite{chenwz} to treat with the NS flow with non-slip boundary condition. In this paper, we will use the idea to handle the temperature. Precisely, we consider
\begin{align}\label{spaceV}
\begin{cases}
\partial_t\omega-\nu(\partial_y^2+\partial_z^2-k^2)\omega+{\rm i} kV(y,z) \omega=-{\rm i} kf_1-\partial_yf_2-\partial_zf_3,\\
\omega|_{y=\pm1}=0,\omega|_{t=0}=\omega_{in},
\end{cases}
\end{align}
where $V$ satisfies \eqref{eqVcon}, $k\in \mathbb{Z}\setminus \{0\}.$
\begin{proposition}\label{space-na-V}
Let $\omega$ be a solution of \eqref{spaceV} with $\omega_{in}(\pm1)=0$. Then it holds that
\begin{align*}
&\| {\rm e}^{a\nu^{\frac{1}{3}}t} \omega\|_{L^\infty L^2}^2+\nu\|{\rm e}^{a\nu^{\frac{1}{3}}t} (\partial_y,\partial_z) \omega \|_{L^2 L^2}^2+\left(\nu k^2+(\nu k^2)^{\frac{1}{3}}\right)\|{\rm e}^{a\nu^{\frac{1}{3}}t} \omega\|_{L^2 L^2}^2\\
&\leq C\left(\|\omega_{in}\|_{L^2}^2+\nu^{-1}\|{\rm e}^{a\nu^{\frac{1}{3}}t}  (f_2,f_3)\|_{L^2 L^2}^2+\min\{(\nu k^2)^{-1}, (\nu k^2)^{-\frac{1}{3}}\}\|{\rm e}^{a\nu^{\frac{1}{3}}t}  kf_1\|_{L^2 L^2}^2\right).
\end{align*}
\end{proposition}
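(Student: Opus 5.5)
We will prove Proposition \ref{space-na-V} by combining a direct energy estimate with the resolvent estimates of Proposition \ref{prores1} and Proposition \ref{prores2}, via the Fourier transform in time (the Gearhart--Pr\"uss type argument used in \cite{chenwz} for Proposition \ref{space-na-y}). By linearity we split $\omega=\omega_I+\omega_H$, where $\omega_I$ solves \eqref{spaceV} with zero initial data and $\omega_H$ solves the homogeneous problem with data $\omega_{in}$. Throughout we work with $\tilde\omega={\rm e}^{a\nu^{\frac13}t}\omega$, which solves the same equation with the extra term $-a\nu^{\frac13}\tilde\omega\le -a(\nu k^2)^{\frac13}\tilde\omega$ (as $|k|\ge1$) and weighted forcing; this matches the form of \eqref{eqres}.

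\emph{Inhomogeneous part.} Extend the forcing by zero for $t<0$ and take the Fourier transform in $t$ with dual variable $\lambda k$. Then $\hat\omega_I(\lambda)$ solves \eqref{eqres} with $F=-{\rm i}k\hat f_1-\partial_y\hat f_2-\partial_z\hat f_3$. Proposition \ref{prores2}, with $F_1={\rm i}k\hat f_1$, $F_2=0$, $(f_1,f_2,f_3)\mapsto(0,\hat f_2,\hat f_3)$, gives
\[
\nu^{\frac23}\|\nabla\hat\omega_I\|_{L^2}+(\nu|k|)^{\frac13}\|\hat\omega_I\|_{L^2}\le C\big(|k|^{\frac23}\|\hat f_1\|_{L^2}+\nu^{-\frac13}\|(\hat f_2,\hat f_3)\|_{L^2}\big),
\]
and after squaring, multiplying by $\nu^{-\frac13}$ and applying Plancherel in $\lambda$ we obtain $(\nu k^2)^{\frac13}\|\tilde\omega_I\|_{L^2L^2}^2\lesssim(\nu k^2)^{-\frac13}\|kf_1\|^2_{L^2L^2}+\nu^{-1}\|(f_2,f_3)\|^2_{L^2L^2}$. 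For the regime $\nu k^2\ge1$ we instead use the plain energy estimate: testing the equation against $\bar{\tilde\omega}_I$, the term ${\rm i}kV$ drops out after taking the real part, and Young's inequality yields the bound with weight $(\nu k^2)^{-1}$. This explains the $\min$ in the statement. The $L^\infty L^2$ and $\nu\|\nabla\tilde\omega\|_{L^2L^2}^2$ terms then follow from the time-domain energy identity
\[
\tfrac12\tfrac{d}{dt}\|\tilde\omega\|^2+\nu\|(\partial_y,\partial_z)\tilde\omega\|^2+\nu k^2\|\tilde\omega\|^2-a\nu^{\frac13}\|\tilde\omega\|^2=\mathrm{Re}\langle\text{forcing},\tilde\omega\rangle.
\]
We bound $a\nu^{\frac13}\|\tilde\omega\|^2_{L^2L^2}$ by the already controlled $(\nu k^2)^{\frac13}\|\tilde\omega\|^2_{L^2L^2}$ for $a$ small. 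The forcing terms are handled by integrating by parts; this is legitimate because $\omega|_{y=\pm1}=0$. We get $|\langle\partial_yf_2,\tilde\omega\rangle|\le\|f_2\|\|\partial_y\tilde\omega\|$, and $|\langle kf_1,\tilde\omega\rangle|$ is bounded either by the $(\nu k^2)^{\frac13}$ term or by the $\nu k^2$ term.

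\emph{Homogeneous part.} We follow \cite{chenwz}. Let $\chi(t)$ be a cutoff equal to $1$ near $t=0$ and supported in $[0,\nu^{-\frac13}]$, and write $\omega_H=\chi\omega_H+(1-\chi)\omega_H$. On $[0,\nu^{-\frac13}]$ the plain energy estimate gives $\|\omega_H\|_{L^\infty L^2}\le\|\omega_{in}\|$, and the exponential weight is harmless there. The piece $(1-\chi)\omega_H$ solves \eqref{spaceV} with zero data and forcing $-\chi'\omega_H$, which is of type $f_4$-like $L^2$ forcing. We treat it by the first estimate of Proposition \ref{prores1}, $(\nu k^2)^{\frac13}\|w\|\le C\|F\|$, which yields $(\nu k^2)^{\frac13}\|\cdot\|^2_{L^2L^2}\lesssim(\nu k^2)^{-\frac13}\|\chi'\omega_H\|^2_{L^2L^2}\lesssim\nu^{\frac13}\cdot\nu^{-\frac13}\|\omega_{in}\|^2$.

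\emph{Main obstacle.} The key point is that Propositions \ref{prores1}--\ref{prores2} are available for the general $V$ satisfying \eqref{eqVcon} with Dirichlet data. This is exactly the Navier-slip case, with no boundary-layer corrector needed, so the only delicate step is the bookkeeping of the $H^{-1}$-type forcing $\partial_yf_2+\partial_zf_3$. This is already absorbed by the $\nu^{-\frac13}\|(f_2,f_3)\|$ term in Proposition \ref{prores2}. A smaller technical issue is that $V$ depends on $z$, so the problem does not decouple in the $z$-frequency. However, the resolvent estimates are stated on the full $\Omega$, so we never Fourier transform in $z$.
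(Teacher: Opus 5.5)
Your treatment of $\omega_I$ matches the paper's Lemma \ref{lemI}: Fourier transform in $t$, Proposition \ref{prores2}, Plancherel, then the energy identity for the $L^\infty L^2$ and gradient terms. Your pure energy argument for $\nu k^2\ge 1$ matches the paper's Case II. The homogeneous part, however, fails as written.

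The left-hand side contains $(\nu k^2)^{\frac13}\|\tilde\omega_H\|_{L^2L^2}^2$, so the piece $\chi\tilde\omega_H$ must also satisfy $(\nu k^2)^{\frac13}\|\chi\tilde\omega_H\|_{L^2L^2}^2\le C\|\omega_{in}\|_{L^2}^2$. With $\chi$ supported on $[0,\nu^{-\frac13}]$, the energy bound $\|\tilde\omega_H(t)\|_{L^2}\le{\rm e}^{a}\|\omega_{in}\|_{L^2}$ only gives $(\nu k^2)^{\frac13}\nu^{-\frac13}\|\omega_{in}\|_{L^2}^2=|k|^{\frac23}\|\omega_{in}\|_{L^2}^2$. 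Adding the dissipation bound $\nu k^2\|\omega_H\|_{L^2L^2}^2\le\|\omega_{in}\|_{L^2}^2$ still leaves a factor $\min\{|k|^{\frac23},(\nu k^2)^{-\frac23}\}$. This is of size $\nu^{-\frac29}$ at $|k|\sim\nu^{-\frac13}$, so $C$ is not uniform in $k$.

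The cutoff must live on the enhanced-dissipation time scale $(\nu k^2)^{-\frac13}$. Then $|\chi'|\lesssim(\nu k^2)^{\frac13}$ and $\|\chi'\tilde\omega_H\|_{L^2L^2}^2\lesssim(\nu k^2)^{\frac13}\|\omega_{in}\|_{L^2}^2$, so both pieces are $O(\|\omega_{in}\|_{L^2}^2)$. A smaller point: $-a\nu^{\frac13}\ge-a(\nu k^2)^{\frac13}$, not $\le$. What you actually use is $a\nu^{\frac13}=a|k|^{-\frac23}(\nu k^2)^{\frac13}$ with $a|k|^{-\frac23}\in[0,\epsilon_1]$.

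There is a second gap. Applying Plancherel to $(1-\chi)\tilde\omega_H$ on $\mathbb{R}_+$, and to $\tilde\omega_I$ after the forcing is switched off, presupposes that the weighted solution lies in $L^2(\mathbb{R}_+;L^2)$. When $\nu k^2\lesssim a\nu^{\frac13}$ the energy identity does not give this. It is precisely the weighted decay you are trying to prove, so the argument is circular as stated.

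The paper breaks this circle in Lemma \ref{lemH}. Proposition \ref{proresoV} gives $\Psi(L_k)\ge c(\nu k^2)^{\frac13}$. The Gearhart--Pr\"uss-type bound of Lemma \ref{Gearhart} then yields $\|\omega_H(t)\|_{L^2}\le C{\rm e}^{-a(\nu k^2)^{\frac13}t}\|\omega_{in}\|_{L^2}$ directly, with no Fourier transform of $\omega_H$. This decay is also what makes the extension of $\omega_I$ beyond $T$ in Lemma \ref{lemI} square-integrable.

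Your cutoff scheme can replace that semigroup lemma, but it needs two repairs. First, the time-scale correction above. Second, an extra truncation at $t=T$ by a second cutoff on scale $(\nu k^2)^{-\frac13}$, so that all quantities are finite. Its error is controlled by $\|\tilde\omega(T)\|_{L^2}^2\le\|\omega_{in}\|_{L^2}^2+2a\nu^{\frac13}\|\tilde\omega\|_{L^2(0,T;L^2)}^2$, plus forcing terms, and is absorbed for $a$ small. The resulting constants are then independent of $T$.
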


To obtain the desired estimates, let us consider the following Orr-Sommerfeld equation
\begin{align}
\label{eqresV}
\begin{cases}
-\nu(\partial_y^2+\partial_z^2-k^2) w+{\rm i} k(V(y,z)-\lambda)w-a(\nu k^2)^{\frac{1}{3}}w=F,\\
w|_{y=\pm 1}=0.
\end{cases}
\end{align}
Note that $V=V(y,z)$ and $w(y,z), F(y,z)$ solves \eqref{eqresV}, then $(w(y,z) {\rm e}^{{\rm i} kx}, F(y,z){\rm e}^{{\rm i}kx})$ solves \eqref{eqres}. Therefore, it follows from Propositions \ref{prores1} -- \ref{prores2} that
\begin{proposition}\label{proresoV}
Let $w\in H^2(\Omega)$ solves \eqref{eqresV} with $F\in H^1(\Omega)$. Then it holds that
\begin{align*}
&\nu^{\frac{2}{3}}|k|^{\frac{1}{3}}\|(\partial_y ,\partial_z)w \|_{L^2}+(\nu k^2)^{\frac{1}{3}}\|w\|_{L^2}\leq C\|F\|_{L^2},\\
&\nu^{\frac{2}{3}}|k|^{\frac{1}{3}}\|(\partial_y ,\partial_z) w\|_{L^2}+(\nu k^2)^{\frac{1}{3}}\|w\|_{L^2}\leq C\nu^{-\frac{1}{3}}|k|^{\frac{1}{3}}\|F\|_{H^{-1}}.
\end{align*}
Moreover, if $F=F_1+\partial_yF_2+\partial_zF_3$, it holds that
\begin{align*}
&\nu^{\frac{2}{3} }\|(\partial_y ,\partial_z) w\|_{L^2}+(\nu |k|)^{\frac{1}{3}}\|w\|_{L^2}\leq C\left(|k|^{-\frac{1}{3}}\|F_1\|_{L^2}+\nu^{-\frac{1}{3}}\|(F_2, F_3)\|_{L^2}\right).
\end{align*}
\end{proposition}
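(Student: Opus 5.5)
The plan is to reduce Proposition \ref{proresoV} directly to Propositions \ref{prores1} and \ref{prores2} by lifting the two-dimensional problem \eqref{eqresV} to the three-dimensional problem \eqref{eqres} on $\Omega=\mathbb{T}\times[-1,1]\times\mathbb{T}$.

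\textbf{Step 1: the lift.} Given $w(y,z)$ and $F(y,z)$ solving \eqref{eqresV}, set $\tilde w=w(y,z){\rm e}^{{\rm i}kx}$ and $\tilde F=F(y,z){\rm e}^{{\rm i}kx}$. Then $\partial_x\tilde w={\rm i}k\tilde w$ and $\partial_x\tilde F={\rm i}k\tilde F$. Moreover $\Delta\tilde w=\big((\partial_y^2+\partial_z^2-k^2)w\big){\rm e}^{{\rm i}kx}$ and $\tilde w|_{y=\pm1}=0$. Since $V=V(y,z)$ does not depend on $x$, the pair $(\tilde w,\tilde F)$ solves \eqref{eqres} with the same $\lambda\in\mathbb{R}$ and $a\in[0,\epsilon_1]$. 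The auxiliary $\varphi$ can be defined by $\Delta\varphi=\tilde w$, $\varphi|_{y=\pm1}=0$, but it plays no role here. The regularity hypotheses $\tilde w\in H^2(\Omega)$, $\tilde F\in H^1(\Omega)$ follow from $w,F\in H^2,H^1$ of $(y,z)$, because $|k|<\infty$.

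\textbf{Step 2: comparison of norms.} For any $f(y,z)$,
\[
\|f{\rm e}^{{\rm i}kx}\|_{L^2(\Omega)}=\sqrt{2\pi}\,\|f\|_{L^2},\qquad \|\nabla(f{\rm e}^{{\rm i}kx})\|_{L^2(\Omega)}\ge\sqrt{2\pi}\,\|(\partial_y,\partial_z)f\|_{L^2}.
\]
Hence the left-hand sides of the first two estimates of Proposition \ref{prores1}, after dropping the nonnegative terms $\nu\|\Delta w\|_{L^2}$ and $|k|\|(V-\lambda)w\|_{L^2}$, bound the left-hand sides we want. This gives the first inequality immediately.

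For the $H^{-1}$ inequality I need $\|\tilde F\|_{H^{-1}(\Omega)}\le C\|F\|_{H^{-1}}$, where the right-hand side is the $H^{-1}$ norm in $(y,z)$. This holds by duality:
\begin{itemize}
\item pairing $\tilde F$ with a test function $g\in H^1_0(\Omega)$ only sees the $-k$ Fourier mode $g_{-k}(y,z)$ of $g$;
\item $\|g_{-k}\|_{H^1_{y,z}}\le C\|g\|_{H^1(\Omega)}$, since the full gradient dominates the $(y,z)$-gradient;
\item the constant is independent of $k$.
\end{itemize}
This is the one place where the direction of the inequality must be checked carefully, and it goes the right way.

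\textbf{Step 3: divergence-form source.} For $F=F_1+\partial_yF_2+\partial_zF_3$, apply Proposition \ref{prores2} to $\tilde F$, with
\[
F_1\mapsto F_1{\rm e}^{{\rm i}kx},\quad F_2\mapsto0,\quad f_1=0,\quad f_2=F_2{\rm e}^{{\rm i}kx},\quad f_3=F_3{\rm e}^{{\rm i}kx}.
\]
This decomposition is exact, since $\partial_y(F_2{\rm e}^{{\rm i}kx})=(\partial_yF_2){\rm e}^{{\rm i}kx}$ and likewise for $\partial_z$. Using the norm comparisons from Step 2 then yields the last estimate.

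There is no genuine obstacle. The only point requiring care is the $H^{-1}$ comparison in Step 2, together with keeping all constants independent of $k$ and $\nu$.
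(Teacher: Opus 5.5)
Your proposal is correct and is the same argument as the paper's one-line proof: lift $(w,F)$ to $(w{\rm e}^{{\rm i}kx},F{\rm e}^{{\rm i}kx})$, note that this pair solves \eqref{eqres}, and read off the estimates from Propositions \ref{prores1}--\ref{prores2}. Your explicit norm comparisons and the duality check $\|F{\rm e}^{{\rm i}kx}\|_{H^{-1}(\Omega)}\le C\|F\|_{H^{-1}}$ fill in details the paper leaves implicit, and you should also state that the lift needs $k\in\mathbb{Z}\setminus\{0\}$ so that it is $2\pi$-periodic in $x$.
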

Let $\tilde{\omega}= {\rm e}^{a\nu^{\frac{1}{3}}t} \omega$ and $\tilde{f}_j= {\rm e}^{a\nu^{\frac{1}{3}}t} f_j$, then $\tilde{\omega}$ satisfies
\begin{align*}
\begin{cases}
\partial_t\tilde{\omega}-\nu(\partial_y^2+\partial_z^2-k^2)\tilde{\omega}+{\rm i}kV\tilde{\omega}-a\nu^{\frac{1}{3} }\tilde{\omega}=-{\rm i} k\tilde{f}_1-\partial_y\tilde{f}_2-\partial_z\tilde{f}_3,\\
\tilde{\omega}|_{y=\pm1}=0,\tilde{\omega}|_{t=0}=\omega_{in}.
\end{cases}
\end{align*}
We decompose $\tilde{\omega}=\omega_I+\omega_H$, where $\omega_I$ and $\omega_H$ satisfies
\begin{align}\label{eqomeI}
\begin{cases}
\partial_t\omega_I-\nu(\partial_y^2+\partial_z^2-k^2)\omega_I+{\rm i} kV\omega_I-a\nu^{\frac{1}{3}}\omega_I= -{\rm i} k\tilde{f}_1-\partial_y\tilde{f}_2-\partial_z\tilde{f}_3,\\
\omega_I|_{y=\pm1}=0,\omega_I|_{t=0}=0,
\end{cases}
\end{align}
and
\begin{align}\label{eqomeH}
\begin{cases}
\partial_t\omega_H-\nu(\partial_y^2+\partial_z^2-k^2)\omega_H+{\rm i} kV\omega_H-a \nu^{\frac{1}{3}}\omega_H=0,\\
\omega_H|_{y=\pm1}=0,\omega_H|_{t=0}=\omega_{in}.
\end{cases}
\end{align}
\begin{lemma} \label{lemH}
Let $\omega_H$ solves \eqref{eqomeH} with $\omega_{in}|_{y=\pm 1}=0$. Then it holds that
\begin{align*}
\|\omega_H\|_{L^\infty L^2}^2+\nu\|(\partial_y , \partial_z) \omega_H \|_{L^2L^2}^2+\left(\nu k^2+(\nu k^2)^{\frac{1}{3}}\right)\|\omega_H\|_{L^2L^2}^2\leq C\|\omega_{in}\|_{L^2}^2.
\end{align*}
\end{lemma}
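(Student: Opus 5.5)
Our plan follows the standard route for homogeneous linearized problems: a basic energy estimate gives the $L^\infty L^2$ bound and the dissipation terms, and the enhanced dissipation part $(\nu k^2)^{\frac13}\|\omega_H\|_{L^2L^2}^2$ comes from the resolvent estimate in Proposition \ref{proresoV}, combined with a Gearhart--Prüss type semigroup bound.

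\textbf{Energy estimate.} We take the $L^2$ inner product of \eqref{eqomeH} with $\omega_H$ and take the real part. The term ${\rm i}kV\omega_H$ drops out because $V$ is real. The boundary condition $\omega_H|_{y=\pm1}=0$ removes the boundary terms and gives
\begin{align*}
\frac12\frac{d}{dt}\|\omega_H\|_{L^2}^2+\nu\|(\partial_y,\partial_z)\omega_H\|_{L^2}^2+\nu k^2\|\omega_H\|_{L^2}^2=a\nu^{\frac13}\|\omega_H\|_{L^2}^2 .
\end{align*}
The right-hand side still has to be absorbed. Since $\omega_H$ vanishes at $y=\pm1$, Poincaré gives $\nu\|\partial_y\omega_H\|^2\ge \frac{\pi^2}{4}\nu\|\omega_H\|^2$. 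This is not enough for $a\nu^{1/3}$, so we must first prove exponential decay of the semigroup at rate $c(\nu k^2)^{1/3}\ge c\nu^{1/3}$ (recall $|k|\ge1$), with $a\le\epsilon_1$ small.

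\textbf{Semigroup decay.} Let $L=-\nu(\partial_y^2+\partial_z^2-k^2)+{\rm i}kV$ with Dirichlet conditions in $y$. This is an m-accretive operator on $L^2$. Proposition \ref{proresoV}, applied with $a=0$ and $\lambda\in\mathbb R$, gives
\begin{align*}
\|(L-{\rm i}k\lambda)^{-1}\|_{L^2\to L^2}\le C(\nu k^2)^{-\frac13}
\end{align*}
uniformly on the imaginary axis. The Gearhart--Prüss theorem in the quantitative form of Wei (Lemma used in \cite{chenwz}) then gives
\begin{align*}
\|{\rm e}^{-tL}\|\le {\rm e}^{\pi/2}\,{\rm e}^{-c_*(\nu k^2)^{1/3}t}.
\end{align*}
Since $\omega_H={\rm e}^{a\nu^{1/3}t}{\rm e}^{-tL}\omega_{in}$ and $a\nu^{1/3}\le a(\nu k^2)^{1/3}\le\frac{c_*}{2}(\nu k^2)^{1/3}$, we obtain
\begin{align*}
\|\omega_H(t)\|_{L^2}\le C{\rm e}^{-\frac{c_*}{2}(\nu k^2)^{1/3}t}\|\omega_{in}\|_{L^2}.
\end{align*}
This yields $\|\omega_H\|_{L^\infty L^2}\le C\|\omega_{in}\|$ and, after integrating in time, $(\nu k^2)^{1/3}\|\omega_H\|_{L^2L^2}^2\le C\|\omega_{in}\|^2$.

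\textbf{Conclusion.} We integrate the energy identity in time and bound the right-hand side by $a\nu^{1/3}\|\omega_H\|_{L^2L^2}^2\le C\|\omega_{in}\|^2$, using the previous step. This gives
\begin{align*}
\nu\|(\partial_y,\partial_z)\omega_H\|_{L^2L^2}^2+\nu k^2\|\omega_H\|_{L^2L^2}^2\le C\|\omega_{in}\|_{L^2}^2 .
\end{align*}
Summing the bounds proves the lemma.

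\textbf{Main obstacle.} The delicate step is justifying the Gearhart--Prüss application. The resolvent bound must be uniform in $\lambda\in\mathbb R$ and hold in the genuinely 2D $(y,z)$ setting with a $z$-dependent $V$; this is exactly what Proposition \ref{proresoV} supplies via its reduction to \eqref{eqres}. We must also make sure $\epsilon_1$ is small relative to $c_*$.
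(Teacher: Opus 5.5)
Your proposal is correct and matches the paper's proof. The paper obtains $\Psi(L_k)\gtrsim(\nu k^2)^{1/3}\ge 2a\nu^{1/3}$ from Proposition \ref{proresoV}, applies Lemma \ref{Gearhart} to get the decay $\|\omega_H(t)\|_{L^2}\le C{\rm e}^{-a(\nu k^2)^{1/3}t}\|\omega_{in}\|_{L^2}$, and then uses the energy identity to absorb $a\nu^{1/3}\|\omega_H\|_{L^2L^2}^2$. The only cosmetic difference is that the paper treats $\nu k^2\ge 1$ separately, using the trivial bound $\Psi(L_k)\ge\nu k^2$ from accretivity instead of the resolvent estimate.
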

\begin{proof}
Let $L_k=\nu(k^2 -\partial_y^2-\partial_z^2)+{\rm i} ky$ with $D(L_k)=H^2\cap H^1_0$, then the solution is given by
 $$\omega_H={\rm e}^{-tL_k+ta\nu^{1/3}}\omega_{in}.$$
Thanks to $\forall f\in D(L_k)$, one has ${\bf Re} \langle L_kf,f\rangle=\nu\|(\partial_y,\partial_z)f\|_{L^2}^2+\nu k^2\|f\|_{L^2}^2$, the operator $L_k$ is accretive for any $k\in \mathbb{Z}\setminus\{0\}$.

Let \begin{align*}
\Psi(L_k)=\inf\{\|(L_k- {\rm i}\lambda)f\|:f\in D(L_k), \lambda\in \mathbb{R}, \|f\|=1\},
\end{align*}
it remains to bound $\Psi(L_k)$.

For $\nu k^2\geq 1$, $\Psi(L_k)\geq \nu k^2\geq (\nu k^2)^{1/3}\geq 2a\nu^{1/3}$.

For $\nu k^2 \leq 1$, by Proposition \ref{proresoV}, there exist $c>0$ so that for any $k\in \mathbb{Z} \setminus \{0\}$,
\begin{align*}
\Psi(L_k)\geq c(\nu k^2)^{1/3}\geq 2a \nu^{1/3}.
\end{align*}
Then it follows from Lemma \ref{Gearhart} that
\begin{align*}
\|\omega_H(t)\|_{L^2}\leq {\rm e}^{-t\Psi(L_k)+\pi/2+ta\nu^{1/3}}\|\omega_{in}\|_{L^2}\leq C {\rm e}^{-a (\nu k^2)^{\frac{1}{3}}t}\|\omega_{in}\|_{L^2},
\end{align*}
which implies that
\begin{align*}
\|\omega_H\|_{L^\infty L^2}^2+(\nu k^2)^{\frac{1}{3}}\|\omega_H\|_{L^2L^2}^2\leq C\|\omega_{in}\|_{L^2}^2.
\end{align*}

In addition, the energy argument yields that
\begin{align*}
\frac{1}{2}\frac{d}{dt}\|\omega_H\|_{L^2}^2+\nu\|(\partial_y,\partial_z)\omega_H\|_{L^2}^2+\nu k^2\|\omega_H\|_{L^2}^2=a \nu^{\frac{1}{3}}\|\omega_H\|_{L^2}^2,
\end{align*}
which yields that
\begin{align*}
\nu \|(\partial_y,\partial_z)\omega_H\|_{L^2L^2}^2+\nu k^2\|\omega_H\|_{L^2L^2}^2\leq C\left(\|\omega_{in}\|_{L^2}^2+a\nu^{\frac{1}{3} }\|\omega_H\|_{L^2 L^2}^2\right)\leq C\|\omega_{in}\|_{L^2}^2.
\end{align*}

The proof is completed.
\end{proof}

\begin{lemma} \label{lemI}
Let $\omega_I$ solves \eqref{eqomeI}. If $\nu k^2\leq 1$, it holds that
\begin{align*}
&\|\omega_I\|_{L^\infty L^2}^2+\nu\|(\partial_y , \partial_z) \omega_I \|_{L^2L^2}^2+\left[\nu k^2+(\nu k^2)^{\frac{1}{3}}\right]\|\omega_I\|_{L^2L^2}^2\\
&\leq C\left[(\nu k^2)^{-\frac{1}{3}}\|\tilde{f}_1\|_{L^2L^2}^2+\nu^{-1}\|(\tilde{f}_2,\tilde{f}_3)\|_{L^2L^2}^2\right].
\end{align*}
\end{lemma}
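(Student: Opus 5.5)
The plan is the standard one for inhomogeneous linear problems with zero initial data: take the Fourier transform in time and reduce \eqref{eqomeI} to the resolvent problem \eqref{eqresV}, whose estimates are recorded in Proposition \ref{proresoV}. Extend $\omega_I$ and $\tilde f_j$ by zero for $t<0$, and for the moment work on $[0,T]$ with the data cut off at $T$; the bounds will be uniform in $T$. Then $\hat\omega_I(\lambda)$ solves
\begin{align*}
-\nu(\partial_y^2+\partial_z^2-k^2)\hat\omega_I+{\rm i}k(V-\lambda)\hat\omega_I-a\nu^{\frac13}\hat\omega_I=-{\rm i}k\hat f_1-\partial_y\hat f_2-\partial_z\hat f_3
\end{align*}
with $\hat\omega_I|_{y=\pm1}=0$, after replacing $\lambda$ by $\lambda/k$. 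Because $\nu k^2\le 1$ and $|k|\ge1$, we have $a\nu^{1/3}\le a(\nu k^2)^{1/3}$. So the damping term is exactly of the form allowed in \eqref{eqresV}, possibly with a smaller constant $a'\in[0,\epsilon_1]$.

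Next I apply Proposition \ref{proresoV} for each fixed $\lambda$, splitting the source. For the $(f_2,f_3)$ part I use the divergence-form estimate, which gives $\nu^{\frac23}\|(\partial_y,\partial_z)\hat\omega\|+(\nu|k|)^{\frac13}\|\hat\omega\|\le C\nu^{-\frac13}\|(\hat f_2,\hat f_3)\|$. Squaring and multiplying by $\nu^{-1/3}$ gives control of $\nu\|(\partial_y,\partial_z)\hat\omega\|^2+(\nu k^2)^{1/3}\|\hat\omega\|^2$ by $\nu^{-1}\|(\hat f_2,\hat f_3)\|^2$, after also using $(\nu|k|)^{\frac13}|k|^{\frac13}=(\nu k^2)^{\frac13}$ to adjust the weight. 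For the $f_1$ part I treat ${\rm i}k\hat f_1=\partial_x(\hat f_1{\rm e}^{{\rm i}kx})$ as a derivative in $x$, so that $\|{\rm i}k\hat f_1\|_{H^{-1}}\le\|\hat f_1\|_{L^2}$. I then use the $H^{-1}$ estimate of Proposition \ref{proresoV}, combined with the $L^2$ estimate applied to $F_1=-{\rm i}k\hat f_1$ in the regime where it is better. I integrate in $\lambda$ and use Plancherel to get $\nu\|(\partial_y,\partial_z)\omega_I\|_{L^2L^2}^2+(\nu k^2)^{1/3}\|\omega_I\|_{L^2L^2}^2$. The term $\nu k^2\|\omega_I\|_{L^2L^2}^2$ is dominated by $(\nu k^2)^{1/3}\|\omega_I\|^2$ since $\nu k^2\le1$.

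Then I get the $L^\infty L^2$ bound from the energy identity. Pairing \eqref{eqomeI} with $\omega_I$ gives
\begin{align*}
\tfrac12\tfrac{d}{dt}\|\omega_I\|^2+\nu\|(\partial_y,\partial_z)\omega_I\|^2+\nu k^2\|\omega_I\|^2=a\nu^{\frac13}\|\omega_I\|^2+{\bf Re}\langle-{\rm i}k\tilde f_1,\omega_I\rangle+\langle(\tilde f_2,\tilde f_3),(\partial_y,\partial_z)\omega_I\rangle .
\end{align*}
The $f_2,f_3$ terms are absorbed by Cauchy--Schwarz into $\nu^{-1}\|(\tilde f_2,\tilde f_3)\|^2$. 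The $a\nu^{1/3}$ term is bounded by the space-time bound already proved. The $f_1$ term I bound by $(\nu k^2)^{-1/3}\|\cdot\|^2+(\nu k^2)^{1/3}\|\omega_I\|^2$, which again uses the previous step. Integrating in time and using $\omega_I(0)=0$ gives the $L^\infty L^2$ part.

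\textbf{Main obstacle.} The delicate point is the bookkeeping of the factor $k$ in the $f_1$ term, so that the right-hand side comes out exactly as $(\nu k^2)^{-1/3}\|\tilde f_1\|^2$. In both the resolvent step and the energy step, the source is ${\rm i}k\tilde f_1$, not $\tilde f_1$.
\begin{itemize}
\item The plain $L^2$ resolvent bound costs a full factor $k^2$.
\item The $H^{-1}$ route costs $\nu^{-2/3}k^{2/3}$ relative to the claimed weight.
\end{itemize}
Neither route alone gives $(\nu k^2)^{-1/3}$ unweighted, and I am not sure the combination does either. What I would try first is to interpolate between the two resolvent estimates of Proposition \ref{proresoV} via a frequency-localized decomposition in $\lambda$, separating $|\lambda-V|\lesssim(\nu/|k|)^{1/3}$ from its complement. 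If that does not recover the stated power, I would check whether the intended convention normalizes $\tilde f_1$ so that the $k$ is already included. This exponent check is where I expect the real difficulty, and possibly a correction to the statement.
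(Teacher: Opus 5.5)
Your skeleton is the paper's proof. It takes the Fourier transform in time, applies the divergence-form estimate of Proposition \ref{proresoV} with Plancherel for the space-time quantities, and then uses the energy identity with Young's inequality for the $L^\infty L^2$ part. Two details you handle are worth keeping. First, you note that $a\nu^{1/3}\le a(\nu k^2)^{1/3}$ for $|k|\ge1$, which is needed to land in \eqref{eqresV}; the paper passes over this silently. Second, after cutting the data at $T$ you need the semigroup decay from Lemma \ref{lemH} to have $\omega_I\in L^2(0,\infty;L^2)$ before applying Plancherel. The paper's extension with zero forcing relies on the same fact.

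The obstacle you isolate is genuine. It is a defect of the printed statement, not of your method, and no decomposition in $\lambda$ will remove it. With source $-{\rm i}k\tilde f_1$, the bound with $(\nu k^2)^{-1/3}\|\tilde f_1\|_{L^2L^2}^2$ cannot hold uniformly for $|k|\gg1$. Here is a counterexample sketch:
\begin{itemize}
\item Take $V=y$ (admissible in \eqref{eqVcon}) and $\tilde f_2=\tilde f_3=0$.
\item Let $w$ be a $z$-independent bump of width $(\nu/|k|)^{1/3}$ at $y=0$. Then $F:=-\nu(\partial_y^2-k^2)w+{\rm i}kyw-a\nu^{1/3}w$ satisfies $\|F\|_{L^2}\le C(\nu k^2)^{1/3}\|w\|_{L^2}$.
\item Force with $-{\rm i}k\tilde f_1=\phi(t/T_0)F$, where $\phi$ is a fixed bump vanishing at $0$ and $T_0(\nu k^2)^{1/3}\gg1$.
\item Integrating by parts in Duhamel's formula and using the decay from Lemma \ref{lemH} gives $\omega_I=\phi(t/T_0)w$ up to a relative error $O\big((T_0(\nu k^2)^{1/3})^{-1}\big)$.
\item The claimed inequality then reduces to $1\lesssim k^{-2}$, which fails for large $|k|$.
\end{itemize}

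The paper's own proof reaches the printed form only by dropping this factor of $k$. It applies the last estimate of Proposition \ref{proresoV} with $F_1=\hat f_1$ instead of $F_1=-{\rm i}k\hat f_1$. In the energy step it likewise bounds $\|k\tilde f_1\|\|\omega_I\|$ by $C(\nu k^2)^{-1/3}\|\tilde f_1\|^2+\cdots$.

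The correct conclusion has $(\nu k^2)^{-1/3}\|k\tilde f_1\|_{L^2L^2}^2$ on the right. This is exactly the form invoked in Case I of the proof of Proposition \ref{space-na-V}, so nothing downstream is affected. Your argument proves this corrected statement as written:
\begin{itemize}
\item Feed $F_1=-{\rm i}k\hat f_1$ into the last estimate of Proposition \ref{proresoV}.
\item Square and multiply by $\nu^{-1/3}$, using $\nu^{-1/3}|k|^{-2/3}=(\nu k^2)^{-1/3}$.
\item In the energy step use $\|k\tilde f_1\|\|\omega_I\|\le(\nu k^2)^{-1/3}\|k\tilde f_1\|^2+(\nu k^2)^{1/3}\|\omega_I\|^2$.
\end{itemize}
The $L^2$ route is the right choice here. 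Its loss $k^2$ is smaller than the $H^{-1}$ loss $(|k|/\nu)^{2/3}$ exactly when $\nu k^2\le1$, and the example above shows it is sharp. So drop the $\lambda$-localization idea.
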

\begin{proof}
We first extend the solution $\omega_I$ to $t>T$ by solving \eqref{eqomeI} with $\tilde{f}_j=0$, $j=1,2,3$. Let
\begin{align*}
w(\lambda,y,z)=\frac{1}{2\pi}\int_{\mathbb{R}^+}\omega_I(t,y,z) {\rm e}^{-{\rm i} \lambda t}dt, \hat{f}_j(\lambda, y, z)=\frac{1}{2\pi}\int_{\mathbb{R}^+}\tilde{f}_j (t,y,z) {\rm e}^{-{\rm i}\lambda t}dt, j=1,2,3,
\end{align*}
then one has
\begin{align*}
\begin{cases}
-\nu(\partial_y^2+\partial_z^2-k^2) w+{\rm i}k(V(y,z)-\lambda)w-a(\nu k^2)^{\frac{1}{3}}w=-{\rm i} k\hat{f}_1-\partial_y\hat{f}_2-\partial_z\hat{f}_3,\\
w|_{y=\pm 1}=0.
\end{cases}
\end{align*}
The resolvent estimates in Proposition \ref{proresoV} give that
\begin{align*}
&\nu^{\frac{2}{3}}\|(\partial_y ,\partial_z) w\|_{L^2}+(\nu |k|)^{\frac{1}{3}}\|w\|_{L^2} \leq C\left(|k|^{-\frac{1}{3}}\|\hat{f}_1\|_{L^2}+\nu^{-\frac{1}{3}}\|(\hat{f}_2, \hat{f}_3)\|_{L^2} \right),
\end{align*}
 which, together with Plancherel's theorem, yields that
\begin{align}
&\nu\|(\partial_y,\partial_z)\omega_I\|_{L^2_tL^2_{y,z}}^2+(\nu k^2)^{\frac{1}{3}}\|\omega_I\|_{L^2_tL^2_{y,z}}^2\nonumber\\
=& \nu\|(\partial_y,\partial_z)w\|_{L^2_\lambda L^2_{y,z} }^2+(\nu k^2)^{\frac{1}{3}}\|w\|_{L^2_\lambda L^2_{y,z}}^2\nonumber\\
\leq&C \left[(\nu|k|^2)^{-\frac{1}{3}}\|\hat{f}_1\|_{L^2_\lambda L^2_{y,z}}^2+\nu^{-1}\|(\hat{f}_2, \hat{f}_3)\|_{L^2_\lambda L^2_{y,z}}^2\right]\nonumber\\
= &C \left[(\nu|k|^2)^{-\frac{1}{3}}\|\tilde{f}_1\|_{L^2_t L^2_{y,z}}^2+\nu^{-1}\|(\tilde{f}_2, \tilde{f}_3)\|_{L^2_t L^2_{y,z}}^2\right].\label{eqI1}
\end{align}

On the other hand, the standard energy argument yields that
\begin{align*}
&\frac{1}{2}\frac{d}{dt}\|\omega_I\|_{L^2}^2+\nu \|(\partial_y,\partial_z)\omega_I\|_{L^2}^2+\nu k^2\|\omega_I\|_{L^2}^2\\
=&\mathbf{Re} \langle  -{\rm i} k\tilde{f}_1-\partial_y\tilde{f}_2-\partial_z\tilde{f}_3,\omega_I\rangle+a\nu^{\frac{1}{3}}\|\omega_I\|_{L^2}^2\\
\leq& \|k\tilde{f}_1\|_{L^2} \|\omega_I\|_{L^2}+\|\tilde{f}_2\|_{L^2} \|\partial_y \omega_I\|_{L^2}+\|\tilde{f}_3\|_{L^2} \|\partial_z\omega_I\|_{L^2}+a\nu^{\frac{1}{3}}\|\omega_I\|_{L^2}^2\\
\leq& \frac{1}{4}\nu\|(\partial_y, \partial_z)\omega_I\|_{L^2}^2+C(\nu k^2)^{-\frac{1}{3}}\|\tilde{f}_1\|_{L^2}^2+ C(\nu k^2)^{\frac{1}{3}}\|\omega_I\|_{L^2}^2+C \nu^{-1}\|(\tilde{f}_2, \tilde{f}_3)\|_{L^2}^2,
\end{align*}
which gives that
\begin{align*}
&\|\omega_I\|_{L^2}^2+\nu \|(\partial_y,\partial_z)\omega_I\|_{L^2}^2+\nu k^2\|\omega_I\|_{L^2}^2\\
\lesssim& (\nu k^2)^{-\frac{1}{3}}\|\tilde{f}_1\|_{L^2}^2+ (\nu k^2)^{\frac{1}{3}}\|\omega_I\|_{L^2}^2+\nu^{-1}\|(\tilde{f}_2, \tilde{f}_3)\|_{L^2}^2.
\end{align*}
Therefore, one has
\begin{align*}
&\|\omega_I\|_{L^\infty L^2}^2+\nu \|(\partial_y,\partial_z)\omega_I\|_{L^2 L^2}^2+\nu k^2\|\omega_I\|_{L^2 L^2}^2\\
\lesssim&  (\nu k^2)^{-\frac{1}{3}}\|\tilde{f}_1\|_{L^2 L^2}^2+ (\nu k^2)^{\frac{1}{3}}\|\omega_I\|_{L^2 L^2}^2+\nu^{-1}\|(\tilde{f}_2, \tilde{f}_3)\|_{L^2 L^2}^2\\
\lesssim&(\nu k^2)^{-\frac{1}{3}}\|\tilde{f}_1\|_{L^2 L^2}^2+\nu^{-1}\|(\tilde{f}_2, \tilde{f}_3)\|_{L^2 L^2}^2,
\end{align*}
where \eqref{eqI1} has been used. This completes the proof.
\end{proof}

Now we are in a position to prove Proposition \ref{space-na-V}.
\begin{proof}[Proof of Proposition \ref{space-na-V}]
The proof will be completed by two cases: $\nu k^2\leq 1$ and $\nu k^2\geq 1$.

{\bf Case I. $\nu k^2\leq 1$. }
In this case, it is clear that $(\nu k^2)^{-1/3}\leq (\nu k^2)^{-1}$, thus $ (\nu k^2)^{-1/3}\leq \min\{  (\nu k^2)^{-1/3},(\nu k^2)^{-1}\}$.
Using the Lemmas \ref{lemH} and \ref{lemI}, one has
\begin{align*}
&\|\tilde{\omega}\|_{L^\infty L^2}^2+\nu\|(\partial_y,\partial_z)\tilde{\omega}\|_{L^2 L^2}^2+\left(\nu k^2+(\nu k^2)^{\frac{1}{3}}\right)\|\tilde{\omega}\|_{L^2 L^2}^2\\
&\leq C\left(\|\omega_{in}\|_{L^2}^2+\nu^{-1}\|(\tilde{f_2},\tilde{f_3})\|_{L^2 L^2}^2+(\nu k^2)^{-\frac{1}{3}}\|k\tilde{f}_1\|_{L^2 L^2}^2\right)\\
&\leq C\left(\|\omega_{in}\|_{L^2}^2+\nu^{-1}\|(\tilde{f_2},\tilde{f_3})\|_{L^2 L^2}^2+\min\{(\nu k^2)^{-1}, (\nu k^2)^{-\frac{1}{3}}\}\|k\tilde{f}_1\|_{L^2 L^2}^2\right).
\end{align*}

{\bf  Case II. $\nu k^2\geq 1$.}
In this case, it is clear that $(\nu k^2)^{-1/3}\geq (\nu k^2)^{-1}$. The standard energy argument gives that
\begin{align*}
&\frac{1}{2}\frac{d}{dt}\|\tilde{\omega}\|_{L^2}^2+\nu\|(\partial_y,\partial_z)\tilde{\omega}\|_{L^2}^2
+(\nu k^2-a\nu^{1/3})\|\tilde{\omega}\|_{L^2}^2\\
&\leq \|k\tilde{f}_1\|_{L^2}\|\tilde{\omega}\|_{L^2}+\|\tilde{f}_2\|_{L^2}\|\partial_y\tilde{\omega}\|_{L^2}
+\|\tilde{f}_3\|_{L^2}\|\partial_z\tilde{\omega}\|_{L^2}\\
&\leq \frac{1}{4}\left[(\nu k^2)\|\tilde{\omega}\|_{L^2}^2+\nu\|(\partial_y,\partial_z)\tilde{\omega}\|_{L^2}^2\right]
+C\left[(\nu k^2)^{-1}\|k\tilde{f}_1\|_{L^2}^2+\nu^{-1}\|(\tilde{f}_2,\tilde{f}_3)\|_{L^2}^2\right],
\end{align*}
which gives that
\begin{align*}
\frac{d}{dt}\|\tilde{\omega}\|_{L^2}^2+\nu\|(\partial_y,\partial_z)\tilde{\omega}\|_{L^2}^2
+\nu k^2 \|\tilde{\omega}\|_{L^2}^2
\lesssim (\nu k^2)^{-1}\|k\tilde{f}_1\|_{L^2}^2+\nu^{-1}\|(\tilde{f}_2,\tilde{f}_3)\|_{L^2}^2,
\end{align*}
where we have used the facts that $\nu k^2-a\nu^{1/3}\geq \frac{3}{4}\nu k^2$ and $(\nu k^2)^{-1/3}\geq (\nu k^2)^{-1}$.

Thus, one has
\begin{align*}
&\|\tilde{\omega}\|_{L^\infty L^2}^2+\nu\|(\partial_y,\partial_z)\tilde{\omega}\|_{L^2 L^2}^2
+\nu k^2 \|\tilde{\omega}\|_{L^2 L^2}^2\\
\lesssim& \|\omega_{in}\|_{L^2}^2+ (\nu k^2)^{-1}\|k\tilde{f}_1\|_{L^2 L^2}^2+\nu^{-1}\|(\tilde{f}_2,\tilde{f}_3)\|_{L^2 L^2}^2\\
\lesssim& \|\omega_{in}\|_{L^2}^2+ \min\{ (\nu k^2)^{-\frac{1}{3}}, (\nu k^2)^{-1}\} \|k\tilde{f}_1\|_{L^2 L^2}^2+\nu^{-1}\|(\tilde{f}_2,\tilde{f}_3)\|_{L^2 L^2}^2,
\end{align*}
which completes the proof.
\end{proof}

 Let us also consider
\begin{align}\label{spaceV-1}
\begin{cases}
\partial_t\omega-\nu\Delta\omega+V\partial_x\omega=\partial_xf_1-\partial_yf_2-\partial_zf_3,\\
\omega|_{y=\pm1}=0,\omega|_{t=0}=\omega_{in},
\end{cases}
\end{align}
where $V$ satisfies \eqref{eqVcon}.

The following proposition is the direct consequence for Proposition \ref{space-na-V}.
\begin{proposition}\label{space-time-V21}
Let $\omega$ be a solution of \eqref{spaceV-1} with $\omega_{in}(\pm1)=0$. Then it holds that
\begin{align*}
&\| {\rm e}^{a\nu^{\frac{1}{3}}t}\omega\|_{L^\infty L^2}^2+\nu\|{\rm e}^{a\nu^{\frac{1}{3}}t}\nabla\omega\|_{L^2 L^2}^2+\nu^{\frac{1}{3}}\|{\rm e}^{a\nu^{\frac{1}{3}}t}\omega\|_{L^2 L^2}^2\\
\leq& C\bigg(\|\omega_{in}\|_{L^2} ^2+\nu^{-1}\|{\rm e}^{a\nu^{\frac{1}{3}}t}(f_2,f_3)\|_{L^2 L^2}^2\\
&+\min\{\nu^{-1}\| {\rm e}^{a\nu^{\frac{1}{3}}t}f_1\|_{L^2L^2}^2, \nu^{-\frac{1}{3}}\|{\rm e}^{a\nu^{\frac{1}{3}}t}|\partial_x|^{\frac{2}{3}} f_1\|_{L^2 L^2}^2\}\bigg).
\end{align*}
\end{proposition}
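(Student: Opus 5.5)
We take the Fourier transform in $x$: write $\omega=\sum_{k\in\mathbb{Z}}\omega_k(t,y,z){\rm e}^{{\rm i}kx}$, and similarly $f_j=\sum_k f_{j,k}{\rm e}^{{\rm i}kx}$. Since $V=V(y,z)$ does not depend on $x$, the modes decouple. For each $k$, the coefficient $\omega_k$ solves
\begin{align*}
\partial_t\omega_k-\nu(\partial_y^2+\partial_z^2-k^2)\omega_k+{\rm i}kV\omega_k={\rm i}kf_{1,k}-\partial_yf_{2,k}-\partial_zf_{3,k},
\end{align*}
with $\omega_k|_{y=\pm1}=0$. This is exactly \eqref{spaceV} with $f_1$ replaced by $-f_{1,k}$. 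The compatibility condition $\omega_{in}(\pm1)=0$ passes to each mode.

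\textbf{Step 1: nonzero modes.} For $k\neq0$, Proposition \ref{space-na-V} gives the bound for $\omega_k$, and the $f_1$ term carries the weight $\min\{(\nu k^2)^{-1},(\nu k^2)^{-1/3}\}k^2$. We bound this weight in two ways:
\begin{itemize}
\item by $(\nu k^2)^{-1}k^2=\nu^{-1}$;
\item by $(\nu k^2)^{-1/3}k^2=\nu^{-1/3}|k|^{4/3}$, which is the symbol of $\nu^{-1/3}\||\partial_x|^{2/3}f_1\|^2$.
\end{itemize}
On the left-hand side we use $\nu k^2+(\nu k^2)^{1/3}\ge \nu^{1/3}$ (valid since $|k|\ge1$). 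We also have $\nu\|(\partial_y,\partial_z)\omega_k\|^2+\nu k^2\|\omega_k\|^2=\nu\|\nabla(\omega_k{\rm e}^{{\rm i}kx})\|^2$, which recovers the full gradient.

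\textbf{Step 2: zero mode.} For $k=0$ the $f_1$ term vanishes, because $\partial_xf_1$ has no zero mode. The equation becomes a heat equation with Dirichlet condition,
\begin{align*}
\partial_t\omega_0-\nu(\partial_y^2+\partial_z^2)\omega_0=-\partial_yf_{2,0}-\partial_zf_{3,0}.
\end{align*}
The energy estimate on ${\rm e}^{a\nu^{1/3}t}\omega_0$ uses the Poincaré inequality in $y$ (from $\omega_0|_{y=\pm1}=0$) to absorb the term $a\nu^{1/3}\|\omega_0\|^2$ into $\nu\|\partial_y\omega_0\|^2$. This gives the $L^\infty L^2$, $\nu\|\nabla\omega_0\|^2$ and $\nu^{1/3}\|\omega_0\|^2$ bounds, since $\nu^{1/3}\lesssim\nu$ is false but the Poincaré inequality gives $\nu^{1/3}\|\omega_0\|^2\lesssim\nu^{-2/3}\cdot\nu\|\partial_y\omega_0\|^2$.

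This last point needs care. A cleaner route is to note that, if the intended application only involves $\omega_{\neq}$ (the statement is used for $\theta_{\neq}$), then the zero mode is absent. Otherwise I would rely on the dissipation term $\nu\|\nabla\omega_0\|^2\gtrsim\nu\|\omega_0\|^2$, which controls $\nu\|\omega_0\|_{L^2L^2}^2$ rather than $\nu^{1/3}\|\omega_0\|_{L^2L^2}^2$. I expect this zero-mode issue to be the only delicate point. I would resolve it by restricting to nonzero modes, as in the application to $\theta_{\neq}$ in subsection \ref{E-4:E-5}, where $\omega$ has zero $x$-average.

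\textbf{Step 3: summation.} Sum over $k$ with Plancherel in $x$. The two alternative bounds for the $f_1$ weight are each summed globally, so taking the minimum of the two summed quantities yields the stated $\min\{\cdot,\cdot\}$.
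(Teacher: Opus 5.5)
This is the paper's argument: the paper calls the result a direct consequence of Proposition \ref{space-na-V}, i.e.\ apply it to each $x$-mode $k\neq 0$ and sum, and your handling of the $f_1$ weight (bounded by both $\nu^{-1}$ and $\nu^{-1/3}|k|^{4/3}$), of $\nu k^2+(\nu k^2)^{1/3}\geq \nu^{1/3}$, and of the full gradient is correct. One correction: restricting to $\mathbb{P}_0\omega=0$ is necessary, not a convenience, because Poincar\'e only gives dissipation of order $\nu$ and cannot absorb $a\nu^{1/3}\|\omega_0\|_{L^2}^2$ as the first half of your Step 2 claims (for $f_j=0$ and $\omega_{in}=\cos(\pi y/2)$ the estimate fails on $(0,\infty)$, even with $a=0$, where the left side is of order $\nu^{-2/3}\|\omega_{in}\|_{L^2}^2$), so the statement implicitly concerns $x$-mean-zero $\omega$, which is exactly how the paper uses it for $\partial_x^2\theta_{[j]}$ and $\partial_z^2\theta_{\neq}$.
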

\subsection{Space-time estimate for linearized NS equations around the Couette flow with non-slip boundary condition }
Let us recall that the linearized NS equations around the Couette flow with non-slip boundary condition as follows:
\begin{align}\label{spacey1}
\begin{cases}
\partial_t\omega-\nu(\partial_y^2-\eta^2)\omega+{\rm i}ky\omega=F,\\
(\partial_y^2-\eta^2)\varphi=\omega, \partial_y\varphi|_{y=\pm 1}=\varphi|_{y=\pm 1}=0,\\
\omega|_{t=0}=\omega_{in},
\end{cases}
\end{align}
where $| \eta| \geq |k|\geq 1$ and $0<\nu\leq \nu_0$, $0\leq a\leq \epsilon_1\leq 1/8$.

The space-time estimates for \eqref{spacey1} are stated as follows.
\begin{proposition}\label{space-time-non1}
Let $\omega$ solves \eqref{spacey1} with $\partial_y\varphi_{in}|_{y=\pm1}=0$ and $F={\rm i} kf_1+\partial_yf_2+{\rm i} lf_3$. Then it holds that
\begin{align*}
&|k\eta|^{\frac{1}{2}}\|{\rm e}^{a\nu^{\frac{1}{3}}t}(\partial_y,\eta)\varphi\|_{L^2L^2}+\nu^{\frac{3}{4}}\|{\rm e}^{a\nu^{\frac{1}{3}}t} \partial_y\omega\|_{L^2L^2}
+\nu^{\frac{1}{2}} |\eta| \|{\rm e}^{a\nu^{\frac{1}{3}}t} \omega\|_{L^2L^2}\\
&\quad\quad+|\eta| \|{\rm e}^{a\nu^{\frac{1}{3}}t} (\partial_y,\eta)\varphi\|_{L^\infty L^2}+\nu^{\frac{1}{4}}\|{\rm e}^{a\nu^{\frac{1}{3}}t} \omega\|_{L^\infty L^2}\\
&\leq C\nu^{-\frac{1}{2}}\|{\rm e}^{a\nu^{\frac{1}{3}}t} (f_1,f_2,f_3)\|_{L^2L^2}+C\left(|\eta|^{-1}\|\partial_y\omega_{in}\|_{L^2}+\|\omega_{in}\|_{L^2}\right).
\end{align*}
\end{proposition}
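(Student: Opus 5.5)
This estimate is the Couette, non-slip analogue of the space-time bounds for the Navier-slip problem (Proposition \ref{space-na-y}). It is essentially Proposition 10.3 of \cite{chenwz}. I would prove it along the same lines, with a Duhamel splitting plus resolvent estimates.

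\textbf{Step 1: weight and splitting.} Put $\tilde\omega={\rm e}^{a\nu^{1/3}t}\omega$ and $\tilde f_j={\rm e}^{a\nu^{1/3}t}f_j$. The weight costs only a harmless term $a\nu^{1/3}\tilde\omega$, since $a\le\epsilon_1\le 1/8$ is small. Decompose $\tilde\omega=\omega_I+\omega_H$, exactly as in \eqref{eqomeI}--\eqref{eqomeH}:
\begin{itemize}
\item $\omega_I$ solves the inhomogeneous problem with zero data, still with $\varphi_I=\partial_y\varphi_I=0$ at $y=\pm1$;
\item $\omega_H$ solves the homogeneous problem with data $\omega_{in}$.
\end{itemize}

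\textbf{Step 2: the inhomogeneous part.} Extend $\omega_I$ past $T$ by setting $F=0$, and take the Fourier transform in $t$. This gives the Orr--Sommerfeld resolvent problem
\[
-\nu(\partial_y^2-\eta^2)w+{\rm i}k(y-\lambda)w-a(\nu k^2)^{1/3}w=\hat F, \qquad \varphi=\varphi'=0 \ \text{at } y=\pm1 .
\]
I would use the non-slip resolvent estimates of \cite{chenwz} (Propositions 5.x/6.x there) for $V=y$, with $\hat F={\rm i}k\hat f_1+\partial_y\hat f_2+{\rm i}l\hat f_3$ in divergence form. These give
\[
|k\eta|^{1/2}\|(\partial_y,\eta)\varphi\|_{L^2}+\nu^{3/4}\|\partial_y w\|_{L^2}+\nu^{1/2}|\eta|\|w\|_{L^2}\lesssim \nu^{-1/2}\|(\hat f_1,\hat f_2,\hat f_3)\|_{L^2}.
\]
The exponents $\nu^{3/4}$ and $\nu^{1/2}$ reflect the boundary-layer thickness $\nu^{1/4}$ scale. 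Plancherel then yields the three $L^2L^2$ terms.

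For the $L^\infty L^2$ terms I would run energy estimates.
\begin{itemize}
\item Pair the equation with $-\bar\varphi$, which is admissible because $\varphi,\varphi'$ vanish on the boundary. The transport term gives $\mathbf{Re}\langle {\rm i}ky\omega,-\varphi\rangle=k\,\mathbf{Im}\langle\varphi',\varphi\rangle$. This is bounded by $|k|\|\varphi'\|\|\varphi\|\le|k\eta|^{-1}\cdot|k\eta|\,\|(\partial_y,\eta)\varphi\|^2/\ldots$, i.e. by the $L^2L^2$ control already obtained. The viscous term $\nu\langle\omega',\varphi'\rangle$ is absorbed using $\nu\|\omega\|^2$, and the forcing is paired via $\|f\|\,\eta\|(\partial_y,\eta)\varphi\|$. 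Together these give $|\eta|\|(\partial_y,\eta)\varphi\|_{L^\infty L^2}$.
\item Pair with $\bar\omega$. The boundary term $\nu\omega'\bar\omega|_{\pm1}$ does not vanish. It is controlled through the $L^2L^2$ bounds on $\nu^{3/4}\omega'$ and $\nu^{1/2}\eta\omega$ by Gagliardo--Nirenberg, $|\omega(\pm1)|^2\lesssim\|\omega\|\|\omega'\|$. This gives $\nu^{1/4}\|\omega\|_{L^\infty L^2}$.
\end{itemize}

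\textbf{Step 3: the homogeneous part.} For $\omega_H$ I would not use Gearhart--Prüss directly, because the non-slip semigroup has boundary vorticity. Instead I would follow \cite{chenwz}: split $\omega_H$ into a Navier-slip-type part with data $\omega_{in}$, handled by Proposition \ref{space-na-y} (first bound), and a boundary corrector. The corrector restores $\varphi'(\pm1)=0$ and is a combination of the homogeneous solutions that are $\nu^{1/3}$-thick (or Airy-type) boundary layers. Equivalently, one treats the corrector as $\omega_I$ with a source built from $\omega_{in}$. The term $|\eta|^{-1}\|\partial_y\omega_{in}\|$ arises because the boundary trace of $\omega_{in}$ must be controlled; here the condition $\partial_y\varphi_{in}|_{\pm1}=0$ is used.

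\textbf{Main obstacle.} I expect the main obstacle to be the boundary-layer corrector in the homogeneous part. It means estimating the boundary traces $\mathcal{B}(\pm1)$ in time-weighted $L^2$ so that they produce exactly the $\nu^{1/4}$ and $\nu^{3/4}$ weights without losing powers of $\nu$. I would first try to represent the corrector through resolvent bounds applied to the trace data, and estimate that data by the $H^1$ norm of $\omega_{in}$.
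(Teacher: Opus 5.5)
The paper gives no argument of its own here. It imports the estimate verbatim from \cite{chenwz}, citing it as Proposition 10.2 (not 10.3), so that citation is the entire proof. The sketch you add, however, does not stand on its own, and the first concrete failure is the bound on $\nu^{1/4}\|{\rm e}^{a\nu^{1/3}t}\omega\|_{L^\infty L^2}$ obtained by pairing with $\bar\omega$. In the non-slip problem $\omega(\pm1)\neq0$, and the pairing gives
\[
\tfrac12\tfrac{d}{dt}\|\omega\|_{L^2}^2+\nu\|\partial_y\omega\|_{L^2}^2+\nu\eta^2\|\omega\|_{L^2}^2={\bf Re}\big[(\nu\partial_y\omega+f_2)\bar\omega\big]_{y=-1}^{y=1}+{\bf Re}\big({\rm i}k\langle f_1,\omega\rangle-\langle f_2,\partial_y\omega\rangle+{\rm i}l\langle f_3,\omega\rangle\big).
\]
Gagliardo--Nirenberg handles the factor $\omega(\pm1)$, but two other boundary quantities are left uncontrolled:
\begin{itemize}
\item $\nu\partial_y\omega(\pm1)$ would need $\partial_y^2\omega$, which appears in none of your bounds;
\item $f_2(\pm1)$ is not controlled by $\|f_2\|_{L^2}$ at all, since $f_2$ is not assumed to vanish at $y=\pm1$.
\end{itemize}
In Proposition \ref{space-na-y} both boundary terms disappear only because $\omega(\pm1)=0$. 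The combination $(\nu\partial_y\omega+f_2)(\pm1)$ can be recovered by differentiating in time the constraint $\langle\omega,\gamma\rangle=0$, $\gamma\in\ker(\partial_y^2-\eta^2)$. That, however, produces $\omega(\pm1)$ together with interior terms such as $k\langle y\omega,\gamma\rangle$, and it is not evident that these close at the level $\nu^{1/4}$ from the $L^2L^2$ bounds alone; your sketch does not address this. Your $\varphi$-pairing, by contrast, is fine: every boundary term vanishes because $\varphi=\partial_y\varphi=0$ at $y=\pm1$, and the transport term is paid for by $|k\eta|\|(\partial_y,\eta)\varphi\|_{L^2L^2}^2$.

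The second gap is your Step 3, which you name as the main obstacle but do not carry out. It is exactly where the $\nu^{1/4}$ and $\nu^{3/4}$ weights and the term $|\eta|^{-1}\|\partial_y\omega_{in}\|_{L^2}$ originate. Write $\omega_H=\omega_{Na}+\omega_c$, with $\omega_{Na}$ the Navier-slip solution with data $\omega_{in}$. The corrector $\omega_c$ is not simply an $\omega_I$ with a source. It solves the homogeneous equation with zero data and inhomogeneous flux data $\partial_y\varphi_c(t,\pm1)=-\langle\omega_{Na}(t),\gamma_\pm\rangle$, for suitable $\gamma_\pm\in\ker(\partial_y^2-\eta^2)$. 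Lifting this data into a source brings in its time derivative, which contains $\nu\partial_y\omega_{Na}(t,\pm1)$. Two ingredients are missing:
\begin{itemize}
\item bounds, uniform in $\lambda$, on the boundary-layer solutions of the Orr--Sommerfeld problem with prescribed $\partial_y\varphi(\pm1)$;
\item a time-frequency weighted estimate of the trace $\langle\omega_{Na},\gamma_\pm\rangle$ in terms of $\|\omega_{in}\|_{L^2}+|\eta|^{-1}\|\partial_y\omega_{in}\|_{L^2}$.
\end{itemize}
Note also that only $\partial_y\varphi_{in}(\pm1)=0$ is assumed here, not $\omega_{in}(\pm1)=0$. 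So for $\omega_{Na}$ you get the $L^2$-level first bound of Proposition \ref{space-na-y} by density, but not its higher-order bound. Near $t=0$, $\omega_{Na}$ therefore carries an initial boundary layer, which the trace estimate must absorb.
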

\begin{proof}
See Proposition 10.2 in \cite{chenwz}.
\end{proof}

\section{Nonlinear interactions}

\subsection{Estimates for velocity in terms of energy}
\begin{lemma}\label{lemu1-0}
It holds that
\begin{align*}
&\|\bar{u}^1\|_{H^2}\leq CE_1\min\{\nu t+\nu^{2/3}, 1\},\\
&\left\|\frac{\partial_zu^{1,0}}{\min\{(\nu t)^{1/2}, 1-y^2\}}\right\|_{L^\infty L^\infty}\leq CE_{1,0}.
\end{align*}
\end{lemma}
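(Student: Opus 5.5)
We split $\bar u^1=u^{1,0}+u^{1,1}$ as in \eqref{equ10} and estimate each piece using only the definitions of $E_{1,0}$ and $E_{1,1}$.

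\emph{The piece $u^{1,1}$.} Here the bound is immediate:
\[
\|u^{1,1}(t)\|_{H^2}\le E_{1,1}=\nu^{2/3}\,\nu^{-2/3}E_{1,1}\le \nu^{2/3}E_1 .
\]
Since $\nu^{2/3}\le \min\{\nu t+\nu^{2/3},1\}$ for $\nu\le 1$, this term is acceptable.

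\emph{The piece $u^{1,0}$.} Because $u^{1,0}(0)=0$, we write $u^{1,0}(t)=\int_0^t\partial_s u^{1,0}\,ds$. This gives
\[
\|u^{1,0}(t)\|_{H^2}\le t\|\partial_t u^{1,0}\|_{L^\infty H^2}\le \nu t\,E_{1,0}.
\]
We also have the trivial bound $\|u^{1,0}(t)\|_{H^2}\le \|u^{1,0}\|_{L^\infty H^4}\le E_{1,0}$. Taking the minimum of the two, and adding the $u^{1,1}$ estimate, yields the first inequality.

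\emph{The weighted bound for $\partial_z u^{1,0}$.} Set $g=\partial_z u^{1,0}$. Then $g$ vanishes at $y=\pm1$, since $u^{1,0}|_{y=\pm1}=0$ for all $z$, and $g(0)=0$. We prove two pointwise bounds and combine them.

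\begin{itemize}
\item[(a)] By the mean value theorem in $y$ from the nearest boundary point, $|g(t,y,z)|\le (1-|y|)\|\partial_y g\|_{L^\infty}$. Using $1-|y|\le 1-y^2$ and the embedding $H^4\subset W^{2,\infty}$ in 2D (in $(y,z)$), this gives $|g|\le C(1-y^2)E_{1,0}$.
\item[(b)] For the bound in time, we need $\|g(t)\|_{L^\infty}\lesssim (\nu t)^{1/2}E_{1,0}$. We interpolate between $\|g(t)\|_{L^\infty}\le \|g(t)\|_{H^2}\le E_{1,0}$ and $\|g(t)\|_{H^2}\le \int_0^t\|\partial_s u^{1,0}\|_{H^3}\,ds$. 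By Cauchy--Schwarz,
\[
\int_0^t\|\partial_s u^{1,0}\|_{H^3}\,ds\le t^{1/2}\|\partial_t u^{1,0}\|_{L^2H^3}\le t^{1/2}\nu^{1/2}E_{1,0}.
\]
\end{itemize}
So $|g|\le CE_{1,0}\min\{(\nu t)^{1/2},1\}$.

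Combining (a) and (b), $|g|\le CE_{1,0}\min\{(\nu t)^{1/2},\,1-y^2\}$. When $(\nu t)^{1/2}\ge 1$, the minimum equals $1-y^2$ and (a) applies directly. Dividing by the weight gives the second inequality.

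The only delicate point is (b). The $L^2_tH^3$ component of $E_{1,0}$ with its $\nu^{-1/2}$ weight is exactly what produces the $(\nu t)^{1/2}$ factor. Using the $L^\infty H^2$ norm of $\partial_t u^{1,0}$ instead would only give $\nu t$, which is worse for $\nu t<1$. The embeddings used are the 2D ones $H^2(\Omega_{y,z})\subset L^\infty$ and $H^3\subset W^{1,\infty}$; both are harmless because all quantities are independent of $x$.
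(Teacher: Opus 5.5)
Your proposal is correct and follows essentially the same route as the paper, which defers to Lemmas 11.6--11.7 of \cite{chenwz}. The $u^{1,1}$ piece is absorbed via $\nu^{-2/3}E_{1,1}\le E_1$, $u^{1,0}$ is controlled from $u^{1,0}(0)=0$ by the $L^\infty H^2$ and $L^2H^3$ norms of $\partial_t u^{1,0}$, and the $1-y^2$ weight comes from the Dirichlet condition together with $H^4\subset W^{2,\infty}$.

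One minor correction to your closing remark, which does not affect the proof: since $\nu t\le(\nu t)^{1/2}$ for $\nu t\le 1$, a bound of order $\nu t$ would be stronger, not weaker. The real reason the $L^2H^3$ piece is needed is that $\|\partial_t u^{1,0}\|_{L^\infty H^2}$ only controls $\partial_z u^{1,0}$ in $H^1$, which does not embed into $L^\infty$ in two dimensions.
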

\begin{proof}
See Lemma 11.6-11.7 in \cite{chenwz}.
\end{proof}
\begin{lemma}\label{lemux1}
It holds that for $k\geq 0$,
\begin{align*}
&\|\nabla^k(\partial_x,\partial_z)\partial_xu_{\neq}\|_{L^2}\leq C\left(\|\nabla^k(\partial_x^2+\partial_z^2)u_{\neq}^3\|_{L^2}+\|\nabla^{k+1}(\partial_x,\partial_z)u^2_{\neq}\|_{L^2}\right),\\
&\|\nabla^k(\partial_x,\partial_z)u_{\neq}\|_{L^2} \leq \|\nabla^k\omega_{\neq}^2\|_{L^2}+\|\nabla^{k+1}u^2_{\neq}\|_{L^2}.
\end{align*}
\end{lemma}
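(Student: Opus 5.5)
The plan is to argue mode by mode in Fourier variables $(x,z)$. For each nonzero mode we use only the divergence-free condition and the definition $\omega^2=\partial_z u^1-\partial_x u^3$, and then apply Plancherel in $(x,z)$. Since $\nabla^k$ commutes with these manipulations, with the $y$-derivatives simply carried along, it suffices to treat a fixed frequency $(k,l)$ with $k\neq 0$. There $\partial_x\mapsto {\rm i}k$ and $\partial_z\mapsto {\rm i}l$.

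\emph{Second inequality.} From $\partial_x u^1+\partial_z u^3=-\partial_y u^2$ and $\partial_z u^1-\partial_x u^3=\omega^2$ we can solve for the horizontal components:
\begin{align*}
(\partial_x^2+\partial_z^2)u^1=-\partial_x\partial_y u^2+\partial_z\omega^2,\qquad (\partial_x^2+\partial_z^2)u^3=-\partial_z\partial_y u^2-\partial_x\omega^2 .
\end{align*}
At frequency $(k,l)$ this gives $\eta^2(\hat u^1,\hat u^3)=({\rm i}k\partial_y\hat u^2-{\rm i}l\hat\omega^2,\ {\rm i}l\partial_y\hat u^2+{\rm i}k\hat\omega^2)$, with $\eta^2=k^2+l^2$. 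The vectors $(k,l)$ and $(-l,k)$ are orthogonal, so
\begin{align*}
\eta^2\big(|\hat u^1|^2+|\hat u^3|^2\big)=|\partial_y\hat u^2|^2+|\hat\omega^2|^2 .
\end{align*}
Hence $|(k,l)\hat u^{1,3}|\le|\partial_y\hat u^2|+|\hat\omega^2|$. The component $u^2$ is handled by $|(k,l)\hat u^2|\le|\nabla\hat u^2|$. Applying $\nabla^k$, summing over modes and squaring gives the second inequality, with constant exactly $1$ up to the obvious $\ell^2$ combination.

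\emph{First inequality.} We first show $\|\nabla^k(\partial_x,\partial_z)\partial_x u^j_{\neq}\|$ is bounded by the right-hand side for each $j$. For $j=2$ this is immediate, since $(\partial_x,\partial_z)\partial_x u^2$ is controlled by $\nabla^{k+1}(\partial_x,\partial_z)u^2$. For $j=3$ we use $|k|\le\eta$, so $|(k,l)k\hat u^3|\le\eta^2|\hat u^3|=|\widehat{(\partial_x^2+\partial_z^2)u^3}|$. For $j=1$ we use the divergence condition: $k\hat u^1={\rm i}\partial_y\hat u^2-l\hat u^3$, so
\begin{align*}
|(k,l)|\,|k\hat u^1|\le \eta|\partial_y\hat u^2|+\eta|l|\,|\hat u^3|\le |(k,l)\partial_y\hat u^2|+\eta^2|\hat u^3| .
\end{align*}
The first term is a piece of $\nabla^{k+1}(\partial_x,\partial_z)u^2$ and the second is $|\widehat{(\partial_x^2+\partial_z^2)u^3}|$.

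Everything is pointwise in frequency, so no step is a real obstacle. The only care needed is to keep $\nabla^k$ compatible with the $y$-derivatives, which holds because all identities used are linear with constant coefficients in $(x,z)$. The factor $\eta|l|\le\eta^2$ is the single place where the nonzero-mode structure matters, and it is trivial.
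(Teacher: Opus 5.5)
Your proof is correct and follows essentially the same argument as the result the paper only cites (Lemma 11.5 of \cite{chenwz}): the divergence-free condition and the definition of $\omega^2$ give, frequency by frequency in $(x,z)$, the identity $\|\nabla^k(\partial_x,\partial_z)u_{\neq}\|_{L^2}^2=\|\nabla^k\omega^2_{\neq}\|_{L^2}^2+\|\nabla^{k+1}u^2_{\neq}\|_{L^2}^2$, and the first bound follows from $\partial_xu^1=-\partial_yu^2-\partial_zu^3$ together with the fact that each horizontal frequency is at most $\eta$. To get the constant exactly $1$ in the second inequality you must sum this squared identity rather than add your separate triangle-inequality bounds, which would leave a factor $\sqrt{2}$ on the $u^2$ term; also, the step $\eta|l|\le\eta^2$ holds for every mode, so the restriction to nonzero modes plays no role there.
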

\begin{proof}
See Lemma 11.5 in \cite{chenwz}.
\end{proof}
\begin{lemma}\label{lemu0-2}
It holds that for $k\in \{2,3\}$,
\begin{align*}
&\|\bar{u}^2\|_{H^2}+\|\nabla \bar{u}^2\|_{H^1}+\|\bar{u}^3\|_{H^1}+\|\partial_z\bar{u}^3\|_{H^1}\leq CE_2,\\
&\|\bar{u}^k\|_{L^\infty L^\infty}+\nu^{\frac{1}{2}}\|\nabla \bar{u}^k\|_{L^2L^\infty}\leq CE_2,\\
&\|\nabla (\overline{u^k} f)\|_{L^2}+\| \overline{u^k} \nabla f\|_{L^2} \leq C E_2 \|f\|_{H^1}.
\end{align*}
\end{lemma}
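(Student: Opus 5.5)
The plan is to read each bound off the definition of $E_2$, using the structure of $(\bar u^2,\bar u^3)$: both functions of $(t,y,z)$ on $[-1,1]\times\mathbb{T}$, vanishing at $y=\pm1$, and linked by $\partial_y\bar u^2+\partial_z\bar u^3=0$ (the $x$-average of $\operatorname{div} u=0$).

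\textbf{First line.} Since $\bar u^2\in H^1_0$, elliptic regularity for the Dirichlet Laplacian on $[-1,1]\times\mathbb{T}$ gives
\begin{equation*}
\|\bar u^2\|_{H^2}\le C\|\Delta\bar u^2\|_{L^2}\le CE_2 .
\end{equation*}
This also bounds $\|\nabla\bar u^2\|_{H^1}$. For $\bar u^3$, Poincar\'e's inequality in $y$ gives $\|\bar u^3\|_{H^1}\le C\|\nabla\bar u^3\|_{L^2}\le CE_2$. The divergence relation gives $\partial_z\bar u^3=-\partial_y\bar u^2$, so
\begin{equation*}
\|\partial_z\bar u^3\|_{H^1}=\|\partial_y\bar u^2\|_{H^1}\le CE_2 .
\end{equation*}
Note that the weighted norm of $\Delta\bar u^3$ in $E_2$ is not needed for this line.

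\textbf{Second line, $k=2$.} This is immediate from the two-dimensional embedding $H^2\hookrightarrow L^\infty$ and the bound $\nu^{\frac12}\|\Delta\bar u^2\|_{L^2L^2}\le E_2$, since $\|\nabla\bar u^2\|_{L^\infty}\le C\|\bar u^2\|_{H^3}$ is not needed. Concretely, I use the Agmon/Gagliardo--Nirenberg estimate
\begin{equation*}
\|\nabla\bar u^2\|_{L^\infty}\le C\|\nabla\bar u^2\|_{H^1}^{1/2}\|\nabla\bar u^2\|_{H^2}^{1/2},
\end{equation*}
or more simply $\|\nabla\bar u^2\|_{L^\infty}\lesssim\|\nabla\bar u^2\|_{H^{1+}}$. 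If $E_2$ does not directly control these, I would instead use the dissipation in the $\bar u^2$ equation. I expect the paper intends $\nu^{1/2}\|\nabla\bar u^2\|_{L^2H^2}\lesssim E_2$, obtained from elliptic estimates for the Stokes-type system combined with $\nu^{-1/2}\|\partial_t\nabla\bar u^2\|_{L^2L^2}$.

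\textbf{Second line, $k=3$ (the main obstacle).} The embedding $H^1\not\hookrightarrow L^\infty$ fails in 2D, so I would use anisotropic embeddings instead:
\begin{equation*}
\|f\|_{L^\infty}\lesssim\|f\|_{L^2}^{1/4}\|\partial_y f\|_{L^2}^{1/4}\|\partial_z f\|_{L^2}^{1/4}\|\partial_y\partial_z f\|_{L^2}^{1/4}+\|f\|_{L^2}^{1/2}\|\partial_y f\|_{L^2}^{1/2}.
\end{equation*}
All these quantities are bounded by $\|\bar u^3\|_{H^1}+\|\partial_z\bar u^3\|_{H^1}\lesssim E_2$, which gives $\|\bar u^3\|_{L^\infty L^\infty}\le CE_2$.

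For $\nu^{\frac12}\|\nabla\bar u^3\|_{L^2L^\infty}$, the terms $\partial_z\bar u^3=-\partial_y\bar u^2$ are already controlled. For $\partial_y\bar u^3$, the same anisotropic inequality requires $\partial_y^2\bar u^3$. I would control this with the weighted quantity $\|\min\{(\nu^{2/3}+\nu t)^{1/2},1-y^2\}\Delta\bar u^3\|$ together with the equation for $\bar u^3$, whose pressure and time-derivative terms are controlled by $E_2$. The degeneracy of the weight near the boundary is exactly where I expect the difficulty. There I would split $y$ near $\pm1$ and use the boundary condition plus the $L^2_t$ bound from $\nu^{1/2}\|\nabla\bar u^3\|_{L^2L^2}$ and $\nu^{-1/2}\|\partial_t\bar u^3\|_{L^2L^2}$ to absorb the loss.

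\textbf{Third line.} Write
\begin{equation*}
\nabla(\bar u^k f)=\nabla\bar u^k\, f+\bar u^k\nabla f .
\end{equation*}
The second term is bounded by $\|\bar u^k\|_{L^\infty}\|f\|_{H^1}$. For the first term, the $z$-component $\partial_z\bar u^k$ lies in $H^1$ by the first line, and $\partial_y\bar u^2\in H^1$ as well. For $\partial_y\bar u^3$, I would use the anisotropic H\"older estimate
\begin{equation*}
\|gf\|_{L^2}\lesssim\|g\|_{L^2_yL^\infty_z}\|f\|_{L^\infty_yL^2_z},
\end{equation*}
with $\|g\|_{L^2_yL^\infty_z}\lesssim\|g\|_{L^2}+\|\partial_z g\|_{L^2}$ and $\|f\|_{L^\infty_yL^2_z}\lesssim\|f\|_{H^1}$. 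Here $\partial_z\partial_y\bar u^3=-\partial_y^2\bar u^2$ is controlled by $\|\bar u^2\|_{H^2}$. This gives $\|\nabla(\bar u^k f)\|_{L^2}\le CE_2\|f\|_{H^1}$.
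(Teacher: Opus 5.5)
Your first and third lines, and your bound on $\|\bar u^k\|_{L^\infty L^\infty}$ via the anisotropic embedding, are fine. The gap is in $\nu^{\frac12}\|\nabla\bar u^k\|_{L^2L^\infty}$. It comes from overlooking the dissipative part of the $Y_0$ norms: since $\|f\|_{Y_0}=\|f\|_{L^\infty L^2}+\nu^{\frac12}\|\nabla f\|_{L^2L^2}$, the functional $E_2$ already contains
\[
\nu^{\frac12}\|\nabla\Delta\bar u^2\|_{L^2L^2}\le\|\Delta\bar u^2\|_{Y_0},\qquad \nu^{\frac12}\|\nabla^2\bar u^3\|_{L^2L^2}\le\|\nabla\bar u^3\|_{Y_0}.
\]

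\textbf{The case $k=2$.} Your opening claim is false as stated. The bound $\nu^{\frac12}\|\Delta\bar u^2\|_{L^2L^2}$ only puts $\nabla\bar u^2$ in $L^2_tH^1$, and this does not embed in $L^2_tL^\infty$ in two dimensions, so third-order information is needed after all. That information is available directly, with no Stokes or $\partial_t$ argument. Dirichlet elliptic regularity gives $\|\bar u^2\|_{H^3}\le C(\|\Delta\bar u^2\|_{L^2}+\|\nabla\Delta\bar u^2\|_{L^2})$, hence $\nu^{\frac12}\|\nabla\bar u^2\|_{L^2L^\infty}\le C\nu^{\frac12}\|\bar u^2\|_{L^2H^3}\le CE_2$.

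\textbf{The case $k=3$.} The step you single out as the main obstacle is never carried out, and the route you sketch cannot prove the lemma as stated. The lemma is an inequality purely in terms of $E_2$. The equation for $\bar u^3$, however, brings in $\overline{u\cdot\nabla u^3}$ and $\partial_z\bar p$, which $E_2$ does not control. Moreover, the weight $\min\{(\nu^{\frac23}+\nu t)^{\frac12},1-y^2\}$ degenerates at $y=\pm1$ and is as small as $\nu^{\frac13}$ in the interior, so trading it away for an unweighted $L^2_t$ bound on $\partial_y^2\bar u^3$ is lossy.

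\textbf{The fix.} No weighted norm is needed. Apply your anisotropic inequality $\|g\|_{L^\infty}\le C(\|g\|_{L^2}+\|\partial_yg\|_{L^2}+\|\partial_zg\|_{L^2}+\|\partial_y\partial_zg\|_{L^2})$ to $g=\partial_y\bar u^3$. Using $\partial_z\partial_y\bar u^3=-\partial_y^2\bar u^2$ and $\partial_y\partial_z\partial_y\bar u^3=-\partial_y^3\bar u^2$, you get
\[
\nu^{\frac12}\|\partial_y\bar u^3\|_{L^2L^\infty}\le C\nu^{\frac12}\big(\|\nabla\bar u^3\|_{L^2L^2}+\|\nabla^2\bar u^3\|_{L^2L^2}+\|\bar u^2\|_{L^2H^3}\big)\le CE_2 .
\]
The other component, $\partial_z\bar u^3=-\partial_y\bar u^2$, is covered by the $k=2$ case.

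With this correction, the whole lemma follows from the definition of $E_2$, Dirichlet elliptic regularity, $\partial_y\bar u^2+\partial_z\bar u^3=0$, and anisotropic Sobolev embeddings, with no use of the equations. The paper itself simply defers to Lemma 11.8 of Chen--Wei--Zhang for this result.
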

\begin{proof}
See Lemma 11.8 in \cite{chenwz}.
\end{proof}
\subsection{Interaction between zero-modes}
\begin{lemma}\label{uthe-00}
It holds that
\begin{align*}
\|\bar{u}\cdot\nabla\bar{\theta}\|_{L^2L^2}+\|\partial_z(\bar{u}\cdot\nabla\bar{\theta})\|_{L^2L^2}
\leq C\nu^{-\frac{1}{2}}E_2E_{4,0}.
\end{align*}
\end{lemma}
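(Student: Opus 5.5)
Since $\bar u\cdot\nabla\bar\theta$ is a zero mode, only $\bar u^2$ and $\bar u^3$ enter:
\[
\bar u\cdot\nabla\bar\theta=\bar u^2\partial_y\bar\theta+\bar u^3\partial_z\bar\theta .
\]
We use the $L^\infty$ bounds of Lemma \ref{lemu0-2} together with the dissipative parts of $E_{4,0}$. Recall $\|f\|_{Y_0}\ge \nu^{\frac12}\|\nabla f\|_{L^2L^2}$, so $E_{4,0}$ controls $\nu^{\frac12}\|\nabla\bar\theta\|_{L^2L^2}$, $\nu^{\frac12}\|\nabla^2\bar\theta\|_{L^2L^2}$ and $\nu^{\frac12}\|\nabla\partial_z\nabla\bar\theta\|_{L^2L^2}$. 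It also controls $\|\nabla\bar\theta\|_{L^\infty L^2}$ and $\|\partial_z\nabla\bar\theta\|_{L^\infty L^2}$.

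\textbf{Step 1: the undifferentiated term.} By H\"older,
\[
\|\bar u^k\partial_j\bar\theta\|_{L^2L^2}\le \|\bar u^k\|_{L^\infty L^\infty}\|\nabla\bar\theta\|_{L^2L^2}\le CE_2\,\nu^{-\frac12}E_{4,0},
\]
where the last step uses Lemma \ref{lemu0-2} and $\nu^{\frac12}\|\nabla\bar\theta\|_{L^2L^2}\le \|\nabla\bar\theta\|_{Y_0}$.

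\textbf{Step 2: the $\partial_z$ derivative.} Expanding,
\[
\partial_z(\bar u\cdot\nabla\bar\theta)=\bar u^2\partial_y\partial_z\bar\theta+\bar u^3\partial_z^2\bar\theta+\partial_z\bar u^2\,\partial_y\bar\theta+\partial_z\bar u^3\,\partial_z\bar\theta .
\]
The first two terms are bounded by $\|\bar u^k\|_{L^\infty L^\infty}\|\partial_z\nabla\bar\theta\|_{L^2L^2}\le CE_2\nu^{-\frac12}\|\partial_z\nabla\bar\theta\|_{Y_0}$.

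For the last two terms, take $\nabla\bar u^k$ in $L^2_tL^\infty$ and $\nabla\bar\theta$ in $L^\infty_tL^2$. Lemma \ref{lemu0-2} gives $\nu^{\frac12}\|\nabla\bar u^k\|_{L^2L^\infty}\le CE_2$, hence
\[
\|\partial_z\bar u^k\,\nabla\bar\theta\|_{L^2L^2}\le \|\nabla\bar u^k\|_{L^2L^\infty}\|\nabla\bar\theta\|_{L^\infty L^2}\le C\nu^{-\frac12}E_2E_{4,0}.
\]

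As an alternative for these terms, one can use the third inequality of Lemma \ref{lemu0-2}, applied pointwise in time with $f=\nabla\bar\theta$:
\[
\|\nabla(\bar u^k\nabla\bar\theta)\|_{L^2}\le CE_2\|\nabla\bar\theta\|_{H^1}.
\]
Integrating in time then requires $\|\nabla\bar\theta\|_{L^2H^1}\le C\nu^{-\frac12}E_{4,0}$. This would need the full $\nabla^2\bar\theta$ in $L^2L^2$. That quantity is controlled only by $\nu^{-\frac12}\|\nabla\bar\theta\|_{Y_0}$, which is part of $E_{4,0}$, so this route also closes.

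\textbf{Main obstacle.} The main point to check is that the $L^2_tL^\infty$ bound on $\nabla\bar u^3$ in Lemma \ref{lemu0-2} indeed holds for $k=3$. It does, since the lemma is stated for $k\in\{2,3\}$. Once this is confirmed, summing the four pieces from Step 2 with the bound from Step 1 gives the claimed estimate $C\nu^{-\frac12}E_2E_{4,0}$.
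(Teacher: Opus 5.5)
Your proof is correct and follows the paper's argument essentially line by line: $\|\bar u^2\|_{L^\infty}+\|\bar u^3\|_{L^\infty}$ against $\nabla\bar\theta$ and $\partial_z\nabla\bar\theta$ in $L^2L^2$, and $\nabla\bar u^k$ in $L^2L^\infty$ against $\nabla\bar\theta$ in $L^\infty L^2$, all from Lemma \ref{lemu0-2} and the definition of $E_{4,0}$. One minor point: $\nu^{\frac12}\|\nabla\bar\theta\|_{L^2L^2}\le E_{4,0}$ comes most directly from $\|\bar\theta\|_{Y_0}$ rather than $\|\nabla\bar\theta\|_{Y_0}$, although the latter also works once you apply a Poincaré-type inequality.
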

\begin{proof}
Note that $\bar{u}\cdot\nabla \bar{\theta}=\bar{u}^2\partial_y\bar{\theta}+\bar{u}^3\partial_z\bar{\theta}$, it follows from the Lemma \ref{lemu0-2} that
\begin{align*}
\|\bar{u}\cdot\nabla \bar{\theta}\|_{L^2}\leq
\|\bar{u}^2\partial_y\bar{\theta}\|_{L^2}+\|\bar{u}^3\partial_z\bar{\theta}\|_{L^2}
\leq \left(\|\bar{u}^2\|_{L^\infty}+\|\bar{u}^3\|_{L^\infty}\right)\|\nabla \bar{\theta}\|_{L^2} \leq CE_2\|\nabla\bar{\theta}\|_{L^2} ,
\end{align*}
and
\begin{align*}
\|\partial_z(\bar{u}\cdot\nabla\bar{\theta})\|_{L^2}\leq&  \|\bar{u}^2\partial_z\partial_y\bar{\theta}\|_{L^2}+\|\partial_z\bar{u}^2\partial_y\bar{\theta}\|_{L^2}
+\|\bar{u}^3\partial_z^2\bar{\theta}\|_{L^2} +\|\partial_z\bar{u}^3\partial_z\bar{\theta}\|_{L^2} \\
\leq & C\left(\|\bar{u}^2\|_{L^\infty}+\|\bar{u}^3\|_{L^\infty}\right)\|\nabla\partial_z \bar{\theta}\|_{L^2}
+\left(\|\nabla\bar{u}^2\|_{L^\infty}+\|\nabla\bar{u}^3\|_{L^\infty}\right)\|\nabla \bar{\theta}\|_{L^2}\\
\leq& CE_2\|\nabla\partial_z\bar{\theta}\|_{L^2}+\left(\|\nabla\bar{u}^2\|_{L^\infty}+\|\nabla\bar{u}^3\|_{L^\infty}\right)E_{4,0},
\end{align*}
from which and Lemma \ref{lemu0-2}, one has
\begin{align*}
&\|\bar{u}\cdot\nabla\bar{\theta}\|_{L^2L^2}+\|\partial_z(\bar{u}\cdot\nabla\bar{\theta})\|_{L^2L^2}\\
\leq& CE_2(\|\nabla \bar{\theta}\|_{L^2L^2}+\|\nabla\partial_z\bar{\theta}\|_{L^2L^2})+CE_{4,0}
\left(\|\nabla\bar{u}^2\|_{L^2L^\infty}+\|\nabla\bar{u}^3\|_{L^2L^\infty}\right)\\
\leq& C\nu^{-\frac{1}{2}}E_2E_{4,0},
\end{align*}
which yields the conclusion.
\end{proof}
\begin{lemma}\label{lemuu-00}
It holds that
\begin{align*}
&\|\partial_t\nabla(\bar{u}^2\partial_yu^{1,0},\bar{u}^3\partial_zu^{1,0})\|_{L^2L^2}^2\leq C\nu E_{1,0}^2E_2^2,\\
&\|\Delta(\bar{u}^2\partial_y\bar{u}^{1,0},\bar{u}^3\partial_zu^{1,0})\|_{L^\infty L^2}^2\leq CE_2^2E_{1,0}^2,\\
&\|\nabla(\bar{u}^2\partial_yu^{1,1}, \bar{u}^3\partial_zu^{1,1})\|_{L^2L^2}^2\leq C\nu^{-1}E_2^2E_{1,1}^2.
\end{align*}
\end{lemma}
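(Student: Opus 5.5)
The three bounds follow from the product rule, Hölder, and the Sobolev controls for $u^{1,0},u^{1,1}$ contained in $E_{1,0}, E_{1,1}$, combined with Lemma \ref{lemu0-2} for $\bar u^2,\bar u^3$. In $\Omega$ (three-dimensional, bounded in $y$) we use $H^2\hookrightarrow L^\infty$ and $H^1\hookrightarrow L^4$. Recall
\[
E_{1,0}=\|u^{1,0}\|_{L^\infty H^4}+\nu^{-1}\|\partial_tu^{1,0}\|_{L^\infty H^2}+\nu^{-\frac{1}{2}}\|\partial_tu^{1,0}\|_{L^2H^3}.
\]
Hence $\nabla u^{1,0}\in L^\infty_t H^3\subset L^\infty_tW^{2,\infty}$, $\partial_t\nabla u^{1,0}\in L^2_tH^2\subset L^2_tL^\infty$ with norm $\le \nu^{1/2}E_{1,0}$, and $\partial_t\nabla^2 u^{1,0}\in L^\infty_t H^1$ with norm $\le \nu E_{1,0}$. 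Likewise $E_2$ gives $\bar u^2\in L^\infty_t H^2$, $\bar u^3\in L^\infty_tH^1$ with $\partial_z\bar u^3\in L^\infty_t H^1$, the $L^\infty_{t,x}$ bounds of Lemma \ref{lemu0-2}, and $\partial_t\nabla\bar u^2,\ \partial_t\bar u^3\in L^2L^2$ with norm $\le \nu^{1/2}E_2$.

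For the \textbf{second inequality}, expand $\Delta(\bar u^2\partial_y u^{1,0})=\Delta\bar u^2\,\partial_yu^{1,0}+2\nabla\bar u^2\cdot\nabla\partial_y u^{1,0}+\bar u^2\Delta\partial_yu^{1,0}$. The three terms are bounded by $\|\Delta\bar u^2\|_{L^2}\|\nabla u^{1,0}\|_{L^\infty}$, $\|\nabla\bar u^2\|_{L^2}\|\nabla^2u^{1,0}\|_{L^\infty}$ and $\|\bar u^2\|_{L^\infty}\|\nabla^3u^{1,0}\|_{L^2}$, each $\lesssim E_2E_{1,0}$ uniformly in time. For $\bar u^3\partial_zu^{1,0}$ the same expansion works except for the term $\Delta\bar u^3\,\partial_zu^{1,0}$, since $E_2$ only controls $\Delta\bar u^3$ with the weight $\min\{(\nu^{2/3}+\nu t)^{1/2},1-y^2\}$. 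Here I invoke the second estimate of Lemma \ref{lemu1-0}: $|\partial_zu^{1,0}|\le CE_{1,0}\min\{(\nu t)^{1/2},1-y^2\}$. This gives
\[
\|\Delta\bar u^3\,\partial_z u^{1,0}\|_{L^2}\le CE_{1,0}\bigl\|\min\{(\nu^{2/3}+\nu t)^{1/2},1-y^2\}\Delta\bar u^3\bigr\|_{L^2}\le CE_{1,0}E_2 .
\]
The remaining pieces use $\|\nabla\bar u^3\|_{L^2}$ and $\|\bar u^3\|_{L^\infty}$, both controlled by Lemma \ref{lemu0-2}.

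For the \textbf{first inequality}, $\partial_t\nabla(\bar u^2\partial_yu^{1,0})$ splits into $\partial_t\bar u^2$-terms and $\partial_t u^{1,0}$-terms.
\begin{itemize}
\item $\partial_t\nabla\bar u^2\,\partial_yu^{1,0}$ and $\partial_t\bar u^2\,\nabla\partial_yu^{1,0}$ are bounded by $\|\partial_t\nabla\bar u^2\|_{L^2L^2}\|\nabla u^{1,0}\|_{L^\infty W^{1,\infty}}$, using Poincaré since $\bar u^2$ vanishes at $y=\pm1$. This gives $\nu^{1/2}E_2E_{1,0}$.
\item $\nabla\bar u^2\,\partial_t\partial_yu^{1,0}$ and $\bar u^2\,\partial_t\nabla\partial_yu^{1,0}$ are bounded by $\|\bar u^2\|_{L^\infty H^2}\|\partial_tu^{1,0}\|_{L^2H^3}\lesssim \nu^{1/2}E_2E_{1,0}$.
\end{itemize}
The $\bar u^3$ terms go the same way, using $\|\partial_t\bar u^3\|_{L^2L^2}\le\nu^{1/2}E_2$ together with $\nabla\partial_z u^{1,0}\in L^\infty$. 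The delicate term is $\partial_t\nabla\bar u^3\,\partial_zu^{1,0}$, where again only the weighted norm of $\partial_t\nabla\bar u^3$ is available. The pointwise bound from Lemma \ref{lemu1-0} on $\partial_zu^{1,0}$ absorbs the weight exactly, giving
\[
\|\partial_t\nabla\bar u^3\,\partial_zu^{1,0}\|_{L^2L^2}\le CE_{1,0}\bigl\|\min\{\cdots\}\partial_t\nabla\bar u^3\bigr\|_{L^2L^2}\le C\nu^{1/2}E_{1,0}E_2 .
\]
Squaring yields the claim.

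For the \textbf{third inequality}, $\nabla(\bar u^2\partial_yu^{1,1})=\nabla\bar u^2\,\partial_yu^{1,1}+\bar u^2\nabla\partial_yu^{1,1}$. We bound it by $\|\nabla\bar u^2\|_{L^\infty_tL^4}\|\nabla u^{1,1}\|_{L^2_tL^4}+\|\bar u^2\|_{L^\infty}\|\nabla^2u^{1,1}\|_{L^2L^2}$. Using $\|\nabla u^{1,1}\|_{L^2H^2}\le\nu^{-1/2}E_{1,1}$ and $\bar u^2\in L^\infty H^2$, this is $\lesssim\nu^{-1/2}E_2E_{1,1}$. For $\bar u^3\partial_z u^{1,1}$ we use $\|\nabla\bar u^3\|_{L^2L^\infty}\le\nu^{-1/2}E_2$ from Lemma \ref{lemu0-2} with $\|\nabla u^{1,1}\|_{L^\infty L^2}\le E_{1,1}$, plus $\|\bar u^3\|_{L^\infty}\|\nabla^2u^{1,1}\|_{L^2L^2}$. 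Alternatively, the third line of Lemma \ref{lemu0-2} with $f=\partial_zu^{1,1}$ gives the bound directly.

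The main obstacle is the terms involving $\Delta\bar u^3$ and $\partial_t\nabla\bar u^3$, which are controlled only with the boundary/time weight. They are handled precisely by the vanishing of $\partial_zu^{1,0}$ quantified in Lemma \ref{lemu1-0}. One should check that $(\nu t)^{1/2}\le(\nu^{2/3}+\nu t)^{1/2}$, so the weights match.
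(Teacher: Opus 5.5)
Your proposal is correct and uses the same product-rule, H\"older and Sobolev strategy as the paper. For the first two bounds the paper only cites Lemmas 11.11--11.12 of \cite{chenwz}; your use of the weighted pointwise bound on $\partial_z u^{1,0}$ from Lemma \ref{lemu1-0}, which absorbs the weight that $E_2$ puts on $\Delta\bar u^3$ and $\partial_t\nabla\bar u^3$, proves them correctly. For the third bound the paper puts $\Delta\bar u^2,\Delta\bar u^3$ in $L^2_tL^2$ and $u^{1,1}$ in $L^\infty_tH^2$, whereas you partly use the dissipation $\nu^{\frac12}\|\nabla u^{1,1}\|_{L^2H^2}$; this difference is inessential. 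Two of your preliminary claims are wrong but never used: $H^3\not\hookrightarrow W^{2,\infty}$ here, so you only have $\nabla u^{1,0}\in L^\infty_tW^{1,\infty}$ (which is all you use), and $E_{1,0}$ gives $\partial_t\nabla^2u^{1,0}\in L^\infty_tL^2$, not $L^\infty_tH^1$.
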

\begin{proof}
The first estimate can be founded in Lemma 11.12 of \cite{chenwz}.
The second estimate is a consequence of Lemma 11.11 of \cite{chenwz}. We focus on the last estimate.

 In fact, one has
\begin{align*}
&\|\bar{u}^2\partial_y\nabla u^{1,1}\|_{L^2}+\|\bar{u}^3\nabla\partial_zu^{1,1}\|_{L^2}+\|\nabla\bar{u}^2\partial_yu^{1,1}\|_{L^2}
+\|\nabla\bar{u}^3\partial_zu^{1,1}\|_{L^2}\\
\leq& \left(\|\bar{u}^2\|_{L^\infty}+\|\bar{u}^3\|_{L^\infty} \right)\|u^{1,1}\|_{H^2}
+\|(\nabla\bar{u}^2, \nabla\bar{u}^3)\|_{L^\infty_yL^2_z}\|(\partial_yu^{1,1}, \partial_zu^{1,1})\|_{L^2_yL^\infty_z}\\
\leq& C\left(\|\Delta \bar{u}^2\|_{L^2}+\|\Delta \bar{u}^3\|_{L^2} \right)\|u^{1,1}\|_{H^2}\leq C\left(\|\Delta \bar{u}^2\|_{L^2}+\|\Delta \bar{u}^3\|_{L^2} \right)E_{1,1},
\end{align*}
which implies that
\begin{align*}
\|\nabla(\bar{u}^2\partial_yu^{1,1}, \bar{u}^3\partial_zu^{1,1})\|_{L^2L^2}\leq
C\left(\|\Delta \bar{u}^2\|_{L^2L^2}+\|\Delta \bar{u}^3\|_{L^2L^2}\right)E_{1,1}\leq C\nu^{-\frac{1}{2}}E_2E_{1,1}.
\end{align*}

The proof is completed.
  \end{proof}
\subsection{Interaction between nonzero-modes}
The following lemma gives the interaction between nonzero-modes of velocity and temperature.
\begin{lemma}\label{lemuthe-11}
It holds that
\begin{align*}
\|{\rm e}^{4\epsilon \nu^{\frac{1}{3}}t}(\partial_z,1)(u_{\neq}\cdot \nabla \theta_{\neq})\|_{L^2L^2}+\|{\rm e}^{4\epsilon \nu^{\frac{1}{3}}t}(\partial_x^2,1)(u_{\neq} \theta_{\neq})\|_{L^2L^2}\leq C\nu^{-\frac{1}{2}}E_{3,0}E_{4,1}.
\end{align*}
\end{lemma}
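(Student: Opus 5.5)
The plan is to put all time weight and all ``$L^2$ in time'' on the temperature factor, since $E_{4,1}$ contains $\nu^{\frac16}\|{\rm e}^{3\epsilon\nu^{\frac13}t}\partial_x^2\theta_{\neq}\|_{L^2L^2}$ and $\nu^{\frac12}\|{\rm e}^{3\epsilon\nu^{\frac13}t}\nabla\partial_x^2\theta_{\neq}\|_{L^2L^2}$ (the $X_3$ norm, using $3\epsilon$ as the exponent, with $a=3\epsilon$), and similarly with $\partial_z^2$ and weight ${\rm e}^{2\epsilon\nu^{\frac13}t}$ (the $X_2$ norm). The velocity factor is taken in $L^\infty_t$ with weight ${\rm e}^{2\epsilon\nu^{\frac13}t}$ from $E_{3,0}$. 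Since ${\rm e}^{4\epsilon\nu^{\frac13}t}\le {\rm e}^{2\epsilon\nu^{\frac13}t}{\rm e}^{2\epsilon\nu^{\frac13}t}$ (and $\le {\rm e}^{2\epsilon\nu^{\frac13}t}{\rm e}^{3\epsilon\nu^{\frac13}t}$ is even better), the weights split. Thus each term will be bounded by
\[
\|{\rm e}^{2\epsilon\nu^{\frac13}t}u_{\neq}\text{-quantity}\|_{L^\infty L^\infty\text{ or }L^\infty L^p}\cdot\|{\rm e}^{2\epsilon\nu^{\frac13}t}\theta_{\neq}\text{-quantity}\|_{L^2L^2\text{ or }L^2L^q},
\]
and the $\nu^{-\frac12}$ comes from the worst dissipation factor $\nu^{\frac12}\|\nabla\cdot\|_{L^2L^2}$ in $X_a$.

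For the velocity side, Lemma \ref{lemux1} with $k=0$ gives $\|(\partial_x,\partial_z)\partial_x u_{\neq}\|_{L^2}\lesssim E_{3,0}$. Since nonzero $x$-modes satisfy $\|f_{\neq}\|\le\|\partial_xf_{\neq}\|$, this controls $u_{\neq}$, $\partial_xu_{\neq}$ and $\partial_zu_{\neq}$, and also $\partial_z\partial_x u_{\neq}$ and $\partial_x^2u_{\neq}$, in $L^\infty L^2$ with weight ${\rm e}^{2\epsilon\nu^{\frac13}t}$. The $y$-derivatives of $u^2_{\neq}$ are controlled by $\|(\partial_x,\partial_z)\nabla u^2_{\neq}\|_{Y_2}$. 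For the other components, $\partial_y u^1$ and $\partial_yu^3$ should be avoided; the approach is to write $u_{\neq}\cdot\nabla\theta_{\neq}=\nabla\cdot(u_{\neq}\theta_{\neq})$, or to use anisotropic product estimates in which only $x,z$ derivatives land on $u^1,u^3$. Concretely, I would use the anisotropic inequality
\[
\|fg\|_{L^2}\lesssim \|f\|_{L^2_yL^\infty_{x,z}}\|g\|_{L^\infty_yL^2_{x,z}},
\]
with
\[
\|f\|_{L^2_yL^\infty_{xz}}\lesssim\|(1,\partial_x,\partial_z)^2f\|_{L^2}
\]
(mixed $x,z$ Sobolev, with nonzero-mode reduction $\|f\|\le\|\partial_xf\|$) and
\[
\|g\|_{L^\infty_yL^2_{xz}}\lesssim\|g\|_{L^2}^{1/2}\|\partial_yg\|_{L^2}^{1/2}+\|g\|_{L^2}.
\]
The $(x,z)$-Sobolev derivatives go on $u_{\neq}$, since these are exactly what $E_{3,0}$ controls: $(\partial_x^2,\partial_x\partial_z,\partial_z^2)$ acting on $u^3_{\neq}$ via $\|(\partial_x^2,\partial_z^2)u^3_{\neq}\|_{Y_2}$, and on $u^1_{\neq}$ via $\partial_xu^1=-\partial_yu^2-\partial_zu^3$. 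The $y$-regularity goes on $\theta_{\neq}$, where $\nabla\partial_x^2\theta_{\neq}$ and $\nabla\partial_z^2\theta_{\neq}$ lie in $L^2L^2$ with weight $\nu^{\frac12}$.

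The terms would be handled in this order. First, $u_{\neq}\theta_{\neq}$ and $\partial_x^2(u_{\neq}\theta_{\neq})$: expand via Leibniz, placing $L^2_yL^\infty_{xz}$ on whichever factor carries fewer $x$-derivatives. For the term $u\,\partial_x^2\theta$, put $u$ in $L^\infty$ via the anisotropic bound and $\partial_x^2\theta$ in $L^2L^2$, which gives $\nu^{-\frac16}$, better than $\nu^{-\frac12}$. Second, $u_{\neq}\cdot\nabla\theta_{\neq}$ and $\partial_z(u_{\neq}\cdot\nabla\theta_{\neq})$: the dangerous pieces are $u^2\partial_y\theta$ and $u^2\partial_z\partial_y\theta$, which are bounded using $\nu^{\frac12}\|\nabla\partial_z^2\theta_{\neq}\|_{L^2L^2}$ together with $\|\nabla\partial_x^2\theta_{\neq}\|$ (to dominate the mixed $(x,z)$ Sobolev norm of $\partial_y\theta$). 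This produces exactly $\nu^{-\frac12}E_{3,0}E_{4,1}$.

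The main obstacle is the bookkeeping of the anisotropic derivative counts. $E_{4,1}$ only gives pure $\partial_x^2$ and $\partial_z^2$, and these must control the mixed terms $\partial_x\partial_z\nabla\theta$ (by interpolation, $\|\partial_x\partial_z g\|\le\|\partial_x^2g\|^{1/2}\|\partial_z^2g\|^{1/2}$ via Fourier in $x,z$, which is valid since the $x,z$ directions are periodic). The key point is to never place a $y$-derivative on $u^1_{\neq}$ or $u^3_{\neq}$, which $E_{3,0}$ does not control (only $E_{3,1}$ does, with a bad $\nu^{\frac13}$).
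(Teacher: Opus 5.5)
Your overall strategy matches the paper's. You split ${\rm e}^{4\epsilon\nu^{\frac13}t}={\rm e}^{2\epsilon\nu^{\frac13}t}\cdot{\rm e}^{2\epsilon\nu^{\frac13}t}$, put the velocity in $L^\infty_t$ via $E_{3,0}$ and Lemma \ref{lemux1}, put $(\partial_x^2+\partial_z^2)\nabla\theta_{\neq}$ in $L^2_tL^2$ to get $\nu^{-\frac12}E_{4,1}$ (handling mixed transverse derivatives by interpolation), and close each Leibniz term by anisotropic H\"older.

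The gap is the single pairing you commit to, $\|fg\|_{L^2}\lesssim\|f\|_{L^2_yL^\infty_{x,z}}\|g\|_{L^\infty_yL^2_{x,z}}$. At the $L^\infty_t$ level, $E_{3,0}$ gives only $\partial_x^2u^1_{\neq}$ and $\partial_x\partial_zu^1_{\neq}$, and for $u^3_{\neq}$ only second transverse derivatives. It gives no $\partial_y$ and no third transverse derivative of these components. The pairing therefore fails in \emph{both} orientations for the following terms:
\begin{itemize}
\item $\partial_x^2u_{\neq}\,\theta_{\neq}$ and $\partial_xu_{\neq}\,\partial_x\theta_{\neq}$, from $\partial_x^2(u_{\neq}\theta_{\neq})$;
\item $\partial_zu^1_{\neq}\,\partial_x\theta_{\neq}$ and $\partial_zu^3_{\neq}\,\partial_z\theta_{\neq}$, from $\partial_z(u_{\neq}\cdot\nabla\theta_{\neq})$.
\end{itemize}
In each case the velocity factor would need something like $\partial_y\partial_x^2u^1_{\neq}$, $\partial_y\partial_xu^1_{\neq}$, $\partial_y\partial_zu^3_{\neq}$, or $\partial_x^3\partial_zu^1_{\neq}$, $\partial_x^2\partial_zu^1_{\neq}$, $\partial_x\partial_z^2u^1_{\neq}$, $\partial_x\partial_z^2u^3_{\neq}$. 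None of these is controlled.

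Your rule of placing $L^2_yL^\infty_{x,z}$ on the factor with fewer $x$-derivatives makes this concrete. For $\partial_x^2u\,\theta$ it puts $\partial_y$ on $\partial_x^2u^1_{\neq}$, which contradicts your own principle of never differentiating $u^1,u^3$ in $y$.

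The same failure occurs for $u^2_{\neq}\partial_y\partial_z\theta_{\neq}$. Putting the mixed transverse Sobolev norm on $\partial_y\partial_z\theta_{\neq}$ requires $\partial_x\partial_y\partial_z^2\theta_{\neq}$. The other orientation requires $\partial_y^2\partial_z\theta_{\neq}$. Neither is in $E_{4,1}$.

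The fix, which is what the paper does, is to load $L^\infty$ in two or three directions onto the temperature factor. $E_{4,1}$ can pay for this, because $\|(\partial_x^2+\partial_z^2)\nabla\theta_{\neq}\|_{L^2}$ dominates $\partial_x\partial_y\partial_z\theta_{\neq}$, $\partial_x^2\partial_y\theta_{\neq}$, $\partial_x\partial_z^2\theta_{\neq}$ and all lower-order ones. The velocity then keeps $L^\infty$ only in a direction it can afford. The paper's pairings are:
\begin{itemize}
\item $\|\partial_x^2u_{\neq}\|_{L^2}\|\theta_{\neq}\|_{L^\infty}$;
\item $\|\partial_xu_{\neq}\|_{L^\infty_zL^2_{x,y}}\|\partial_x\theta_{\neq}\|_{L^2_zL^\infty_{x,y}}$;
\item $\|\partial_zu_{\neq}\|_{L^\infty_xL^2_{y,z}}\|\partial_j\theta_{\neq}\|_{L^2_xL^\infty_{y,z}}$ for $j\in\{1,3\}$;
\item $\|u^2_{\neq}\|_{L^\infty_{y,z}L^2_x}\|\partial_y\partial_z\theta_{\neq}\|_{L^2_{y,z}L^\infty_x}$, which uses $\partial_y\partial_zu^2_{\neq}$, a quantity $E_{3,0}$ does control.
\end{itemize}
With these choices every term is bounded, pointwise in time, by $\big(\|(\partial_x,\partial_z)\nabla u^2_{\neq}\|_{L^2}+\|(\partial_x^2+\partial_z^2)u^3_{\neq}\|_{L^2}\big)\|(\partial_x^2+\partial_z^2)\nabla\theta_{\neq}\|_{L^2}$. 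Your time-weight argument then gives $C\nu^{-\frac12}E_{3,0}E_{4,1}$.

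Finally, your claimed $\nu^{-\frac16}$ for $u_{\neq}\partial_x^2\theta_{\neq}$ is not justified. It would require $\|u^1_{\neq}\|_{L^\infty}$ and $\|u^3_{\neq}\|_{L^\infty}$ from $E_{3,0}$, which is impossible without a $y$-derivative on these components. Your anisotropic route gives $\nu^{-\frac13}$ for that term, which is still sufficient.
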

\begin{proof}
First of all, for $j\in \{1,3\}$ and $x, z \in \mathbb{T}^2$, it follows from Lemma \ref{lemux1} that
\begin{align*}
\|\partial_z(u_{\neq}^j\partial_j\theta_{\neq})\|_{L^2}
\leq & \|\partial_zu_{\neq}^j\partial_j\theta_{\neq}\|_{L^2}+\|u_{\neq}^j\partial_j\partial_z\theta_{\neq}\|_{L^2}\\
\leq& \|\partial_zu_{\neq}\|_{L^\infty_xL^2_{y,z}}\|\partial_j\theta_{\neq}\|_{L^2_xL^\infty_{y,z}}
+\|u_{\neq}^j\|_{L^\infty_{x,z}L^2_y}\|\partial_z\partial_j\theta_{\neq}\|_{L^2_{x,z}L^\infty_y}\\
\leq& C\left(\|\partial_z\partial_xu_{\neq}\|_{L^2}+\|(\partial_x,1)u_{\neq}\|_{L^2}\right)\|(\partial_x^2+\partial_z^2)\nabla \theta_{\neq}\|_{L^2}\\
\leq& C\left(\|(\partial_x,\partial_z)\nabla u_{\neq}^2\|_{L^2}+\|(\partial_x^2+\partial_z^2)u_{\neq}^3\|_{L^2} \right)\|(\partial_x^2+\partial_z^2)\nabla \theta_{\neq}\|_{L^2}.
\end{align*}
Meanwhile, using Lemma \ref{lemux1} again, we have
\begin{align*}
\|u_{\neq}\cdot\nabla \theta_{\neq}\|_{L^2}\leq & \|u_{\neq}^1\partial_x \theta_{\neq}\|_{L^2}+\|u_{\neq}^2\partial_y \theta_{\neq}\|_{L^2}+\|u_{\neq}^3\partial_z \theta_{\neq}\|_{L^2}\\
\leq& \|u_{\neq}^1\|_{L^\infty_xL^2_{y,z}}\|\partial_x\theta_{\neq}\|_{L^2_xL^\infty_{y,z}}
+\|u_{\neq}^2\|_{L^\infty_yL^2_{x,z}}\|\partial_y\theta_{\neq}\|_{L^2_yL^\infty_{x,z}}\\
&
+\|u_{\neq}^3\|_{L^\infty_xL^2_{y,z}}\|\partial_z\theta_{\neq}\|_{L^2_xL^\infty_{y,z}}\\
\leq& C\left(\|\partial_x u^1_{\neq}\|_{L^2}+\|(\partial_y,\partial_x)u^2_{\neq}\|_{L^2}+\|\partial_xu^3_{\neq}\|_{L^2}\right)
\|(\partial_x^2+\partial_z^2)\nabla \theta_{\neq}\|_{L^2}\\
\leq&C\left(\|(\partial_x,\partial_z)\nabla u_{\neq}^2\|_{L^2}+\|(\partial_x^2+\partial_z^2)u_{\neq}^3\|_{L^2}\right)\|(\partial_x^2+\partial_z^2)\nabla \theta_{\neq}\|_{L^2}
\end{align*}
and
\begin{align*}
\|\partial_z(u^2_{\neq}\partial_y\theta_{\neq})\|_{L^2}\leq &
\|\partial_zu^2_{\neq}\partial_y\theta_{\neq}\|_{L^2}+\|u^2_{\neq}\partial_y\partial_z\theta_{\neq}\|_{L^2}\\
\leq& \|\partial_zu_{\neq}^2\|_{L^\infty_y L^2_{x,z}}\|\partial_y\theta_{\neq}\|_{L^2_y L^\infty_{x,z}}
+\|u^2_{\neq}\|_{L^\infty_{y,z}L^2_x}\|\partial_y\partial_z\theta_{\neq}\|_{L^2_{y,z}L^\infty_x}\\
\leq& \|(\partial_z,\partial_x)\nabla u^2_{\neq}\|_{L^2} \|(\partial_x^2+\partial_z^2)\nabla\theta_{\neq}\|_{L^2}.
\end{align*}
Therefore, we deduce that
\begin{align*}
&\|{\rm e}^{4\epsilon \nu^\frac{1}{3}t}(\partial_z,1)(u_{\neq}\cdot \nabla \theta_{\neq})\|_{L^2L^2}\\
\leq& C\left(\|{\rm e}^{2\epsilon \nu^\frac{1}{3}t}(\partial_x,\partial_z)\nabla u_{\neq}^2\|_{L^\infty L^2}+\|{\rm e}^{2\epsilon \nu^\frac{1}{3}t}(\partial_x^2+\partial_z^2)u_{\neq}^3\|_{L^\infty L^2}\right)\\
&\cdot \|{\rm e}^{2\epsilon \nu^\frac{1}{3}t}(\partial_x^2+\partial_z^2)\nabla \theta_{\neq}\|_{L^2L^2}\\
\leq& C\nu^{-\frac{1}{2}}E_{3,0}E_{4,1}.
\end{align*}
By Sobolev embedding and Lemma \ref{lemux1}, we have
\begin{align*}
\|u_{\neq}\theta_{\neq}\|_{L^2} \leq & \|u_{\neq}\|_{L^2} \|\theta_{\neq}\|_{L^\infty} \\
\leq & C\|\partial_xu_{\neq}\|_{L^2} \left(\|\partial_x\partial_y\partial_z\theta_{\neq}\|_{L^2}+\|\partial_x\partial_y\theta_{\neq}\|_{L^2}
+\|\partial_x\partial_z\theta_{\neq}\|_{L^2}+\|\partial_x\theta_{\neq}\|_{L^2}\right)\\
\leq& C\left(\|(\partial_x,\partial_z)\nabla u^2_{\neq}\|_{L^2}+\|(\partial_x^2+\partial_z^2)u^3_{\neq}\|_{L^2}\right)\|(\partial_x^2+\partial_z^2)\nabla \theta_{\neq}\|_{L^2}
\end{align*}
and
\begin{align*}
&\|\partial_x^2u_{\neq}\cdot \theta_{\neq}\|_{L^2} +\|\partial_x u_{\neq}\partial_x\theta_{\neq}\|_{L^2}+\|u_{\neq}\partial_x^2\theta_{\neq}\|_{L^2}\\
\leq& \|\partial_x^2u_{\neq}\|_{L^2}\|\theta_{\neq}\|_{L^\infty}+\|\partial_xu_{\neq}\|_{L^\infty_zL^2_{x,y}}\|\partial_x\theta_{\neq}\|_{L^2_zL^\infty_{x,y}}
+\|u_{\neq}\|_{L^2_yL^\infty_{x,z}}\|\partial_x^2\theta_{\neq}\|_{L^\infty_yL^2_{x,z}}\\
\leq& \|\partial_x^2u_{\neq}\|_{L^2} \|\theta_{\neq}\|_{L^\infty}+(\|\partial_x\partial_zu_{\neq}\|_{L^2}+\|\partial_xu_{\neq}\|_{L^2})(\|\partial_x^2\partial_y\theta_{\neq}\|_{L^2}+\|\partial_x^2\theta_{\neq}\|_{L^2})
\\
\leq& \left(\|(\partial_x,\partial_z)\nabla u^2_{\neq}\|_{L^2}+\|(\partial_x^2+\partial_z^2)u^3_{\neq}\|_{L^2}\right)\|(\partial_x^2+\partial_z^2)\nabla\theta_{\neq}\|_{L^2}.
\end{align*}
Thus, we conclude that
\begin{align*}
&\|{\rm e}^{4\epsilon \nu^{\frac{1}{3}}t}(\partial_x^2,1)(u_{\neq} \theta_{\neq})\|_{L^2L^2}\\
\leq&C\left(\|{\rm e}^{2\epsilon \nu^\frac{1}{3}t}(\partial_x,\partial_z)\nabla u_{\neq}^2\|_{L^\infty L^2}+\|{\rm e}^{2\epsilon \nu^\frac{1}{3}t}(\partial_x^2+\partial_z^2)u_{\neq}^3\|_{L^\infty L^2}\right)\\
&\cdot \|{\rm e}^{2\epsilon \nu^\frac{1}{3}t}(\partial_x^2+\partial_z^2)\nabla \theta_{\neq}\|_{L^2L^2}\\
\leq& C\nu^{-\frac{1}{2}}E_{3,0}E_{4,1},
\end{align*}
which completes the proof.
\end{proof}
The following lemma states the interaction between nonzero-modes of velocity.
\begin{lemma}\label{lemuu-11}
It holds that
\begin{align*}
\|{\rm e}^{4\epsilon \nu^{\frac{1}{3}}t}|u_{\neq}|^2\|_{L^2L^2}^2+\|{\rm e}^{4\epsilon \nu^{\frac{1}{3}}t}u_{\neq}\cdot\nabla u_{\neq}\|_{L^2L^2}^2+\|{\rm e}^{4\epsilon \nu^{\frac{1}{3}}t}\partial_x(u_{\neq}\cdot\nabla u_{\neq})\|_{L^2L^2}^2\\
+\|{\rm e}^{4\epsilon \nu^{\frac{1}{3}}t}\partial_z(u_{\neq}\cdot\nabla u_{\neq}^3)\|_{L^2L^2}^2+\|{\rm e}^{4\epsilon \nu^{\frac{1}{3}}t}\nabla(u_{\neq}\cdot\nabla u_{\neq}^2)\|_{L^2L^2}^2\leq C\nu^{-1}E_3^4,
\end{align*}
and
\begin{align*}
\|{\rm e}^{4\epsilon \nu^{\frac{1}{3}}t}\nabla(u_{\neq}\cdot\nabla u_{\neq})\|_{L^2L^2}^2\leq C\nu^{-\frac{5}{3}}E_3^4.
\end{align*}
In particular, one has
\begin{align*}
\|(\nu^{2/3}+\nu t)^{1/2}\nabla (u_{\neq}\cdot \nabla u^3_{\neq})\|_{L^2L^2}^2\leq C\nu^{-1}E_3^4.
\end{align*}
\end{lemma}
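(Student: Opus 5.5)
This lemma involves only the velocity and only the energy $E_3$, so it concerns the Navier--Stokes part of the system alone. It coincides with the corresponding nonzero--nonzero interaction estimate of \cite{chenwz} (Lemma 11.9/11.10 there). I would simply cite that result, but I would still verify it along the lines of the proof of Lemma \ref{lemuthe-11}. The basic principle is to split the exponential weight as ${\rm e}^{4\epsilon\nu^{1/3}t}={\rm e}^{2\epsilon\nu^{1/3}t}\cdot{\rm e}^{2\epsilon\nu^{1/3}t}$. One factor is put in $L^\infty_t$: by the definition of $E_{3,0}$ and Lemma \ref{lemux1}, $\|{\rm e}^{2\epsilon\nu^{1/3}t}(\partial_x,\partial_z)\partial_x u_{\neq}\|_{L^\infty L^2}\lesssim E_3$. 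The other factor is put in $L^2_t$, using the $\nu^{1/2}\|\nabla\cdot\|_{L^2L^2}$ part of the $Y_2$ norms; this gives the $\nu^{-1/2}$ loss per product, hence $\nu^{-1}E_3^4$ after squaring.

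For the first group of terms I would use anisotropic Sobolev/Gagliardo--Nirenberg inequalities, as in Lemma \ref{lemuthe-11}. Since every factor has zero $x$-average, $\partial_x$ controls the function, so $\|u_{\neq}\|_{L^\infty_x L^2_{y,z}}\lesssim\|\partial_x u_{\neq}\|_{L^2}$ and similar bounds hold. For $u^2_{\neq}$ the extra $y$-regularity $\|(\partial_x,\partial_z)\nabla u^2_{\neq}\|$ allows $L^\infty_y$, and for $u^1,u^3$ the bounds $\|(\partial_x,\partial_z)\partial_x u_{\neq}\|$ give $L^\infty_{x,z}$. In a product $u^j_{\neq}\partial_j u^i_{\neq}$ I would place the factor with the most derivatives in $L^2_t$ through $\nu^{1/2}\|\nabla(\partial_x,\partial_z)\nabla u^2_{\neq}\|_{L^2L^2}$, $\nu^{1/2}\|\nabla(\partial_x^2,\partial_z^2)u^3_{\neq}\|_{L^2L^2}$, or $\|{\rm e}^{2\epsilon\nu^{1/3}t}\partial_x\nabla u^2_{\neq}\|_{L^2L^2}$ (the latter with no loss), and the remaining factor in $L^\infty_t$. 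The terms $|u_{\neq}|^2$, $\partial_x(u\cdot\nabla u)$, $\partial_z(u\cdot\nabla u^3)$ and $\nabla(u\cdot\nabla u^2)$ are chosen so that no derivative falls on a component where $E_3$ lacks control. The main subtlety is $\partial_y u^1_{\neq}$ and $\partial_y u^3_{\neq}$, which are controlled only through $\omega^2_{\neq}$, i.e.\ by $E_{3,1}=\nu^{1/3}\|\nabla\omega^2_{\neq}\|_{Y_2}$, so they cost an extra $\nu^{-1/3}$. In the first group such terms must be paired with factors that do not produce this loss. One example is $u^2_{\neq}\partial_y u^1_{\neq}$, where $\|u^2_{\neq}\|_{L^\infty}$ is bounded by $\|(\partial_x,\partial_z)\nabla u^2_{\neq}\|$ and the difficult factor is treated through $\partial_y u^1_{\neq}=\partial_x^{-1}(\ldots)$, using incompressibility $\partial_x u^1=-\partial_y u^2-\partial_z u^3$.

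For the second estimate, $\nabla(u_{\neq}\cdot\nabla u_{\neq})$, the full gradient falls on $u^1,u^3$ in $y$. I would use $\|\nabla\partial_y u^{1,3}_{\neq}\|\lesssim\|\nabla\omega^2_{\neq}\|+\|\nabla^2\partial_y u^2_{\neq}\|$ via Lemma \ref{lemux1}. The vorticity part gives $\nu^{-1/3}\cdot\nu^{-1/2}$ in $L^2_t$, which after squaring and combining with the losses from the other factor yields $\nu^{-5/3}E_3^4$. The term $\nu^{3/4}\|\Delta\nabla u^2_{\neq}\|_{L^2L^2}$ should be used where third derivatives of $u^2$ appear, since it is better than $\nu^{-5/6}$.

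For the weighted estimate of $\nabla(u_{\neq}\cdot\nabla u^3_{\neq})$ the exponential weight is dropped, since ${\rm e}^{4\epsilon\nu^{1/3}t}$ dominates a polynomial weight after using $(\nu^{2/3}+\nu t)^{1/2}{\rm e}^{-c\nu^{1/3}t}\lesssim\nu^{1/3}$. Indeed, $\sup_t(\nu^{2/3}+\nu t){\rm e}^{-2c\nu^{1/3}t}\lesssim\nu^{2/3}$, so the weight gains exactly $\nu^{2/3}$ in the squared norm and turns $\nu^{-5/3}$ into $\nu^{-1}$. The main obstacle is the bookkeeping of the $\partial_y u^{1,3}_{\neq}$ terms in the first estimate, specifically showing that they never require $\omega^2$ at a cost beyond $\nu^{-1/2}$. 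I would first try to reduce every such occurrence to $(\partial_x,\partial_z)$ derivatives through incompressibility, or to move the $y$ derivative onto $u^2$ by writing $u^2\partial_y=\partial_y(u^2\,\cdot)+(\partial_x u^1+\partial_z u^3)\,\cdot$.
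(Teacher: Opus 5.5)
Your proposal matches the paper, which gives no argument of its own for this lemma and simply cites Lemma 11.10 of \cite{chenwz}. Your deduction of the last estimate from the $\nu^{-5/3}$ bound, via $\nu^{2/3}+\nu t\le C_\epsilon\,\nu^{2/3}{\rm e}^{8\epsilon\nu^{1/3}t}$, is also correct.

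Two corrections to your supplementary sketch:

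\begin{itemize}
\item $\partial_y u^1_{\neq}$ and $\partial_y u^3_{\neq}$ cost no extra $\nu^{-1/3}$ when placed in $L^2_t$. Indeed $\|\partial_y u_{\neq}\|\le\|\nabla(\partial_x,\partial_z)\partial_x u_{\neq}\|$, and Lemma \ref{lemux1} with $k=1$ bounds the weighted $L^2L^2$ norm of the right side by $\nu^{-1/2}E_{3,0}$. In $L^\infty_t$, by contrast, they would require $\|\partial_y^2u^2_{\neq}\|_{L^\infty L^2}$, which $E_3$ does not contain. So your ``main obstacle'' disappears once these factors always occupy the $L^2_t$ slot.
\item For $\nabla\partial_y u^{1,3}_{\neq}$, Lemma \ref{lemux1} gives $\|\nabla^2\omega^2_{\neq}\|+\|\nabla^3u^2_{\neq}\|$, not $\|\nabla\omega^2_{\neq}\|+\|\nabla^2\partial_y u^2_{\neq}\|$. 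The former is in fact what your $\nu^{-1/3}\cdot\nu^{-1/2}$ count uses.
\end{itemize}
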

\begin{proof}
See Lemma 11.10 of \cite{chenwz}.
\end{proof}
\subsection{Interaction between zero-mode and nonzero-mode}
\begin{lemma}\label{lemuthe-10}
For $j\in \{2,3\}$, there holds that
\begin{align}
&\|{\rm e}^{3\epsilon\nu^{\frac{1}{3}}t}u^{1,1}\partial_x^3\theta_{\neq}\|_{L^2L^2}
+\|{\rm e}^{2\epsilon\nu^{\frac{1}{3}}t} \partial_z^2(u^{1,1}\partial_x\theta_{\neq})\|_{L^2L^2}
\leq C\nu^{\frac{1}{6}}E_1E_{4,1}, \label{lem:u-tem-01}\\
&\|{\rm e}^{3\epsilon\nu^{\frac{1}{3}}t}\bar{u}^j\partial_j\partial_x^2\theta_{\neq}\|_{L^2L^2}
+\|{\rm e}^{2\epsilon\nu^{\frac{1}{3}}t}\partial_z^2(\bar{u}^j\theta_{\neq})\|_{L^2L^2}
\leq C\nu^{-\frac{1}{2}}E_2E_{4,1},\label{lem:u-tem-02}\\
&\|{\rm e}^{3\epsilon\nu^{\frac{1}{3}}t}\partial_x^2u_{\neq}^2 \cdot \bar{\theta}\|_{L^2L^2}
+\|{\rm e}^{3\epsilon\nu^{\frac{1}{3}}t}\partial_x^2u_{\neq}^3\cdot \bar{\theta}\|_{L^2L^2}
\leq C\nu^{-\frac{1}{6}}E_{4,0}E_5,\label{lem:u-tem-03}\\
&\|{\rm e}^{2\epsilon\nu^{\frac{1}{3}}t}\partial_z^2(u_{\neq}^2\bar{\theta})\|_{L^2L^2}
+\|{\rm e}^{2\epsilon\nu^{\frac{1}{3}}t}\partial_z^2(u_{\neq}^3\bar{\theta})\|_{L^2L^2}
\leq C\nu^{-\frac{1}{2}}E_{3,0}E_{4,0},\label{lem:u-tem-04}\\
&\|{\rm e}^{2\epsilon\nu^{\frac{1}{3}}t}\partial_z(u_{\neq}^2\partial_y\bar{\theta}+u_{\neq}^3\partial_z\bar{\theta})\|_{L^2L^2}\leq
C\nu^{-\frac{1}{2}}E_{3,0}E_{4,0},\label{lem:u-tem-05}\\
&\|{\rm e}^{2\epsilon \nu^{\frac{1}{3}}t}\partial_z^2(u^{1,0}\partial_x\theta_{\neq})\|_{L^2L^2}\leq C\nu^{\frac{1}{6}}E_1E_{4,1}.\label{lem:u-tem-06}
\end{align}
\end{lemma}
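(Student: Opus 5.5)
Each of the six estimates in Lemma \ref{lemuthe-10} is bilinear: a zero-mode factor ($u^{1,1}$, $u^{1,0}$, $\bar u^j$ or $\bar\theta$) multiplies a nonzero-mode factor. The plan is the same for all of them. Put the zero mode in a space that is $L^\infty$ in time, or in $L^2_t$ where a dissipation weight is available, and take an anisotropic $L^\infty$ bound in $(y,z)$ from Sobolev embedding. Control the nonzero mode in $L^2_tL^2$ through the $\nu^{1/6}$ or $\nu^{1/2}$ parts of the $X_a$, $Y_a$ norms. The exponential weights are split as ${\rm e}^{3\epsilon\nu^{1/3}t}\le{\rm e}^{3\nu^{1/3}t}$ and ${\rm e}^{2\epsilon\nu^{1/3}t}\le {\rm e}^{2\nu^{1/3}t}$ (since $\epsilon\le1$), and placed entirely on the nonzero mode, whose norms carry weights ${\rm e}^{3\nu^{1/3}t}$ ($\partial_x^2\theta_{\neq}$ in $X_3$), ${\rm e}^{2\nu^{1/3}t}$ ($\partial_z^2\theta_{\neq}$ in $X_2$, and $E_{3,0}$), or ${\rm e}^{3\epsilon\nu^{1/3}t}$ ($E_5$).

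For \eqref{lem:u-tem-01} I would use $\|u^{1,1}\|_{L^\infty_{y,z}}\lesssim\|u^{1,1}\|_{H^2}\le \nu^{2/3}E_1$. Combined with $\|\partial_x^3\theta_{\neq}\|_{L^2L^2}\lesssim\|\partial_x^2\partial_x\theta_{\neq}\|$, this reduces to $\nu^{-1/2}\|\nabla\partial_x^2\theta_{\neq}\|$ weighted, which is at most $\nu^{-1}E_{4,1}$. The result is $\nu^{2/3-1}=\nu^{-1/3}$, too weak. So I should instead exploit $\nu^{-1/2}$ only for one derivative. Since $\partial_x^3$ on $\mathbb{T}$ gains nothing, the better route is to bound $u^{1,1}$ in $L^2_tH^2$: $\nu^{1/2}\|\nabla u^{1,1}\|_{L^2H^2}\le\nu^{2/3}E_1$ gives $\|u^{1,1}\|_{L^2L^\infty}\lesssim \nu^{1/6}E_1$ (Poincaré, by the Dirichlet condition). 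Pair this with $\|\partial_x^3\theta_{\neq}\|_{L^\infty L^2}$, which, after writing $\partial_x^3=\partial_x\partial_x^2$ and an anisotropic $L^\infty_xL^2$ trade, is controlled by $E_{4,1}$. That yields the $\nu^{1/6}$.

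For $\partial_z^2(u^{1,1}\partial_x\theta_{\neq})$, Leibniz gives three terms. Put $\partial_z^2u^{1,1}$ in $L^2_tL^2_yL^\infty_z$ with $\partial_x\theta_{\neq}$ in $L^\infty_tL^\infty_zL^2$; put $\partial_zu^{1,1}$ in $L^2_tL^\infty$ with $\partial_x\partial_z\theta_{\neq}$ in $L^\infty L^2$ (interpolating $\partial_x^2$ and $\partial_z^2$ of $\theta_{\neq}$); and handle $u^{1,1}\partial_x\partial_z^2\theta_{\neq}$ as before. Estimate \eqref{lem:u-tem-06} is identical with $u^{1,0}$, using $\|u^{1,0}\|_{L^\infty H^4}\le E_1$ and, for the $L^2_t$ gain, $\|u^{1,0}\|_{H^4}\lesssim\nu t E_1$ for small times (from $\partial_tu^{1,0}$ in $E_{1,0}$) against the exponential decay of $\theta_{\neq}$.

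For \eqref{lem:u-tem-02}, Lemma \ref{lemu0-2} gives $\|\bar u^j\|_{L^\infty}\le CE_2$, and $\partial_j\partial_x^2\theta_{\neq}$ is taken in $L^2L^2$ via $\nu^{1/2}\|\nabla\partial_x^2\theta_{\neq}\|_{L^2L^2}\le E_{4,1}$. The $\partial_z^2$ version uses $\|\partial_z\bar u^j\|_{L^\infty_yL^2_z}$ and $\|\partial_z^2\bar u^j\|_{L^2}$ (both bounded by $E_2$, using the weighted $\Delta\bar u^3$ bound or $\|\partial_z\bar u^3\|_{H^1}$) against $\theta_{\neq}$ in $L^2_tL^\infty_zH^1_y$ from $\nu^{1/2}\|\nabla\partial_z^2\theta_{\neq}\|$.

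For \eqref{lem:u-tem-03}, take $\|\bar\theta\|_{L^\infty L^\infty}\lesssim \|\bar\theta\|_{L^\infty H^2}$ (using $\partial_y\nabla\bar\theta$, obtained from $\Delta\bar\theta$? only $\partial_z\nabla\bar\theta$ is available). The hardest point is that $L^\infty_{y,z}$ of $\bar\theta$ needs a mixed derivative. I would use $\|f\|_{L^\infty_{y,z}}\lesssim \|f\|_{L^2}+\|\partial_yf\|_{L^2}+\|\partial_z\partial_yf\|_{L^2}+\|\partial_zf\|_{L^2}$, all of which lie in $E_{4,0}$. Then $\nu^{1/6}\|{\rm e}^{3\epsilon\nu^{1/3}t}\partial_x^2u^{2,3}_{\neq}\|_{L^2L^2}=E_5$ gives $\nu^{-1/6}E_{4,0}E_5$.

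Estimates \eqref{lem:u-tem-04} and \eqref{lem:u-tem-05} follow the same way. Put $\bar\theta$, $\partial_z\bar\theta$ in $L^\infty_yL^2_z$ or $L^\infty$, and $\partial_z^2\bar\theta$ in $L^2_zL^\infty_y$, all bounded by $E_{4,0}$. Put $u^j_{\neq}$, $\partial_z u^j_{\neq}$, $\partial_z^2u^j_{\neq}$ in $L^2_t$ with the appropriate $L^\infty$ direction, using Lemma \ref{lemux1} and the $\nu^{1/2}L^2_t$ gradient parts of $E_{3,0}$, which produce $\nu^{-1/2}$. I expect the $u^{1,1}$ and $u^{1,0}$ terms in \eqref{lem:u-tem-01} and \eqref{lem:u-tem-06} to be the main obstacle, since there the $\nu^{1/6}$ must come from the time-integrated dissipation of $u^{1,1}$ and the small-time growth of $u^{1,0}$, rather than from $\theta_{\neq}$.
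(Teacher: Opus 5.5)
The gap is in \eqref{lem:u-tem-01}, and it carries over to \eqref{lem:u-tem-06}. Your final route puts $u^{1,1}$ in $L^2_tL^\infty$ and puts $\partial_x^3\theta_{\neq}$ (and, via ``as before'', $\partial_x\partial_z^2\theta_{\neq}$) in $L^\infty_tL^2$. Nothing in the energies controls third derivatives of $\theta_{\neq}$ uniformly in time. In $L^\infty_t$, $E_{4,1}$ only bounds $\partial_x^2\theta_{\neq}$ and $\partial_z^2\theta_{\neq}$ in $L^2$. Third derivatives enter only through $\nu^{\frac12}\|{\rm e}^{3\epsilon\nu^{\frac13}t}\nabla\partial_x^2\theta_{\neq}\|_{L^2L^2}$ and $\nu^{\frac12}\|{\rm e}^{2\epsilon\nu^{\frac13}t}\nabla\partial_z^2\theta_{\neq}\|_{L^2L^2}$. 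No anisotropic H\"older trade helps: $u^{1,1}$ is an $x$-independent multiplier, so all three derivatives stay on $\theta_{\neq}$. Since that factor is only in $L^2_t$, $u^{1,1}$ must be taken in $L^\infty_t$.

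The route you discarded is correct, and it is exactly the paper's; you miscounted the loss. The dissipative part of $E_{4,1}$ gives $\|{\rm e}^{3\epsilon\nu^{\frac13}t}\partial_x^3\theta_{\neq}\|_{L^2L^2}\le\nu^{-\frac12}E_{4,1}$, not $\nu^{-1}E_{4,1}$. Combined with $\|u^{1,1}\|_{L^\infty L^\infty}\lesssim\|u^{1,1}\|_{L^\infty H^2}\le E_{1,1}\le\nu^{\frac23}E_1$, this gives exactly $\nu^{\frac23-\frac12}=\nu^{\frac16}$. The paper treats $\partial_z^2(u^{1,1}\partial_x\theta_{\neq})$ the same way, pointwise in time: $\|\partial_z^2(u^{1,1}\partial_x\theta_{\neq})\|_{L^2}\lesssim\|u^{1,1}\|_{H^2}\|(\partial_x^2+\partial_z^2)\nabla\theta_{\neq}\|_{L^2}$. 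This pairs $\partial_zu^{1,1}\in L^\infty_yL^2_z$ with $\partial_z\partial_x\theta_{\neq}\in L^\infty_zL^2_{x,y}$, and $\partial_z^2u^{1,1}\in L^2$ with $\partial_x\theta_{\neq}\in L^2_xL^\infty_{y,z}$; only then does it integrate in time. So the $\nu^{\frac16}$ comes from the $\nu^{-\frac23}$ normalisation of $E_{1,1}$ inside $E_1$, played against the dissipation of $\theta_{\neq}$. The time-integrated dissipation of $u^{1,1}$, which you single out as the key mechanism, is never used.

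For \eqref{lem:u-tem-06}, ``identical with $u^{1,0}$'' fails for the same reason, because of the term $u^{1,0}\partial_x\partial_z^2\theta_{\neq}$. In any case $E_{1,0}$ contains no dissipative $L^2_t$ bound on $u^{1,0}$ itself. Also, $\partial_tu^{1,0}$ only yields $\|u^{1,0}(t)\|_{H^2}\le\nu tE_{1,0}$, not an $H^4$ bound, though $H^2$ is enough.

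The paper keeps $u^{1,0}$ in $L^\infty_t$ with $\|u^{1,0}(t)\|_{H^2}\lesssim(\nu^{\frac23}+\nu t)E_1=\nu^{\frac23}(1+\nu^{\frac13}t)E_1$. It puts $\partial_x\partial_y\partial_z\theta_{\neq}$, $\nabla\partial_x^2\theta_{\neq}$ and $\partial_x\partial_z^2\theta_{\neq}$ in $L^2_t$. It then removes the growth $(1+\nu^{\frac13}t)^2$ using $\|\partial_x\partial_z\nabla f\|_{L^2}^2\le\|\partial_x^2\nabla f\|_{L^2}\|\partial_z^2\nabla f\|_{L^2}$. This turns the mismatch between the weights ${\rm e}^{3\epsilon\nu^{\frac13}t}$ and ${\rm e}^{2\epsilon\nu^{\frac13}t}$ on the two parts of $E_{4,1}$ into a factor ${\rm e}^{-\epsilon\nu^{\frac13}t}$.

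That interpolation is what your ``against the exponential decay of $\theta_{\neq}$'' is missing. The subscripts in $X_3$, $X_2$, $Y_2$ mean weights ${\rm e}^{3\epsilon\nu^{\frac13}t}$ and ${\rm e}^{2\epsilon\nu^{\frac13}t}$; that is how they are used in Lemma \ref{lemuthe-11} and Proposition \ref{proE41}. So the slack you take from ${\rm e}^{3\epsilon\nu^{\frac13}t}\le{\rm e}^{3\nu^{\frac13}t}$ is not available, and the $\epsilon\nu^{\frac13}$ gap is the only decay margin.

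A smaller point concerns \eqref{lem:u-tem-04}. With $\bar\theta$ in $L^\infty_t$, putting $\partial_z^2\bar\theta$ in $L^2_zL^\infty_y$ requires $\partial_y\partial_z^2\bar\theta\in L^\infty_tL^2$, which $E_{4,0}$ does not give. Either put the $L^\infty_{y,z}$ on $u^j_{\neq}$ there, or follow the paper: take the velocity in $L^\infty_t$ from $E_{3,0}$ and the $\bar\theta$ factor in $L^2_t$ from $\nu^{\frac12}\|\nabla\partial_z\nabla\bar\theta\|_{L^2L^2}\le E_{4,0}$. Your treatment of \eqref{lem:u-tem-02} and \eqref{lem:u-tem-03} agrees with the paper.
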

\begin{proof}
First of all, we have
\begin{align*}
&\|u^{1,1}\partial_x^3\theta_{\neq}\|_{L^2} \leq \|u^{1,1}\|_{L^\infty} \|\partial_x^3\theta_{\neq}\|_{L^2}  \leq \|u^{1,1}\|_{H^2}\|\partial_x^3\theta_{\neq}\|_{L^2} \leq C\nu^{\frac{2}{3}}E_1\|\partial_x^3\theta_{\neq}\|_{L^2},
\end{align*}
\begin{align*}
\|\partial_z^2(u^{1,1}\partial_x\theta_{\neq})\|_{L^2}
\leq& \|\partial_x \theta_{\neq} \partial_z^2u^{1,1}\|_{L^2}+2\|\partial_zu^{1,1}\partial_z\partial_x\theta_{\neq}\|_{L^2}
+\|u^{1,1}\partial_z^2\partial_x\theta_{\neq}\|_{L^2}\\
 \leq&  \|u^{1,1}\|_{H^2}\|\partial_x\theta_{\neq}\|_{L^2_x L^\infty_{y,z} }+2\|\partial_zu^{1,1}\|_{L^\infty_y L^2_z}\|\partial_z\partial_x\theta_{\neq}\|_{L^\infty_zL^2_{x,y}}\\
 &+\|u^{1,1}\|_{L^\infty} \|\partial_x\partial_z^2\theta_{\neq}\|_{L^2}\\
\leq & C\|u^{1,1}\|_{H^2}\|(\partial_x^2+\partial_z^2)\nabla \theta_{\neq}\|_{L^2}\leq C\nu^{\frac{2}{3}}E_1\|(\partial_x^2+\partial_z^2)\nabla \theta_{\neq}\|_{L^2},
\end{align*}
which gives
\begin{align*}
&\|{\rm e}^{3\epsilon\nu^{\frac{1}{3}}t}u^{1,1}\partial_x^3\theta_{\neq}\|_{L^2L^2}
+\|{\rm e}^{2\epsilon\nu^{\frac{1}{3}}t} \partial_z^2(u^{1,1}\partial_x\theta_{\neq})\|_{L^2L^2}\\
\leq& C\nu^{\frac{2}{3}}E_1\left(\|{\rm e}^{3\epsilon\nu^{\frac{1}{3}}t}\partial_x^3\theta_{\neq}\|_{L^2L^2}+
\|{\rm e}^{2\epsilon\nu^{\frac{1}{3}}t}(\partial_x^2+\partial_z^2)\nabla \theta_{\neq}\|_{L^2L^2}\right)\\
\leq& C\nu^{\frac{1}{6}}E_1E_{4,1}.
\end{align*}
Thus \eqref{lem:u-tem-01} follows.

For $j\in\{2,3\}$, using Lemma \ref{lemu0-2} to give that
\begin{align*}
\|\bar{u}^j\partial_j\partial_x^2\theta_{\neq}\|_{L^2}
\leq& \|\bar{u}^j\|_{L^\infty} \|\partial_x^2\partial_j\theta_{\neq}\|_{L^2}
\leq CE_2\|\partial_x^2\partial_j\theta_{\neq}\|_{L^2},\\
\|\partial_z^2(\bar{u}^j\theta_{\neq})\|_{L^2}
\leq& \|\partial_z^2\bar{u}^j\|_{L^2} \|\theta_{\neq}||_{L^\infty_{y,z}L^2_x}
+\|\partial_z\bar{u}^j\|_{L^\infty_yL^2_z}\|\partial_z\theta_{\neq}\|_{L^\infty_zL^2_{x,y}}\\
&
+\|\bar{u}^j\|_{L^\infty} \|\partial_z^2\theta_{\neq}\|_{L^2}\\
\leq& C\left(\|\Delta \bar{u}^2\|_{L^2}+\|\partial_z\bar{u}^3\|_{H^1}+\|\bar{u}^3\|_{H^1}\right)
\|(\partial_x^2+\partial_z^2)\nabla\theta_{\neq}\|_{L^2}\\
\leq& CE_2\|(\partial_x^2+\partial_z^2)\nabla\theta_{\neq}\|_{L^2},
\end{align*}
which yields that
\begin{align*}
&\|{\rm e}^{3\epsilon\nu^{\frac{1}{3}}t}\bar{u}^j\partial_j\partial_x^2\theta_{\neq}\|_{L^2L^2}
+\|{\rm e}^{2\epsilon\nu^{\frac{1}{3}}t}\partial_z^2(\bar{u}^j\theta_{\neq})\|_{L^2L^2}\\
&\leq CE_2\left(\|{\rm e}^{3\epsilon\nu^{\frac{1}{3}}t}\partial_x^2\nabla\theta_{\neq}\|_{L^2L^2}+
\|{\rm e}^{2\epsilon\nu^{\frac{1}{3}}t}(\partial_x^2+\partial_z^2)\nabla \theta_{\neq}\|_{L^2L^2}\right)\\
& \leq C\nu^{-\frac{1}{2} }E_2E_{4,1}.
\end{align*}
Therefore \eqref{lem:u-tem-02} follows.

Meanwhile, for $j\in \{2,3\}$, we have
\begin{align*}
\|{\rm e}^{3\epsilon\nu^{\frac{1}{3}}t} \partial_x^2u_{\neq}^j\bar{\theta}\|_{L^2L^2}
\leq& \|{\rm e}^{3\epsilon\nu^{\frac{1}{3}}t}\partial_x^2u_{\neq}^j\|_{L^2L^2}\|\bar{\theta}\|_{L^\infty L^\infty}\\
\leq& \|{\rm e}^{3\epsilon\nu^{\frac{1}{3}}t}\partial_x^2u_{\neq}^j\|_{L^2L^2}
\left(\|\bar{\theta}\|_{L^\infty L^2}+\|\nabla \bar{\theta}\|_{L^\infty L^2}+\|\partial_z\partial_y\bar{\theta}\|_{L^\infty L^2}\right)\\
\leq& C\nu^{-\frac{1}{6}}E_5E_{4,0},
\end{align*}
which gives \eqref{lem:u-tem-03}.

Indeed, for $j\in \{2,3\}$, it holds that
\begin{align*}
\|\partial_z^2(u_{\neq}^j\bar{\theta})\|_{L^2} \leq&
\|\partial_z^2u^j_{\neq}\|_{L^2} \|\bar{\theta}\|_{L^\infty}+\|\partial_z u^j_{\neq}\|_{L^2_{x,y} L^\infty_z }\|\bar{\theta}\|_{L^\infty_y L^2_z }+\|u^j_{\neq}\|_{L^2_{x,y}L^\infty_z}\|\partial_z^2\bar{\theta}\|_{ L^\infty_y L^2_z}\\
\leq& \left(\|(\partial_x,\partial_z)\nabla u^2_{\neq}\|_{L^2}+\|(\partial_x^2+\partial_z^2)u^3_{\neq}\|_{L^2}\right)\\
&\cdot \left(\|\partial_z^2\partial_y\bar{\theta}\|_{L^2}+\|\partial_y\partial_z\bar{\theta}\|_{L^2}+\|\partial_y\bar{\theta}\|_{L^2}\right)\\
\leq& CE_{3,0}\left(\|\partial_z^2\partial_y\bar{\theta}\|_{L^2}+\|\partial_y\partial_z\bar{\theta}\|_{L^2}+\|\partial_y\bar{\theta}\|_{L^2}\right) {\rm e}^{-2 \epsilon \nu^{\frac{1}{3}} t}
\end{align*}
and
\begin{align*}
\|\partial_z(u_{\neq}^2\partial_y\bar{\theta})\|_{L^2} \leq& \|u_{\neq}^2\|_{L^\infty_{y,z} L^2_x}\|\partial_z\partial_y\bar{\theta}\|_{L^2}+\|\partial_zu_{\neq}^2\|_{L^\infty_yL_{x,z}^2}\|\partial_y\bar{\theta}\|_{L^\infty_zL^2_y}\\
\leq&\|(\partial_x,\partial_z)\nabla u^2_{\neq}\|_{L^2}\left(\|\partial_z\partial_y\bar{\theta}\|_{L^2}+\|\partial_y\bar{\theta}\|_{L^2}\right)\\
\leq& CE_{3,0}\left(\|\partial_z\partial_y\bar{\theta}\|_{L^2}+\|\partial_y\bar{\theta}\|_{L^2}\right){\rm e}^{-2 \epsilon \nu^{\frac{1}{3}} t},
\end{align*}
which implies that
\begin{align*}
&\|{\rm e}^{2\epsilon\nu^{\frac{1}{3}}t}\partial_z^2(u_{\neq}^j\bar{\theta})\|_{L^2L^2}
+\|{\rm e}^{2\epsilon\nu^{\frac{1}{3}}t}\partial_z(u_{\neq}^j\partial_j\bar{\theta})\|_{L^2L^2}
\\
&\leq CE_{3,0}\left(\|\partial_z^2\partial_y\bar{\theta}\|_{L^2L^2}+\|\partial_y\partial_z\bar{\theta}\|_{L^2L^2}+\|\partial_y\bar{\theta}\|_{L^2L^2}\right)\\
&
\leq C\nu^{-\frac{1}{2}}E_{3,0}E_{4,0}.
\end{align*}
This gives \eqref{lem:u-tem-04} and \eqref{lem:u-tem-05}.

Thanks to Lemma \ref{lemu1-0}, one has
\begin{align*}
&\|\partial_z^2(u^{1,0}\partial_x\theta_{\neq})\|_{L^2} \lesssim
\|\partial_z^2 u^{1,0}\partial_x\theta_{\neq}\|_{L^2}+\|\partial_zu^{1,0}\partial_z\partial_x\theta_{\neq}\|_{L^2}
+\|u^{1,0}\partial_z^2\partial_x\theta_{\neq}\|_{L^2}\\
\lesssim& \|\partial_z^2 u^{1,0}\|_{L^2}\|\partial_x\theta_{\neq}\|_{L^2_x L^\infty_{y,z}} +\|\partial_zu^{1,0}\|_{L^2_zL^\infty_y}\|\partial_x\partial_z\theta_{\neq}\|_{L^2_{x,y} L^\infty_z}
+\|u^{1,0}\|_{H^2}\|\partial_z^2\partial_x\theta_{\neq}\|_{L^2}\\
\leq& C\|u^{1,0}\|_{H^2}\left(\|\partial_x\partial_y\partial_z\theta_{\neq}\|_{L^2}+\|\nabla\partial_x^2\theta_{\neq}\|_{L^2}
+\|\partial_x\partial_z^2\theta_{\neq}\|_{L^2}\right),\\
\leq & C(\nu^{\frac{2}{3}}+\nu t)E_1\left(\|\partial_x\partial_y\partial_z\theta_{\neq}\|_{L^2}+\|\nabla\partial_x^2\theta_{\neq}\|_{L^2}
+\|\partial_x\partial_z^2\theta_{\neq}\|_{L^2}\right).
\end{align*}
Note that for $(x,z) \in \mathbb{T}^2$, it holds that
$$\|\partial_x\partial_z\nabla f\|_{L^2}^2=\langle \partial_x^2\nabla f,\partial_z^2\nabla f \rangle\leq \|\partial_x^2\nabla f\|_{L^2}\|\partial_z^2\nabla f\|_{L^2},$$
one has
\begin{align*}
&\|{\rm e}^{2\epsilon \nu^{\frac{1}{3}}t}\partial_z^2(u^{1,0}\partial_x\theta_{\neq})\|_{L^2L^2}^2\\
&\leq
CE_1^2\nu^{\frac{4}{3}}\|(1+\nu^{1/3}t)^2 {\rm e}^{-\epsilon \nu^{\frac{1}{3}}t}\|_{L^\infty_t}\|{\rm e}^{2\epsilon\nu^{\frac{1}{3}}t}\partial_z^2\nabla \theta_{\neq}\|_{L^2L^2}\|{\rm e}^{3\epsilon\nu^{\frac{1}{3}}t}\partial_x^2\nabla \theta_{\neq}\|_{L^2L^2}\\
&\leq C\nu^{\frac{1}{3}}E_1^2E_{4,1}^2,
\end{align*}
which gives \eqref{lem:u-tem-06}.
\end{proof}
\begin{lemma}\label{lemuu-10}
It holds that for $j\in \{2,3\}$,
\begin{align*}
&\|{\rm e}^{2\epsilon \nu^{\frac{1}{3}}t}(\partial_x,\partial_z)(\bar{u}^1\partial_xu^3_{\neq})\|_{L^2L^2}^2
+\|{\rm e}^{2\epsilon \nu^{\frac{1}{3}}t}(\partial_x,\partial_z)(\bar{u}^1\partial_xu^2_{\neq})\|_{L^2L^2}^2\\
&\qquad\qquad\qquad\qquad\qquad+\|{\rm e}^{2\epsilon \nu^{\frac{1}{3}}t}\partial_x((u^2_{\neq}\partial_y+u^3_{\neq}\partial_z)\bar{u}^1)\|_{L^2L^2}^2
\leq C\nu E_1^2E_3E_5,\\
&\|{\rm e}^{2\epsilon \nu^{\frac{1}{3}}t}\partial_x(\bar{u}^1\partial_xu^1_{\neq})\|_{L^2L^2}^2\leq
C\nu\left(E_1^2E_3^{\frac{3}{2}}E_5^{\frac{1}{2}}+E_1^2E_3E_5\right),\\
&\|{\rm e}^{2\epsilon \nu^{\frac{1}{3}}t}(\partial_x,\partial_z)(\bar{u}^j\nabla u_{\neq})\|_{L^2L^2}
+\|{\rm e}^{2\epsilon \nu^{\frac{1}{3}}t}(\partial_x,\partial_z)(u_{\neq}\cdot\nabla \bar{u}^j)\|_{L^2L^2}
\leq C\nu^{-\frac{1}{2}}E_2E_3,\\
&\|{\rm e}^{2\epsilon \nu^{\frac{1}{3}}t}\partial_z(\bar{u}^1\partial_xu^1_{\neq})\|_{L^2L^2}^2
+\|{\rm e}^{2\epsilon \nu^{\frac{1}{3}}t}\partial_z((u_{\neq}^2\partial_y+u^3_{\neq}\partial_z)\bar{u}^1)\|_{L^2L^2}^2\\
&\qquad \qquad \qquad \qquad \qquad \qquad \qquad \qquad \qquad \leq C\nu^{\frac{1}{3}}E_1^2E_3(E_5+E_3^{\frac{3}{4}}E_5^{\frac{1}{4}}).
\end{align*}
\end{lemma}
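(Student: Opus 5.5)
This lemma involves only velocity fields, with no temperature. It coincides with the zero-mode/nonzero-mode velocity interaction estimates of the Navier--Stokes analysis in \cite{chenwz} (Lemma 11.9 there), and the energy functionals $E_1,E_2,E_3,E_5$ are defined exactly as in that paper. So my plan is to reduce to that lemma and check that every ingredient is available here. For completeness I would still reprove each line with the same tools as in Lemmas \ref{lemuthe-10} and \ref{lemuthe-11}. These tools are anisotropic Sobolev embeddings on $\mathbb{T}\times[-1,1]\times\mathbb{T}$ (an $L^\infty$ bound in one variable costs one derivative in that variable) and Lemma \ref{lemux1}, which controls $\|(\partial_x,\partial_z)\partial_x u_{\neq}\|_{L^2}$ and $\|(\partial_x,\partial_z)u_{\neq}\|_{L^2}$ by $E_3$-quantities. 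On the zero-mode side I would use Lemma \ref{lemu1-0}, giving $\|\bar u^1\|_{H^2}\le CE_1\min\{\nu t+\nu^{2/3},1\}$, and Lemma \ref{lemu0-2} for $\bar u^2,\bar u^3$.

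The third estimate is the easiest and I would do it first. Write $(\partial_x,\partial_z)(\bar u^j\nabla u_{\neq})$ via the Leibniz rule. Put $\bar u^j$ in $L^\infty$ and $\nabla\bar u^j$ in $L^\infty_yL^2_z$, both bounded by $E_2$ through Lemma \ref{lemu0-2}. The factor $(\partial_x,\partial_z)\nabla u_{\neq}$ is then in $L^2$; its time integral with weight ${\rm e}^{2\epsilon\nu^{1/3}t}$ is bounded by $\nu^{-1/2}E_3$, using the $\nu^{1/2}\|\nabla\cdot\|_{L^2L^2}$ parts of the $Y_2$ norms in $E_{3,0}$ and $E_{3,1}$ together with Lemma \ref{lemux1}. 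The term $u_{\neq}\cdot\nabla\bar u^j$ is handled symmetrically, with $\nabla\bar u^j\in L^2L^\infty$ of size $\nu^{-1/2}E_2$ by Lemma \ref{lemu0-2}, and $u_{\neq}$ in $L^\infty L^2$ (after one anisotropic embedding) bounded by $E_3$.

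For the $\bar u^1$ terms (first, second and fourth estimates) the point is the growth of $\bar u^1$, which is compensated by the exponential decay of nonzero modes. The key step is $\|\bar u^1\|_{H^2}^2{\rm e}^{-2\epsilon\nu^{1/3}t}\lesssim E_1^2(\nu t+\nu^{2/3})^2{\rm e}^{-2\epsilon\nu^{1/3}t}\lesssim E_1^2\nu^{4/3}$. Take for example $\partial_x(\bar u^1\partial_xu^3_{\neq})=\bar u^1\partial_x^2u^3_{\neq}$. I would bound its square in $L^2L^2$ by an interpolation: pair $\|{\rm e}^{3\epsilon\nu^{1/3}t}\partial_x^2u^3_{\neq}\|_{L^2L^2}\le\nu^{-1/6}E_5$ with $\|{\rm e}^{2\epsilon\nu^{1/3}t}\partial_x^2u^3_{\neq}\|_{L^2L^2}$ from $E_3$ (at most $\nu^{-1/6}E_3$ via the $\nu^{1/6}L^2L^2$-type bound, or the $L^\infty L^2$ bound times an integrability factor). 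Then $\nu^{4/3}\cdot\nu^{-1/3}E_3E_5=\nu E_3E_5$, which gives the claimed $\nu E_1^2E_3E_5$. For $\partial_z$ derivatives only $\|\bar u^1\|_{H^2}\lesssim E_1\min\{\cdot,1\}$ is used, with less decay gain, which explains the weaker $\nu^{1/3}$ factor in the fourth estimate. The term $\partial_x(u^2_{\neq}\partial_y\bar u^1)$ uses $\partial_y\bar u^1\in L^\infty_yL^2_z$ and $\partial_xu^2_{\neq}\in L^2_yL^\infty_{x,z}$, which Lemma \ref{lemux1} controls.

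The main obstacle is the term involving $u^1_{\neq}$ in the second and fourth estimates, because $\partial_x^2u^1_{\neq}$ is not directly in $E_5$. I would recover it from the divergence-free condition, $\partial_xu^1_{\neq}=-\partial_yu^2_{\neq}-\partial_zu^3_{\neq}$. This requires interpolating $\partial_x\partial_yu^2_{\neq}$ between the $E_5$ bound on $\partial_x^2u^2_{\neq}$ and the $\nu^{3/4}\|\Delta\nabla u^2_{\neq}\|$ part of $E_3$. That interpolation is what produces the fractional powers $E_3^{3/2}E_5^{1/2}$ and $E_3^{3/4}E_5^{1/4}$. Since the time weights are delicate, at that step I would follow \cite{chenwz} verbatim.
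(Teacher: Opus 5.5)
Your plan is the paper's: the lemma is a pure velocity-field inequality in functionals $E_1,E_2,E_3,E_5$ defined as in \cite{chenwz}, and the paper's entire proof is a pointer to \cite{chenwz}. However, the results the paper invokes are Lemmas 11.13--11.16 there, not Lemma 11.9.

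If you do write out your optional self-contained sketch, two corrections are needed. First, $E_3$ is built on $Y_2$, which has no $\nu^{\frac16}L^2L^2$ component, so that alternative is unavailable. In the $\bar u^1$ terms the factor $\nu$ must instead come from your second option: take the $L^2_t$ norm of $(\nu t+\nu^{\frac23})^2{\rm e}^{-\epsilon\nu^{\frac13}t}$, which is of size $\nu^{\frac43-\frac16}$, against the $L^\infty L^2$ part of $E_3$ and the bound $\nu^{-\frac16}E_5$. Second, for $\bar u^1\partial_x\partial_y u^2_{\neq}$, the interpolation $\|\partial_x\partial_yu^2_{\neq}\|^2\le\|\partial_x\partial_yu^2_{\neq}\|\,\|\partial_x^2u^2_{\neq}\|^{\frac12}\|\partial_y^2u^2_{\neq}\|^{\frac12}$ yields exactly $\nu E_1^2E_3^{\frac32}E_5^{\frac12}$ when paired with $\|{\rm e}^{2\epsilon\nu^{\frac13}t}\partial_x\nabla u^2_{\neq}\|_{L^2L^2}\le E_3$ and $\nu^{\frac12}\|{\rm e}^{2\epsilon\nu^{\frac13}t}\partial_x\partial_y^2u^2_{\neq}\|_{L^2L^2}\le E_3$. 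Using the $\nu^{\frac34}\|\Delta\nabla u^2_{\neq}\|$ piece you name in the same pairing gives only $\nu^{\frac78}$.
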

\begin{proof}
See Lemma 11.13-11.16 in \cite{chenwz}.
\end{proof}

\section{Estimate of zero modes}
\subsection{Estimate of $E_1$}
\begin{proposition}\label{proE1}
It holds that
\begin{align*}
&E_{1,0}\leq C\nu^{-1}\left(\|\bar{u}(0)\|_{H^2}+E_2+E_2E_{1,0}\right),\\
&E_{1,1}\leq C\left(\|\bar{u}(0)\|_{H^2}+\nu^{-1}E_2E_{1,1}+\nu^{-\frac{4}{3}}E_3^2\right).
\end{align*}
\end{proposition}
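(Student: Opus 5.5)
Both estimates come from energy estimates for the two equations in \eqref{equ10}. These equations do not involve $\theta$ at all: the temperature enters $\bar u^1$ only through $\bar u^2$, which is already measured by $E_2$. So the argument should follow the corresponding estimates for $E_1$ in \cite{chenwz} essentially verbatim. We treat $u^{1,0}$ and $u^{1,1}$ separately, each time viewing the equation as a heat equation with Dirichlet data on $y=\pm1$ and forcing.

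\emph{Estimate for $E_{1,0}$.} Write
\[
(\partial_t-\nu\Delta)u^{1,0}=-\bar u^2-\bar u^2\partial_y u^{1,0}-\bar u^3\partial_z u^{1,0}=:g,\qquad u^{1,0}(0)=0.
\]
\begin{itemize}
\item[(a)] Differentiate in time and set $h=\partial_t u^{1,0}$, which vanishes on $y=\pm1$. Then $h$ solves the same heat problem with forcing $\partial_t g$ and initial datum $h(0)=g(0)=-\bar u^2(0)$.
\item[(b)] Run the $L^\infty H^2$ and $L^2H^3$ energy estimates on $h$. Using the bound on $\partial_t\nabla(\bar u^2\partial_y u^{1,0},\bar u^3\partial_z u^{1,0})$ from Lemma \ref{lemuu-00}, together with the bound $\nu^{-1/2}\|\partial_t\nabla\bar u^2\|_{L^2L^2}\le E_2$ built into $E_2$, this gives
\[
\nu^{-1}\|\partial_t u^{1,0}\|_{L^\infty H^2}+\nu^{-1/2}\|\partial_t u^{1,0}\|_{L^2H^3}\lesssim \nu^{-1}\bigl(\|\bar u(0)\|_{H^2}+E_2+E_2E_{1,0}\bigr).
\]
\item[(c)] The higher norms need compatibility at the boundary. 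Since $h=0$ on $y=\pm1$ and $g$ is built from $\bar u^2$, which satisfies $\bar u^2=\partial_y\bar u^2=0$ there, the required traces of $\Delta h$ should vanish. This is the point where the $H^2$ (respectively $H^3$) level estimates with the boundary can be closed.
\item[(d)] For $\|u^{1,0}\|_{L^\infty H^4}$, use the elliptic problem $\nu\Delta u^{1,0}=\partial_t u^{1,0}-g$ with Dirichlet condition. This gives
\[
\|u^{1,0}\|_{H^4}\lesssim \nu^{-1}\bigl(\|\partial_t u^{1,0}\|_{H^2}+\|g\|_{H^2}\bigr).
\]
Bound $\|\bar u^2\|_{H^2}$ by $E_2$, and bound $\|\Delta(\bar u^2\partial_y u^{1,0},\bar u^3\partial_z u^{1,0})\|_{L^\infty L^2}$ by Lemma \ref{lemuu-00}. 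Elliptic regularity then upgrades the resulting $\Delta$-control to the full $H^2$ norm of $g$.
\end{itemize}

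\emph{Estimate for $E_{1,1}$.} Take the $L^\infty H^2$ / $L^2H^3$ energy estimate, implemented via $-\Delta$-type multipliers, for
\[
(\partial_t-\nu\Delta)u^{1,1}=-\bar u^2\partial_y u^{1,1}-\bar u^3\partial_z u^{1,1}-\overline{u_{\neq}\cdot\nabla u^1_{\neq}}.
\]
\begin{itemize}
\item[(a)] Testing with $\Delta^2 u^{1,1}$ and integrating by parts once puts the forcing in the form $\nu^{-1/2}\|\nabla(\cdot)\|_{L^2L^2}$, so that
\[
E_{1,1}\lesssim \|\bar u^1(0)\|_{H^2}+\nu^{-1/2}\|\nabla F\|_{L^2L^2},
\]
where $F$ is the right-hand side.
\item[(b)] The transport part contributes $\nu^{-1/2}\cdot\nu^{-1/2}E_2E_{1,1}=\nu^{-1}E_2E_{1,1}$ by the third estimate of Lemma \ref{lemuu-00}.
\item[(c)] For the nonzero-mode part, Lemma \ref{lemuu-11} gives $\|\nabla(u_{\neq}\cdot\nabla u_{\neq})\|_{L^2L^2}\lesssim\nu^{-5/6}E_3^2$, which yields the term $\nu^{-4/3}E_3^2$. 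The exponential weight in Lemma \ref{lemuu-11} only helps here.
\end{itemize}

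\emph{Main obstacle.} The boundary terms in the higher-order energy estimates are the delicate part, because the trace of $\Delta u$ on $y=\pm1$ is not zero in general. First I would try to do the estimate on $\partial_t u$, which vanishes on the boundary, and recover spatial regularity elliptically, as outlined above. Failing that, I would cite Propositions 11.1 and 11.2 of \cite{chenwz} directly, since the equations \eqref{equ10} and the norms $E_{1,0}$, $E_{1,1}$, $E_2$, $E_3$ coincide with those used there.
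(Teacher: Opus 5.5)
Your proposal is correct and matches the paper's proof. The paper likewise observes that the system \eqref{equ10} for $u^{1,0},u^{1,1}$ is identical to the one in \cite{chenwz}, with temperature entering only through $\bar u^2$, which $E_2$ already controls, and it simply defers to Proposition 12.1 there; your energy-estimate sketch is a sound filling-in of that argument. The ``main obstacle'' you flag does not actually arise: every forcing term in \eqref{equ10} vanishes on $y=\pm1$ because $\bar u^2,\bar u^3,u_{\neq}$ vanish there, so $\nu\Delta u^{1,j}$ equals $\partial_t u^{1,j}$ plus terms vanishing at the boundary, hence has zero trace for $t>0$, and the $\Delta$-level estimates for both $u^{1,1}$ and $\partial_t u^{1,0}$ carry no boundary terms.
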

\begin{proof}
Due to the equations for $\overline{u}^{1,j},j=0,1$ are the same as in \cite{chenwz}, then the proof can be achieved by following the same arguments as in Proposition 12.1 of \cite{chenwz}. We omit the details here.
\end{proof}

\subsection{Estimate of $E_{4,0}$}
In this subsection, we assume $0<\nu\leq \nu_0$, $\nu_0^{2/3}\leq 4\epsilon$, and then ${\rm e}^{\nu t}\leq {\rm e}^{4\epsilon \nu^{\frac{1}{3}}t}$.
\begin{proposition}\label{proE40}
It holds that
\begin{align*}
&E_{4,0}\leq C(\nu^{-1}E_2E_{4,0}+\nu^{-1}E_{3,0}E_{4,1}+\|\overline{\theta}(0)\|_{H^2}),\\
&\|{\rm e}^{\nu t}\bar{\theta}\|_{L^\infty L^2}^2+\nu\|{\rm e}^{\nu t}\nabla\bar{\theta}\|_{L^2L^2}^2 \leq C (\|\overline{\theta} (0)\|_{L^2}^2+ \nu^{-2}E_{3,0}^2E_{4,1}^2).
\end{align*}
\end{proposition}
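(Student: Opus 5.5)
The plan is to work directly with the equation for $\bar\theta$ in \eqref{0-mode},
\begin{align*}
\partial_t\bar\theta-\nu\Delta\bar\theta+\bar u\cdot\nabla\bar\theta+\overline{u_{\neq}\cdot\nabla\theta_{\neq}}=0,\qquad \bar\theta|_{y=\pm1}=0,
\end{align*}
and to run three energy estimates: at the levels $\bar\theta$, $\nabla\bar\theta$ and $\partial_z\nabla\bar\theta$. In each one the two nonlinear terms are treated as forcing. Since $\bar\theta$ does not depend on $x$, we have $\bar u\cdot\nabla\bar\theta=\bar u^2\partial_y\bar\theta+\bar u^3\partial_z\bar\theta$. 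By incompressibility we can also write $\overline{u_{\neq}\cdot\nabla\theta_{\neq}}=\partial_y\overline{u^2_{\neq}\theta_{\neq}}+\partial_z\overline{u^3_{\neq}\theta_{\neq}}$; this divergence form will be used at the lowest level.

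\textbf{Lowest level.} Test the equation with $\bar\theta$. The Poincaré inequality in $y$ applies because of the Dirichlet condition, and it gives the damping $\nu\|\nabla\bar\theta\|^2\ge c\nu\|\bar\theta\|^2$.
\begin{itemize}
\item The transport term $\bar u\cdot\nabla\bar\theta$ drops out, since $\bar u$ is divergence free in $(y,z)$ and vanishes at $y=\pm1$.
\item The nonzero-mode forcing is bounded by $\|u_{\neq}\cdot\nabla\theta_{\neq}\|_{L^2}\|\bar\theta\|_{L^2}$.
\end{itemize}
For the second statement, multiply by ${\rm e}^{2\nu t}$ and absorb the term $\nu\|{\rm e}^{\nu t}\bar\theta\|^2$ into the Poincaré damping, taking a slightly smaller constant. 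The forcing is then handled by Cauchy--Schwarz in time and Young's inequality:
\begin{align*}
\int {\rm e}^{2\nu t}\|u_{\neq}\cdot\nabla\theta_{\neq}\|\|\bar\theta\|\le \|{\rm e}^{\nu t}\bar\theta\|_{L^\infty L^2}\|{\rm e}^{\nu t}u_{\neq}\cdot\nabla\theta_{\neq}\|_{L^1L^2}.
\end{align*}
By Lemma \ref{lemuthe-11} and ${\rm e}^{\nu t}\le {\rm e}^{4\epsilon\nu^{1/3}t}$, the last factor is at most
\begin{align*}
\|{\rm e}^{-\epsilon' \nu^{1/3}t}\|_{L^2_t}\,\nu^{-1/2}E_{3,0}E_{4,1}\lesssim \nu^{-1/6}\nu^{-1/2}E_{3,0}E_{4,1}.
\end{align*}
This requires keeping a small amount of the exponential weight in reserve. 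That produces $\nu^{-4/3}E_{3,0}^2E_{4,1}^2$, which is even better than the claimed $\nu^{-2}$. Alternatively, use the divergence form: the forcing is bounded by $\|u_{\neq}\theta_{\neq}\|\|\nabla\bar\theta\|$, which is absorbed by $\nu\|\nabla\bar\theta\|^2$ at the cost of $\nu^{-1}\|u_{\neq}\theta_{\neq}\|^2_{L^2L^2}\le \nu^{-2}E_{3,0}^2E_{4,1}^2$. This is exactly the claimed bound. The $Y_0$ part of $\|\bar\theta\|$ follows in the same way without the weight.

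\textbf{Gradient levels.} For $\nabla\bar\theta$, test the equation with $-\Delta\bar\theta$. This is legitimate since $\bar\theta_t=0$ on the boundary. It gives $\frac12\frac{d}{dt}\|\nabla\bar\theta\|^2+\nu\|\Delta\bar\theta\|^2$, and the forcing is bounded by $\|F\|_{L^2}\|\Delta\bar\theta\|$, so
\begin{align*}
\|\nabla\bar\theta\|_{Y_0}^2\lesssim \|\nabla\bar\theta(0)\|^2+\nu^{-1}\|F\|^2_{L^2L^2}.
\end{align*}
For $\partial_z\nabla\bar\theta$, apply $\partial_z$, which commutes with the equation and keeps the Dirichlet condition. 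Testing with $-\Delta\partial_z\bar\theta$ gives the analogous bound with $\nu^{-1}\|\partial_zF\|^2_{L^2L^2}$. Here $F=\bar u\cdot\nabla\bar\theta+\overline{u_{\neq}\cdot\nabla\theta_{\neq}}$, and the two pieces are controlled as follows:
\begin{itemize}
\item Lemma \ref{uthe-00} gives $\|(1,\partial_z)\bar u\cdot\nabla\bar\theta\|_{L^2L^2}\le C\nu^{-1/2}E_2E_{4,0}$, so its contribution is $\nu^{-1/2}\cdot\nu^{-1/2}E_2E_{4,0}=\nu^{-1}E_2E_{4,0}$.
\item Lemma \ref{lemuthe-11} gives $\|(1,\partial_z)(u_{\neq}\cdot\nabla\theta_{\neq})\|_{L^2L^2}\le C\nu^{-1/2}E_{3,0}E_{4,1}$. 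Averaging in $x$ does not increase $L^2$ norms, so this contributes $\nu^{-1}E_{3,0}E_{4,1}$.
\end{itemize}
Summing the three levels and using $\|\overline{\theta}(0)\|_{H^1}+\|\partial_z\nabla\overline{\theta}(0)\|\lesssim\|\overline\theta(0)\|_{H^2}$ yields the first inequality.

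\textbf{Main obstacle.} The delicate step is justifying the integrations by parts at the $\partial_z\nabla\bar\theta$ level in the bounded channel. The term $\partial_z\bar\theta$ vanishes at $y=\pm1$, so $-\Delta\partial_z\bar\theta$ is an admissible test function and $\langle\partial_t\partial_z\bar\theta,-\Delta\partial_z\bar\theta\rangle=\frac12\frac{d}{dt}\|\nabla\partial_z\bar\theta\|^2$ with no boundary term. One must make sure we never differentiate in $y$ twice at top order, which is precisely why only $\partial_z\nabla\bar\theta$, and not the full $H^2$ norm, appears in $E_{4,0}$.
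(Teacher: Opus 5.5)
Your proposal is correct and follows essentially the same route as the paper. The paper also runs three energy estimates, testing with $\bar\theta$, with $-\Delta\bar\theta$ and with $\partial_z^2\Delta\bar\theta$; the last is equivalent to your testing the $\partial_z$-differentiated equation with $-\Delta\partial_z\bar\theta$. As you propose, it uses the divergence form at the lowest level to get $C\nu^{-1}\|u_{\neq}\theta_{\neq}\|_{L^2}^2$, absorbs the ${\rm e}^{2\nu t}$ weight by Poincaré, and bounds the forcing with Lemmas \ref{uthe-00} and \ref{lemuthe-11}. Your alternative $L^1_tL^2$ treatment of the weighted lowest level is also valid and gives the sharper factor $\nu^{-4/3}$, but it needs $\nu_0^{2/3}$ slightly below $4\epsilon$ to keep some ${\rm e}^{4\epsilon\nu^{1/3}t}$ weight in reserve, and the stated bound does not require it.
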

\begin{proof}
Recall that $\bar{\theta}$ enjoys
\begin{align}\label{eqthe0}
\begin{cases}
\partial_t\bar{\theta}-\nu\Delta \bar{\theta}=-\overline{u\cdot\nabla \theta}=-\bar{u}\cdot\nabla\bar{\theta}-\overline{u_{\neq}\cdot\nabla\theta_{\neq}}
,\\
\bar{\theta}|_{y=\pm1}=0, \bar{\theta}|_{t=0}=\bar{\theta}_{in}=: \bar{\theta} (0),
\end{cases}
\end{align}
the basic energy estimate yields that
\begin{align*}
\frac{1}{2}\frac{d}{dt}\|\bar{\theta}\|_{L^2}^2+\nu\|\nabla\bar{\theta}\|_{L^2}^2= \langle-\bar{u}\cdot\nabla\bar{\theta}-\overline{u_{\neq}\cdot\nabla\theta_{\neq}},\overline{\theta}\rangle
\leq  \frac{1}{4}\nu \|\nabla \bar{\theta}\|_{L^2}^2+C\nu^{-1}\|u_{\neq}\theta_{\neq}\|_{L^2}^2,
\end{align*}
which gives that
\begin{align}
\frac{d}{dt}\|\bar{\theta}\|_{L^2}^2+\nu\|\nabla\bar{\theta}\|_{L^2}^2\leq C\nu^{-1}\|u_{\neq}\theta_{\neq}\|_{L^2}^2\label{eqthe0-1}
\end{align}
and
\begin{align*}
\frac{d}{dt}({\rm e}^{2\nu t}\|\bar{\theta}\|_{L^2}^2)+\nu {\rm e}^{2\nu t}\|\nabla\bar{\theta}\|_{L^2}^2\leq C\nu^{-1} {\rm e}^{2\nu t}\|u_{\neq}\theta_{\neq}\|_{L^2}^2+2\nu {\rm e}^{2\nu t}\|\bar{\theta}\|_{L^2}^2.
\end{align*}
Note that $\overline{\theta}|_{y=\pm 1}=0,$ it follows from the fact $\|\nabla f\|_{L^2}^2\geq \left(\frac{\pi}{2}\right)^2\|f\|_{L^2}^2$ that
\begin{align*}
\|{\rm e}^{\nu t}\bar{\theta}\|_{L^\infty L^2}^2+\nu\|{\rm e}^{\nu t}\nabla \bar{\theta}\|_{L^2L^2}^2\leq & \|\overline{\theta}(0)\|_{L^2}^2+C \nu^{-1} \|{\rm e}^{ \nu t}u_{\neq}\theta_{\neq}\|_{L^2L^2}^2\\
\leq& \|\overline{\theta}(0)\|_{L^2}^2+C\nu^{-1}  \|{\rm e}^{4\epsilon \nu^{\frac{1}{3}}t}u_{\neq}\theta_{\neq}\|_{L^2L^2}^2\\
 \lesssim&  \|\overline{\theta} (0)\|_{L^2}^2+ \nu^{-2}E_{3,0}^2E_{4,1}^2,
\end{align*}
where we have used Lemma \ref{lemuthe-11} in the last inequality.

Multiplying \eqref{eqthe0} by $-\Delta \bar{\theta}$ and $\partial_z^2\Delta \bar{\theta}$, respectively, it follows from the integration by part that
\begin{align}
\frac{1}{2}\frac{d}{dt}\|\nabla\bar{\theta}\|_{L^2}^2+\nu\|\Delta \bar{\theta}\|_{L^2}^2\leq \left(\|\bar{u}\cdot\nabla\bar{\theta}\|_{L^2}+\|\overline{u_{\neq}\cdot\nabla\theta_{\neq}}\|_{L^2}\right)\|\Delta \overline{\theta}\|_{L^2} \label{eqthe0-2}
\end{align}
and
\begin{align}
\frac{1}{2}\frac{d}{dt}\|\partial_z\nabla \bar{\theta}\|_{L^2}^2+\nu\|\partial_z\Delta \bar{\theta}\|_{L^2}^2\leq \left(\|\partial_z(\bar{u}\cdot\nabla\bar{\theta})\|_{L^2}+\|\partial_z(\overline{u_{\neq}\cdot\nabla\theta_{\neq}})\|_{L^2}\right)\|\partial_z\Delta \bar{\theta}\|_{L^2}.\label{eqthe0-3}
\end{align}
Summing \eqref{eqthe0-1}-\eqref{eqthe0-3}, and using Lemma \ref{uthe-00} and Lemma \ref{lemuthe-11}, we obtain that
\begin{align*}
E_{4,0}^2\leq& C\|\overline{\theta}(0)\|_{H^2}+ C\nu^{-1}\left(\|u_{\neq}\theta_{\neq}\|_{L^2L^2}^2+\|(\partial_z,1)(\bar{u}\cdot\nabla\bar{\theta},\overline{u_{\neq}\cdot\nabla\theta_{\neq}})\|_{L^2L^2}^2\right)\\ \leq & C\left(\|\bar{\theta}(0)\|_{H^2}^2+\nu^{-2}E_2^2E_{4,0}^2+\nu^{-2}E_{3,0}^2E_{4,1}^2\right).
\end{align*}

The proof is completed.
\end{proof}

\subsection{Estimate of $E_2$}
\begin{proposition}\label{proE2}
It holds that
\begin{align*}
E_2\leq C(1+\nu^{-1}E_2)(\|u(0)\|_{H^2}+\nu^{-1}E_3^2+\nu^{-1}\|\bar{\theta}(0)\|_2+ \nu^{-2}E_{3,0}E_{4,1}).
\end{align*}
\end{proposition}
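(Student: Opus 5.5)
This estimate will follow Proposition 12.2 of \cite{chenwz} (the $E_2$ estimate for the Navier--Stokes case). The new ingredient is the buoyancy forcing $\bar\theta$ in the $\bar u^2$ equation of \eqref{0-mode}, together with the pressure $P^\theta$. The plan is to reduce the zero-mode system to a scalar problem for $\bar u^2$, since $\bar u^3$ is then recovered from $\partial_y\bar u^2+\partial_z\bar u^3=0$. Concretely, applying $\partial_z$ to the $\bar u^2$ equation and $\partial_y$ to the $\bar u^3$ equation and subtracting eliminates $\overline P$. This gives a fourth-order equation for $\bar u^2$:
\begin{align*}
(\partial_t-\nu\Delta)\Delta\bar u^2 = \partial_z^2\bar\theta + \partial_z^2 N_2 - \partial_y\partial_z N_3,
\qquad \bar u^2=\partial_y\bar u^2=0 \ \text{on } y=\pm1,
\end{align*}
where $N_j=(\bar u^2\partial_y+\bar u^3\partial_z)\bar u^j+\overline{u_{\neq}\cdot\nabla u^j_{\neq}}$. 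This is the same Stokes-type problem already treated in \cite{chenwz}, now with one extra source $\partial_z^2\bar\theta$. I would therefore take from that proof the linear space-time bound for $E_2$, with the zero-mode part of \cite{chenwz} quoted rather than redone. That bound controls $E_2$ by $\|u(0)\|_{H^2}$ plus $\nu^{-1/2}$ times the $L^2L^2$ norms of the forcing, in divergence form $\nabla\cdot(\cdot)$ or $\partial_z(\cdot)$ (and for the weighted $\Delta\bar u^3$ piece, the $(\nu^{2/3}+\nu t)^{1/2}$-weighted $\nabla$ of the forcing).

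For the nonlinear terms I would split $N_j$ into two parts. The zero-zero part $(\bar u^2\partial_y+\bar u^3\partial_z)\bar u^j$ is bounded through Lemma \ref{lemu0-2}, which gives $\|\nabla(\bar u^k f)\|_{L^2}\le CE_2\|f\|_{H^1}$; this yields a contribution $\nu^{-1/2}E_2\cdot\nu^{-1/2}E_2$, that is, the factor $\nu^{-1}E_2\cdot E_2$. The nonzero-nonzero part $\overline{u_{\neq}\cdot\nabla u_{\neq}}$ is controlled by Lemma \ref{lemuu-11}, including its weighted last line, which gives $\nu^{-1/2}\cdot\nu^{-1/2}E_3^2=\nu^{-1}E_3^2$. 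Both are exactly as in \cite{chenwz}.

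The new step is the temperature forcing. In the energy identity for $\Delta\bar u^2$ (testing with $\bar u^2$ and with $\partial_t\bar u^2$, and using the no-slip conditions), the term $\partial_z^2\bar\theta$ pairs as $\langle\partial_z\bar\theta,\partial_z\bar u^2\rangle$. To exploit the linear growth structure of Remark \ref{rk1} rather than an exponential loss, I would move all $t$-growth onto $\bar\theta$. The key tool is the decay bound of Proposition \ref{proE40}:
\begin{align*}
\nu^{1/2}\|{\rm e}^{\nu t}\nabla\bar\theta\|_{L^2L^2}\lesssim \|\bar\theta(0)\|_{L^2}+\nu^{-1}E_{3,0}E_{4,1}.
\end{align*}
Since $\bar u^2$ is measured in $L^\infty$-type norms in $E_2$, I bound
\begin{align*}
\int_0^T|\langle\partial_z\bar\theta,\partial_z\partial_t\bar u^2\rangle|\,dt\le \|\nabla\bar\theta\|_{L^2L^2}\,\|\partial_t\nabla\bar u^2\|_{L^2L^2}.
\end{align*}
The second factor is $\le\nu^{1/2}E_2$, and the first is $\le\nu^{-1/2}(\|\bar\theta(0)\|_{L^2}+\nu^{-1}E_{3,0}E_{4,1})$. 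After Young's inequality this gives the contribution $\nu^{-1}\|\bar\theta(0)\|+\nu^{-2}E_{3,0}E_{4,1}$ to $E_2$. The $\bar u^3$ and weighted $\Delta\bar u^3$ components are then recovered from incompressibility, $\partial_z\bar u^3=-\partial_y\bar u^2$, together with the $\bar u^3$ equation. In that equation $\partial_z\overline{P^\theta}$ appears; it is expressed via $\Delta P^\theta=\partial_y\theta$ with Neumann data and bounded by $\|\bar\theta\|_{L^2}$, giving the same contribution.

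The main obstacle is this temperature term. The naive bound $\|\bar\theta\|_{L^1_tL^2}\lesssim\nu^{-1}\|\bar\theta(0)\|$ would only give the loss $\nu^{-1}$ in the lowest-order norm, and the higher norms in $E_2$ (e.g.\ $\nu^{-1/2}\|\partial_t\nabla\bar u^2\|_{L^2L^2}$) require carefully pairing the $\nu^{1/2}$ dissipation of $\bar\theta$ against them. One factor needs care: the overall multiplier $(1+\nu^{-1}E_2)$ in front of the source terms. It arises because the nonlinear transport terms $\bar u\cdot\nabla\bar u^j$ also act on the solution driven by the initial data and by $\bar\theta$. This is handled by absorbing via the bound $E_2\le C(1+\nu^{-1}E_2)\times(\text{sources})$, exactly as in \cite{chenwz}.
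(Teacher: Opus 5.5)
Your proposal is correct and is essentially the paper's argument. Like the paper, you work with the $\Delta\bar u^2$ equation forced by $\partial_z^2\bar\theta$ (the paper keeps $\Delta\partial_y\overline{P^{NL}}$ where you eliminate the pressure, which is equivalent), test it against $\bar u^2$ and $\partial_t\bar u^2$, and obtain the $\bar u^3$ pieces from the $\bar u^3$ equation with $\Delta\overline{P^\theta}=\partial_y\bar\theta$ handled elliptically, so that temperature enters only through $\nu^{-1}\|{\rm e}^{\nu t}\nabla\bar\theta\|_{L^2L^2}^2$, which is then replaced by the second bound of Proposition \ref{proE40}.

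Two minor points. First, the nonlinear terms in your fourth-order equation should carry the opposite signs, $-\partial_z^2N_2+\partial_y\partial_zN_3$. Second, the factor $(1+\nu^{-1}E_2)$ arises as in Lemma \ref{lemE21}: $\bar u\cdot\nabla\bar u^j$ is bounded by $E_2$ times lower-order norms of $\bar u$ that are already controlled by the sources, not by $\nu^{-1}E_2^2$ as your second paragraph suggests.
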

To prove the Proposition \ref{proE2}, we need the following lemmas.

\begin{lemma}\label{lemE21}
It holds that
\begin{align*}
&\|{\rm e}^{\nu t}(\bar{u}^2, \bar{u}^3)\|_{L^\infty L^2}^2+\nu\|{\rm e}^{\nu t}(\nabla \bar{u}^2,\nabla\bar{u}^3)\|_{L^2L^2}^2\\
&\qquad\qquad\qquad\qquad \leq C\left(\|u(0)\|_{H^2}^2+\nu^{-2}E_3^4\right)+C\nu^{-1}\|{\rm e}^{\nu t}\nabla \bar{\theta}\|_{L^2L^2}^2,\\
&\|{\rm e}^{\nu t}\nabla(\bar{u}^2, \bar{u}^3)\|_{L^\infty L^2}^2+\nu^{-1}\|{\rm e}^{\nu t}(\partial_t \bar{u}^2,\partial_t\bar{u}^3)\|_{L^2L^2}^2\\
&\qquad\qquad\leq C\left(\|u(0)\|_{H^2}^2+\nu^{-2}E_3^4+\nu^{-1}\|{\rm e}^{\nu t}\nabla \bar{\theta}\|_{L^2L^2}^2\right)(\nu^{-2}E_2^2+1).
\end{align*}
\end{lemma}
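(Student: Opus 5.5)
We work with the zero-mode system \eqref{0-mode} for $(\bar u^2,\bar u^3)$. Write $\overline{P}=\overline{P^{NL}}+\overline{P^\theta}$; note $\overline{P^L}=0$, since $\Delta P^L=-2\partial_xu^2$ has vanishing $x$-average and $\overline{\Delta u^2}|_{y=\pm1}=0$ by the boundary conditions. Then
\begin{align*}
\partial_t\bar u^j-\nu\Delta\bar u^j+\partial_j\overline{P}+(\bar u^2\partial_y+\bar u^3\partial_z)\bar u^j+\overline{u_{\neq}\cdot\nabla u^j_{\neq}}=\delta_{j2}\bar\theta,\qquad j=2,3,
\end{align*}
with $\partial_y\bar u^2+\partial_z\bar u^3=0$ and $\bar u^2=\bar u^3=0$ on $y=\pm1$.

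\textbf{First estimate.} Take the $L^2$ inner product with $(\bar u^2,\bar u^3)$. The pressure term vanishes by the 2-D divergence-free condition and the boundary conditions. The transport term $(\bar u^2\partial_y+\bar u^3\partial_z)\bar u^j$ also vanishes, for the same reason. For the nonzero-mode interaction, integrate by parts using $\nabla\cdot u_{\neq}=0$:
\[
\langle\overline{u_{\neq}\cdot\nabla u^j_{\neq}},\bar u^j\rangle=-\langle\overline{u_{\neq}u^j_{\neq}},\nabla\bar u^j\rangle\le \tfrac{\nu}{8}\|\nabla\bar u^j\|^2+C\nu^{-1}\||u_{\neq}|^2\|^2.
\]
For the buoyancy term, $\bar u^2$ vanishes on the boundary, so Poincaré in $y$ gives $\|\bar\theta\|_{L^2}\le C\|\nabla\bar\theta\|_{L^2}$ and
\[
\langle\bar\theta,\bar u^2\rangle\le C\|\nabla\bar\theta\|\,\|\nabla\bar u^2\|\le \tfrac{\nu}{8}\|\nabla\bar u^2\|^2+C\nu^{-1}\|\nabla\bar\theta\|^2.
\]
Multiply the energy identity by ${\rm e}^{2\nu t}$. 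The extra term $2\nu{\rm e}^{2\nu t}\|\bar u\|^2$ is absorbed by $\nu\|\nabla\bar u\|^2$ using the Poincaré constant $(\pi/2)^2>2$, after reserving part of the dissipation. Integrating in time, and using ${\rm e}^{\nu t}\le{\rm e}^{4\epsilon\nu^{1/3}t}$ together with Lemma \ref{lemuu-11} to bound $\nu^{-1}\|{\rm e}^{\nu t}|u_{\neq}|^2\|_{L^2L^2}^2\le C\nu^{-2}E_3^4$, gives the first inequality.

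\textbf{Second estimate.} Test the equations with $\partial_t\bar u^j$. The pressure term again drops out, since $\partial_t\bar u$ is divergence free and vanishes on the boundary. This gives
\[
\tfrac{\nu}{2}\tfrac{d}{dt}\|\nabla\bar u\|^2+\|\partial_t\bar u\|^2\le \|N\|\,\|\partial_t\bar u\|+\|\bar\theta\|\,\|\partial_t\bar u^2\|,
\]
where $N=(\bar u^2\partial_y+\bar u^3\partial_z)\bar u+\overline{u_{\neq}\cdot\nabla u_{\neq}}$. After Young's inequality, divide by $\nu$ and multiply by ${\rm e}^{2\nu t}$. The commutator term $2\nu{\rm e}^{2\nu t}\|\nabla\bar u\|^2$ is controlled by the first estimate. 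We need, in $L^2L^2$ with weight ${\rm e}^{\nu t}$:
\begin{itemize}
\item $\nu^{-1}\|\overline{u_{\neq}\cdot\nabla u_{\neq}}\|^2\lesssim\nu^{-2}E_3^4$, by Lemma \ref{lemuu-11};
\item $\nu^{-1}\|\bar\theta\|^2\lesssim\nu^{-1}\|\nabla\bar\theta\|^2$, by Poincaré;
\item $\nu^{-1}\|(\bar u^2\partial_y+\bar u^3\partial_z)\bar u\|^2\le\nu^{-1}\|\bar u\|_{L^\infty}^2\|\nabla\bar u\|^2\lesssim\nu^{-1}E_2^2\|{\rm e}^{\nu t}\nabla\bar u\|_{L^2L^2}^2$, by Lemma \ref{lemu0-2}.
\end{itemize}
In the third bound, the first estimate gives $\|{\rm e}^{\nu t}\nabla\bar u\|_{L^2L^2}^2\le\nu^{-1}\times(\text{first-estimate RHS})$. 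This produces the factor $\nu^{-2}E_2^2$ multiplying the first-estimate RHS, which explains the product structure of the stated bound. Initial data are handled by $\|\nabla\bar u(0)\|\le\|u(0)\|_{H^2}$.

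\textbf{Main obstacle.} The delicate point is the pressure and boundary bookkeeping. One must confirm that $\overline{P^L}$ contributes nothing: its Neumann condition involves $\nu\Delta u^2$, whose $x$-average vanishes on the boundary because $\bar u^2=\partial_y\bar u^2=0$ there. One must also confirm that $\overline{P^\theta}$ is killed by testing against divergence-free fields vanishing on the boundary. The remaining steps are routine.
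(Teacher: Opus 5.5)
Your proposal is correct and follows essentially the paper's route: an $L^2$ energy estimate with weight ${\rm e}^{2\nu t}$ (Poincar\'e absorbing the weight), then testing with $\partial_t\bar u^2,\partial_t\bar u^3$ and feeding the first bound into the term $\nu^{-1}E_2^2\|{\rm e}^{\nu t}\nabla\bar u\|_{L^2L^2}^2$. The only difference is in the buoyancy term: the paper bounds it by $C\|\partial_z\bar\theta\|_{L^2}\|\partial_z\bar u^2\|_{L^2}$, subtracting the $z$-mean of $\bar\theta$ and using $\int_{\mathbb{T}}\bar u^2\,dz=0$, while your Poincar\'e-in-$y$ bound already suffices for the stated estimate.

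One side remark of yours is wrong, though harmless. $\overline{P^L}$ need not vanish, because on $y=\pm1$ one has $\overline{\Delta u^2}=\partial_y^2\bar u^2=-\partial_z\partial_y\bar u^3$, which is generically nonzero (no-slip does not force $\partial_y\bar u^3=0$). You never actually use this claim: the full pressure gradient is orthogonal to both test fields $\bar u$ and $\partial_t\bar u$, since they are divergence free in $(y,z)$ and vanish at $y=\pm1$.
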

\begin{proof}
Let us recall that $\bar{u}^j (j=2,3)$ enjoy
\begin{align}\label{equ023}
\begin{cases}
\partial_t\bar{u}^2-\nu\Delta \bar{u}^2+\overline{u\cdot \nabla u^2}+\partial_y\overline{P^{NL}}+\partial_y\overline{P^{\theta}}=\bar{\theta},\\
\partial_t\bar{u}^3-\nu\Delta \bar{u}^3+\overline{u\cdot \nabla u^3}+\partial_z\overline{P^{NL}}+\partial_z\overline{P^{\theta}}=0,\\
\bar{u}^j|_{y=\pm1}=0, \partial_y\bar{u}^2|_{y=\pm1}=0, \bar{u}^j|_{t=0}=\bar{u}^j(0).
\end{cases}
\end{align}
As $\partial_y\bar{u}^2+\partial_z\bar{u}^3=0$ and $(\overline{u}^2,\partial_y \overline{u}^2)|_{y=\pm 1}=(0,0)$, it holds that $\int_{\mathbb{T}}\bar{u}^2dz=0$.
In addition, the standard energy argument gives that
\begin{align*}
&\frac{1}{2}\frac{d}{dt}\left(\|\bar{u}^2\|_{L^2}^2+\|\bar{u}^3\|_{L^2}^2\right)+\nu\left(\|\nabla \bar{u}^2\|_{L^2}^2+\|\nabla\bar{u}^3\|_{L^2}^2\right)\\
=&\langle \bar{\theta},\bar{u}^2\rangle-\langle \overline{u_{\neq}\cdot\nabla u_{\neq}^2},\bar{u}^2\rangle-
\langle \overline{u_{\neq}\cdot\nabla u_{\neq}^3},\bar{u}^3\rangle\\
\leq& \||u_{\neq}|^2\|_{L^2}\left(\|\nabla \bar{u}^2\|_{L^2}+\|\nabla \bar{u}^3\|_{L^2}\right)+\|\partial_z\bar{\theta}\|_{L^2} \|\partial_z\bar{u}^2\|_{L^2}\\
\leq& \frac{1}{2}\nu \left(\|\nabla \bar{u}^2\|_{L^2}^2+\|\nabla \bar{u}^3\|_{L^2}^2\right)+C\nu^{-1}\left(\||u_{\neq}|^2\|_{L^2}^2+\|\partial_z\bar{\theta}\|_{L^2}^2\right),
\end{align*}
where we use the facts
\begin{align*}
&\int_{\Omega} \theta_{0,z}(t,y) \overline{u}^2 (t,y,z) dx dy dz=0,\\
& \langle \bar{\theta},\bar{u}^2\rangle
=\langle \bar{\theta}-\theta_{0,z},\bar{u}^2\rangle
\leq \|\bar{\theta}-\theta_{0,z}\|_{L^2} \|\bar{u}^2\|_{L^2}\leq C\|\partial_z\bar{\theta}\|_{L^2}\|\partial_z \bar{u}^2\|_{L^2},
\end{align*}
here $\theta_{0,z}:=\int_{\mathbb{T}}\bar{\theta}dz$.

Using $\|\nabla \bar{u}^j\|_{L^2}^2\geq \left(\pi/2\right)^2\|\bar{u}^j\|_{L^2}^2$, we have
\begin{align*}
\frac{d}{dt}\left(\|\bar{u}^2\|_{L^2}^2+\|\bar{u}^3\|_{L^2}^2\right)+\nu\left(\|\nabla \bar{u}^2\|_{L^2}^2+\|\nabla\bar{u}^3\|_{L^2}^2\right)
\leq& C\nu^{-1}\left(\||u_{\neq}|^2\|_{L^2}^2+\|\partial_z\bar{\theta}\|_{L^2}^2\right)
\end{align*}
and
\begin{align*}
&\frac{d}{dt}\left({\rm e}^{2\nu t}\|\bar{u}^2\|_{L^2}^2+{\rm e}^{2\nu t}\|\bar{u}^3\|_{L^2}^2\right)+\nu\left({\rm e}^{2\nu t}\|\nabla \bar{u}^2\|_{L^2}^2+{\rm e}^{2\nu t}\|\nabla\bar{u}^3\|_{L^2}^2\right)\\
\leq& C\nu^{-1}\left( {\rm e}^{2\nu t}\||u_{\neq}|^2\|_{L^2}^2+{\rm e}^{2\nu t}\|\partial_z\bar{\theta}\|_{L^2}^2\right)+\nu {\rm e}^{2\nu t}\left(\|\bar{u}^2\|_{L^2}^2+\|\bar{u}^3\|_{L^2}^2\right),
\end{align*}
which, together Lemma \ref{lemuu-11} and ${\rm e}^{\nu t}\leq {\rm e}^{4\epsilon \nu^{1/3}t}$, gives that
\begin{align*}
\|{\rm e}^{\nu t}(\bar{u}^2, \bar{u}^3)\|_{L^\infty L^2}^2&+\nu\|{\rm e}^{\nu t}(\nabla \bar{u}^2,\nabla\bar{u}^3)\|_{L^2L^2}^2\\
&\leq C\left(\|u(0)\|_{L^2}^2+\nu^{-2}E_3^4\right)+C\nu^{-1}\|{\rm e}^{\nu t}\nabla \bar{\theta}\|_{L^2L^2}^2.
\end{align*}
The first estimate follows.

 Multiplying $\eqref{equ023}_1,\eqref{equ023}_2$ by $\partial_t\bar{u}^2$ and $\partial_t\bar{u}^3$, respectively, it follows from the integration by part that
\begin{align*}
&\frac{\nu}{2}\frac{d}{dt}\left(\|\nabla \bar{u}^2\|_{L^2}^2+\|\nabla\bar{u}^3\|_{L^2}^2\right)+\|\partial_t(\bar{u}^2,\bar{u}^3)\|_{L^2}^2\\
\leq& \|(\bar{u}^2\partial_y+\bar{u}^3\partial_z)\bar{u}^2\|_{L^2}\|\partial_t\bar{u}^2\|_{L^2}
+\|(\bar{u}^2\partial_y+\bar{u}^3\partial_z)\bar{u}^3\|_{L^2} \|\partial_t\bar{u}^3\|_{L^2}+\|\partial_z\bar{\theta}\|_{L^2} \|\partial_t\bar{u}^2\|_{L^2}\\
&+\|u_{\neq}\cdot\nabla u_{\neq}^2\|_{L^2} \|\partial_t\bar{u}^2\|_{L^2}
+\|u_{\neq}\cdot\nabla u_{\neq}^3\|_{L^2} \|\partial_t\bar{u}^3\|_{L^2}.
\end{align*}
By Lemmas \ref{lemu0-2} and Lemma \ref{lemuu-11}, for $j\in \{2,3\}$, we have
\begin{align*}
&\|{\rm e}^{\nu t}(\bar{u}^2\partial_y+\bar{u}^3\partial_z)\bar{u}^j\|_{L^2}
\leq \|(\bar{u}^2,\bar{u}^3)\|_{L^\infty} \|{\rm e}^{\nu t}\nabla \bar{u}^j\|_{L^2} \leq CE_2\|{\rm e}^{\nu t}\nabla \bar{u}^j\|_{L^2},\\
&\|{\rm e}^{\nu t}(u_{\neq}\cdot\nabla u_{\neq})\|_{L^2L^2}^2\leq
\|{\rm e}^{4\epsilon \nu^{\frac{1}{3}} t}(u_{\neq}\cdot\nabla u_{\neq})\|_{L^2L^2}^2\leq C\nu^{-1}E_3^4.
\end{align*}
Therefore, we obtain
\begin{align*}
&\|{\rm e}^{\nu t}\nabla(\bar{u}^2, \bar{u}^3)\|_{L^\infty L^2}^2+\nu^{-1}\|{\rm e}^{\nu t}(\partial_t \bar{u}^2,\partial_t\bar{u}^3)\|_{L^2L^2}^2\\
\leq &\|\bar{u}(0)\|_{H^1}^2+C\nu^{-1}\left(\| {\rm e}^{\nu t}(\bar{u}^2\partial_y+\bar{u}^3\partial_z)\bar{u}^j\|_{L^2L^2}^2+\| {\rm e}^{\nu t} \partial_z\bar{\theta}\|_{L^2L^2}^2
+\|{\rm e}^{\nu t}(u_{\neq}\cdot\nabla u_{\neq})\|_{L^2L^2}^2\right)\\
\leq& \|u(0)\|_{H^2}^2+C\nu^{-1}E_2^2\|{\rm e}^{\nu t}\nabla \bar{u}^j\|_{L^2L^2}^2+C\nu^{-1}\| {\rm e}^{\nu t} \partial_z\bar{\theta}\|_{L^2L^2}^2
+C\nu^{-2}E_3^4\\
\leq& C(1+\nu^{-2}E_2^2)\left(\|u(0)\|_{H^2}^2+\nu^{-2}E_3^4+\nu^{-1}\|{\rm e}^{\nu t}\nabla \bar{\theta}\|_{L^2L^2}^2\right),
\end{align*}
which completes the proof.
\end{proof}

\begin{lemma}\label{lemE22}
It holds that
\begin{align*}
&\|{\rm e}^{\nu t}\Delta \bar{u}^2\|_{L^\infty L^2}^2+\nu^{-1}\|{\rm e}^{\nu t}\nabla \partial_t\bar{u}^2\|_{L^2 L^2}^2\\
\leq& C \left(\|u(0)\|_{H^2}^2+\nu^{-2}E_3^4+\nu^{-1}\|{\rm e}^{\nu t}\nabla \bar{\theta}\|_{L^2L^2}^2\right)(\nu^{-2}E_2^2+1).
\end{align*}
\end{lemma}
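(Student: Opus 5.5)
The plan is to derive an evolution equation for $\bar u^2$ in which the pressure is absent, and then run a weighted energy estimate on it, tested against $\partial_t\bar u^2$. This follows the route of the corresponding lemma in \cite{chenwz} for the Navier--Stokes case. The only new ingredient is the buoyancy term $\bar\theta$, together with the pressure part $P^\theta$ that it generates.

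\textbf{Step 1: eliminate the pressure.} Since $\partial_y\bar u^2+\partial_z\bar u^3=0$, I introduce a stream function $\bar u^2=\partial_z\psi$, $\bar u^3=-\partial_y\psi$. Applying $\partial_z$ to $\eqref{equ023}_2$... more concretely, applying $\mathrm{curl}$ in $(y,z)$ to \eqref{equ023} and then $\partial_z$ removes both $P^{NL}$ and $P^\theta$. This yields
\begin{align*}
(\partial_t-\nu\Delta)\Delta\bar u^2=\partial_z^2\bar\theta-\partial_z^2\,\overline{u\cdot\nabla u^2}+\partial_y\partial_z\,\overline{u\cdot\nabla u^3},
\end{align*}
with boundary conditions $\bar u^2=\partial_y\bar u^2=0$ on $y=\pm1$. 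This is the zero-mode analogue of the first equation in \eqref{non0-mode}.

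\textbf{Step 2: the energy identity.} I pair the equation with $-\mathrm{e}^{2\nu t}\partial_t\bar u^2$ and integrate by parts. The term $\langle\partial_t\Delta\bar u^2,\partial_t\bar u^2\rangle$ gives $-\|\nabla\partial_t\bar u^2\|^2$, which uses $\partial_t\bar u^2=0$ on the boundary. The term $\nu\langle\Delta^2\bar u^2,\partial_t\bar u^2\rangle$ gives $\frac{\nu}{2}\frac{d}{dt}\|\Delta\bar u^2\|^2$; here the boundary terms vanish because $\partial_t\bar u^2=\partial_t\partial_y\bar u^2=0$ at $y=\pm1$. After dividing by $\nu$, this produces
\begin{align*}
\tfrac12\tfrac{d}{dt}\|\mathrm{e}^{\nu t}\Delta\bar u^2\|^2+\nu^{-1}\|\mathrm{e}^{\nu t}\nabla\partial_t\bar u^2\|^2
\le \nu\|\mathrm{e}^{\nu t}\Delta\bar u^2\|^2+\nu^{-1}\big|\langle \text{RHS},\mathrm{e}^{2\nu t}\partial_t\bar u^2\rangle\big|.
\end{align*}
In the right-hand side, each term carries at least one $z$- or $y$-derivative that can be moved onto $\partial_t\bar u^2$. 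This bounds the forcing contribution by
\begin{align*}
\big(\|\partial_z\bar\theta\|+\|\nabla(\bar u\cdot\nabla\bar u^j)\|+\|\nabla\overline{u_{\neq}\cdot\nabla u^j_{\neq}}\|\big)\,\|\nabla\partial_t\bar u^2\|.
\end{align*}
Absorbing the factor $\|\nabla\partial_t\bar u^2\|$ into the left-hand side, the forcing terms enter with a weight $\nu^{-1}$ in $L^2L^2$.

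\textbf{Step 3: controlling the forcing terms.} There are three pieces.
\begin{itemize}
\item[(a)] The $\bar\theta$ term contributes $\nu^{-1}\|\mathrm{e}^{\nu t}\nabla\bar\theta\|_{L^2L^2}^2$, which appears directly on the right-hand side of the statement.
\item[(b)] The nonzero-mode interactions are bounded by Lemma~\ref{lemuu-11}, using $\mathrm{e}^{\nu t}\le \mathrm{e}^{4\epsilon\nu^{1/3}t}$: $\|\nabla(u_{\neq}\cdot\nabla u^2_{\neq})\|^2\lesssim\nu^{-1}E_3^4$, giving $\nu^{-2}E_3^4$. For the $u^3$ component, the required derivative is $\partial_y$ applied to $\overline{u_{\neq}\cdot\nabla u^3_{\neq}}$. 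I plan to avoid it by integrating by parts with $\partial_z$ instead, pairing $\partial_z\overline{u\cdot\nabla u^3}$ against $\partial_y\partial_t\bar u^2$; Lemma~\ref{lemuu-11} controls $\partial_z(u_{\neq}\cdot\nabla u^3_{\neq})$.
\item[(c)] The zero-mode transport terms $\nabla(\bar u^j\partial_j\bar u^k)$ are bounded by Lemma~\ref{lemu0-2} as $E_2\|\nabla\bar u\|_{H^1}$. Their $L^2L^2$ norm is then controlled by $\nu^{-1}E_2^2\|\mathrm{e}^{\nu t}\nabla\bar u\|_{H^1}^2$. The $H^1$ part of this is handled by the $\nu\|\nabla^2\bar u\|^2$ dissipation, through Lemma~\ref{lemE21} and elliptic regularity for $\Delta\bar u^2$. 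This produces the factor $(\nu^{-2}E_2^2+1)$.
\end{itemize}

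\textbf{Main obstacle.} The delicate point is item (c), the quadratic zero-mode term together with $\partial_y$ falling on the $u^3$ equation. It requires $\|\mathrm{e}^{\nu t}\Delta\bar u^3\|$-type control, which $E_2$ provides only with the weight $\min\{(\nu^{2/3}+\nu t)^{1/2},1-y^2\}$. My first approach is to use the pressure-free $\partial_z$ form from (b) together with the divergence-free structure. This should reduce every term to $\partial_z\bar u^3=-\partial_y\bar u^2$ and to $\nabla\bar u^2$ quantities, which are controlled by Lemma~\ref{lemE21} and the quantity being estimated.

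Finally, I integrate in time using $\nu\|\Delta\bar u^2\|^2$ and the first estimate of Lemma~\ref{lemE21} to control the term $\nu\|\mathrm{e}^{\nu t}\Delta\bar u^2\|^2$, and bound $\|\Delta\bar u^2(0)\|$ by $\|u(0)\|_{H^2}$.
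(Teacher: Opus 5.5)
Your scheme is the paper's. You use the same fourth-order equation: your pressure-free form is the paper's after substituting $\Delta\bar P^{NL}=-\partial_y\overline{u\cdot\nabla u^2}-\partial_z\overline{u\cdot\nabla u^3}$. You also use the same test function $-\mathrm{e}^{2\nu t}\partial_t\bar u^2$, and (a), (b) are fine, including moving $\partial_y$ onto the test function for the $u^3$-nonlinearity.

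The gap is in your last step. The time weight produces $2\nu\|\mathrm{e}^{\nu t}\Delta\bar u^2\|_{L^2L^2}^2$ on the right, and nothing you have controls it. The $\partial_t$-test gives no dissipation at the level of $\Delta\bar u^2$, and Lemma \ref{lemE21} only bounds $\nu\|\mathrm{e}^{\nu t}\nabla\bar u^2\|_{L^2L^2}^2$, one derivative short. You need a separate lower-order estimate:
\begin{itemize}
\item test the same equation with $-\mathrm{e}^{2\nu t}\bar u^2$; the boundary terms vanish because $\bar u^2=\partial_y\bar u^2=0$;
\item move all derivatives of the forcing onto $\bar u^2$ and absorb them into $\nu\|\Delta\bar u^2\|^2$ via $\|\nabla^2\bar u^2\|\lesssim\|\Delta\bar u^2\|$;
\item only then use Lemma \ref{lemE21} for the weight term $\nu\|\mathrm{e}^{\nu t}\nabla\bar u^2\|^2$.
\end{itemize}
This is the first half of the paper's proof, which yields \eqref{eqnu1-1}.

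Second, (c) as written does not close. Bounding $\|\nabla(\bar u^j\partial_j\bar u^k)\|\le CE_2\|\nabla\bar u\|_{H^1}$ puts $\mathrm{e}^{\nu t}$ on the second-order factor. You then need $\|\mathrm{e}^{\nu t}\nabla^2\bar u^3\|_{L^2L^2}$, which is proved only in Lemma \ref{lemE2-3}, and that proof uses the present lemma. The paper's device in \eqref{eqE2-101} is the opposite placement: $\|\mathrm{e}^{\nu t}\nabla(\bar u^i\partial_i\bar u^j)\|_{L^2L^2}\le C\|\mathrm{e}^{\nu t}\nabla(\bar u^2,\bar u^3)\|_{L^\infty L^2}\|(\Delta\bar u^2,\Delta\bar u^3,\nabla\Delta\bar u^2)\|_{L^2L^2}$. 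The first factor comes from Lemma \ref{lemE21}, and the second is at most $C\nu^{-1/2}E_2$.

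Your diagnosis of the obstacle is also off. $E_2$ does contain $\nu^{1/2}\|\nabla^2\bar u^3\|_{L^2L^2}$ with no spatial weight, through $\|\nabla\bar u^3\|_{Y_0}$; what it lacks is the time weight. Your divergence-free alternative (keep only $\partial_z$ on $\bar u\cdot\nabla\bar u^3$ and use $\partial_z\bar u^3=-\partial_y\bar u^2$) is valid, and it puts every second derivative on $\bar u^2$. It still closes only with the same placement: the weight goes on a factor controlled by Lemma \ref{lemE21} or by your left-hand side, and the unweighted $E_2$ goes on the other. It also needs the bound from the previous paragraph.
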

\begin{proof}
Recall that $\Delta \bar{u}^2$ satisfies
\begin{align*}
\begin{cases}
(\partial_t-\nu \Delta)\Delta \bar{u}^2+\Delta \partial_y \bar{P}^{NL}+\Delta(\overline{u\cdot \nabla u^2})=\partial_z^2\bar{\theta},\\
\nabla \bar{u}^2|_{y=\pm1}=0,
\end{cases}
\end{align*}
taking $L^2$ inner product with $-2\bar{u}^2$, one gets
\begin{align*}
&\frac{d}{dt}\|\nabla \bar{u}^2\|_{L^2}^2+2\nu\|\Delta \bar{u}^2\|_{L^2}^2\\
=&-2\langle \Delta\bar{P}^{NL}, \partial_y\bar{u}^2\rangle+2\langle \bar{u}^2\partial_y\bar{u}^2+\bar{u}^3\partial_z\bar{u}^2+\overline{u_{\neq}\cdot\nabla u_{\neq}^2},\Delta \bar{u}^2\rangle-2 \langle \partial_z \bar{\theta},\partial_z \bar{u}^2\rangle\\
\leq &2 \bigg(\|\Delta\bar{P}^{NL}\|_{L^2} \| \partial_y\bar{u}^2 \|_{L^2}+\| \bar{u}^2\partial_y\bar{u}^2+\bar{u}^3\partial_z\bar{u}^2\|_{L^2} \|\Delta \bar{u}^2\|_{L^2}\\
&+\| \overline{u_{\neq}\cdot\nabla u_{\neq}^2}\|_{L^2} \|\Delta \bar{u}^2\|_{L^2}+\|\partial_z \bar{\theta}\|_{L^2} \|\partial_z \bar{u}^2\|_{L^2}\bigg) ,
\end{align*}
which implies by the Young's inequality that
\begin{align*}
&\frac{d}{dt}\|\nabla \bar{u}^2\|_{L^2}^2+\nu\|\Delta \bar{u}^2\|_{L^2}^2\\
\leq&  C \nu^{-1} \bigg(\|\Delta\bar{P}^{NL}\|_{L^2}^2+\| \bar{u}^2\partial_y\bar{u}^2+\bar{u}^3\partial_z\bar{u}^2\|_{L^2}^2+\| \overline{u_{\neq}\cdot\nabla u_{\neq}^2}\|_{L^2}^2+\|\partial_z \bar{\theta}\|_{L^2}^2\bigg).
\end{align*}
Moreover, one has
\begin{align*}
\nu\|{\rm e}^{\nu t}\Delta \bar{u}^2\|_{L^2L^2}^2
\leq& C\|u(0)\|_{H^2}^2+C\nu^{-1}\left(\|{\rm e}^{\nu t}\Delta \bar{P}^{NL}\|_{L^2L^2}^2+\|{\rm e}^{\nu t}(\bar{u}^2\partial_y+\bar{u}^3\partial_z)\bar{u}^2\|_{L^2L^2}^2
\right.\\
&\left.+\|{\rm e}^{\nu t}(u_{\neq}\cdot\nabla u_{\neq})\|_{L^2L^2}^2+\|{\rm e}^{\nu t}\partial_z\bar{\theta}\|_{L^2L^2}^2\right).
\end{align*}
Thanks to Lemma \ref{lemE21} and Lemma \ref{lemuu-11}, for $i,j \in \{2,3\}$, one has
\begin{align}
&\|{\rm e}^{\nu t}\nabla(\bar{u}^i\partial_i\bar{u}^j)\|_{L^2L^2}^2
\leq \|{\rm e}^{\nu t}\bar{u}^i\|_{L^\infty H^1}^2\|(\partial_i\bar{u}^j,\partial_z\partial_i\bar{u}^j)\|_{L^2H^1}^2\nonumber\\
&\quad\quad\leq C\|{\rm e}^{\nu t}(\nabla\bar{u}^2,\nabla\bar{u}^3)\|_{L^\infty L^2}^2\|(\Delta\bar{u}^2,\Delta \bar{u}^3,\nabla\Delta \bar{u}^2)\|_{L^2L^2}^2\nonumber\\
&\quad\quad\leq C\nu^{-1}E_2^2\left(\|u(0)\|_{H^2}^2+\nu^{-2}E_3^4+\nu^{-1}\|{\rm e}^{\nu t}\nabla \bar{\theta}\|_{L^2L^2}^2\right)(\nu^{-2}E_2^2+1),\label{eqE2-101}\\
&\nu^{-1}\|{\rm e}^{\nu t}\partial_k(u_{\neq}\cdot\nabla u^k_{\neq})\|_{L^2L^2}^2+\nu^{-1}\|{\rm e}^{\nu t}\nabla(u_{\neq}\cdot\nabla u^2_{\neq})\|_{L^2L^2}^2\leq C\nu^{-2}E_3^4.\label{eqE2-102}
\end{align}
Note that $\Delta \bar{P}^{NL}=-\overline{\partial_iu^j\partial_ju^i}=-\overline{\partial_i(\bar{u}^j\partial_j\bar{u}^i)}
-\overline{\partial_i(u^j_{\neq}\partial_j u^i_{\neq})}$, we have
\begin{align*}
\nu^{-1}\|{\rm e}^{\nu t}\Delta \bar{P}^{NL}\|_{L^2L^2}^2\leq & \nu^{-1}\|{\rm e}^{\nu t}\nabla(\bar{u}^i\partial_i\bar{u}^j)\|_{L^2L^2}^2+\nu^{-1}\|{\rm e}^{\nu t}\partial_k(u_{\neq}\cdot\nabla u^k_{\neq})\|_{L^2L^2}^2\\
\leq& C\left(\|u(0)\|_{H^2}^2+\nu^{-2}E_3^4+\nu^{-1}\|{\rm e}^{\nu t}\nabla \bar{\theta}\|_{L^2L^2}^2\right)(\nu^{-2}E_2^2+1)^2\\
\leq&C\left(\|u(0)\|_{H^2}^2+\nu^{-2}E_3^4+\nu^{-1}\|{\rm e}^{\nu t}\nabla \bar{\theta}\|_{L^2L^2}^2\right)(\nu^{-2}E_2^2+1),
\end{align*}
where we have used $E_2\leq \varepsilon_0 \nu$.
Therefore, we obtain
\begin{align}
\nu\|{\rm e}^{\nu t}\Delta \bar{u}^2\|_{L^2L^2}^2
\leq C\left(\|u(0)\|_{H^2}^2+\nu^{-2}E_3^4+\nu^{-1}\|{\rm e}^{\nu t}\nabla \bar{\theta}\|_{L^2L^2}^2\right)(\nu^{-2}E_2^2+1).\label{eqnu1-1}
\end{align}

Next, we take the $L^2$ inner product with $-2\partial_t\bar{u}^2$ to obtain
\begin{align*}
2\|\partial_t\nabla \bar{u}^2\|_{L^2}^2+\nu\frac{d}{dt}\|\Delta \bar{u}^2\|_{L^2}^2+2\langle \Delta \bar{P}^{NL},\partial_t\partial_y \bar{u}^2\rangle+\langle\nabla\overline{(u\cdot\nabla u^2)},\partial_t\nabla\bar{u}^2\rangle=\langle\partial_z\bar{\theta},\partial_z\partial_t\bar{u}^2\rangle,
\end{align*}
from which, one has
\begin{align*}
&\frac{d}{dt}\left({\rm e}^{2\nu t}\|\Delta \bar{u}^2\|_{L^2}^2\right)+\nu^{-1}{\rm e}^{2\nu t}\|\partial_t\nabla \bar{u}^2\|_{L^2}^2\\
&\leq C\nu^{-1}\left(\|{\rm e}^{\nu t}\Delta \bar{P}^{NL}\|_{L^2}^2+\|{\rm e}^{\nu t}\nabla\overline{(u\cdot\nabla u^2)}\|_{L^2}^2+\|{\rm e}^{\nu t}\partial_z\bar{\theta}\|_{L^2}^2\right)+C\nu {\rm e}^{2\nu t}\|\Delta \bar{u}^2\|_{L^2}^2\\
&\leq C\nu^{-1}\left(\|{\rm e}^{\nu t}\Delta \bar{P}^{NL}\|_{L^2}^2+\|{\rm e}^{\nu t}\nabla(\bar{u}\cdot\nabla \bar{u}^2)\|_{L^2}^2+\|{\rm e}^{\nu t}\nabla\overline{(u_{\neq}\cdot\nabla u_{\neq}^2)}\|_{L^2}^2+\|{\rm e}^{\nu t}\partial_z\bar{\theta}\|_{L^2}^2\right)\\
&\quad+C\nu {\rm e}^{2\nu t}\|\Delta \bar{u}^2\|_{L^2}^2,
\end{align*}
which gives that
\begin{align*}
&\|{\rm e}^{\nu t} \Delta \bar{u}^2\|_{L^\infty L^2}^2+\nu^{-1} \|{\rm e}^{\nu t} \partial_t\nabla \bar{u}^2\|_{L^2 L^2}^2\\
\lesssim& \| \Delta \bar{u}^2 (0)\|_{L^2}^2+ \nu^{-1} \bigg(\|{\rm e}^{\nu t}\Delta \bar{P}^{NL}\|_{L^2 L^2}^2+\|{\rm e}^{\nu t}\nabla(\bar{u}\cdot\nabla \bar{u}^2)\|_{L^2 L^2}^2\\
&+\|{\rm e}^{\nu t}\nabla\overline{(u_{\neq}\cdot\nabla u_{\neq}^2)}\|_{L^2 L^2}^2+\|{\rm e}^{\nu t}\partial_z\bar{\theta}\|_{L^2 L^2}^2+\nu \|{\rm e}^{\nu t}\Delta \bar{u}^2\|_{L^2 L^2}^2\\
\lesssim& \left(\|u(0)\|_{H^2}^2+\nu^{-2}E_3^4+\nu^{-1}\|{\rm e}^{\nu t}\nabla \bar{\theta}\|_{L^2L^2}^2\right)(\nu^{-2}E_2^2+1),
\end{align*}
where we have used \eqref{eqE2-101}, \eqref{eqE2-102} and \eqref{eqnu1-1}.
\end{proof}
\begin{lemma}\label{lemE2-3}
It holds that
\begin{align*}
&\nu\|{\rm e}^{\nu t}\nabla \Delta \bar{u}^2\|_{L^2L^2}^2+\nu\|{\rm e}^{\nu t}\Delta \bar{u}^3\|_{L^2L^2}^2\\
\leq& C\left(\|u(0)\|_{H^2}^2+\nu^{-2}E_3^4+\nu^{-1}\|{\rm e}^{\nu t}\nabla \bar{\theta}\|_{L^2L^2}^2\right)(\nu^{-2}E_2^2+1).
\end{align*}
\end{lemma}
\begin{proof}
Note that $\bar{p}=\bar{P}^{NL}+\bar{P}^{\theta}$, and $\partial_z\bar{u}^2$ solves
\begin{align*}
\begin{cases}
(\partial_t-\nu \Delta )\partial_z\bar{u}^2+\partial_z\partial_y\bar{p}+\partial_z
(\overline{u\cdot \nabla u^2})=\partial_z\bar{\theta},\\
\nabla \bar{u}^2|_{y=\pm1}=0,
\end{cases}
\end{align*}
it follows from the integration by part that
\begin{align*}
&\|\partial_t\partial_z\bar{u}^2+\partial_z(\overline{u\cdot \nabla u^2})-\partial_z\bar{\theta}\|_{L^2}^2=\|\nu\Delta \partial_z\bar{u}^2-\partial_z\partial_y\bar{p}\|_{L^2}^2\\
&=\nu^2\|\Delta \partial_z\bar{u}^2\|_{L^2}^2+\|\partial_z\partial_y\bar{p}\|_{L^2}^2
-2\nu\langle \Delta \partial_z\bar{u}^2 \partial_z\partial_y\bar{p}\rangle\\
&=\nu^2\|\Delta \partial_z\bar{u}^2\|_{L^2}^2+\|\partial_z\partial_y\bar{p}\|_{L^2}^2
-2\nu\langle  \partial_z^2\partial_y\bar{u}^2 \Delta\bar{p}\rangle,
\end{align*}
which shows that
\begin{align*}
\nu^2\|\Delta \partial_z\bar{u}^2\|_{L^2}^2+\|\partial_z\partial_y\bar{p}\|_{L^2}^2\leq C\left(\|\Delta \bar{p}\|_{L^2}^2+\|\partial_t\partial_z\bar{u}^2+\partial_z(\overline{u\cdot \nabla u^2})-\partial_z\bar{\theta}\|_{L^2}^2\right).
\end{align*}

By \eqref{eqE2-101}, Lemma \ref{lemuu-11} and Lemma \ref{lemE22}, we obtain
\begin{align*}
&\nu\|{\rm e}^{\nu t}\Delta \partial_z\bar{u}^2\|_{L^2L^2}+\nu^{-1}\|{\rm e}^{\nu t}\partial_z\partial_y\bar{p}\|_{L^2L^2}^2\\
&\leq C\nu^{-1}\left(\|{\rm e}^{\nu t}\Delta \bar{p}\|_{L^2L^2}^2+\|{\rm e}^{\nu t}\partial_t\partial_z\bar{u}^2\|_{L^2L^2}^2+\|{\rm e}^{\nu t}\partial_z(\overline{u\cdot \nabla u^2})\|_{L^2L^2}^2+\|{\rm e}^{\nu t}\partial_z\bar{\theta}\|_{L^2L^2}^2\right)\\
&\leq C\nu^{-1}\left(\|{\rm e}^{\nu t}\Delta \bar{P}^{NL}\|_{L^2L^2}^2+\|{\rm e}^{\nu t}\partial_t\partial_z\bar{u}^2\|_{L^2L^2}^2+\|{\rm e}^{\nu t}\partial_z(\overline{u\cdot \nabla u^2})\|_{L^2L^2}^2+\|{\rm e}^{\nu t}\nabla\bar{\theta}\|_{L^2L^2}^2\right)\\
& \leq C\left(\|u(0)\|_{H^2}^2+\nu^{-2}E_3^4+\nu^{-1}\|{\rm e}^{\nu t}\nabla \bar{\theta}\|_{L^2L^2}^2\right)(\nu^{-2}E_2^2+1).
\end{align*}
Using Lemma \ref{pxy} to give that
\begin{align}
&\nu^{-1}\left(\|{\rm e}^{\nu t}\partial_y^2\bar{p}\|_{L^2L^2}^2+\|{\rm e}^{\nu t}\partial_z^2\bar{p}\|_{L^2L^2}^2\right)
\leq C\nu^{-1}\|{\rm e}^{\nu t}\Delta \bar{p}\|_{L^2L^2}^2+\|{\rm e}^{\nu t}\partial_y\partial_z\bar{p}\|_{L^2L^2}^2\nonumber\\
&\leq C\left(\|u(0)\|_{H^2}^2+\nu^{-2}E_3^4+\nu^{-1}\|{\rm e}^{\nu t}\nabla \bar{\theta}\|_{L^2L^2}^2\right)(\nu^{-2}E_2^2+1).\label{eqpyz2}
\end{align}

Note that
\begin{align*}
(\partial_t-\nu \Delta )\partial_y\bar{u}^2+\partial_y^2\bar{p}+\partial_y
(\overline{u\cdot \nabla u^2})=\partial_y\bar{\theta},
\end{align*}
then one obtains that
\begin{align*}
\nu\|{\rm e}^{\nu t}\Delta \partial_y\bar{u}^2\|_{L^2L^2}^2
\leq& C\nu^{-1}\left(\|{\rm e}^{\nu t}(\partial_t\partial_y\bar{u}^2+\partial_y^2\bar{p}-\partial_y\bar{\theta})\|_{L^2L^2}^2
+\|{\rm e}^{\nu t}\partial_y(\bar{u}\cdot\nabla\bar{u}^2)\|_{L^2L^2}^2\right.\\
&\left.+
\|{\rm e}^{\nu t}\partial_y(u_{\neq}\cdot\nabla u_{\neq}^2)\|_{L^2L^2}^2\right)\\
\leq& C\left(\|u(0)\|_{H^2}^2+\nu^{-2}E_3^4+\nu^{-1}\|{\rm e}^{\nu t}\nabla \bar{\theta}\|_{L^2L^2}^2\right)(\nu^{-2}E_2^2+1),
\end{align*}
where \eqref{eqpyz2} has been used.

The equation for $\bar{u}^3$ reads as
$$
\partial_t\bar{u}^3-\nu\Delta \bar{u}^3+\overline{u\cdot \nabla u^3}+\partial_z\bar{p}=0,
$$
then one has
\begin{align*}
\nu\|{\rm e}^{\nu t}\Delta \bar{u}^3\|_{L^2L^2}^2\leq& C\nu^{-1}\left(\|{\rm e}^{\nu t}\partial_t \bar{u}^3\|_{L^2L^2}^2+
\|{\rm e}^{\nu t}\partial_z^2 \bar{p}\|_{L^2L^2}^2\right.\\
&\left.+\|{\rm e}^{\nu t}\bar{u}\cdot\nabla \bar{u}^3\|_{L^2L^2}^2+\|{\rm e}^{\nu t}u_{\neq}\cdot\nabla u_{\neq}^3\|_{L^2L^2}^2\right)\\
\leq& C\left(\|u(0)\|_{H^2}^2+\nu^{-2}E_3^4+\nu^{-1}\|{\rm e}^{\nu t}\nabla \bar{\theta}\|_{L^2L^2}^2\right)(\nu^{-2}E_2^2+1),
\end{align*}
where we have used $\|\partial_z\bar{p}\|_{L^2} \leq\|\partial_z^2\bar{p}\|_{L^2}$.

The proof is completed.
\end{proof}
\begin{lemma}\label{lemE2-4}
It holds that
\begin{align*}
&\left\|\min\{(\nu^{\frac{2}{3}}+\nu t)^{\frac{1}{2}}, 1-y^2\}\Delta \bar{u}^3\right\|_{L^\infty L^2}^2+\nu \left\|\min\{(\nu^{\frac{2}{3}}+\nu t)^{\frac{1}{2}}, 1-y^2\}\nabla\Delta \bar{u}^3\right\|_{L^2 L^2}^2\\
&\quad+\nu^{-1}\left\|\min\{(\nu^{\frac{2}{3}}+\nu t)^{\frac{1}{2}}, 1-y^2\}\partial_t\nabla \bar{u}^3\right\|_{L^2L^2}^2\\
\leq& C\left(\|u(0)\|_{H^2}^2+\nu^{-2}E_3^4+\nu^{-1}\|{\rm e}^{\nu t}\nabla \bar{\theta}\|_{L^2L^2}^2\right)(\nu^{-2}E_2^2+1).
\end{align*}
\end{lemma}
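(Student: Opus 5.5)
We prove Lemma \ref{lemE2-4} by a weighted energy estimate for $\Delta\bar u^3$, in the same spirit as Lemma 11 of \cite{chenwz}. The only new ingredient is the temperature, and it enters only through the pressure $\bar P^\theta$. Write $\rho(t,y)=\min\{(\nu^{\frac23}+\nu t)^{\frac12},1-y^2\}$. Note that $\rho\le 1$, $\rho|_{y=\pm1}=0$, $|\partial_y\rho|\le 2$, and $\partial_t\rho=\frac{\nu}{2}(\nu^{\frac23}+\nu t)^{-\frac12}\mathbf 1_{\{\rho<1-y^2\}}\ge 0$. The weight is needed because $\Delta\bar u^3$ does not vanish on the boundary, so no boundary condition is available for an unweighted energy estimate of $\nabla\Delta \bar u^3$. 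The equation
$$(\partial_t-\nu\Delta)\bar u^3+\partial_z\bar p+\overline{u\cdot\nabla u^3}=0,\qquad \bar p=\bar P^{NL}+\bar P^\theta,$$
contains no direct $\bar\theta$ source. The temperature therefore appears only through $\partial_z\bar P^\theta$, and we control it by $\|\Delta\bar P^\theta\|_{L^2}=\|\partial_y\bar\theta\|_{L^2}$ (elliptic regularity with Neumann data, or via Lemma \ref{pxy}). This is exactly why the right-hand side acquires the term $\nu^{-1}\|{\rm e}^{\nu t}\nabla\bar\theta\|_{L^2L^2}^2$, which is already present in Lemmas \ref{lemE21}--\ref{lemE2-3}.

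\textbf{Step 1.} Apply $\nabla$ to the equation and take the $L^2$ inner product with $-\nabla\cdot(\rho^2\nabla\partial_t\bar u^3)$, or equivalently test $\partial_t\nabla\bar u^3$ against $\rho^2$ times the gradient of the equation. Since $\rho$ vanishes at $y=\pm1$, all boundary terms disappear. This gives
$$\|\rho\,\partial_t\nabla\bar u^3\|_{L^2}^2+\frac{\nu}{2}\frac{d}{dt}\|\rho\Delta\bar u^3\|_{L^2}^2\le \nu\|\rho\partial_t\rho\,|\Delta\bar u^3|^2\|_{L^1}+C\nu\|\nabla\rho\,\Delta\bar u^3\|_{L^2}\|\rho\,\partial_t\nabla\bar u^3\|_{L^2}+\|\rho\nabla(\partial_z\bar p+\overline{u\cdot\nabla u^3})\|_{L^2}\|\rho\,\partial_t\nabla\bar u^3\|_{L^2}.$$
Using $\rho|\partial_t\rho|\le\nu$ and $|\nabla\rho|\le 2$, the commutator terms are bounded by $C\nu\|\Delta\bar u^3\|_{L^2}^2$ after absorption. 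Integrating in time and multiplying by $\nu^{-1}$, this controls the first and third quantities in the statement by $\|\Delta\bar u^3(0)\|_{L^2}^2\lesssim\|u(0)\|_{H^2}^2$ (using $\rho(0)\le 1$ and $H^2$ data) plus $C\nu\|\Delta\bar u^3\|_{L^2L^2}^2$ plus $C\nu^{-1}\|\nabla\partial_z\bar p\|_{L^2L^2}^2+C\nu^{-1}\|\rho\nabla(\overline{u\cdot\nabla u^3})\|_{L^2L^2}^2$.

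\textbf{Step 2.} We bound each term on the right of Step 1.
\begin{itemize}
\item The term $\nu\|\Delta\bar u^3\|_{L^2L^2}^2$ is given by Lemma \ref{lemE2-3}.
\item The pressure term $\nabla\partial_z\bar p$ is given by \eqref{eqpyz2} together with the bound on $\partial_y\partial_z\bar p$ from the proof of Lemma \ref{lemE2-3}. The $\bar P^\theta$ part is controlled by $\nu^{-1}\|\nabla\bar\theta\|_{L^2L^2}^2$.
\item For the nonlinearity, we split $\overline{u\cdot\nabla u^3}=\bar u\cdot\nabla\bar u^3+\overline{u_{\neq}\cdot\nabla u^3_{\neq}}$. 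The zero-mode part uses Lemma \ref{lemu0-2} and \eqref{eqE2-101}, and yields the factor $\nu^{-2}E_2^2$. For the nonzero part we use $\rho\le(\nu^{\frac23}+\nu t)^{1/2}$ and the last estimate of Lemma \ref{lemuu-11}, which gives $\nu^{-1}E_3^4$ and hence $\nu^{-2}E_3^4$ after the factor $\nu^{-1}$.
\end{itemize}

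\textbf{Step 3.} To bound $\nu\|\rho\nabla\Delta\bar u^3\|_{L^2L^2}^2$ we use the equation itself: $\nu\rho\nabla\Delta\bar u^3=\rho\nabla(\partial_t\bar u^3+\partial_z\bar p+\overline{u\cdot\nabla u^3})$. Hence $\nu\|\rho\nabla\Delta \bar u^3\|^2\le C\nu^{-1}(\ldots)$, with the same terms as in Step 2. The main obstacle is the commutator coming from the time derivative of the weight and the non-smooth corner of $\rho$. We handle this as in \cite{chenwz}: on the set where $\rho=1-y^2$ the weight is time-independent, and on the set where $\rho=(\nu^{\frac23}+\nu t)^{1/2}$ we have $\rho\partial_t\rho=\nu/2$. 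In both cases the bound $\nu\rho\partial_t\rho\lesssim\nu^2$ suffices, so this commutator is absorbed into $\nu\|\Delta\bar u^3\|_{L^2L^2}^2$, which Lemma \ref{lemE2-3} controls. Collecting these bounds gives the claim.
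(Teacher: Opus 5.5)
Your proposal is correct and follows the paper's route. The paper imports from Lemma 12.4 of \cite{chenwz} the weighted energy estimate with $\rho\sim\min\{(\nu^{\frac23}+\nu t)^{\frac12},1-y^2\}$, which reduces the claim to bounding $\|u(0)\|_{H^2}^2+\nu\|\Delta\bar u^3\|_{L^2L^2}^2+\nu^{-1}\|\rho(\nabla(\overline{u\cdot\nabla u^3})+\partial_z\nabla\bar p)\|_{L^2L^2}^2$, and it closes this exactly as you do, via \eqref{eqE2-101}, \eqref{eqpyz2}, the time-weighted last estimate of Lemma \ref{lemuu-11}, and Lemma \ref{lemE2-3}. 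The differences are cosmetic: you use the exact Lipschitz minimum instead of a smooth equivalent weight, and your pointwise commutator bound should read $C\nu^2\|\Delta\bar u^3\|_{L^2}^2$ rather than $C\nu\|\Delta\bar u^3\|_{L^2}^2$, which is what your integrated bound $C\nu\|\Delta\bar u^3\|_{L^2L^2}^2$ after multiplying by $\nu^{-1}$ actually uses.
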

\begin{proof}
The proof is very similar to that of Lemma 12.4 of \cite{chenwz}. The main difference is the appearance of temperature.

First, $\nabla \bar{u}^3$ solves
\begin{align}
\partial_t\nabla\bar{u}^3-\nu\Delta \nabla\bar{u}^3+\nabla(\overline{u\cdot \nabla u^3})+\partial_z\nabla\bar{p}=0.\label{eqnabu3}
\end{align}
Let $\rho$ be the same as in Lemma 12.4 of \cite{chenwz}, and it is clear that for $t\geq 0$, it holds that
\begin{align*}
\rho(t,y) \sim \min\{(\nu^{\frac{2}{3}}+\nu t)^{\frac{1}{2}}, 1-y^2\}, y\in [-1,1].
\end{align*}
Following the same arguments as in Lemma 12.4 of \cite{chenwz}, one can obtain that
\begin{align*}
&\nu^{-1}\|\rho\partial_t\nabla \bar{u}^3\|_{L^2L^2}^2+\nu\|\rho \Delta\nabla \bar{u}^3\|_{L^2L^2}^2+\|\rho\Delta\bar{u}^3\|_{L^\infty L^2}^2\\
&\leq C\left(\|u(0)\|_{H^2}^2+\nu^{-1}\|\rho (\nabla(\overline{u\cdot \nabla u^3})+\partial_z\nabla\bar{p})\|_{L^2L^2}^2+\nu\|\Delta \bar{u}^3\|_{L^2L^2}^2\right)\\
&\leq C\left(\|u(0)\|_{H^2}^2+\nu\|\Delta \bar{u}^3\|_{L^2L^2}^2+\nu^{-1}\|\partial_z\nabla\bar{p}\|_{L^2L^2}^2\right.\\
&\quad\quad\left.+\nu^{-1}\|\nabla(\bar{u}\cdot \nabla \bar{u}^3)\|_{L^2L^2}^2+\nu^{-1}\|(\nu^{2/3}+\nu t)^{1/2}\nabla(\overline{u_{\neq}\cdot \nabla u_{\neq}^3})\|_{L^2L^2}^2\right),
\end{align*}
 which together with \eqref{eqE2-101}, \eqref{eqpyz2}, Lemma \ref{lemuu-11} and Lemma \ref{lemE2-3} gives that
\begin{align*}
&\nu^{-1}\|\rho \partial_t\nabla \bar{u}^3\|_{L^2L^2}^2+\nu\|\rho \Delta\nabla \bar{u}^3\|_{L^2L^2}^2+\|\rho\Delta\bar{u}^3\|_{L^\infty L^2}^2\\
&\leq C\left(\|u(0)\|_{H^2}^2+\nu^{-2}E_3^4+\nu^{-1}\|{\rm e}^{\nu t}\nabla \bar{\theta}\|_{L^2L^2}^2\right)(\nu^{-2}E_2^2+1).
\end{align*}
The conclusion follows from the definition of $\rho.$
\end{proof}
It is ready to prove Proposition \ref{proE2}.
\begin{proof}[Proof of Proposition \ref{proE2}]
Using Proposition \ref{proE40}, Lemmas \ref{lemE21}--\ref{lemE2-4}, one obtains
\begin{align*}
E_2^2\leq & C\left(\|u(0)\|_{H^2}^2+\nu^{-2}E_3^4+\nu^{-1}\|{\rm e}^{\nu t}\nabla \bar{\theta}\|_{L^2L^2}^2\right)(\nu^{-2}E_2^2+1)\\
\leq&C\left(\|u(0)\|_{H^2}^2+\nu^{-2}E_3^4+\nu^{-2}\|\bar{\theta}(0)\|_2^2+ \nu^{-4}E_{3,0}^2E_{4,1}^2\right)(\nu^{-2}E_2^2+1),
\end{align*}
which yields the desired conclusion.
\end{proof}
\section{Estimate of non-zero modes}

Let us recall that $(\Delta u^2,\omega^2,\theta_{\neq})$ satisfy
\begin{align*}
\begin{cases}
\partial_t\Delta u^2-\nu\Delta^2u^2+y\partial_x\Delta u^2+(\partial_x^2+\partial_z^2)(u\cdot \nabla u^2)-\partial_y[\partial_x(u\cdot\nabla u^1)+\partial_z(u\cdot\nabla u^3)]\\
\quad\quad=(\partial_x^2+\partial_z^2)\theta,\\
\partial_t\omega^2-\nu\Delta \omega^2+y\partial_x\omega^2+\partial_zu^2+\partial_z(u\cdot\nabla u^1)-\partial_x(u\cdot\nabla u^3)=0,\\
\partial_t\theta_{\neq}-\nu \Delta \theta_{\neq}+y\partial_x\theta_{\neq}+(u\cdot \nabla \theta)_{\neq}=0,\\
(\partial_yu^2, u^2, \omega^2,\theta_{\neq})|_{y=\pm1}=(0,0,0,0),\\
 u^2|_{t=0}=u^2(0), \omega^2|_{t=0}=\omega^2 (0), \theta_{\neq}|_{t=0}=\theta_{\neq}(0),
\end{cases}
\end{align*}
We denote
\begin{align*}
\hat{\Delta}=\Delta_{k,l}=\partial_y^2-k^2-l^2, f_{k,l}=\frac{1}{2\pi}\int_{\mathbb{T}^2}f(x,y,z){\rm e}^{-{\rm i}kx-{\rm i}lz}dxdz.
\end{align*}
Taking Fourier transform in $(x,z)$ of equations for $(\Delta u^2,\omega^2)$, we obtain
\begin{align*}
\begin{cases}
\partial_t\hat{\Delta} u^2_{k,l}-\nu\hat{\Delta}^2u^2_{k,l}+{\rm i} ky\hat{\Delta} u^2_{k,l}-(k^2+l^2)(u\cdot \nabla u^2)_{k,l}\\
\quad\quad-\partial_y[{\rm i}k(u\cdot\nabla u^1)_{k,l}+{\rm i}l(u\cdot\nabla u^3)_{k,l}]
=-(k^2+l^2)\theta_{k,l},\\
\partial_t\omega_{k,l}^2-\nu\hat{\Delta} \omega^2_{k,l}+{\rm i}ky\omega^2_{k,l}+{\rm i}lu^2_{k,l}+{\rm i}l(u\cdot\nabla u^1)_{k,l}-{\rm i}k(u\cdot\nabla u^3)_{k,l}=0,\\
(\partial_yu^2_{k,l}, u^2_{k,l},\omega^2_{k,l})|_{y=\pm1}=(0,0), u^2_{k,l}|_{t=0}=u^2(0)_{k,l}, \omega^2_{k,l}|_{t=0}={\rm i}lu^1_{k,l}(0)-{\rm i} ku^3_{k,l}(0).
\end{cases}
\end{align*}

Let $a\geq 0$ and $\eta=\sqrt{k^2+l^2}$. We introduce the following norms:
\begin{align*}
\|f\|_{M^a_{k,l}}^2=&\eta|k|\| {\rm e}^{a\nu^{\frac{1}{3}}t}(\partial_y,{\rm i}\eta)f\|_{L^2L^2}^2
+\nu \eta^2\| {\rm e}^{a\nu^{\frac{1}{3}}t} (\partial_y^2-\eta^2)f\|_{L^2L^2}^2\\
&+\nu^{\frac{3}{2}}\| {\rm e}^{a\nu^{\frac{1}{3}}t} \partial_y(\partial_y^2-\eta^2)f\|_{L^2L^2}^2
+\eta^2\| {\rm e}^{a\nu^{\frac{1}{3}}t} (\partial_y,{\rm i}\eta)f\|_{L^\infty L^2}^2\\
&+\nu^{\frac{1}{2}}\| {\rm e}^{a\nu^{\frac{1}{3}}t} (\partial_y^2-\eta^2)f\|_{L^\infty L^2}^2,
\end{align*}
and
\begin{align*}
\|f\|_{N^a_{k,l}}^2=\|{\rm e}^{a\nu^{\frac{1}{3}}t}f\|_{L^\infty L^2}^2
+\nu\|{\rm e}^{a\nu^{\frac{1}{3}}t} \partial_yf\|_{L^2 L^2}^2+\left((\nu k^2)^{\frac{1}{3}}+\nu \eta^2\right)\|{\rm e}^{a\nu^{\frac{1}{3}}t} f\|_{L^2 L^2}^2,
\end{align*}
obviously, one has
\begin{align*}
\|f\|_{M_a}^2=\sum_{k\neq 0;l\in \mathbb{Z}}\|\hat{f}(k,l)\|_{M^a_{k,l}}^2,\quad\quad\|f\|_{N_a}^2=\sum_{k\neq 0;l\in \mathbb{Z}}\|\hat{f}(k,l)\|_{N^a_{k,l}}^2.
\end{align*}
Thus, it is clear that
\begin{align}
&\|{\rm e}^{a\nu^{\frac{1}{3}}t} \partial_x\nabla f_{\neq}\|_{L^2 L^2}^2+\nu\|{\rm e}^{a\nu^{\frac{1}{3}}t} (\partial_x,\partial_z)\Delta f_{\neq}\|_{L^2 L^2}^2+\nu^{\frac{3}{2}}\|{\rm e}^{a\nu^{\frac{1}{3}}t} \partial_y\Delta f_{\neq}\|_{L^2 L^2}^2\nonumber\\
&\quad\quad+\|{\rm e}^{a\nu^{\frac{1}{3}}t} (\partial_x,\partial_z)\nabla f_{\neq}\|_{L^\infty L^2}^2+\nu^{\frac{1}{2}}\|{\rm e}^{a\nu^{\frac{1}{3}}t} \Delta f_{\neq}\|_{L^\infty L^2}^2\leq \|f_{\neq}\|_{M_a}^2,\label{eqfM}\\
&\|{\rm e}^{a\nu^{\frac{1}{3}}t} f_{\neq}\|_{L^\infty L^2}^2+\nu \|{\rm e}^{a\nu^{\frac{1}{3}}t} \nabla f_{\neq}\|_{L^2 L^2}^2\leq \|f_{\neq}\|_{N_a}^2.
\end{align}
\begin{lemma}\label{lemE30}
It holds that
\begin{align*}
E_{3,0}^2\leq C\left(\|u_{\neq}^2\|_{M_{2\epsilon}}^2+\|\partial_x\omega_{\neq}^2\|_{N_{2\epsilon}}^2\right).
\end{align*}
\end{lemma}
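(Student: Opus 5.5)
The plan is to bound each of the five pieces of
\[
E_{3,0}=\|(\partial_x,\partial_z)\nabla u^2_{\neq}\|_{Y_2}+\|(\partial_x^2,\partial_z^2)u^3_{\neq}\|_{Y_2}+\nu^{\frac34}\|{\rm e}^{2\epsilon\nu^{\frac13}t}\Delta\nabla u^2_{\neq}\|_{L^2L^2}+\|{\rm e}^{2\epsilon\nu^{\frac13}t}\partial_x\nabla u^2_{\neq}\|_{L^2L^2}
\]
directly by \eqref{eqfM} applied to $f=u^2$ with $a=2\epsilon$, together with the corresponding $N_{2\epsilon}$ bound for $\partial_x\omega^2_{\neq}$. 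We work mode by mode in $(k,l)$ with $k\neq0$ and then sum using Plancherel. Throughout we use that $Y_a$ consists of an $L^\infty L^2$ part plus $\nu^{1/2}$ times an $L^2L^2$ gradient part.

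\textbf{Terms coming from $u^2$.}

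The quantity $\|{\rm e}^{2\epsilon\nu^{1/3}t}(\partial_x,\partial_z)\nabla u^2_{\neq}\|_{L^\infty L^2}$ and the term $\|{\rm e}^{2\epsilon\nu^{1/3}t}\partial_x\nabla u^2_{\neq}\|_{L^2L^2}$ appear explicitly in \eqref{eqfM}.

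For the gradient part of the $Y_2$ norm, note that $\nabla(\partial_x,\partial_z)\nabla u^2$ has modes of size $\eta|(\partial_y,\eta)(\partial_y,\eta)\hat u|$. Since $u^2_{k,l}$ and $\partial_yu^2_{k,l}$ both vanish at $y=\pm1$, integration by parts gives
\[
\|(\partial_y,\eta)^2 f\|_{L^2}\sim\|(\partial_y^2-\eta^2)f\|_{L^2}.
\]
Hence $\nu\,\|\nabla(\partial_x,\partial_z)\nabla u^2_{\neq}\|^2$ is controlled by $\nu\eta^2\|(\partial_y^2-\eta^2)f\|^2$, which is the second term of $M^a_{k,l}$.

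For $\nu^{3/4}\|\Delta\nabla u^2\|_{L^2L^2}$, split it into $(\partial_x,\partial_z)\Delta u^2$ and $\partial_y\Delta u^2$:
\begin{itemize}
\item the first piece is bounded by $\nu^{1/2}\times\nu\eta^2\|\hat\Delta f\|^2$;
\item the second piece has exactly the weight $\nu^{3/2}$, which is the third term of $M^a_{k,l}$.
\end{itemize}
We use $\nu\le1$ throughout.

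\textbf{Terms coming from $u^3$.}

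Here we use incompressibility and the definition of $\omega^2$ to recover $u^3$. From $\partial_xu^1+\partial_zu^3=-\partial_yu^2$ and $\partial_zu^1-\partial_xu^3=\omega^2$ we get, mode by mode,
\[
(\partial_x^2+\partial_z^2)u^3=-\partial_z\partial_yu^2-\partial_x\omega^2 .
\]
Since $\partial_x^2,\partial_z^2$ are bounded by $\partial_x^2+\partial_z^2$ in Fourier (the symbols $k^2,l^2\le\eta^2$), it follows that
\[
\|(\partial_x^2,\partial_z^2)u^3_{\neq}\|_{Y_2}\le \|\partial_z\partial_yu^2_{\neq}\|_{Y_2}+\|\partial_x\omega^2_{\neq}\|_{Y_2}.
\]
The first term on the right is already handled by the $u^2$ estimate above. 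The second is bounded by $\|\partial_x\omega^2_{\neq}\|_{N_{2\epsilon}}$, using the $L^\infty L^2$ part together with $\nu\|\partial_y\cdot\|^2+\nu\eta^2\|\cdot\|^2$ for the gradient.

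\textbf{Expected obstacle.} The only delicate point is the comparison $\|(\partial_y,\eta)^2f\|\lesssim\|(\partial_y^2-\eta^2)f\|$, which relies on the non-slip conditions $u^2=\partial_yu^2=0$; these are available from \eqref{non0-mode}. The rest is bookkeeping of powers of $\nu$, which is what \eqref{eqfM} and the matching $N$ bound (obtained as in Lemma 11.5 of \cite{chenwz}) are designed for.
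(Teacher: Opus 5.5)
Your proof is correct and follows the same route as Lemma 13.1 of \cite{chenwz}, to which the paper defers: the $u^2$ pieces of $E_{3,0}$ are read off mode by mode from \eqref{eqfM}, and $u^3$ is recovered from the identity $(\partial_x^2+\partial_z^2)u^3=-\partial_y\partial_z u^2-\partial_x\omega^2$, with the $\omega^2$ part bounded by the $N_{2\epsilon}$ norm. Two small remarks: the identity $\|(\partial_y,\eta)^2 f\|_{L^2}=\|(\partial_y^2-\eta^2)f\|_{L^2}$ holds with equality and uses only $f|_{y=\pm1}=0$, so the condition $\partial_y u^2=0$ is not needed; and the $N$-bound you use is the inequality stated right after \eqref{eqfM}, so no appeal to Lemma 11.5 of \cite{chenwz} is required.
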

\begin{proof}
See Lemma 13.1 of \cite{chenwz}.
\end{proof}
\subsection{Estimate of $E_{3,0}$}
\begin{proposition}\label{proE30}
It holds that
\begin{align*}
E_{3,0}^2+\|u_{\neq}^2\|_{M_a}^2\leq C\|u(0)\|_{H^2}^2+C\left(\nu^{-2}E_3^4+\nu^{-2}E_2^2E_3^2+E_1^2E_3E_5+E_1^2E_3^\frac{3}{2}E_5^\frac{1}{2}+\nu^{-\frac{4}{3}}E_{4,1}^2\right).
\end{align*}
\end{proposition}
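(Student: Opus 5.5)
By Lemma \ref{lemE30}, it suffices to bound $\|u^2_{\neq}\|_{M_{2\epsilon}}^2$ and $\|\partial_x\omega^2_{\neq}\|_{N_{2\epsilon}}^2$, with $a=2\epsilon$ in the $M_a$ term. The only new ingredient compared with Proposition 13.2 of \cite{chenwz} is the buoyancy source in the $\Delta u^2$ equation. The equation for $\omega^2$ contains no temperature, so I would reuse that argument verbatim.

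For each Fourier mode $(k,l)$ with $k\neq0$, write the equation for $\hat\Delta u^2_{k,l}$ in the form \eqref{spacey1} with $\omega=\hat\Delta u^2_{k,l}$ and $\varphi=u^2_{k,l}$. The velocity source is split as in \cite{chenwz} into three pieces:
\begin{enumerate}
\item the linear lift-up and streak interactions with $V=y+u^{1,0}$, which are absorbed through the quasilinear resolvent estimate Proposition \ref{proWU} (this is where $E_1^2E_3E_5$ and $E_1^2E_3^{3/2}E_5^{1/2}$ arise via Lemma \ref{lemuu-10});
\item the $\bar u^2,\bar u^3$ interactions, bounded by Lemma \ref{lemuu-10}, which give $\nu^{-2}E_2^2E_3^2$ after the factor $\nu^{-1/2}$ from Proposition \ref{space-time-non1};
\item the nonzero–nonzero interactions, bounded by Lemma \ref{lemuu-11}, which give $\nu^{-2}E_3^4$.
\end{enumerate}
Applying Proposition \ref{space-time-non1} and summing over $(k,l)$ yields the $M_{2\epsilon}$ bound up to the temperature term. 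The initial-data contribution is $C\|u(0)\|_{H^2}^2$.

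The new term is $-(k^2+l^2)\theta_{k,l}$. I would write it as ${\rm i}k f_1+{\rm i}l f_3$ with $f_1={\rm i}k\theta_{k,l}$ and $f_3={\rm i}l\theta_{k,l}$, so that $\|(f_1,f_3)\|\lesssim\|(\partial_x,\partial_z)\theta_{\neq}\|$. Proposition \ref{space-time-non1} then bounds its contribution to $\|u^2_{\neq}\|_{M_{2\epsilon}}^2$ by
\[
C\nu^{-1}\|{\rm e}^{2\epsilon\nu^{1/3}t}(\partial_x,\partial_z)\theta_{\neq}\|_{L^2L^2}^2 .
\]
This is too crude against $\nu^{-4/3}E_{4,1}^2$ unless we exploit the $X_a$ norms. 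These give $\nu^{1/6}\|{\rm e}^{2\epsilon\nu^{1/3}t}(\partial_x^2,\partial_z^2)\theta_{\neq}\|_{L^2L^2}\le E_{4,1}$. Because $|k|\ge1$, we have $\|(\partial_x,\partial_z)\theta_{\neq}\|\lesssim\|(\partial_x^2,\partial_z^2)\theta_{\neq}\|$, and hence
\[
\nu^{-1}\|{\rm e}^{2\epsilon\nu^{1/3}t}(\partial_x,\partial_z)\theta_{\neq}\|_{L^2L^2}^2\le C\nu^{-1}\nu^{-1/3}E_{4,1}^2=C\nu^{-4/3}E_{4,1}^2,
\]
which is exactly the claimed power.

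The main obstacle is the compatibility of this buoyancy source with the quasilinear step. In \cite{chenwz}, the $\bar u^1$ part is handled not perturbatively but through the resolvent estimate Proposition \ref{proWU} (with the combination $W,U$ built from $u^2,\omega^2$ and $\kappa$), and that estimate requires the source in $\nabla G_1$ form. I would place $\theta$ into $G_1$ as $\Delta^{-1}(\partial_x^2+\partial_z^2)\theta$, which is legitimate since $\theta_{\neq}$ vanishes on the boundary. Its contribution is then $\nu^{-1}\|\nabla\Delta^{-1}(\partial_x^2+\partial_z^2)\theta_{\neq}\|^2\lesssim\nu^{-1}\|(\partial_x,\partial_z)\theta_{\neq}\|^2$, and via the time-freezing (Gearhart–Prüss type) argument of \cite{chenwz} this is again bounded by $\nu^{-4/3}E_{4,1}^2$. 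The remaining step is checking that the elliptic estimate for $\Delta^{-1}$ with Dirichlet data does not lose under the non-slip condition. I expect this to hold because only $H^{-1}\to H^1$ control is used.
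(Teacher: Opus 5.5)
Your core estimates are the paper's proof, and they already close the argument. You apply Proposition \ref{space-time-non1} to $\hat\Delta u^2_{k,l}$ and Proposition \ref{space-na-y} to $\omega^2_{k,l}$; the latter is multiplied by $k^2$, so the lift-up forcing ${\rm i}lu^2_{k,l}$, entering as $f_4$, is dominated by $\|u^2_{\neq}\|_{M_{2\epsilon}}^2$. You split the nonlinear forcing by $(fg)_{\neq}=\bar f g_{\neq}+f_{\neq}\bar g+(f_{\neq}g_{\neq})_{\neq}$ and bound it through Lemmas \ref{lemuu-10} and \ref{lemuu-11}. Finally, you write the buoyancy term as ${\rm i}kf_1+{\rm i}lf_3$ and use the $L^2L^2$ part of the $X_a$ norms to get $\nu^{-1}\cdot\nu^{-1/3}E_{4,1}^2$.

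You should drop the parenthetical in item 1 and the whole ``main obstacle'' paragraph, because they misplace the quasilinear machinery.

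\begin{itemize}
\item Proposition \ref{proWU} plays no role in this proposition, and nothing is absorbed. Every $\bar u^1$ interaction, with $u^{1,0}$ as well as $u^{1,1}$, is an ordinary forcing term for the Couette linearization. This is affordable only because Lemma \ref{lemuu-10} bounds these terms through $E_5$, and $E_5$ is controlled separately by the time-freezing argument in Proposition \ref{proE5}. That is exactly why $E_1^2E_3E_5$ and $E_1^2E_3^{3/2}E_5^{1/2}$ appear on the right-hand side.
\item Buoyancy meets the quasilinear step only in Proposition \ref{proE5}. There $-\mathbb{P}(0,\theta_{\neq},0)$ is placed in $g$ and handled by Lemma \ref{lemPthe}.
\item Your proposed $G_1=\Delta^{-1}(\partial_x^2+\partial_z^2)\theta$ would be the wrong object anyway. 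System \eqref{eqres2} is a second-order system at the level of the velocity; compare the norms $Z^1_{[V]}$ and $Z^2_{[V]}$, which are built on $v^2+\kappa v^3$ and $v^3$. In that system buoyancy enters as $\theta_{\neq}$ plus a pressure gradient.
\item Through Proposition \ref{pro14.2}, that route yields $L^2_tZ^2_j$ control, not the $L^\infty_t$ components of the $M_{2\epsilon}$ norm. So it would not deliver the stated bound.
\end{itemize}

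The step you left as an expectation lies entirely inside this unnecessary detour. Deleting it leaves a complete argument.
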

\begin{proof}
The main idea of the proof is similar to that of Proposition 13.1 of \cite{chenwz}. The main difference is treatment of temperature.

By Proposition \ref{space-na-y} and Proposition \ref{space-time-non1} and $\partial_yu^2_{k,l}+{\rm i}ku^1_{k,l}+{\rm i}lu^3_{k,l}=0$, one has
\begin{align*}
&\|u_{k,l}^2\|_{M^{2\epsilon}_{k,l}}^2\leq C\left(\|\hat{\Delta}u^2_{k,l}(0)\|_{L^2}^2+\eta^{-2}\|\hat{\Delta}({\rm i}ku^1_{k,l}+{\rm i}lu^3_{k,l})(0)\|_{L^2}^2\right)
+C\nu^{-1}\|{\rm e}^{2\epsilon \nu^{\frac{1}{3}}t}(\partial_x,\partial_z)\theta_{k,l}\|_{L^2L^2}^2\\
&\quad\quad+C\nu^{-1}\left(\|{\rm e}^{2\epsilon \nu^{\frac{1}{3}}t}[\partial_x(u\cdot\nabla u^1)+\partial_z(u\cdot\nabla u^3)]_{k,l}\|_{L^2L^2}^2+
\|{\rm e}^{2\epsilon \nu^{\frac{1}{3}}t}(k,l)(u\cdot\nabla u^2)_{k,l}\|_{L^2L^2}^2\right),
\end{align*}
and
\begin{align*}
&\|\omega_{k,l}^2\|_{N^{2\epsilon}_{k,l}}^2\leq C\left(\|\omega_{k,l}^2(0)\|_{L^2}^2+(l^2(|k|\eta)^{-1})\| {\rm e}^{2\epsilon\nu^{\frac{1}{3}}t}\partial_yu_{k,l}^2\|_{L^2L^2}^2
+(l^2\eta|k|^{-1})\| {\rm e}^{2\epsilon\nu^{\frac{1}{3}}t} u_{k,l}^2\|_{L^2L^2}^2\right.\\
&\quad\quad+\left. \min\{(\nu\eta^2)^{-1},(\nu k^2)^{-1/3}\}\| {\rm e}^{2\epsilon\nu^{\frac{1}{3}}t} [k(u\cdot\nabla u^3)_{k,l}-l(u\cdot\nabla u^1)_{k,l}]\|_{L^2L^2}^2\right)\\
&\leq C\left(\|\omega_{k,l}^2(0)\|_{L^2}^2+(l^2(|k|\eta)^{-1})\| {\rm e}^{2\epsilon\nu^{\frac{1}{3}}t} \partial_yu_{k,l}^2\|_{L^2L^2}^2
+(l^2|\eta|k|^{-1})\| {\rm e}^{2\epsilon\nu^{\frac{1}{3}}t} u_{k,l}^2\|_{L^2L^2}^2\right.\\
&\quad\quad+\left. \nu^{-1}\| {\rm e}^{2\epsilon\nu^{\frac{1}{3}}t} (u\cdot\nabla u^3)_{k,l}\|_{L^2L^2}^2+\nu^{-1}\| {\rm e}^{2\epsilon\nu^{\frac{1}{3}}t} (u\cdot\nabla u^1)_{k,l}]\|_{L^2L^2}^2\right).
\end{align*}

Note that
\begin{align*}
&\sum_{k\neq 0;l\in \mathbb{Z}}k^2\left((l^2(|k|\eta)^{-1})\| {\rm e}^{2\epsilon\nu^{\frac{1}{3}}t} \partial_yu_{k,l}^2\|_{L^2L^2}^2
+(l^2\eta|k|^{-1})\| {\rm e}^{2\epsilon\nu^{\frac{1}{3}}t} u_{k,l}^2\|_{L^2L^2}^2\right)\\
&\leq \sum_{k\neq 0;l\in \mathbb{Z}}\eta|k|\| {\rm e}^{2\epsilon\nu^{\frac{1}{3}}t} (\partial_y,{\rm i}\eta)u^2_{k,l}\|_{L^2L^2}^2
\leq \sum_{k\neq 0;l\in \mathbb{Z}}\|u^2_{k,l}\|_{M^{2\epsilon}_{k,l}}^2=\|u_{\neq}^2\|_{M_{2\epsilon}}^2,
\end{align*}
therefore, it follows from the Lemma \ref{lemE30} that
\begin{align*}
&E_{3,0}^2+\|u_{\neq}^2\|_{M_{2\epsilon}}^2\leq C\left(\|u_{\neq}^2\|_{M_{2\epsilon}}^2+\|\partial_x\omega_{\neq}^2\|_{N_{2\epsilon}}^2\right)\\
&\leq C\left(\|\omega^2_{\neq}(0)\|_{H^1}^2+\nu^{-1}\| {\rm e}^{2\epsilon\nu^{\frac{1}{3}}t} \partial_x(u\cdot\nabla u^3)\|_{L^2L^2}^2+\nu^{-1}\| {\rm e}^{2\epsilon\nu^{\frac{1}{3}}t} \partial_x(u\cdot\nabla u^1)\|_{L^2L^2}^2\right)+C\|u_{\neq}^2\|_{M_{2\epsilon}}^2\\
&\leq C\left(\|\omega^2_{\neq}(0)\|_{H^1}^2+\nu^{-1}\| {\rm e}^{2\epsilon\nu^{\frac{1}{3}}t} \partial_x(u\cdot\nabla u^3)\|_{L^2L^2}^2+\nu^{-1}\| {\rm e}^{2\epsilon\nu^{\frac{1}{3}}t} \partial_x(u\cdot\nabla u^1)\|_{L^2L^2}^2\right)\\
&+C\nu^{-1}\sum_{k\neq 0;l\in \mathbb{Z}}\left(\| {\rm e}^{2\epsilon\nu^{\frac{1}{3}}t} [\partial_x(u\cdot\nabla u^1)+\partial_z(u\cdot\nabla u^3)]_{k,l}\|_{L^2L^2}^2+
\| {\rm e}^{2\epsilon\nu^{\frac{1}{3}}t} (k,l)(u\cdot\nabla u^2)_{k,l}\|_{L^2L^2}^2\right)\\
&+C\sum_{k\neq 0;l\in \mathbb{Z}}\left(\left(\|\hat{\Delta}u^2_{k,l}(0)\|_{L^2}^2+\eta^{-2}\|\hat{\Delta}({\rm i} ku^1_{k,l}+{\rm i}lu^3_{k,l})(0)\|_{L^2}^2\right)
+\nu^{-1}\| {\rm e}^{2\epsilon\nu^{\frac{1}{3}}t}(\partial_x,\partial_z)\theta_{k,l}\|_{L^2L^2}^2\right)\\
\leq& C\left(\|\omega_{\neq}^2\|_{H^1}^2+\|u_{\neq}(0)\|_{H^2}^2\right)\\
&
+C\nu^{-1}\left(\| {\rm e}^{2\epsilon\nu^{\frac{1}{3}}t} (\partial_x,\partial_z)(u\cdot\nabla u^3)_{\neq}\|_{L^2L^2}^2+\| {\rm e}^{2\epsilon\nu^{\frac{1}{3}}t} \partial_x(u\cdot\nabla u^1)_{\neq}\|_{L^2L^2}^2\right)\\
&+C\nu^{-1}\left(\| {\rm e}^{2\epsilon\nu^{\frac{1}{3}}t} (\partial_x,\partial_z)(u\cdot\nabla u^2)_{\neq}\|_{L^2L^2}^2+\| {\rm e}^{2\epsilon\nu^{\frac{1}{3}}t}(\partial_x,\partial_z)\theta_{\neq}\|_{L^2L^2}^2\right).
\end{align*}

By the definition of $E_{4,1}$, one has
\begin{align*}
\| {\rm e}^{2\epsilon\nu^{\frac{1}{3}}t} (\partial_x,\partial_z)\theta_{\neq}\|_{L^2L^2}^2
\leq C\nu^{-\frac{1}{3}}E_{4,1}^2.
\end{align*}
By Lemma \ref{lemuu-11} and Lemma \ref{lemuu-10}, and using the fact that $(fg)_{\neq}=\bar{f}g_{\neq}+f_{\neq}\bar{g}+(f_{\neq}g_{\neq})_{\neq}$, one can deduce that
\begin{align*}
E_{3,0}^2+\|u_{\neq}^2\|_{M_a}^2\leq C\|u(0)\|_{H^2}^2+C\left(\nu^{-2}E_3^4+\nu^{-2}E_2^2E_3^2+E_1^2E_3E_5+E_1^2E_3^\frac{3}{2}E_5^\frac{1}{2}+\nu^{-\frac{4}{3}}E_{4,1}^2\right),
\end{align*}
which completes the proof.
\end{proof}
\subsection{Estimate of $E_{3,1}$}
\begin{proposition}\label{proE31}
It holds that
\begin{align*}
E_{3,1}^2\leq C\left(\|u(0)\|_{H^2}^2+\nu^{-2}E_3^4+\nu^{-2}E_2^2E_3^2+E_1^2E_3E_5+E_1^2E_3^\frac{3}{2}E_5^\frac{1}{2}+E_1^2E_3^\frac{7}{4}E_5^\frac{1}{4}+\nu^{-\frac{4}{3}}E_{4,1}^2\right).
\end{align*}
\end{proposition}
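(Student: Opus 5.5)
We estimate $E_{3,1}=\nu^{\frac13}\|\nabla\omega^2_{\neq}\|_{Y_2}$ mode by mode in $(k,l)$ with the second (gradient-level) estimate of Proposition \ref{space-na-y}, exactly as in Proposition 13.2 of \cite{chenwz}. The temperature does not enter the $\omega^2$ equation directly. It reaches $E_{3,1}$ only through the forcing term ${\rm i}lu^2_{k,l}$, and hence through the bound on $\|u^2_{\neq}\|_{M_{2\epsilon}}$ from Proposition \ref{proE30}; this is why the same $\nu^{-\frac43}E_{4,1}^2$ term appears here. For each $k\neq0$ we view the equation for $\omega^2_{k,l}$ as \eqref{spacey} with
\[
f_4={\rm i}lu^2_{k,l},\qquad f_1=-(u\cdot\nabla u^3)_{k,l},\qquad f_3=(u\cdot\nabla u^1)_{k,l},\qquad f_2=0 .
\]
The condition $f_4(\pm1)=0$ holds by the non-slip condition on $u^2$. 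Since the nonlinear terms do not vanish at $y=\pm1$, we may instead split them as $\partial_y$-divergence terms where needed, following \cite{chenwz}.

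We apply the gradient estimate of Proposition \ref{space-na-y}, multiply by $\nu^{\frac23}$, and sum over $(k,l)$. The prefactor $C\nu^{-\frac23}|k|^{\frac23}$ becomes $|k|^{\frac23}$, and it is absorbed because $E_{3,1}$ carries $\nabla$ together with the norm structure: one derivative in $x$ controls $|k|^{\frac23}$. The linear term $f_4$ then contributes at most
\[
(\eta|k|)^{-1}\,l^2\,\|\partial_yu^2_{k,l}\|^2+\eta|k|^{-1}\,l^2\,\|u^2_{k,l}\|^2 \;\lesssim\; \|u^2_{k,l}\|_{M^{2\epsilon}_{k,l}}^2
\]
after using the $|k|^{\frac23}$ weight and $\eta|k|\ge |k|^{2}$ bookkeeping. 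By Proposition \ref{proE30}, this part is bounded by the right-hand side of that proposition, including $\nu^{-\frac43}E_{4,1}^2$. The initial data terms give $\|u(0)\|_{H^2}^2$.

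The nonlinear terms give $\nu^{-\frac13}\|{\rm e}^{2\epsilon\nu^{\frac13}t}\nabla\big(u\cdot\nabla u^{1},u\cdot\nabla u^3\big)_{\neq}\|_{L^2L^2}^2$, together with the weaker $L^2$ terms. We split each product as $(fg)_{\neq}=\bar f g_{\neq}+f_{\neq}\bar g+(f_{\neq}g_{\neq})_{\neq}$ and treat the pieces as follows.
\begin{itemize}
\item The nonzero–nonzero part is controlled by Lemma \ref{lemuu-11}. The bound $\nu^{-\frac53}E_3^4$ times $\nu^{\frac23}\cdot\nu^{-1}$ yields $\nu^{-2}E_3^4$.
\item The interactions with $\bar u^2,\bar u^3$ are handled by Lemma \ref{lemuu-10}, giving $\nu^{-2}E_2^2E_3^2$.
\item The interactions with $\bar u^1$ via $\partial_x$ and $\partial_z$ are also from Lemma \ref{lemuu-10}. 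They produce $E_1^2E_3E_5$ and $E_1^2E_3^{\frac32}E_5^{\frac12}$, and, from the $\partial_z$ estimate carrying the factor $\nu^{\frac13}$, the term $E_1^2E_3^{\frac74}E_5^{\frac14}$.
\end{itemize}

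The main obstacle is the term $\partial_y(\bar u^1\partial_x u^1_{\neq})$ and its companion $\partial_y\big((u^2_{\neq}\partial_y+u^3_{\neq}\partial_z)\bar u^1\big)$. These involve $y$-derivatives of the lift-up streak, which Lemma \ref{lemuu-10} does not bound directly. For them we would pass through the $f_2$ slot, using $f_2$ and $\partial_yf_2$ in Proposition \ref{space-na-y}. We would then use $\|\bar u^1\|_{H^2}\lesssim E_1(\nu t+\nu^{\frac23})$ from Lemma \ref{lemu1-0}, absorbing the polynomial growth in $t$ into the gap between the weights ${\rm e}^{3\epsilon\nu^{\frac13}t}$ and ${\rm e}^{2\epsilon\nu^{\frac13}t}$. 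Alternatively we would interpolate $\|\partial_x u_{\neq}\|$ between $E_3$ and $E_5$, which is the origin of the fractional powers; this mirrors \cite{chenwz}. Collecting all the terms gives the stated bound.
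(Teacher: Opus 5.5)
Your route is the paper's. The paper simply invokes Proposition 13.2 of \cite{chenwz}, with Proposition \ref{proE30} controlling the forcing ${\rm i}lu^2_{k,l}$. Your choices $f_4={\rm i}lu^2_{k,l}$, $f_2=0$, the $\nu^{\frac23}$ normalization and your bulleted nonlinear bounds reproduce every term of the statement. In particular, $\nu^{-\frac13}$ times the last bound of Lemma \ref{lemuu-10} is exactly where $E_1^2E_3^{\frac74}E_5^{\frac14}$ comes from.

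Your ``main obstacle'', however, does not exist. With $f_2=0$, the gradient estimate of Proposition \ref{space-na-y} sees the nonlinearity only through $\nu^{-1}\|{\rm e}^{2\epsilon\nu^{\frac13}t}(kf_1+lf_3)\|_{L^2L^2}^2$. After normalization this is $\nu^{-\frac13}\|{\rm e}^{2\epsilon\nu^{\frac13}t}\big(\partial_x(u\cdot\nabla u^3)_{\neq}-\partial_z(u\cdot\nabla u^1)_{\neq}\big)\|_{L^2L^2}^2$. So no $\partial_y$ of a nonlinear term ever appears. The derivative $\partial_y\bar u^1$ enters only inside $\partial_z\big((u^2_{\neq}\partial_y+u^3_{\neq}\partial_z)\bar u^1\big)$, which Lemma \ref{lemuu-10} bounds directly.

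Drop that paragraph rather than try to repair it, because the repair you sketch would not close. After normalization the $f_2$ slot costs $\nu^{-\frac13}\|\partial_yf_2\|^2+\nu^{-1}\|\,|\partial_x|^{\frac13}f_2\|^2$. Moreover, $\bar u^1\partial_x\partial_yu^1_{\neq}$ is controlled only through $E_{3,1}$ itself, with its $\nu^{-\frac13}$ loss. Also, the nonlinear terms do vanish at $y=\pm1$, since $u$ does; in any case $f_1,f_3$ need no boundary condition.

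What is actually missing is the horizontal part of $\nabla\omega^2_{\neq}$ in $E_{3,1}$. The $\omega'$ estimate gives neither $\nu^{\frac13}\|(\partial_x,\partial_z)\omega^2_{\neq}\|_{L^\infty L^2}$ nor $\nu^{\frac56}\|(\partial_x,\partial_z)^2\omega^2_{\neq}\|_{L^2L^2}$ (with weights). To get them, multiply the first estimate of Proposition \ref{space-na-y} by $\nu^{\frac23}\eta^2$. The data part is then $\lesssim\|u(0)\|_{H^2}^2$, and the nonlinear part is again at most $\nu^{-\frac13}\|kf_1+lf_3\|^2$. The $f_4$ part is $\nu^{\frac23}\eta^2\big[(\eta|k|)^{-1}l^2\|\partial_yu^2_{k,l}\|^2+\eta|k|^{-1}l^2\|u^2_{k,l}\|^2\big]$. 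The component $\eta|k|\|(\partial_y,{\rm i}\eta)u^2_{k,l}\|^2$ that sufficed for the $\omega'$ part handles this only when $\nu^{\frac13}\eta\lesssim1$. When $\nu^{\frac13}\eta\geq1$ you need the other component of the $M$-norm, namely $\nu\eta^2\|(\partial_y^2-\eta^2)u^2_{k,l}\|^2\geq\nu\eta^2\big(2\eta^2\|\partial_yu^2_{k,l}\|^2+\eta^4\|u^2_{k,l}\|^2\big)$. With these two corrections your argument is the one the paper has in mind via \cite{chenwz}.
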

\begin{proof}
Recall that $\omega_{k,l}^2$ solves
\begin{align*}
\begin{cases}
\partial_t\omega_{k,l}^2-\nu\hat{\Delta} \omega^2_{k,l}+{\rm i}ky\omega^2_{k,l}+{\rm i}lu^2_{k,l}+{\rm i}l(u\cdot\nabla u^1)_{k,l}-{\rm i}k(u\cdot\nabla u^3)_{k,l}=0,\\
\omega^2_{k,l}|_{y=\pm1}=0, \omega^2_{k,l}|_{t=0}={\rm i}lu^1_{k,l}(0)-{\rm i}ku^3_{k,l}(0).
\end{cases}
\end{align*}
The equation for $\omega^2$ is the same as in \cite{chenwz}, the main difference is the presence of temperature in equation for velocity. The proof can be achieved via following the same arguments as in Proposition 13.2 in \cite{chenwz} with application of Proposition \ref{proE30}. We omit the details here.
\end{proof}
\subsection{Estimate of $E_{4,1}$ and $E_5$}\label{E-4:E-5}
Inspired by \cite{chenwz}, we will use the method of freezing the coefficient in time to estimate $E_{4,1}$ and $E_5$.

Let $ \mathring{J} (\Omega)$ and $J_{2,0}^{(k)}(\Omega)$ denote the closure of the set of vector field, which is smooth and solenoidal in $\Omega$ and vanish on $\partial \Omega$, in
the topology, respectively, of $L^2(\Omega)$ and Sobolev space $W^{k,2}(\Omega)$.

We define
\begin{align*}
&Q(u,v)=\mathbb{P}(u\cdot \nabla v+v\cdot\nabla u),\\
&Q_1(f,v)=Q((f,0,0),v), A_{[V]}v=\nu\mathbb{P}\Delta v-Q_1(V,v), B_{[V]}=\nu\Delta -V\partial_x,
\end{align*}
where $V$ satisfies \eqref{eqVcon} and $\mathbb{P}$ is a projector in $L^2(\Omega)$ into $\mathring{J} (\Omega)$. Then $A_{[V]}$ defined on $J^{(2)}_{2,0}$ is invariant in the subspace $\mathcal{H}=\{f\in L^2(\Omega)|P_0f=0\}$.

We denote by $m(\nu, V)$ the upper bound of the real parts of points of the spectrum of $A_{[V]}$ in the subspace $J^{(2)}_{2,0}\cap \mathcal{H}$. More precisely, we consider $A_{[V]}$ as a closed linear operator in the Hilbert space $\mathring{J} \cap \mathcal{H}$
 with the domain $D(A_{[V]})=J^{(2)}_{2,0}\cap \mathcal{H}$.

Inspired by \cite{chenwz}, let $\kappa=\partial_zV/\partial_yV$, and for $v=(v_1,v_2,v_3)$, we introduce the notations:
\begin{align*}
\|v\|_{Z_{[V]}^1}^2=&\nu^{-1}(\|\nabla(v^2+\kappa v^3)\|_{L^2}^2+\|\partial_xv^3\|_{L^2}^2),\\
\|v\|_{Z_{[V]}^2}^2=&\nu^{\frac{1}{3}}(\|\partial_x^2v^3\|_{L^2}^2+\|(\partial_x(\partial_z-\kappa\partial_y)v^3\|_{L^2}^2)
+\nu(\|\nabla\partial_x^2 v^3\|_{L^2}^2+\|\nabla\partial_x(\partial_z-\kappa\partial_y)v^3\|_{L^2}^2)\\
&+\nu^{\frac{1}{3}}\|\partial_x\nabla(v^2+\kappa v^3)\|_{L^2}^2+\nu\|\partial_x\Delta(v^2+\kappa v^3)\|_{L^2}^2+\nu^{\frac{5}{3}}\|\partial_x\Delta v^3\|_{L^2}^2.
\end{align*}
\begin{lemma}\label{lemvZ-1}
It holds that for any $v\in J^{(3)}_{2,0}(\Omega)\cap\mathcal{H}$,
\begin{align*}
C^{-1}\nu^{\frac{1}{3}}\|\partial_x^2v\|_{L^2}^2\leq \|v\|_{Z_{[V]}^2}^2\leq C\nu^{\frac{1}{3}}\|\partial_x v\|_{H^2}^2.
\end{align*}
More precisely, it holds that
\begin{align*}
\nu^{\frac{1}{3}}(\|\partial_x^2 v^2\|_{L^2}^2+\|\partial_x(\partial_z-\kappa\partial_y)v^2\|_{L^2}^2)+\nu(\|\nabla\partial_x^2v\|_{L^2}^2
+\|\nabla\partial_x(\partial_z-\kappa\partial_y) v^2\|_{L^2}^2)\leq C\|v\|_{Z_{[V]}^2}^2.
\end{align*}
\end{lemma}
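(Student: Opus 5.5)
The lemma is a coercivity/boundedness statement for $Z^2_{[V]}$ on divergence-free fields with $x$-mean zero that vanish on $y=\pm1$. The plan is to work mode by mode in $x$: since $v\in\mathcal{H}$, every Fourier mode in $x$ has $|k|\ge 1$, so $\|\partial_x f\|_{L^2}\le\|\partial_x^2 f\|_{L^2}$ and lower-order terms in $x$ are absorbed. Throughout I use \eqref{eqVcon}: $\partial_yV=1+O(\varepsilon_0)$ in $W^{2,\infty}$ (by $H^4\subset W^{2,\infty}$ in 2D), so $\kappa=\partial_zV/\partial_yV$ satisfies $\|\kappa\|_{W^{2,\infty}}\le C\varepsilon_0$.

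\textbf{Upper bound.} Every term of $\|v\|_{Z^2_{[V]}}^2$ is a product of $\kappa$ or its derivatives with at most three derivatives of $v$, at least one of which is $\partial_x$. I expand $\nabla(\kappa v^3)$, $\Delta(\kappa v^3)$ and $(\partial_z-\kappa\partial_y)$ by Leibniz and bound the coefficients using $\|\kappa\|_{W^{2,\infty}}\le C$. The terms with prefactor $\nu$ or $\nu^{5/3}$ are at most $\nu^{1/3}$ times the same norms, because $\nu\le1$. This gives $\|v\|^2_{Z^2}\le C\nu^{1/3}\|\partial_x v\|_{H^2}^2$.

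\textbf{Lower bound for $v^3$.} The $v^3$ terms $\partial_x^2v^3$, $\nabla\partial_x^2v^3$ and $\nabla\partial_x(\partial_z-\kappa\partial_y)v^3$ appear directly in $Z^2$.

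\textbf{Lower bound for $v^2$.} I write $v^2=(v^2+\kappa v^3)-\kappa v^3$. Then
\[
\|\partial_x^2 v^2\|_{L^2}\le \|\partial_x^2(v^2+\kappa v^3)\|_{L^2}+C\varepsilon_0\|\partial_x^2v^3\|_{L^2}.
\]
The first term is bounded by $\|\partial_x\nabla(v^2+\kappa v^3)\|_{L^2}$, and the second by the $v^3$ part. The same argument handles $\nabla\partial_x^2v^2$, using $\nu\|\partial_x\Delta(v^2+\kappa v^3)\|^2$ together with the $H^2$ elliptic estimate
\[
\|\nabla^2\partial_x f\|_{L^2}\lesssim\|\Delta\partial_x f\|_{L^2}\quad\text{for } f|_{y=\pm1}=0,
\]
where $f=v^2+\kappa v^3$ vanishes on the boundary since $v$ does. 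For $\partial_x(\partial_z-\kappa\partial_y)v^2$ I again substitute $v^2=(v^2+\kappa v^3)-\kappa v^3$. The commutator terms $\partial_z\kappa\, v^3$ and $\kappa\partial_y\kappa\, v^3$ carry only the $W^{1,\infty}$ norm of $\kappa$ and are absorbed by the $v^3$ terms.

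\textbf{Lower bound for $v^1$.} Here I use incompressibility, $\partial_x v^1=-\partial_yv^2-\partial_zv^3$, so
\[
\|\partial_x^2v^1\|_{L^2}\le\|\partial_x\nabla v^2\|_{L^2}+\|\partial_x\partial_zv^3\|_{L^2}.
\]
The term $\partial_x\nabla v^2$ is controlled by $\partial_x\nabla(v^2+\kappa v^3)$ plus $\kappa$-weighted $v^3$ terms. For $\partial_x\partial_zv^3$ I write $\partial_z=(\partial_z-\kappa\partial_y)+\kappa\partial_y$. The delicate term is then $\kappa\,\partial_x\partial_yv^3$, which is not among the $\nu^{1/3}$-weighted terms for $v^3$. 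I plan to bound it by $\|\partial_x\nabla v^3\|_{L^2}$ and then control the latter by $\|\partial_x(\partial_z-\kappa\partial_y)v^3\|_{L^2}$, $\|\partial_x^2 v^3\|_{L^2}$ and the $\partial_y$ direction, which needs an additional argument. My first attempt is to estimate $\partial_x\partial_y v^3$ through the $\nabla\partial_x^2 v^3$ term and interpolation, which would leave a loss in $\nu$.

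\textbf{Main obstacle and fallback.} The hard step is controlling $\partial_y$-derivatives of $v^3$ (hence $\partial_x^2v^1$ and $\nabla\partial_x^2 v^1$) with the $\nu^{1/3}$ weight, since $Z^2$ gives only $\partial_x$ and $(\partial_z-\kappa\partial_y)$ derivatives of $v^3$ at that weight. If the direct route fails, I will use the $\nu^{5/3}\|\partial_x\Delta v^3\|^2$ and $\nu\|\nabla\partial_x^2 v^3\|^2$ terms together with the smallness of $\kappa$: every appearance of $\partial_yv^3$ comes multiplied by $\kappa=O(\varepsilon_0)$, and the resulting terms are absorbed by choosing $\varepsilon_0$ small. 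This is exactly the structure of the corresponding lemma in \cite{chenwz}, which I would follow.
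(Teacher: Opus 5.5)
Your upper bound and your lower bounds for $v^2$ and $v^3$ are fine. The $v^1$ step, however, has a genuine gap, and your proposed fallback cannot repair it. Writing $\|\partial_x^2v^1\|_{L^2}\le\|\partial_x\nabla v^2\|_{L^2}+\|\partial_x\partial_zv^3\|_{L^2}$ discards a cancellation. The piece $\partial_x\partial_yv^2$ contains $-\kappa\,\partial_x\partial_yv^3$, the piece $\partial_x\partial_zv^3$ contains $+\kappa\,\partial_x\partial_yv^3$, and neither piece alone is controlled by $Z^2_{[V]}$ at weight $\nu^{1/3}$.

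Here is a counterexample. Since $(\partial_z-\kappa\partial_y)V=0$, take
\[
v^3={\rm e}^{{\rm i}x}\psi(V),\qquad v^2=-\kappa v^3,\qquad v^1=-{\rm i}(\partial_y\kappa)v^3,
\]
with $\psi\in C_c^\infty(-\frac12,\frac12)$ varying on the scale $\nu^{1/3}$. This field is divergence free, has zero $x$-mean, vanishes near $y=\pm1$, and satisfies $(\partial_z-\kappa\partial_y)v^3=0$ and $v^2+\kappa v^3=0$. Hence $\|v\|_{Z^2_{[V]}}^2\sim\nu^{1/3}\|v^3\|_{L^2}^2$. On the other hand, $\nu^{1/3}\|\kappa\,\partial_x\partial_yv^3\|_{L^2}^2\sim\nu^{-1/3}\|\kappa v^3\|_{L^2}^2$ and $\nu\|\partial_x\partial_y^2v^3\|_{L^2}^2\sim\nu^{-1/3}\|v^3\|_{L^2}^2$.

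The loss is a negative power of $\nu$, so the $\nu$-independent smallness $\|\kappa\|\lesssim\varepsilon_0$ cannot absorb it, and there is no such term on the left-hand side to absorb into. The same obstruction blocks your route to $\nu\|\nabla\partial_x^2v^1\|_{L^2}^2$. This is also why the lemma claims only $\partial_x^2v^2$ and $\partial_x(\partial_z-\kappa\partial_y)v^2$ at weight $\nu^{1/3}$, and not $\partial_x\partial_yv^2$.

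The missing idea is to use $\nabla\cdot v=0$ \emph{before} splitting, grouping the terms exactly as in $Z^2_{[V]}$:
\[
\partial_xv^1=-\partial_yv^2-\partial_zv^3=-\partial_y(v^2+\kappa v^3)-(\partial_z-\kappa\partial_y)v^3+(\partial_y\kappa)\,v^3 .
\]
Here the two $\kappa\,\partial_yv^3$ terms cancel identically. Since $\kappa$ is independent of $x$, applying $\partial_x$ and using $\|\partial_xv^3\|_{L^2}\le\|\partial_x^2v^3\|_{L^2}$ on $\mathcal{H}$ gives
\[
\nu^{\frac13}\|\partial_x^2v^1\|_{L^2}^2\le C\nu^{\frac13}\big(\|\partial_x\nabla(v^2+\kappa v^3)\|_{L^2}^2+\|\partial_x(\partial_z-\kappa\partial_y)v^3\|_{L^2}^2+\|\partial_x^2v^3\|_{L^2}^2\big)\le C\|v\|_{Z^2_{[V]}}^2 .
\]
Applying $\nabla\partial_x$ instead gives $\nu\|\nabla\partial_x^2v^1\|_{L^2}^2\le C\|v\|_{Z^2_{[V]}}^2$. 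For this you use the Dirichlet elliptic bound $\|\nabla^2\partial_xf\|_{L^2}\lesssim\|\Delta\partial_xf\|_{L^2}$ with $f=v^2+\kappa v^3$, together with $\|\nabla\partial_xv^3\|_{L^2}\le\|\nabla\partial_x^2v^3\|_{L^2}$. With this identity, no $\nu^{1/3}$-weighted $\partial_y$-derivative of $v^3$ is ever needed.

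A smaller point concerns the regularity of $\kappa$. Because $\kappa=\partial_zV/\partial_yV$ involves a derivative of $V$, the assumption $V-y\in H^4$ only gives $\kappa\in H^3\subset W^{1,\infty}$, not $W^{2,\infty}$. Terms such as $\Delta\kappa\,\partial_xv^3$ (in your upper bound) and $\nabla\partial_y\kappa\,\partial_xv^3$ (in the identity above) should therefore be estimated using $\nabla^2\kappa\in H^1\subset L^4$ in $(y,z)$ together with $\|\partial_xv^3\|_{L^2_xL^4_{y,z}}\lesssim\|\partial_xv^3\|_{H^1}$.
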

\begin{proof}
See Lemma 14.1 in \cite{chenwz}.
\end{proof}
\begin{proposition}\label{proZ1-1}
Let $\lambda\in \mathbb{C}$, ${\bf Re} (\lambda)\in [0,\epsilon_1\nu^{1/3}]$. It holds that for any $v\in J^{(3)}_{2,0}(\Omega)\cap\mathcal{H}$,
\begin{align*}
\|v\|_{Z_{[V]}^2}\leq C\|(A_{[V]}+\lambda)v\|_{Z_{[V]}^1}.
\end{align*}
\end{proposition}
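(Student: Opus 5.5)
This is the resolvent bound for the linearized Navier--Stokes operator around the general shear flow $V$, with no temperature coupling. The natural plan is to reduce it to Proposition \ref{proWU}, i.e. Proposition 9.1 of \cite{chenwz}, as in Proposition 14.2 there.

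\textbf{Step 1 (setup and Fourier reduction).} Set $F:=(A_{[V]}+\lambda)v$. Since $v\in\mathcal{H}$ and $V$ is independent of $x$, we Fourier decompose in $x$ and treat each mode $k\neq0$ separately, writing $\lambda=-{\rm i}k\lambda_0+a(\nu k^2)^{1/3}$ with $\lambda_0\in\mathbb{R}$. The constraint ${\bf Re}\,\lambda\in[0,\epsilon_1\nu^{1/3}]$ together with $|k|\ge1$ gives $a\in[0,\epsilon_1]$, which is the regime required by Proposition \ref{proWU}. Modes in $z$ need not be separated, because Proposition \ref{proWU} is stated on the full $(y,z)$ domain.

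\textbf{Step 2 (rewrite in the variables of \eqref{eqres2}).} Write $A_{[V]}v=\nu\Delta v-V\partial_xv-(v^2\partial_yV+v^3\partial_zV,0,0)-\nabla p$. Take $U=v^3$ and $W=v^2+\kappa v^3$, where $\kappa=\partial_zV/\partial_yV$ is smooth and small because $\|V-y\|_{H^4}<\varepsilon_0$. The key point is that $\partial_yV\,W=v^2\partial_yV+v^3\partial_zV$, so the linear pressure satisfies $\Delta p^{L1}=-2{\rm i}k\,\partial_yV\,W$, exactly as in \eqref{eqres2}. For the boundary conditions, $U|_{y=\pm1}=0$ and $W|_{y=\pm1}=0$ are immediate. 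Divergence-freeness and $v|_{y=\pm1}=0$ give $\partial_yv^2|_{y=\pm1}=0$, and since $\kappa=0$ on $y=\pm1$ (as $\partial_zV=0$ there), also $\partial_yW|_{y=\pm1}=0$.

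Taking the second component of the equation plus $\kappa$ times the third gives the $W$-equation. Its commutator terms are $\nu(\Delta\kappa)U+2\nu\nabla\kappa\cdot\nabla U$, and the pressure enters as $(\partial_y+\kappa\partial_z)p^{L1}$. The forcing splits as $G_1=-(F^2+\kappa F^3)$ and $G_2=-F^3$, up to the pressure of $F$, which is absorbed because $F$ lies in the solenoidal range; I would work with $\mathbb{P}$ exactly as in \cite{chenwz}. Commutators between $V\partial_x$ and $\kappa$ vanish, since both $V$ and $\kappa$ are $x$-independent.

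\textbf{Step 3 (conclude).} Proposition \ref{proWU} gives
\[
\|v\|_{Z^2_{[V]}}^2\lesssim\nu^{-1}\bigl(\|\nabla G_1\|_{L^2}^2+\|\partial_xG_2\|_{L^2}^2\bigr),
\]
and the right-hand side is $\|F\|_{Z^1_{[V]}}^2$ by definition. Summing over $k$ finishes the proof.

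\textbf{Main obstacle.} The delicate point is the pressure bookkeeping in Step 2. The projector $\mathbb{P}$ produces a full pressure that must be split into $p^{L1}$ plus a part generated by $F$, and we must check that the second part does not add terms outside $\nabla G_1$ and $\partial_xG_2$. The same issue concerns the Neumann-type boundary condition hidden in $\mathbb{P}$ on $\{y=\pm1\}$. I would handle this exactly as in Proposition 14.2 of \cite{chenwz}: by definition $F=(A_{[V]}+\lambda)v$ is itself divergence-free and vanishes on the boundary for $v\in J^{(3)}_{2,0}$, so no extra pressure arises from $F$. Given that, the statement is a direct transcription of that result, and the proof can cite it.
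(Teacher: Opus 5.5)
Your reduction is correct and matches the paper, whose proof only cites Proposition 14.1 of \cite{chenwz}. Setting $W=v^2+\kappa v^3$, $U=v^3$, $G_1=F^2+\kappa F^3$, $G_2=F^3$ gives exactly \eqref{eqres2} (including the $\nu(\Delta\kappa)U+2\nu\nabla\kappa\cdot\nabla U$ terms and $\Delta p^{L1}=-2{\rm i}k\,\partial_yV\,W$), so the two sides of Proposition \ref{proWU} are $\|v\|_{Z^2_{[V]}}^2$ and $\|F\|_{Z^1_{[V]}}^2$.

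One remark in your last paragraph is wrong, though harmless. $F=(A_{[V]}+\lambda)v$ need not vanish on $\{y=\pm1\}$: only its normal component $F^2$ does, while the tangential components of $\mathbb{P}\Delta v$ survive. What the argument actually uses is $\nabla\cdot F=0$, which already forces the full pressure to satisfy $\Delta p=-2{\rm i}k\,\partial_yV\,W$, so no extra pressure comes from $F$.
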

\begin{proof}
See Proposition 14.1 of \cite{chenwz}.
\end{proof}

 Let $t_j=j\nu ^{-\frac{1}{3}}$, $I_j=[t_j, t_{j+1})\cap [0, T]$. We define
\begin{align*}
&V_j(y,z)=y+u^{1,0}(t_j,y,z), \kappa_j=\partial_zV_j/\partial_yV_j, A_j=A_{[V_j]}, B_j=B_{[V_j]}\\
&\tilde{u}^{1,1}=y+\bar{u}^1-V_j, t\in I_j, j\in [0, \nu^{\frac{1}{3}}T)\cap \mathbb{Z}.
\end{align*}
Then ${\rm e}^{a\nu^{1/3}t}\sim {\rm  e}^{aj}$, for $t\in I_j$. We denote
\begin{align*}
\|v\|_{Z^l_j}=\|v\|_{Z^l_{[V_j]}}, \text{ for } j\in [0, \nu^{\frac{1}{3}}T)\cap \mathbb{Z}, l\in\{1,2\}.
\end{align*}

Recall that $\theta_{\neq}$ and $u_{\neq}$ satisfy
\begin{align*}
\begin{cases}
\partial_t\theta_{\neq}-\nu \Delta \theta_{\neq}+(y+u^{1,0})\partial_x\theta_{\neq}+u^{1,1}\partial_x\theta_{\neq}
+(\bar{u}^2\partial_y+\bar{u}^3\partial_z)\theta_{\neq}\\
\quad\quad+u_{\neq}\cdot\nabla\bar{\theta}
+(u_{\neq}\cdot\nabla \theta_{\neq})_{\neq}=0,\\
\partial_tu_{\neq}-\nu\mathbb{P}\Delta u_{\neq}+Q_1(y,u_{\neq})+Q(\bar{u},u_{\neq})+\mathbb{P}(u_{\neq}\cdot\nabla u_{\neq})_{\neq}=\mathbb{P}(0,\theta_{\neq},0).
\end{cases}
\end{align*}
The equation for $u_{\neq}$ can be rewritten as
\begin{align*}
Q_1(y,u_{\neq})+Q(\bar{u},u_{\neq})=&Q_1(y+\bar{u}^1,u_{\neq})+Q((0,\bar{u}^2,\bar{u}^3),u_{\neq})\\
=&Q_1(V_j,u_{\neq})+Q_1(\tilde{u}^{1,1},u_{\neq})+Q((0,\bar{u}^2,\bar{u}^3),u_{\neq}).
\end{align*}
Then, we have
\begin{align*}
\begin{cases}
\partial_t\theta_{\neq}-\nu\Delta \theta_{\neq}+V_j\partial_x\theta_{\neq}+f=0, t\in I_j,\\
\partial_tu_{\neq}+A_ju_{\neq}+g=0, t\in I_j,
\end{cases}
\end{align*}
where
\begin{align*}
f=&(u^{1,0}(t,y,z)-u^{1,0}(t_j,y,z))\partial_x\theta_{\neq}+u^{1,1}\partial_x\theta_{\neq}
+(\bar{u}^2\partial_y+\bar{u}^3\partial_z)\theta_{\neq}\\
&+u_{\neq}\cdot\nabla\bar{\theta}
+(u_{\neq}\cdot\nabla \theta_{\neq})_{\neq},\\
g=&Q_1(\tilde{u}^{1,1},u_{\neq})+Q((0,\bar{u}^2,\bar{u}^3),u_{\neq})+
\mathbb{P}(u_{\neq}\cdot\nabla u_{\neq})_{\neq}-\mathbb{P}(0,\theta_{\neq},0).
\end{align*}
We define $f_{(j)}$, $g_{(j)}$, $u_{[j]}$, and $\theta_{[j]}$ for $j\in [0, \nu^{\frac{1}{3}}T)\cap \mathbb{Z}$ iteratively by solving
\begin{align*}
&f_{(j)}(t)=0 \text{ for } t\notin I_j, \quad f_{(j)}(t)=f+\sum_{i=0}^{j-1}(B_i-B_j)\theta_{[i]} \text{ for } t\in I_j,\\
&g_{(j)}(t)=0 \text{ for } t\notin I_j, \quad g_{(j)}(t)=g+\sum_{i=0}^{j-1}(A_i-A_j)u_{[i]} \text{ for } t\in I_j,\\
&\partial_t\theta_{[j]}-\nu\Delta \theta_{[j]}+V_j\partial_x\theta_{[j]}+f_{(j)}=0, \theta_{[j]}\in J^{(3)}_{2,0} \text{ for }t\in [0,\infty),\\
&\theta_{[0]}(0)=\theta_{\neq}(0), \theta_{[j]}(0)=0 \text{ for } j\in [0, \nu^{\frac{1}{3}}T)\cap \mathbb{Z},\\
&\partial_tu_{[j]}-A_ju_{[j]}+g_{(j)}=0, u_{[j]}(t)\in J^{(3)}_{2,0}(\Omega)\cap \mathcal{H},\text{ for } t\in[0,\infty),\\
&u_{[0]}(0)=u_{\neq}(0), u_{[j]}(0)=0 \text{ for } j\in [0, \nu^{\frac{1}{3}}T)\cap \mathbb{Z}.
\end{align*}
Then we find that
\begin{align*}
&\theta_{\neq}=\sum_{i=0}^j\theta_{[i]} \text{ for }t\in I_j, j\in [0, \nu^{\frac{1}{3}}T)\cap \mathbb{Z}, \theta_{[j]}(t)=0\text{ for }0\leq t\leq t_j,\\
&u_{\neq}=\sum_{i=0}^ju_{[i]} \text{ for }t\in I_j, j\in [0, \nu^{\frac{1}{3}}T)\cap \mathbb{Z}, u_{[j]}(t)=0\text{ for }0\leq t\leq t_j.
\end{align*}
\subsubsection{Estimate of $E_{4,1}$}
\begin{proposition}\label{proE41}
It holds that
\begin{align*}
E_{4,1}^2\leq C(\|\theta(0)\|_{H^2}^2+\nu^{-2}E_{3,0}^2E_{4,1}^2+\nu^{-4/3}E_{4,0}^2E_5^2+E_1^2E_{4,1}^2+\nu^{-2}E_{3,0}^2E_{4,0}^2+\nu^{-2}E_2^2E_{4,1}^2).
\end{align*}
\end{proposition}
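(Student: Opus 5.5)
We will estimate the two pieces of $E_{4,1}$ separately. The $\partial_x^2\theta_{\neq}$ part uses the freezing-coefficient decomposition $\theta_{\neq}=\sum_i\theta_{[i]}$ together with Proposition \ref{space-time-V21}. The $\partial_z^2\theta_{\neq}$ part is a direct estimate on the full equation for $\theta_{\neq}$ (around $y$, Proposition \ref{space-na-y}). This split is suggested by the weights in $E_{4,1}$: $X_3$ for $\partial_x^2\theta_{\neq}$, and only $X_2$ for $\partial_z^2\theta_{\neq}$.

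\textbf{The $\partial_x^2$ part.} Since $\partial_x$ commutes with $B_j=\nu\Delta-V_j\partial_x$, the function $\partial_x^2\theta_{[j]}$ solves \eqref{spaceV-1} with $V=V_j$. Condition \eqref{eqVcon} holds because $\|u^{1,0}\|_{H^4}\le E_{1,0}$ is small. The source is $-\partial_x^2 f_{(j)}$, and it is written in divergence form as far as possible:
\begin{itemize}
\item The transport terms $\bar u^2\partial_y\theta_{\neq}$, $\bar u^3\partial_z\theta_{\neq}$ and $u_{\neq}\cdot\nabla\theta_{\neq}$ are put in the form $\partial_y(\cdot)$, $\partial_z(\cdot)$, $\partial_x(\cdot)$ using incompressibility. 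They are then bounded by \eqref{lem:u-tem-02} and Lemma \ref{lemuthe-11}. With the $\nu^{-1}$ weight of the $f_2,f_3$ terms, these give the contributions $\nu^{-2}E_2^2E_{4,1}^2$ and $\nu^{-2}E_{3,0}^2E_{4,1}^2$.
\item The term $u_{\neq}\cdot\nabla\bar\theta$ becomes $\partial_x^2u_{\neq}^2\,\partial_y\bar\theta+\dots$. It is treated as an $f_1$-type source via $\partial_x(\partial_x u_{\neq}\bar\theta)$, or directly by \eqref{lem:u-tem-03}. Here we take the $\nu^{-1/3}|\partial_x|^{2/3}$ branch of the minimum in Proposition \ref{space-time-V21}, giving $\nu^{-4/3}E_{4,0}^2E_5^2$.
\item The term $u^{1,1}\partial_x^3\theta_{\neq}$ is handled by \eqref{lem:u-tem-01}, giving $E_1^2E_{4,1}^2$.
\item For $(u^{1,0}(t)-u^{1,0}(t_j))\partial_x^3\theta_{\neq}$ we use $\|u^{1,0}(t)-u^{1,0}(t_j)\|_{L^\infty}\le \nu^{-1/3}\|\partial_tu^{1,0}\|_{L^\infty H^2}\le \nu^{2/3}E_{1,0}$ on $I_j$. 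Combined with $\nu^{1/6}\|\partial_x^3\theta\|_{L^2L^2}$, this again gives $E_1^2E_{4,1}^2$.
\end{itemize}

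\textbf{The sum over $j$.} The commutator sum $\sum_{i<j}(B_i-B_j)\theta_{[i]}=\sum_{i<j}(V_j-V_i)\partial_x\theta_{[i]}$ is controlled using the exponential decay of the earlier blocks $\theta_{[i]}$ past $t_{i+1}$ (rate $3\epsilon\nu^{1/3}$, versus the weight $e^{aj}$), together with $\|V_j-V_i\|_{L^\infty}\lesssim (j-i)\nu^{2/3}E_{1,0}$. The geometric factor $e^{-c(j-i)}(j-i)$ is summable. Summing the resulting block estimates over $j$ in $\ell^2$ with weight $e^{3\epsilon j}$ then gives the $\|\partial_x^2\theta_{\neq}\|_{X_3}$ bound. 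The initial data enter only through $\theta_{[0]}(0)$ and are bounded by $\|\theta(0)\|_{H^2}$.

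\textbf{The $\partial_z^2$ part.} We write $\partial_z^2\theta_{\neq}$ with the full $\bar u^1\partial_x\theta_{\neq}$ moved to the right side in the form $\partial_x(\partial_z^2(\bar u^1\theta_{\neq}))$, as an $f_1$-type source. We then apply Proposition \ref{space-na-y} with $a=2\epsilon$. The needed bounds are \eqref{lem:u-tem-01} and \eqref{lem:u-tem-06} for $\partial_z^2(\bar u^1\partial_x\theta)$, and \eqref{lem:u-tem-02}, \eqref{lem:u-tem-04}, \eqref{lem:u-tem-05} and Lemma \ref{lemuthe-11} for the remaining terms. The lower weight $e^{2\epsilon\nu^{1/3}t}$ absorbs the $(\nu^{2/3}+\nu t)$ growth of $u^{1,0}$, as in the proof of \eqref{lem:u-tem-06}. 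There the cross term $\|\partial_x\partial_z\nabla\theta\|$ is interpolated between $\partial_x^2$ and $\partial_z^2$, which is why the $\partial_x^2$ bound with the stronger weight has to be established first.

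\textbf{Main obstacle.} I expect the hardest step to be the commutator sum in the freezing argument: showing that $\sum_{i<j}(V_j-V_i)\partial_x^3\theta_{[i]}$ is small in the space required by Proposition \ref{space-time-V21}. I would first try to mimic the corresponding argument for $u_{[i]}$ in \cite{chenwz}, using the $L^\infty H^2$ bound on $\partial_tu^{1,0}$ from $E_{1,0}$ and the decay of $\theta_{[i]}$ after $I_i$.
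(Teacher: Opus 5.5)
Your plan follows the paper's proof. It freezes coefficients and applies Proposition \ref{space-time-V21} with $V=V_j$ to $\partial_x^2\theta_{[j]}$. It controls the commutator through $\|V_i-V_j\|_{L^\infty}\le C\nu^{2/3}|i-j|E_1$. For $\partial_z^2\theta_{\neq}$ it uses a direct estimate around $V=y$, with $\partial_z^2(\bar u^1\partial_x\theta_{\neq})$ on the enhanced-dissipation branch, bounded by \eqref{lem:u-tem-01} and \eqref{lem:u-tem-06}. The paper uses Proposition \ref{space-time-V21} with $V=y$ rather than Proposition \ref{space-na-y}, which is immaterial.

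One step is misdescribed: $\partial_x^2(u_{\neq}\cdot\nabla\bar\theta)$ cannot go on the $\nu^{-1/3}|\partial_x|^{2/3}$ branch. There you would need $\|{\rm e}^{3\epsilon\nu^{1/3}t}\partial_x^2u^2_{\neq}\,\partial_y\bar\theta\|_{L^2L^2}$, and $E_5$, $E_{4,0}$ do not control it. $E_5$ contains no $\partial_y\partial_x^2u^2_{\neq}$, and $E_{4,0}$ gives no $L^\infty$ bound on $\partial_y\bar\theta$. The powers of $\nu$ you quote also do not match that branch. The paper instead moves the derivative off $\bar\theta$ using $\nabla\cdot u_{\neq}=0$. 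It puts $\partial_x^2u^2_{\neq}\bar\theta$ and $\partial_x^2u^3_{\neq}\bar\theta$ in the $f_2,f_3$ slots with weight $\nu^{-1}$ and applies \eqref{lem:u-tem-03}. The product $\nu^{-1}\cdot\nu^{-1/3}$ is exactly the term $\nu^{-4/3}E_{4,0}^2E_5^2$.

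Some smaller points follow.

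The gap between block weight and target weight must be built in. The paper bounds each $\theta_{[i]}$ with weight ${\rm e}^{4\epsilon\nu^{1/3}t}$ (allowed since $4\epsilon<\epsilon_1$) and sums with ${\rm e}^{3\epsilon j}$. That gap is what makes $(j-i+1){\rm e}^{-\epsilon(j-i)}$ summable. Blocks decaying only at the target rate $3\epsilon$, as you wrote, leave no room.

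With this gap, the step you call the main obstacle is short for a scalar. Since $V_i-V_j$ is independent of $x$, the term $(V_i-V_j)\partial_x^3\theta_{[i]}$ goes on the $|\partial_x|^{2/3}$ branch. The factors are $\nu^{-1/6}$ from the branch, $\nu^{2/3}|j-i|E_1$ from $V_i-V_j$, and $\nu^{-1/2}$ from $\nabla\partial_x^2\theta_{[i]}$. They multiply to $|j-i|E_1$, times ${\rm e}^{-\epsilon(j-i)}$ after reweighting, which is absorbed for $E_1$ small. No comparison of $\kappa_j$-dependent norms as in \cite{chenwz} is needed.

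The control of $\partial_x^3\theta_{\neq}$ carries the factor $\nu^{1/2}$, from $\nu^{1/2}\|{\rm e}^{3\epsilon\nu^{1/3}t}\nabla\partial_x^2\theta_{\neq}\|_{L^2L^2}$ in $X_3$, not $\nu^{1/6}$. With $\nu^{1/2}$ your $E_1^2E_{4,1}^2$ comes out correctly.

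Your $\ell^2$ summation over $j$ is sound: use Young's inequality for the convolution in $j-i$, and Cauchy--Schwarz for the $L^\infty_t$ part. It also fits the $L^2_t$ form of the source lemmas. The paper sums the normalised block energies $\sum_j{\rm e}^{3\epsilon j}s_j$ in $\ell^1$. There, passing from $\sum_j{\rm e}^{3\epsilon j}\|\cdot\|_{L^2(I_j)}$ to a weighted $L^2L^2$ norm is not automatic and needs a little extra exponential room.
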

\begin{proof}
Let $N=\max([0,\nu^{1/3}T)\cap \mathbb{Z})$.
Recall that $\theta_{[j]}$ satisfies
\begin{align*}
\partial_t\theta_{[j]}-\nu\Delta \theta_{[j]}+V_j\partial_x\theta_{[j]}+f_{(j)}=0, \text{ for } t\in I_j,
\end{align*}
and then
\begin{align*}
\partial_t\partial_x^2\theta_{[j]}-\nu\Delta \partial_x^2\theta_{[j]}+V_j\partial_x^3\theta_{[j]}+\partial_x^2f_{(j)}=0, \text{ for } t\in I_j.
\end{align*}

By Proposition \ref{space-time-V21}, one has
for $j>0$,
\begin{align*}
&\|{\rm e}^{4\epsilon\nu^{\frac{1}{3}}t} \partial_x^2\theta_{[j]}\|_{L^\infty L^2}^2+\nu\|{\rm e}^{4\epsilon\nu^{\frac{1}{3}}t} \nabla\partial_x^2\theta_{[j]}\|_{L^2 L^2}^2+\nu^{\frac{1}{3}}\|{\rm e}^{4\epsilon\nu^{\frac{1}{3}}t} \partial_x^2\theta_{[j]}\|_{L^2 L^2}^2\\
\leq& C\nu^{-\frac{1}{3}}\left(\|{\rm e}^{4\epsilon\nu^{\frac{1}{3}}t} (u^{1,0}-u^{1,0}|_{t=t_j})\partial_x^3\theta_{\neq}\|_{L^2_{I_j}L^2}^2
+\sum_{i=0}^{j-1}\|{\rm e}^{4\epsilon\nu^{\frac{1}{3}}t} (B_i-B_j)\partial_x^2\theta_{[i]}\|_{L^2_{I_j}L^2}^2\right)\\
&+C\nu^{-1}\left(\|{\rm e}^{4\epsilon\nu^{\frac{1}{3}}t} \partial_x^2(u_{\neq}\theta_{\neq})\|_{L^2_{I_j}L^2}
+\|{\rm e}^{4\epsilon\nu^{\frac{1}{3}}t} (\partial_x^2u^2_{\neq}\partial_y+\partial_x^2u_{\neq}^3\partial_z)\bar{\theta}\|_{L^2_{I_j}L^2}^2\right)\\
&+C\nu^{-\frac{1}{3}}\|{\rm e}^{4\epsilon\nu^{\frac{1}{3}}t} (u^{1,1}\partial_x+\bar{u}^2\partial_y+\bar{u}^3\partial_z)\partial_x^2\theta_{\neq}\|_{L^2_{I_j}L^2}^2\\
\leq& C\nu^{-\frac{1}{3}}{\rm e}^{8 \epsilon j}\left(\|(u^{1,0}-u^{1,0}|_{t=t_j})\partial_x^3\theta_{\neq}\|_{L^2_{I_j}L^2}^2
+\sum_{i=0}^{j-1}\|(B_i-B_j)\partial_x^2\theta_{[i]}\|_{L^2_{I_j}L^2}^2\right)\\
&+C\nu^{-1}{\rm e}^{8 \epsilon j}\left(\|\partial_x^2(u_{\neq}\theta_{\neq})\|_{L^2_{I_j}L^2}^2
+\|(\partial_x^2u^2_{\neq}\partial_y+\partial_x^2u_{\neq}^3\partial_z)\bar{\theta}\|_{L^2_{I_j}L^2}^2\right)\\
&+C\nu^{-\frac{1}{3}} {\rm e}^{8 \epsilon j}\|(u^{1,1}\partial_x+\bar{u}^2\partial_y+\bar{u}^3\partial_z)\partial_x^2\theta_{\neq}\|_{L^2_{I_j}L^2}^2.
\end{align*}
For $t\in I_0=[0,\nu^{-\frac{1}{3}}),$ it holds that $1 \leq {\rm e}^{4\epsilon \nu^{\frac{1}{3}}t}< {\rm e}^{4 \epsilon} \leq {\rm e}^{1/2}$ and
\begin{align*}
&\|{\rm e}^{4\epsilon\nu^{\frac{1}{3}}t} \partial_x^2\theta_{[0]}\|_{L^\infty L^2}^2+\nu\|{\rm e}^{4\epsilon\nu^{\frac{1}{3}}t} \nabla\partial_x^2\theta_{[0]}\|_{L^2 L^2}^2+\nu^{\frac{1}{3}}\|{\rm e}^{4\epsilon\nu^{\frac{1}{3}}t} \partial_x^2\theta_{[0]}\|_{L^2 L^2}^2\\
&\leq C\|\theta(0)\|_{H^2}^2+C\nu^{-\frac{1}{3}}\|{\rm e}^{4\epsilon\nu^{\frac{1}{3}}t}  (u^{1,0}-u^{1,0}|_{t=t_j})\partial_x^3\theta_{\neq}\|_{L^2_{I_0}L^2}^2\\
&\quad+C \nu^{-\frac{1}{3}} \|{\rm e}^{4\epsilon\nu^{\frac{1}{3}}t}  (u^{1,1}\partial_x+\bar{u}^2\partial_y+\bar{u}^3\partial_z)\partial_x^2\theta_{\neq}\|_{L^2_{I_0}L^2}^2\\
&\quad+C\nu^{-1}\left(\|{\rm e}^{4\epsilon\nu^{\frac{1}{3}}t}   \partial_x^2(u_{\neq}\theta_{\neq})\|_{L^2_{I_0}L^2}^2
+\|{\rm e}^{4 \epsilon\nu^{\frac{1}{3}}t}   (\partial_x^2u^2_{\neq}\partial_y+\partial_x^2u_{\neq}^3\partial_z)\bar{\theta}\|_{L^2_{I_0}L^2}^2\right)\\
&\leq C\|\theta(0)\|_{H^2}^2+C\nu^{-\frac{1}{3}}\|(u^{1,0}-u^{1,0}|_{t=t_j})\partial_x^3\theta_{\neq}\|_{L^2_{I_0}L^2}^2\\
&\quad+C\nu^{-\frac{1}{3}} \|(u^{1,1}\partial_x+\bar{u}^2\partial_y+\bar{u}^3\partial_z)\partial_x^2\theta_{\neq}\|_{L^2_{I_0}L^2}^2\\
&\quad+C\nu^{-1}\left(\|\partial_x^2(u_{\neq}\theta_{\neq})\|_{L^2_{I_0}L^2}^2
+\|(\partial_x^2u^2_{\neq}\partial_y+\partial_x^2u_{\neq}^3\partial_z)\bar{\theta}\|_{L^2_{I_0}L^2}^2\right).
\end{align*}
For $j\in [0,\nu^{\frac{1}{3}}T)\cap\mathbb{Z}$, let
\begin{align*}
&m_j=\sum_{i=0}^j(j-i+1)\|\partial_x^3 \theta_{[i]}\|_{L^2_{I_j}L^2},
\\
&s_j={\rm e}^{-4\epsilon j}\left(\|{\rm e}^{4\epsilon\nu^{\frac{1}{3}}t}   \partial_x^2\theta_{[j]}\|_{L^\infty L^2}+\nu^{\frac{1}{2}}\|{\rm e}^{4\epsilon\nu^{\frac{1}{3}}t}   \nabla\partial_x^2\theta_{[j]}\|_{L^2 L^2}+\nu^{\frac{1}{6} }\|{\rm e}^{4\epsilon\nu^{\frac{1}{3}}t}  \partial_x^2\theta_{[j]}\|_{L^2 L^2}\right).
\end{align*}
Indeed, one has
\begin{align*}
\|(B_i-B_j)\partial_x^3 \theta_{[i]}\|_{L^2_{I_j}L^2}=&\|(V_i-V_j)\partial_x^3\theta_{[i]}\|_{L^2_{I_j}L^2}
\leq\|V_i-V_j\|_{L^\infty L^\infty}\| \partial_x^3 \theta_{[i]}\|_{L^2_{I_j}L^2}\\
\leq& C\nu^{\frac{2}{3}}E_1|j-i|\|\partial_x^3\theta_{[i]}\|_{L^2_{I_j}L^2},\\
\|(u^{1,0}-u^{1,0}|_{t=t_j})\partial_x^3\theta_{\neq}\|_{L^2_{I_j}L^2}
\leq& \|u^{1,0}-u^{1,0}|_{t=t_j}\|_{L^\infty_{I_j}L^\infty}\|\partial_x^3\theta_{\neq}\|_{L^2_{I_j}L^2}\\
\leq& C\nu^{\frac{2}{3}}E_1\|\partial_x^3\theta_{\neq}\|_{L^2_{I_j}L^2},
\end{align*}
where the following fact has been used:
\begin{align*}
\|V_i-V_j\|_{L^\infty L^\infty}&=\|u^{1,0}(t_i)-u^{1,0}(t_j)\|_{L^\infty L^\infty}= \left\|\int_{t_i}^{t_j}\partial_tu^{1,0}dt\right\|_{L^\infty L^\infty}\\
&\leq C\nu|t_j-t_i|E_1\leq C\nu^{\frac{2}{3}}E_1|j-i|.
\end{align*}
Therefore, one has
\begin{align*}
&s_0\leq C\|\theta(0)\|_{H^2}+C\nu^{-\frac{1}{6}}\|(u^{1,0}-u^{1,0}|_{t=t_0})\partial_x^3\theta_{\neq}\|_{L^2_{I_0}L^2}\\
&\quad+C\nu^{-\frac{1}{6}} \|(u^{1,1}\partial_x+\bar{u}^2\partial_y+\bar{u}^3\partial_z)\partial_x^2\theta_{\neq}\|_{L^2_{I_0}L^2}\\
&\quad+C\nu^{-\frac{1}{2}}\left(\|\partial_x^2(u_{\neq}\theta_{\neq})\|_{L^2_{I_0}L^2}
+\|(\partial_x^2u^2_{\neq}\partial_y+\partial_x^2u_{\neq}^3\partial_z)\bar{\theta}\|_{L^2_{I_0}L^2}\right),\\
\end{align*}
and
\begin{align*}
s_j\leq& C\nu^{-\frac{1}{6}}\left(\|(u^{1,0}-u^{1,0}|_{t=t_j})\partial_x^3\theta_{\neq}\|_{L^2_{I_j}L^2}
+\sum_{i=0}^{j-1}\|(B_i-B_j)\partial_x^2\theta_{[i]}\|_{L^2_{I_j}L^2}\right)\\
&+C\nu^{-\frac{1}{2}}\left(\|\partial_x^2(u_{\neq}\theta_{\neq})\|_{L^2_{I_j}L^2}
+\|(\partial_x^2u^2_{\neq}+\partial_x^2u_{\neq}^3)\bar{\theta}\|_{L^2_{I_j}L^2}\right)\\
&+C\nu^{-\frac{1}{6}}\|(u^{1,1}\partial_x+\bar{u}^2\partial_y+\bar{u}^3\partial_z)\partial_x^2\theta_{\neq}\|_{L^2_{I_j}L^2}\\
\leq& C\nu^{-\frac{1}{6}}\left(\|(u^{1,0}-u^{1,0}|_{t=t_j})\partial_x^3\theta_{\neq}\|_{L^2_{I_j}L^2}
+\nu^{\frac{2}{3}}E_1\sum_{i=0}^{j-1}|j-i|\|\partial_x^3\theta_{[i]}\|_{L^2_{I_j}L^2}\right)\\
&+C\nu^{-\frac{1}{2}}\left(\|\partial_x^2(u_{\neq}\theta_{\neq})\|_{L^2_{I_j}L^2}
+\|(\partial_x^2u^2_{\neq}+\partial_x^2u_{\neq}^3)\bar{\theta}\|_{L^2_{I_j}L^2}\right)\\
&+C\nu^{-\frac{1}{6}}\|(u^{1,1}\partial_x+\bar{u}^2\partial_y+\bar{u}^3\partial_z)\partial_x^2\theta_{\neq}\|_{L^2_{I_j}L^2}
\\
\leq& C\nu^{-\frac{1}{6}}\left(\|(u^{1,0}-u^{1,0}|_{t=t_j})\partial_x^3\theta_{\neq}\|_{L^2_{I_j}L^2}
+\nu^{\frac{2}{3}}E_1m_j\right)\\
&+C\nu^{-\frac{1}{2}}\left(\|\partial_x^2(u_{\neq}\theta_{\neq})\|_{L^2_{I_j}L^2}
+\|(\partial_x^2u^2_{\neq}+\partial_x^2u_{\neq}^3)\bar{\theta}\|_{L^2_{I_j}L^2}\right)\\
&+C\nu^{-\frac{1}{6}}\|(u^{1,1}\partial_x+\bar{u}^2\partial_y+\bar{u}^3\partial_z)\partial_x^2\theta_{\neq}\|_{L^2_{I_j}L^2}.
\end{align*}
Notice that
\begin{align*}
m_j=&\sum_{i=0}^j(j-i+1)\|\partial_x^3\theta_{[i]}\|_{L^2_{I_j}L^2}\leq \sum_{i=0}^j(j-i+1) {\rm e}^{-4\epsilon j}\|{\rm e}^{4\epsilon\nu^{\frac{1}{3}}t}\partial_x^3\theta_{[i]}\|_{L^2_{I_j}L^2}\\
\leq& \sum_{i=0}^j(j-i+1){\rm e}^{-4\epsilon j}\|{\rm e}^{4\epsilon\nu^{\frac{1}{3}}t}\partial_x^3\theta_{[i]}\|_{L^2L^2}\leq C\nu^{-\frac{1}{2}}\sum_{i=0}^j(j-i+1){\rm e}^{-4\epsilon (j-i)}s_i,
\end{align*}
and hence, we have
\begin{align*}
\sum_{j=0}^N {\rm e}^{3\epsilon j}m_j\leq & C\nu^{-\frac{1}{2}}\sum_{j=0}^N\sum_{i=0}^j(j-i+1) {\rm e}^{3\epsilon j} {\rm e}^{-4\epsilon (j-i)}s_i\\
\leq & C\nu^{-\frac{1}{2}}\sum_{j=0}^N\sum_{i=0}^j(j-i+1) {\rm e}^{-\epsilon (j-i)} {\rm e}^{3\epsilon i}s_i\\
\leq& C\nu^{-\frac{1}{2}}\sum_{j=0}^N {\rm e}^{3\epsilon j}s_i.
\end{align*}
Therefore, one obtains
\begin{align*}
 \sum_{j=0}^N {\rm e}^{3\epsilon j}s_j\leq & C\|\theta(0)\|_{H^2}+C\nu^{-\frac{1}{6}}\sum_{j=0}^N \nu^{\frac{2}{3}}E_1 {\rm e}^{3\epsilon j}\|\partial_x^3\theta_{\neq}\|_{L^2_{I_j}L^2}
+CE_1\sum_{j=0}^N {\rm e}^{3\epsilon j}s_j
\\
&+C\nu^{-\frac{1}{2}}\sum_{j=0}^N {\rm e}^{3\epsilon j}\left(\|\partial_x^2(u_{\neq}\theta_{\neq})\|_{L^2_{I_j}L^2}
+\|(\partial_x^2u^2_{\neq}+\partial_x^2u_{\neq}^3)\bar{\theta}\|_{L^2_{I_j}L^2}\right)\\
&+C\nu^{-\frac{1}{6}}\sum_{j=0}^N {\rm e}^{3\epsilon j}\|(u^{1,1}\partial_x+\bar{u}^2\partial_y+\bar{u}^3\partial_z)\partial_x^2\theta_{\neq}\|_{L^2_{I_j}L^2}\\
\leq& C\|\theta(0)\|_{H^2}+C\nu^{\frac{1}{2}}E_1\|{\rm e}^{3\epsilon \nu^{\frac{1}{3}}t}\partial_x^3\theta_{\neq}\|_{L^2L^2}+CE_1\sum_{j=0}^N {\rm e}^{3\epsilon j}s_j\\
&+C\nu^{-\frac{1}{2}}\left(\|{\rm e}^{3\epsilon \nu^{\frac{1}{3}}t}\partial_x^2(u_{\neq}\theta_{\neq})\|_{L^2L^2}
+\|{\rm e}^{3\epsilon \nu^{\frac{1}{3} }t}(\partial_x^2u^2_{\neq}+\partial_x^2u_{\neq}^3)\bar{\theta}\|_{L^2L^2}\right)\\
&+C\nu^{-\frac{1}{6}}\|{\rm e}^{3\epsilon \nu^{\frac{1}{3}}t}(u^{1,1}\partial_x+\bar{u}^2\partial_y+\bar{u}^3\partial_z)\partial_x^2\theta_{\neq}\|_{L^2L^2},
\end{align*}
which implies that
\begin{align*}
\|\partial_x^2\theta_{\neq}\|_{X_3}\leq& \sum_{j=0}^N {\rm e}^{3\epsilon j}s_j\leq C\|\theta(0)\|_{H^2}+CE_1\nu^{\frac{1}{2}}\|{\rm e}^{3\epsilon \nu^{\frac{1}{3}}t}\partial_x^3\theta_{\neq}\|_{L^2L^2}\\
&+C\nu^{-\frac{1}{2}}\left(\|{\rm e}^{3\epsilon \nu^{\frac{1}{3}}t}\partial_x^2(u_{\neq}\theta_{\neq})\|_{L^2L^2}
+\|{\rm e}^{3\epsilon \nu^{\frac{1}{3}}t}(\partial_x^2u^2_{\neq}+\partial_x^2u_{\neq}^3)\bar{\theta}\|_{L^2L^2}\right)\\
&+C\nu^{-\frac{1}{6}}\|{\rm e}^{3\epsilon \nu^{\frac{1}{3}}t}(u^{1,1}\partial_x+\bar{u}^2\partial_y+\bar{u}^3\partial_z)\partial_x^2\theta_{\neq}\|_{L^2L^2},
\end{align*}
where we have used $CE_1\leq C\epsilon_0<1/2$.

In addition, Lemma \ref{lemuthe-11} and Lemma \ref{lemuthe-10} give that
\begin{align*}
&\|{\rm e}^{3\epsilon \nu^{\frac{1}{3}}t} \partial_x^2(u_{\neq}\theta_{\neq})\|_{L^2L^2}\leq \nu^{-\frac{1}{2}}E_{3,0}E_{4,1},\\
&\|{\rm e}^{3\epsilon \nu^{\frac{1}{3}}t} (\partial_x^2u^2_{\neq}+\partial_x^2u_{\neq}^3)\bar{\theta}\|_{L^2L^2}
\leq C\nu^{-\frac{1}{6}}E_{4,0}E_5,\\
&\|{\rm e}^{3\epsilon \nu^{\frac{1}{3}}t} (u^{1,1}\partial_x+\bar{u}^2\partial_y+\bar{u}^3\partial_z)\partial_x^2\theta_{\neq}\|_{L^2L^2}
\leq C\nu^{\frac{1}{6}}E_1E_{4,1}+C\nu^{-\frac{1}{2}}E_2E_{4,1},
\end{align*}
which implies that
\begin{align*}
\|\partial_x^2\theta_{\neq}\|_{X_3}\leq C\|\theta(0)\|_{H^2}+CE_1E_{4,1}+C\nu^{-1}E_{3,0}E_{4,1}+C\nu^{-\frac{2}{3}}E_{4,0}E_5
+C\nu^{-\frac{2}{3}}E_2E_{4,1}.
\end{align*}

Let us focus on $\partial_z^2\theta_{\neq}$.
Recall that $\partial_z^2\theta_{\neq}$ solves
\begin{align*}
\begin{cases}
\partial_t \partial_z^2\theta_{\neq}-\nu\Delta \partial_z^2\theta_{\neq}+y\partial_x\partial_z^2\theta_{\neq} +\partial_z^2(u_{\neq}\cdot\nabla\theta_{\neq})_{\neq}+\partial_z^2((u_{\neq}^2\partial_y+u^3_{\neq}\partial_z)\bar{\theta})\\
\quad \quad+ \partial_z^2(\bar{u}^1\partial_x\theta_{\neq})+\partial_z^2\partial_y(\bar{u}^2\theta_{\neq})+\partial_z^3(\bar{u}^3\theta_{\neq})=0,\\
\partial_z^2\theta_{\neq}|_{t=0}=\partial_z^2\theta_{\neq}(0), \partial_z^2\theta_{\neq}|_{y=\pm1}=0,
\end{cases}
\end{align*}
then applying Proposition \ref{space-time-V21} with $V=y$, one has
\begin{align*}
&\|\partial_z^2\theta_{\neq}\|_{X_2}^2\leq C\|\theta(0)\|_{H^2}^2+C\nu^{-\frac{1}{3}}\|{\rm e}^{2\epsilon\nu^{\frac{1}{3}}t} \partial_z^2(\bar{u}^1\partial_x\theta_{\neq})\|_{L^2L^2}^2\\
&\quad\quad+C\nu^{-1}\left(\|{\rm e}^{2\epsilon\nu^{\frac{1}{3}}t}  \partial_z(u_{\neq}\cdot\nabla\theta_{\neq})_{\neq}\|_{L^2L^2}^2+
\|{\rm e}^{2\epsilon\nu^{\frac{1}{3}}t}  \partial_z((u_{\neq}^2\partial_y+u^3_{\neq}\partial_z)\bar{\theta})\|_{L^2L^2}^2\right)\\
&\quad\quad +C\nu^{-1}\left(\|{\rm e}^{2\epsilon\nu^{\frac{1}{3}}t}  \partial_z^2(\bar{u}^2\theta_{\neq})\|_{L^2L^2}^2+
\|{\rm e}^{2\epsilon\nu^{\frac{1}{3}}t}  \partial_z^2(\bar{u}^3\theta_{\neq})\|_{L^2L^2}^2\right).
\end{align*}
Recall that $\bar{u}^1=u^{1,0}+u^{1,1}$, by Lemma \ref{lemuthe-11} and Lemma \ref{lemuthe-10}, one has
\begin{align*}
&\|{\rm e}^{2\epsilon\nu^{\frac{1}{3}}t} \partial_z^2(\bar{u}^1\partial_x\theta_{\neq})\|_{L^2L^2}
\leq \|{\rm e}^{2\epsilon\nu^{\frac{1}{3}}t}\partial_z^2(u^{1,0}\partial_x\theta_{\neq})\|_{L^2L^2}
+\|{\rm e}^{2\epsilon\nu^{\frac{1}{3}}t} \partial_z^2(u^{1,1}\partial_x\theta_{\neq})\|_{L^2L^2}\\
&\ \ \quad\quad\quad\quad\quad\quad\quad\quad\quad\quad\leq C\nu^{\frac{1}{6}}E_1E_{4,1},\\
&\|{\rm e}^{2\epsilon\nu^{\frac{1}{3}}t} \partial_z(u_{\neq}\cdot\nabla\theta_{\neq})_{\neq}\|_{L^2L^2}
\leq C\nu^{-\frac{1}{2}}E_{3,0}E_{4,1},\\
&\|{\rm e}^{2\epsilon\nu^{\frac{1}{3}}t}\partial_z((u_{\neq}^2\partial_y+u^3_{\neq}\partial_z)\bar{\theta})\|_{L^2L^2}
\leq C\nu^{-\frac{1}{2}}E_{3,0}E_{4,0},\\
&\|{\rm e}^{2\epsilon\nu^{\frac{1}{3}}t}\partial_z^2(\bar{u}^2\theta_{\neq})\|_{L^2L^2}+
\|{\rm e}^{2\epsilon\nu^{\frac{1}{3}}t}\partial_z^2(\bar{u}^3\theta_{\neq})\|_{L^2L^2}
\leq C\nu^{-\frac{1}{2}}E_2E_{4,1},
\end{align*}
which implies that
\begin{align*}
\|\partial_z^2\theta_{\neq}\|_{X_2}\leq C\|\theta(0)\|_{H^2}+CE_1E_{4,1}+C\nu^{-1}E_{3,0}E_{4,1}+C\nu^{-1}E_{3,0}E_{4,0}+C\nu^{-1}E_2E_{4,1}.
\end{align*}
This completes the proof.
\end{proof}
\subsection{Estimate of $E_5$}
\begin{proposition}\label{pro14.2}
Let $\epsilon=\epsilon_1/8$ and $V_j$ satisfies \eqref{eqVcon} for $j\in [0,\nu^{\frac{1}{3}}T)\cap \mathbb{Z}$. Then it holds that
\begin{align*}
&\|{\rm e}^{4\epsilon \nu^{\frac{1}{3}}t} u_{[j]}\|_{L^2 Z_j^2}\leq C\|{\rm e}^{4\epsilon \nu^{\frac{1}{3}}t} g_{(j)}\|_{L^2Z_j^1}\leq C{\rm e}^{4\epsilon j}\|g_{(j)}\|_{L^2Z_j^1} \text{ for }j\in [0,\nu^{\frac{1}{3}}T)\cap \mathbb{Z},\\
&\|{\rm e}^{4\epsilon \nu^{\frac{1}{3}}t} u_{[0]}\|_{L^2 Z_j^2}\leq C(\|u(0)\|_{H^2}+{\rm e}^{4\epsilon j}\|g_{(0)}\|_{L^2Z_0^1})\leq C(\|u(0)\|_{H^2}+\|g_{(0)}\|_{L^2(I_0,Z_0^1)} ).
\end{align*}
\begin{proof}
See Proposition 14.2 of \cite{chenwz}.
\end{proof}
\end{proposition}
\begin{lemma}\label{lem14.2}
Suppose that $V_j$ satisfies \eqref{eqVcon} for $j\in [0,\nu^{\frac{1}{3}}T)\cap \mathbb{Z}$. For $j,k\in [0,\nu^{\frac{1}{3}}T)\cap \mathbb{Z}$, $v=(v^1,v^2,v^3)\in J^{(3)}_{2,0}(\Omega)\cap\mathcal{H}$, it holds that
\begin{align*}
&\|V_j-V_k\|_{H^2}+\|\kappa_j-\kappa_k\|_{H^1}\leq C\nu^{\frac{2}{3}}|j-k|E_1,
\|\kappa_j-\kappa_k\|_{H^2}\leq C\nu^{\frac{1}{3}}|j-k|^{\frac{1}{2}}E_1,\\
&\|u^{1,1}\|_{H^2}\leq C\nu^{\frac{2}{3}}E_1, \|v\|_{Z_j^2}-\|v\|_{Z_k^2}\leq C|j-k|^{\frac{1}{2} }E_1\|v\|_{Z_k^2}.
\end{align*}
For $j\in [0,\nu^{\frac{1}{3}}T)\cap \mathbb{Z}$, $t\in I_j$, there holds that
\begin{align*}
\|\kappa_j\nabla \bar{u}^3\|_{H^1}\leq CE_1E_2.
\end{align*}
For $j\in [0,\nu^{\frac{1}{3}}T)\cap \mathbb{Z}$, $f=(f_1,f_2,f_3)\in H_0^1(\Omega)$, we have
\begin{align*}
\nu^{\frac{1}{2}}\|\mathbb{P}f\|_{Z_j^1}\leq C(\|\nabla f_2\|_{L^2}+\|(\partial_x,\partial_z)f_3\|_{L^2}+\|\partial_xf_1\|_{L^2}+\nu^{\frac{1}{3}}j^{\frac{1}{2}}\|\nabla f_3\|_{L^2}).
\end{align*}
\end{lemma}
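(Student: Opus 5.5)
The plan is to argue directly from the definitions, using the a priori smallness $E_1\le\varepsilon_0$ built into the bootstrap. The first two items hold because $V_j-V_k=u^{1,0}(t_j)-u^{1,0}(t_k)=\int_{t_k}^{t_j}\partial_t u^{1,0}\,dt$.

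\emph{Estimates on $V_j-V_k$ and $\kappa_j-\kappa_k$.} Since $E_{1,0}$ controls $\nu^{-1}\|\partial_tu^{1,0}\|_{L^\infty H^2}$, we get
\[
\|V_j-V_k\|_{H^2}\le \nu|t_j-t_k|E_1=C\nu^{2/3}|j-k|E_1 .
\]
Next write
\[
\kappa=\frac{\partial_zV}{\partial_yV}=\frac{\partial_zu^{1,0}}{1+\partial_yu^{1,0}} .
\]
Since $\|u^{1,0}\|_{H^4}\le E_1$ is small, $\partial_yV\ge 1/2$, and $H^2\hookrightarrow L^\infty$ in 2D. Taking differences gives $\|\kappa_j-\kappa_k\|_{H^1}\lesssim\|u^{1,0}(t_j)-u^{1,0}(t_k)\|_{H^2}$, which is bounded as above.

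For $\|\kappa_j-\kappa_k\|_{H^2}$ we need three derivatives of the difference. Here we use the $L^2H^3$ part of $E_{1,0}$ and Cauchy--Schwarz:
\[
\Big\|\int_{t_k}^{t_j}\partial_tu^{1,0}\Big\|_{H^3}\le |t_j-t_k|^{1/2}\|\partial_tu^{1,0}\|_{L^2H^3}\le \nu^{-1/6}|j-k|^{1/2}\,\nu^{1/2}E_1=\nu^{1/3}|j-k|^{1/2}E_1 .
\]

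\emph{The bound on $u^{1,1}$.} The estimate $\|u^{1,1}\|_{H^2}\le C\nu^{2/3}E_1$ follows immediately from $E_1\ge \nu^{-2/3}E_{1,1}$.

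\emph{Comparability of $Z_j^2$ and $Z_k^2$.} In $\|v\|_{Z^2}$, $\kappa$ enters only through $v^2+\kappa v^3$ and $(\partial_z-\kappa\partial_y)v^3$. The difference of the two norms is therefore bounded by the $\nu$-weighted norms of $(\kappa_j-\kappa_k)$ times derivatives of $v^3$. We estimate these products by
\[
\|\kappa_j-\kappa_k\|_{L^\infty}+\|\nabla(\kappa_j-\kappa_k)\|_{L^4}\lesssim \|\kappa_j-\kappa_k\|_{H^2}\le\nu^{1/3}|j-k|^{1/2}E_1 .
\]
The extra $\nu^{1/3}$ is exactly what is lost when converting terms such as $\nu^{1/2}\|\partial_x\Delta v^3\|$ (with no weight loss) into $\|v\|_{Z_k^2}$; Lemma \ref{lemvZ-1} supplies the bound $\nu^{1/3}\|\partial_x^2 v\|^2+\nu\|\nabla\partial_x^2v\|^2\lesssim\|v\|^2_{Z_k^2}$. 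This matching of powers in the top-order term $\nu\|\partial_x\Delta(\kappa v^3)\|^2$ is the main obstacle, and I would follow Lemma 14.3 of \cite{chenwz} term by term. The bound $\|\kappa_j\nabla\bar u^3\|_{H^1}\le CE_1E_2$ uses Lemma \ref{lemu1-0}: $|\kappa_j|\lesssim E_1\min\{(\nu t_j)^{1/2},1-y^2\}$, and for $t\in I_j$ this weight is comparable to the weight $\min\{(\nu^{2/3}+\nu t)^{1/2},1-y^2\}$ in $E_2$. The weighted $\Delta\bar u^3$ bound then covers the second derivatives, and Lemma \ref{lemu0-2} handles the rest.

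\emph{The Leray projector bound.} For $\nu^{1/2}\|\mathbb Pf\|_{Z^1_j}$, write $\mathbb Pf=f-\nabla q$ with $\Delta q=\operatorname{div} f$ and Neumann boundary data. Elliptic estimates bound $\|\nabla(\mathbb Pf)^2\|$ and $\|\partial_x(\mathbb Pf)^3\|$ by the listed norms of $f$. The term $\kappa_j(\mathbb Pf)^3$ is controlled using $\|\kappa_j\|_{H^2}\lesssim\nu^{1/3}j^{1/2}E_1$, obtained from the difference estimate with $k=0$ and $\kappa_0\approx 0$ small. This is the source of the extra term $\nu^{1/3}j^{1/2}\|\nabla f_3\|$.
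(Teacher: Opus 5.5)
Your proposal is correct. It is essentially the direct verification behind Lemma 14.2 of \cite{chenwz}, which the paper simply cites: integrate $\partial_t u^{1,0}$ in time using the $L^\infty H^2$ and $L^2H^3$ parts of $E_{1,0}$, pair the weighted bound of Lemma \ref{lemu1-0} with the weighted $\Delta\bar u^3$ in $E_2$, and write $\mathbb{P}f=f-\nabla q$ with $q$ solving a Neumann problem. Two small points should be made explicit. First, $\kappa_0=0$ exactly, since $u^{1,0}(0)=0$. Second, because $j$ is unbounded and only $\|\nabla f_3\|$ carries the factor $\nu^{1/3}j^{1/2}$, the pressure piece $\kappa_j\nabla\partial_z q$ must be bounded with the uniform estimate $\|\kappa_j\|_{H^3}\lesssim\|u^{1,0}(t_j)\|_{H^4}\lesssim E_1$ rather than $\nu^{1/3}j^{1/2}E_1$; the same uniform bound handles the term $\nabla\kappa_j\otimes\nabla\bar u^3$ in the estimate for $\kappa_j\nabla\bar u^3$.
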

\begin{proof}
See Lemma 14.2 of \cite{chenwz}.
\end{proof}
\begin{lemma}\label{lem14.3}
For $\partial_xf=0, v=(v^1,v^2,v^3)\in J^{(3)}_{2,0}(\Omega)\cap\mathcal{H}$, it holds that
\begin{align*}
\|Q_1(f,v)\|_{Z_{[V]}^1}\leq C\nu^{-\frac{2}{3}}\|f\|_{H^2}\|v\|_{Z_{[V]}^2}.
\end{align*}
For $j\in [0,\nu^{\frac{1}{3}}T)\cap \mathbb{Z}$, $ v=(v^1,v^2,v^3)\in J^{(3)}_{2,0}(\Omega)\cap\mathcal{H}$, $t\in I_j$, we have
\begin{align*}
\|Q((0,\bar{u}^2,\bar{u}^3),v)\|_{Z_j^1}\leq C\nu^{-1}E_2\|v\|_{Z_j^2}.
\end{align*}
\end{lemma}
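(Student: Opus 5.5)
The plan is to prove the two bounds of Lemma \ref{lem14.3} by unpacking the definition of $\|\cdot\|_{Z^1_{[V]}}$. We need $\nu^{-1/2}\|\nabla(w^2+\kappa w^3)\|_{L^2}$ and $\nu^{-1/2}\|\partial_x w^3\|_{L^2}$ for $w=Q_1(f,v)$ or $w=Q((0,\bar u^2,\bar u^3),v)$. Each is then bounded by products of $f$ (resp.\ $\bar u^2,\bar u^3$) with derivatives of $v$, and the $v$-factors are compared with $\|v\|_{Z^2_{[V]}}$ via Lemma \ref{lemvZ-1}.

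\textbf{First estimate.} Since $\partial_x f=0$ and $\operatorname{div} v=0$, we compute
\[
Q_1(f,v)=\mathbb{P}\bigl(f\partial_x v+(v^2\partial_y f+v^3\partial_z f,0,0)\bigr).
\]
We write $\mathbb{P}h=h-\nabla\pi$ with $\Delta\pi=\operatorname{div}h$ and the appropriate Neumann data. The leading contribution to $Z^1$ is then $\nu^{-1/2}\|\nabla(f\partial_x(v^2+\kappa v^3))\|_{L^2}$, together with commutator terms involving $\nabla\kappa$. Using $\|f\|_{L^\infty}+\|\nabla f\|_{L^4}\lesssim\|f\|_{H^2}$, we bound
\[
\|f\partial_x\nabla(v^2+\kappa v^3)\|_{L^2}\le\|f\|_{H^2}\nu^{-1/6}\|v\|_{Z^2},
\]
because $\nu^{1/3}\|\partial_x\nabla(v^2+\kappa v^3)\|^2_{L^2}\le\|v\|_{Z^2}^2$. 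For terms with $\nabla f$ we pair $\|\nabla f\|_{L^4}$ with $\|\partial_x v\|_{L^4}$. Gagliardo--Nirenberg interpolates $\|\partial_x v\|_{L^4}$ between $\nu^{1/6}\|\partial_x^2v\|$ and $\nu^{1/2}\|\nabla\partial_x^2 v\|$, both controlled by $\|v\|_{Z^2}$ through Lemma \ref{lemvZ-1}, after using $\|\partial_x g\|\le\|\partial_x^2g\|$ on $\mathcal H$. The worst power from these terms is about $\nu^{-1/6}$. Multiplied by the prefactor $\nu^{-1/2}$ this gives $\nu^{-2/3}$, matching the claimed bound.

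The pressure part $\nabla\pi$ is the delicate term. Its $x$-derivative satisfies $\partial_x\Delta\pi=\partial_x\operatorname{div}h=2\partial_x(\nabla f\cdot\partial_x v)$-type terms, since $\operatorname{div}(f\partial_xv)=\nabla f\cdot\partial_x v$. Elliptic estimates in $H^1$ then bound $\nabla(\partial_y\pi+\kappa\partial_z\pi)$ in $L^2$ by $\|\nabla f\|_{L^4}\|\partial_x v\|_{L^4}$ plus a boundary contribution. Here I would follow the pressure treatment of Lemma 14.3 of \cite{chenwz}, namely the Neumann problem with trace control.

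\textbf{Second estimate.} We proceed the same way with $(0,\bar u^2,\bar u^3)$ in place of $(f,0,0)$. We now have
\[
Q=\mathbb{P}\bigl(\bar u^i\partial_i v+v^2\partial_y(0,\bar u^2,\bar u^3)+v^3\partial_z(0,\bar u^2,\bar u^3)\bigr),\qquad i=2,3 .
\]
By Lemma \ref{lemu0-2}, $\|\bar u^{2,3}\|_{L^\infty}\lesssim E_2$ and $\|\nabla\bar u^2\|_{H^1}\lesssim E_2$. The one problematic factor is $\nabla\bar u^3$, which is only $H^1$ in $z$. Combined with $\kappa_j$, however, it is handled by $\|\kappa_j\nabla\bar u^3\|_{H^1}\le CE_1E_2$ from Lemma \ref{lem14.2}. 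The term $\nabla(\bar u^i\partial_i v^2)$ costs a full $\nabla^2 v$ without an $x$-derivative, so we cannot use $\nu^{1/3}$-weights. Instead we use $\nu\|\nabla\partial_x^2 v\|^2\le\|v\|_{Z^2}^2$ and Poincaré in $x$, giving $\|\nabla^2 v\|\lesssim\nu^{-1/2}\|v\|_{Z^2}$. Multiplied by the prefactor $\nu^{-1/2}$, this yields exactly $\nu^{-1}E_2$.

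\textbf{Main obstacle.} I expect the hard step to be the projection/pressure and the $\kappa$-combination $v^2+\kappa v^3$ near the boundary, where $\kappa$ is only $H^3$-regular. For this I would borrow the estimate of $\mathbb{P}$ in $Z^1$ stated in Lemma \ref{lem14.2}, namely $\nu^{1/2}\|\mathbb{P}h\|_{Z^1_j}\lesssim\|\nabla h_2\|+\|(\partial_x,\partial_z)h_3\|+\|\partial_x h_1\|+\nu^{1/3}j^{1/2}\|\nabla h_3\|$, applied to $h$ equal to the unprojected nonlinearity. Each component is then bounded as above, and this reduces both claims to pointwise product estimates.
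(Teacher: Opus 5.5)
The paper gives no argument here; it only cites Lemmas 14.3--14.4 of \cite{chenwz}. Your outline therefore has to stand on its own, and it has a genuine gap: it estimates $v$ isotropically, whereas $\|\cdot\|_{Z^2_{[V]}}$ is strongly anisotropic.

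Only $W=v^2+\kappa v^3$ carries full gradients at the favourable orders: $\|\partial_x\nabla W\|\lesssim\nu^{-1/6}\|v\|_{Z^2}$ and $\|\partial_x\nabla^2 W\|\lesssim\|\partial_x\Delta W\|\lesssim\nu^{-1/2}\|v\|_{Z^2}$. For $v^3$ only the tangential derivatives $\partial_x$ and $\partial_z-\kappa\partial_y$ are good. The normal derivative $\partial_y\partial_x v^3$ is controlled only at order $\nu^{-1/2}$, and $\partial_y^2\partial_x v^3$ only at order $\nu^{-5/6}$, through $\nu^{5/3}\|\partial_x\Delta v^3\|^2$.

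For a concrete example, take $V=y$ and $v=(0,0,\cos x\,\phi(y))$, with $\phi$ a profile of width $\nu^{1/3}$ vanishing at $y=\pm1$. Then $\|v\|_{Z^2}\sim\nu^{1/3}$, but $\|\partial_x v\|_{L^4}\sim\nu^{1/12}$ and $\|\nabla^2 v\|_{L^2}\sim\nu^{-1/2}$. This has two consequences.

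\begin{itemize}
\item Your claim that the $\nabla f$-terms cost only $\nu^{-1/6}$ fails. Your Gagliardo--Nirenberg step actually gives $\nu^{-5/12}$. Moreover, no estimate $\|\partial_x v\|_{L^4}\lesssim\nu^{-\alpha}\|v\|_{Z^2}$ can hold with $\alpha<1/4$, so the pairing $\|\nabla f\|_{L^4}\|\partial_x v\|_{L^4}$ yields at best $\nu^{-3/4}$ overall.
\item Your claim $\|\nabla^2 v\|\lesssim\nu^{-1/2}\|v\|_{Z^2}$ ``by Poincar\'e in $x$'' is false. Poincar\'e removes a $\partial_x$; it cannot supply the missing $\nabla$.
\end{itemize}

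These are exactly the decisive terms. In the first estimate it is the pressure, $\Delta\pi=\operatorname{div}h=2(\partial_y f\,\partial_x v^2+\partial_z f\,\partial_x v^3)$, which also feeds $\partial_x(\mathbb{P}h)_3$. In the second estimate it is $\bar u^2\partial_y^2 v^2$ with $v^2=W-\kappa v^3$. You are right that the pressure and the $\kappa$-combination are the crux, but your proposed resolution does not handle them.

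Your fallback to the projection bound of Lemma \ref{lem14.2} makes things worse, for three reasons.
\begin{itemize}
\item It replaces $\|\nabla(h_2+\kappa h_3)\|$ by $\|\nabla h_2\|+\nu^{1/3}j^{1/2}\|\nabla h_3\|$. This splits $W$ apart and reintroduces $\kappa\,\partial_y\partial_x v^3$ (respectively $\kappa\,\partial_y^2 v^3$).
\item It carries a factor $j^{1/2}$ that is not bounded uniformly in $j$.
\item It is only available for the frozen profiles $V_j$, not for a general $V$ satisfying \eqref{eqVcon} as in the first claim.
\end{itemize}

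The missing idea is to keep the combination intact and let $v^3$ enter only through tangential derivatives. For $Q_1$ one has exactly $h_2+\kappa h_3=f\,\partial_x W$. For the second estimate,
$h_2+\kappa h_3=(\bar u^2\partial_y+\bar u^3\partial_z)W-v^3(\bar u^2\partial_y+\bar u^3\partial_z)\kappa+v^2\partial_y\bar u^2+v^3\partial_z\bar u^2+\kappa(v^2\partial_y\bar u^3+v^3\partial_z\bar u^3)$.
The last group is where $\|\kappa_j\nabla\bar u^3\|_{H^1}\le CE_1E_2$ enters, as you correctly anticipated.

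The leftover factors of $v^3$, such as $\partial_z f\,\partial_x v^3$ in $\Delta\pi$, must be handled by anisotropic inequalities that use only $\partial_x$ and $\partial_z-\kappa\partial_y$. One option is Agmon's inequality along the level curves of $V$: in the coordinates $(x,V(y,z),z)$ the operator $\partial_z-\kappa\partial_y$ becomes $\partial_z$. This gives
$\|\partial_z f\,\partial_x v^3\|_{L^2}\lesssim\|f\|_{H^2}\bigl(\|\partial_x v^3\|+\|\partial_x(\partial_z-\kappa\partial_y)v^3\|\bigr)\lesssim\nu^{-1/6}\|f\|_{H^2}\|v\|_{Z^2}$,
which is the order needed for the $\nu^{-2/3}$ bound.
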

\begin{proof}
 See Lemma 14.3, Lemma14.4 of \cite{chenwz}.
\end{proof}
\begin{lemma}\label{lemPthe}
For $j\in [0,\nu^{\frac{1}{3}}T)\cap \mathbb{Z}$, $t\in I_j$, it holds that
\begin{align*}
\|\mathbb{P}(0,\theta_{\neq},0)\|_{Z_j^1}\leq C\|\nabla \theta_{\neq}\|_{L^2}.
\end{align*}
\end{lemma}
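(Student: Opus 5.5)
The plan is to make $\mathbb{P}(0,\theta_{\neq},0)$ explicit through its pressure and then estimate the two pieces of the $Z_j^1$ norm directly. The proof rests on two inputs: elliptic estimates for the Neumann problem of the $\theta$-pressure, and the smallness of $\kappa_j$ coming from \eqref{eqVcon}. As a shortcut, the final estimate of Lemma \ref{lem14.2} applies to $f=(0,\theta_{\neq},0)$, which lies in $H^1_0(\Omega)$ because $\theta_{\neq}|_{y=\pm1}=0$. It gives the bound with the factor $\nu^{\frac12}$ on the left. The work beyond that lemma is to track where this factor enters.

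\textbf{Step 1 (explicit projection).} I will write $\mathbb{P}(0,\theta_{\neq},0)=(0,\theta_{\neq},0)-\nabla q$, where
\begin{align*}
\Delta q=\partial_y\theta_{\neq},\qquad \partial_y q|_{y=\pm1}=\theta_{\neq}|_{y=\pm1}=0 .
\end{align*}
This is exactly the $P^\theta$ problem in \eqref{eqP}, restricted to nonzero $x$-modes. Then
\begin{align*}
v^2=\theta_{\neq}-\partial_y q,\qquad v^3=-\partial_z q,\qquad v^1=-\partial_x q .
\end{align*}
In $x$-Fourier variables with $k\neq0$ the problem is uniquely solvable. The $H^2$-regularity of the Neumann problem in the flat channel then gives $\|\nabla^2 q\|_{L^2}\le C\|\partial_y\theta_{\neq}\|_{L^2}$. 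I plan to prove this by taking the Fourier transform in $(x,z)$ and integrating by parts, using $\partial_y q=0$ on the boundary. Consequently
\begin{align*}
\|\nabla v^2\|_{L^2}+\|\nabla v^3\|_{L^2}\le C\|\nabla\theta_{\neq}\|_{L^2},\qquad \|\partial_x v^3\|_{L^2}\le C\|\nabla\theta_{\neq}\|_{L^2}.
\end{align*}

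\textbf{Step 2 (the $\kappa_j$ term).} Next I write $\nabla(v^2+\kappa_jv^3)=\nabla v^2+\kappa_j\nabla v^3+v^3\nabla\kappa_j$. The bound $\|V_j-y\|_{H^4}<\varepsilon_0$ gives $\|\kappa_j\|_{W^{1,\infty}}\le C\varepsilon_0$ by Sobolev embedding in the $(y,z)$ variables. The last term is controlled by $\|v^3\|_{L^2}\le C\|\nabla\theta_{\neq}\|_{L^2}$, which follows from Poincaré in $x$ together with Step 1. This gives
\begin{align*}
\|\nabla(v^2+\kappa_jv^3)\|_{L^2}+\|\partial_xv^3\|_{L^2}\le C\|\nabla\theta_{\neq}\|_{L^2},
\end{align*}
which is the content of Lemma \ref{lem14.2} for this $f$ and is uniform in $j$ and $t\in I_j$.

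\textbf{Step 3 (the weight $\nu^{-1}$, the main obstacle).} Multiplying Step 2 by $\nu^{-\frac12}$ yields $\|\mathbb{P}(0,\theta_{\neq},0)\|_{Z_j^1}\le C\nu^{-\frac12}\|\nabla\theta_{\neq}\|_{L^2}$. The stated $\nu$-independent bound therefore needs the extra factor $\nu^{\frac12}$, and Steps 1--2 cannot supply it. No gain of $\nu^{\frac12}$ is available from elliptic structure alone, since $\nabla(v^2+\kappa_jv^3)$ is genuinely of size $\|\nabla\theta_{\neq}\|$ for general $\theta_{\neq}$. I have not found a way to prove the stated inequality as written; this step is a gap, not a completed argument.

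What I would try is to check whether the intended reading of the statement is the weighted form $\nu^{\frac12}\|\mathbb{P}(0,\theta_{\neq},0)\|_{Z_j^1}\le C\|\nabla\theta_{\neq}\|_{L^2}$, which matches the normalization of the last inequality in Lemma \ref{lem14.2}. I would check this against the $E_5$ estimate, where the lemma is used only after multiplying by $\nu^{\frac12}$ and combined with the $\nu^{\frac12}\|e^{a\nu^{\frac13}t}\nabla\theta_{\neq}\|_{L^2L^2}$ control built into $E_{4,1}$. If the use in the $E_5$ estimate does not already carry that factor, the stated bound would have to come from elsewhere. One option is an additional smallness of $\theta_{\neq}$ relative to $\nu$, namely the $\nu^2$ threshold on $\theta_{in}$. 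That route would require changing the statement rather than the proof.
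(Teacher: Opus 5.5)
Your Steps 1--2 are the paper's own argument, and your reading of the statement is the correct one. The paper writes $\mathbb{P}(0,\theta_{\neq},0)=(0,\theta_{\neq},0)+\nabla p$ with $\Delta p=-\partial_y\theta_{\neq}$ and $\partial_yp|_{y=\pm1}=0$. It uses $f^3=0$ for the first piece and $\|\nabla p\|_{H^1}\le C\|\Delta p\|_{L^2}$ for the second. Its last line is $\|\mathbb{P}(0,\theta_{\neq},0)\|_{Z_j^1}\le C\nu^{-\frac12}\|\nabla\theta_{\neq}\|_{L^2}$, which is exactly your bound; your shortcut through the last estimate of Lemma \ref{lem14.2} gives the same thing.

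So the factor $\nu^{-\frac12}$ has simply been dropped from the displayed statement. Your Step 3 is not a gap in your reasoning, because the literal $\nu$-independent inequality is false. Take $j=0$: then $V_0=y$ (since $u^{1,0}|_{t=0}=0$), so $\kappa_0=0$. For $\theta_{\neq}=\cos(kx)\cos(\frac{\pi y}{2})$ the pressure is explicit, $p=\partial_y\theta_{\neq}/(k^2+\frac{\pi^2}{4})$. This gives $\mathbb{P}(0,\theta_{\neq},0)=\bigl(\partial_xp,\ \frac{k^2}{k^2+\pi^2/4}\theta_{\neq},\ 0\bigr)$, hence $\|\mathbb{P}(0,\theta_{\neq},0)\|_{Z_0^1}\ge\frac{4}{4+\pi^2}\nu^{-\frac12}\|\nabla\theta_{\neq}\|_{L^2}$.

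No appeal to the $\nu^2$ threshold is needed. The corrected form is also the one consistent with how the lemma enters Proposition \ref{proE5}. There the temperature source contributes $\nu^{-1}\|{\rm e}^{3\epsilon\nu^{1/3}t}\nabla\theta_{\neq}\|_{L^2L^2}^2$, which is at most $C\nu^{-2}E_{4,1}^2$ by the dissipative part of $E_{4,1}$. This matches the term $C\nu^{-2}E_{4,1}^2$ appearing there.
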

\begin{proof}
It is clear that
\begin{align*}
\begin{cases}
\mathbb{P}(0,\theta_{\neq},0)=(0,\theta_{\neq},0)+\nabla p,\\
\Delta p=-\partial_y \theta_{\neq},\\
\partial_yp|_{y=\pm 1}=0.
\end{cases}
\end{align*}
Let $f=(0,\theta_{\neq},0)=(f^1,f^2,f^3)$, since
\begin{align*}
\|\nabla(f^2+\kappa f^3)\|_{L^2}+\|\partial_xf^3\|_{L^2}=\|\nabla f^2\|_{L^2}=\|\nabla \theta_{\neq}\|_{L^2},
\end{align*}
which implies that $\|f\|_{Z_j^1}= \nu^{-\frac{1}{2}}\|\nabla \theta_{\neq}\|_{L^2}$.

Thanks to $\partial_yp|_{y=\pm1}=0$, the classical elliptic estimate gives that
\begin{align*}
\|\nabla p\|_{Z_j^1}=&\nu^{-\frac{1}{2}}\|\nabla(\partial_yp+\kappa \partial_zp)\|_{L^2}+\nu^{-\frac{1}{2}}\|\partial_x\partial_zp\|_{L^2}\leq C\nu^{-\frac{1}{2}}\|\nabla p\|_{H^1}\\
\leq& C\nu^{-\frac{1}{2}}\|\Delta p\|_{L^2}\leq C\nu^{-\frac{1}{2}}\|\partial_y\theta_{\neq}\|_{L^2},
\end{align*}
which yields that
\begin{align*}
\|\mathbb{P}(0,\theta_{\neq},0)\|_{Z_j^1}\leq \|(0,\theta_{\neq},0)\|_{Z_j^1}+\|\nabla p\|_{Z_j^1}\leq C\nu^{-\frac{1}{2}}\|\nabla \theta_{\neq}\|_{L^2}.
\end{align*}
The proof is completed.
\end{proof}
 Let $N=\max([0,\nu^{\frac{1}{3}}T)\cap \mathbb{Z})$, we define
$$E_6^2=\sum_{j=0}^N {\rm e}^{6\epsilon j}\|u_{\neq}\|_{L^2(I_j, Z_j^2)}^2.$$
\begin{proposition}\label{proE5}
It holds that
\begin{align*}
E_5^2\leq C E_6^2\leq C\|u(0)\|_{H^2}^2+ CE_1^2E_6^2+C\nu^{-2}E_2^2E_6^2+C\nu^{-2}E_{4,1}^2+C\nu^{-2}E_3^4.
\end{align*}
\end{proposition}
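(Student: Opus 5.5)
The first inequality $E_5^2\le CE_6^2$ comes from Lemma \ref{lemvZ-1}. On each $I_j$ we have $\nu^{\frac13}\|\partial_x^2 u_{\neq}\|_{L^2}^2\le C\|u_{\neq}\|_{Z_j^2}^2$ and ${\rm e}^{6\epsilon\nu^{1/3}t}\sim {\rm e}^{6\epsilon j}$. Summing over $j$ gives
\[
E_5^2=\nu^{\frac13}\|{\rm e}^{3\epsilon\nu^{1/3}t}\partial_x^2(u^2_{\neq},u^3_{\neq})\|_{L^2L^2}^2\le C\sum_j {\rm e}^{6\epsilon j}\|u_{\neq}\|_{L^2(I_j,Z_j^2)}^2=CE_6^2 .
\]

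For the second inequality I will follow the freezing-in-time scheme of Proposition 14.3 of \cite{chenwz}. On $I_j$ we write $u_{\neq}=\sum_{i\le j}u_{[i]}$. Proposition \ref{pro14.2} bounds each $u_{[i]}$ in $L^2 Z_i^2$ with weight ${\rm e}^{4\epsilon\nu^{1/3}t}$ by $g_{(i)}$ in $Z_i^1$, plus $\|u(0)\|_{H^2}$ when $i=0$. Lemma \ref{lem14.2} lets us switch between $Z_i^2$ and $Z_j^2$ at a cost of $1+C|i-j|^{1/2}E_1$. The commutator pieces $(A_i-A_j)u_{[i]}=Q_1(V_j-V_i,u_{[i]})$ are controlled via Lemma \ref{lem14.3} and $\|V_i-V_j\|_{H^2}\le C\nu^{2/3}|i-j|E_1$, which gives $\|(A_i-A_j)u_{[i]}\|_{Z^1_j}\le C|i-j|E_1\|u_{[i]}\|_{Z^2_j}$. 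The gap between the weights ${\rm e}^{4\epsilon}$ and ${\rm e}^{3\epsilon}$ yields a factor ${\rm e}^{-\epsilon(j-i)}$, which absorbs the polynomial factors $|j-i|$. Exactly as in the $m_j,s_j$ summation in the proof of Proposition \ref{proE41}, this closes into
\[
E_6^2\le C\|u(0)\|_{H^2}^2+CE_1^2E_6^2+C\sum_j{\rm e}^{6\epsilon j}\|g\|_{L^2(I_j,Z_j^1)}^2 ,
\]
with $g$ now the original forcing.

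We then bound $g$ term by term on $I_j$.
\begin{itemize}
\item $Q_1(\tilde u^{1,1},u_{\neq})$: write $\tilde u^{1,1}=u^{1,1}+(u^{1,0}(t)-u^{1,0}(t_j))$. Both pieces have $H^2$ norm at most $C\nu^{2/3}E_1$, by Lemma \ref{lem14.2} and the bound $\|\partial_t u^{1,0}\|_{H^2}\le \nu E_1$ over a time interval of length $\nu^{-1/3}$. Lemma \ref{lem14.3} then gives a contribution $CE_1\|u_{\neq}\|_{Z_j^2}$, i.e. $CE_1^2E_6^2$.
\item $Q((0,\bar u^2,\bar u^3),u_{\neq})$: Lemma \ref{lem14.3} gives $C\nu^{-1}E_2\|u_{\neq}\|_{Z_j^2}$, hence $C\nu^{-2}E_2^2E_6^2$.
\item Buoyancy term $\mathbb P(0,\theta_{\neq},0)$: Lemma \ref{lemPthe} gives $\|\mathbb P(0,\theta_{\neq},0)\|_{Z^1_j}\le C\nu^{-1/2}\|\nabla\theta_{\neq}\|_{L^2}$. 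I need $\|{\rm e}^{3\epsilon\nu^{1/3}t}\nabla\theta_{\neq}\|_{L^2L^2}\le C\nu^{-1/2}E_{4,1}$. This follows from the $\nu^{1/2}\|\nabla\cdot\|_{L^2L^2}$ parts of the $X_3$ norm of $\partial_x^2\theta_{\neq}$ and the $X_2$ norm of $\partial_z^2\theta_{\neq}$, using $|k|,|l|$ integer for the modes with $k\neq0$. The exponents $2\epsilon$ versus $3\epsilon$ need care; if necessary I will interpolate on the Fourier side or use the $\partial_x^2$ part only. This yields $C\nu^{-2}E_{4,1}^2$.
\item Nonlinear term $\mathbb P(u_{\neq}\cdot\nabla u_{\neq})_{\neq}$: apply the last estimate of Lemma \ref{lem14.2} with $f=u_{\neq}\cdot\nabla u_{\neq}$, which vanishes on the boundary. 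This requires $\nabla(u\cdot\nabla u^2)$, $(\partial_x,\partial_z)(u\cdot\nabla u^3)$ and $\partial_x(u\cdot\nabla u^1)$ in $L^2L^2$, bounded by $\nu^{-1/2}E_3^2$ from Lemma \ref{lemuu-11}. The extra term $\nu^{1/3}j^{1/2}\|\nabla(u\cdot\nabla u^3)\|$ is bounded by $\nu^{1/3}j^{1/2}\nu^{-5/6}E_3^2$; the factor $j^{1/2}$ is absorbed by the exponential slack ${\rm e}^{8\epsilon}$ versus ${\rm e}^{6\epsilon}$. Altogether this gives $C\nu^{-2}E_3^4$.
\end{itemize}

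The main obstacle is the summation step with the non-uniform norms $Z_j^2$, which also carries the $j^{1/2}$ losses. Since this argument is identical to \cite{chenwz}, I would invoke it; the only new ingredient is the buoyancy forcing.
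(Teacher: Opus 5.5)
Your proposal is correct and takes the paper's route: the paper defers to the freezing-in-time argument of Proposition 14.3 of \cite{chenwz} and treats the only new forcing, $\mathbb{P}(0,\theta_{\neq},0)$, with Lemma \ref{lemPthe}. Your handling of that term is also right. You use the bound $\|\mathbb{P}(0,\theta_{\neq},0)\|_{Z_j^1}\le C\nu^{-1/2}\|\nabla\theta_{\neq}\|_{L^2}$ actually proved there (the lemma's displayed statement drops the $\nu^{-1/2}$), and since $|k|\ge 1$ on nonzero modes, the $\partial_x^2$ part of $E_{4,1}$ alone gives $\|{\rm e}^{3\epsilon\nu^{1/3}t}\nabla\theta_{\neq}\|_{L^2L^2}\le \nu^{-1/2}E_{4,1}$, which yields the $C\nu^{-2}E_{4,1}^2$ term.
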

\begin{proof}
The proof can be obtained by following the similar arguments as in Proposition 14.3 of \cite{chenwz}.
The main differences here are the pressure and temperature, which have been conducted in $E_{4,0}, E_{4,1}$ and Lemma \ref{lemPthe}. We omit the details here.
\end{proof}
\section{Proof of the main result}
In this section, we will prove the main result, i.e., Theorem \ref{the1}. Let us assume that $\nu\in(0,\nu_0], \nu_0,\epsilon\in (0,1), \nu_0^{\frac{2}{3}}\leq 4\epsilon<\epsilon_1$. Then ${\rm e}^{\nu t}\leq {\rm e}^{4\epsilon \nu^{\frac{1}{3}}t}$ for $t>0$.

\subsection{The global well-posedness for \eqref{eqBou} with \eqref{bdd}}
The local well-posedness theory for the solution to \eqref{eqBou} with \eqref{bdd} is classical, and we will extend the solution to global time.

The proof is based on a bootstrap argument.
By the Propositions \ref{proE1}--\ref{proE2}, Proposition \ref{proE30}, Proposition \ref{proE31}, Proposition \ref{proE41}, and Proposition \ref{proE5}, one has
\begin{align*}
&E_{1,0}\leq C\nu^{-1}\left(\|\bar{u}(0)\|_{H^2}+E_2+E_2E_{1,0}\right),\\
&E_{1,1}\leq C\left(\|\bar{u}(0)\|_{H^2}+\nu^{-1}E_2E_{1,1}+\nu^{-\frac{4}{3}}E_3^2\right),\\
&E_2\leq C(1+\nu^{-1}E_2)(\|u(0)\|_{H^2}+\nu^{-1}E_3^2+\nu^{-1}\|\bar{\theta}(0)\|_2+ \nu^{-2}E_{3,0}E_{4,1}),\\
&E_{3,0}^2\leq C\|u(0)\|_{H^2}^2+C\left(\nu^{-2}E_3^4+\nu^{-2}E_2^2E_3^2+E_1^2E_3E_5+E_1^2E_3^\frac{3}{2}E_5^\frac{1}{2}+\nu^{-\frac{4}{3}}E_{4,1}^2\right),\\
&E_{3,1}^2\leq C\left(\|u(0)\|_{H^2}^2+\nu^{-2}E_3^4+\nu^{-2}E_2^2E_3^2+E_1^2E_3E_5+E_1^2E_3^\frac{3}{2}E_5^\frac{1}{2}+E_1^2E_3^\frac{7}{4}E_5^\frac{1}{4}+\nu^{-\frac{4}{3}}E_{4,1}^2\right),\\
&E_{4,0}\leq C(\nu^{-1}E_2E_{4,0}+\nu^{-1}E_{3,0}E_{4,1}+\|\bar{\theta}(0)\|_{H^2}),\\
&E_{4,1}^2\leq C(\|\theta(0)\|_{H^2}^2+\nu^{-2}E_{3,0}^2E_{4,1}^2+\nu^{-\frac{4}{3}}E_{4,0}^2E_5^2+E_1^2E_{4,1}^2+\nu^{-2}E_{3,0}^2E_{4,0}^2+\nu^{-2}E_2^2E_{4,1}^2),\\
&E_5^2\leq CE_6^2\leq C\|u(0)\|_{H^2}^2+ CE_1^2E_6^2+C\nu^{-2}E_2^2E_6^2+C\nu^{-2}E_{4,1}^2+C\nu^{-2}E_3^4.
\end{align*}

Let us assume that
 \begin{align}\label{bootstrap-ass}
 E_1\leq \varepsilon_1, E_2\leq \varepsilon_1 \nu, E_3\leq \varepsilon_1\nu, E_{4}\leq \varepsilon_1\nu^2,%E_5\leq \varepsilon_1 \nu, E_6\leq \varepsilon_1\nu.
 \end{align}
 where $\varepsilon_1>0$ is a suitable small constant will be determined latter.

 By taking $\varepsilon_1$ small enough, we have
 \begin{align*}
 E_3+\nu^{-1} E_4 +E_5 \leq C (\|u(0)\|_{H^2}+\nu^{-1} \| \theta(0)\|_{H^2}) \leq C c_0 \nu.
 \end{align*}
 Then we get that
  \begin{align*}
 E_2 \leq C( \|\bar{u}(0)\|_{H^2}+\nu^{-1} \|\bar{\theta} (0)\|_{H^2}+\varepsilon_1 E_3) \leq C ( \|u(0)\|_{H^2}+\nu^{-1} \|\theta(0)\|_{H^2}) \leq C c_0 \nu.
 \end{align*}
Moreover, one has
  \begin{align*}
 &E_{1,0} \leq C \nu^{-1} (\|\bar{u}(0)\|_{H^2}+E_2) \leq C\nu^{-1} ( \|u(0)\|_{H^2}+\nu^{-1} \|\theta(0)\|_{H^2}) \leq C c_0,\\
 &E_{1,1} \leq C\left(\|\bar{u}(0)\|_{H^2}+\varepsilon_1 \nu^{-\frac{1}{3}}E_3\right) \leq C \nu^{-\frac{1}{3}}  ( \|u(0)\|_{H^2}+\nu^{-1} \|\theta(0)\|_{H^2}) \leq C c_0 \nu^{\frac{2}{3}},
 \end{align*}
 which implies that
   \begin{align*}
 E_1=E_{1,0}+\nu^{-\frac{2}{3}} E_{1,1} \leq C c_0.
 \end{align*}
 We take $c_0>0$ small enough such that $Cc_0<\varepsilon_1/2$, then the continuity argument implies that the problem \eqref{eqBou} with \eqref{bdd} admits the global unique solution.

 \subsection{Large-time dynamics}
 In this subsection, we establish the large time dynamics for $u$ and $\theta$.

  First of all, it follows from the Lemma \ref{lemu0-2}, Lemma \ref{lemE21} and Lemma \ref{lemE2-4} that
 \begin{align*}
 \|\bar{u}^2(t)\|_{H^2}+\|\bar{u}^3(t)\|_{H^1}+\|(\bar{u}^2,\bar{u}^3)(t)\|_{L^\infty}
\leq C {\rm e}^{-\nu t} \left(\|u_{in}\|_{H^2}+\nu^{-1}\|\theta_{in}\|_{H^2}\right),
 \end{align*}
 which implies \eqref{dynamics-2}.

By Proposition \ref{proE40},
 one has
 \begin{align}
 &\|{\rm e}^{\nu t}\bar{\theta}\|_{L^\infty L^2}^2+\nu\|{\rm e}^{\nu t}\nabla\bar{\theta}\|_{L^2L^2}^2 \leq C (\|\overline{\theta} (0)\|_{L^2}^2+ \nu^{-2}E_{3,0}^2E_{4,1}^2),\nonumber\\
 &
\frac{d}{dt}\|\nabla\bar{\theta}\|_{L^2}^2+\nu\|\Delta \bar{\theta}\|_{L^2}^2\leq C\nu^{-1} \left(\|\bar{u}\cdot\nabla\bar{\theta}\|_{L^2}^2+\|\overline{u_{\neq}\cdot\nabla\theta_{\neq}}\|_{L^2}^2\right),\label{eqthe0-2}
\\
&
\frac{d}{dt}\|\partial_z\nabla \bar{\theta}\|_{L^2}^2+\nu\|\partial_z\Delta \bar{\theta}\|_{L^2}^2\leq C\nu^{-1} \left(\|\partial_z(\bar{u}\cdot\nabla\bar{\theta})\|_{L^2}^2+\|\partial_z(\overline{u_{\neq}\cdot\nabla\theta_{\neq}})\|_{L^2}^2\right).\label{eqthe0-3}
\end{align}

Therefore, from \eqref{eqthe0-2}, we have
\begin{align*}
\frac{d}{dt}&({\rm e}^{2\nu t}\|\nabla\bar{\theta}\|_{L^2}^2)+\nu {\rm e}^{2\nu t}\|\Delta \bar{\theta}\|_{L^2}^2\\
\leq & 2\nu {\rm e}^{2\nu t}\|\nabla\bar{\theta}\|_{L^2}^2+C\nu^{-1} {\rm e}^{2\nu t} \left(\|\bar{u}\cdot\nabla\bar{\theta}\|_{L^2}^2+\|\overline{u_{\neq}\cdot\nabla\theta_{\neq}}\|_{L^2}^2\right).
\end{align*}
It is clear that
\begin{align*}
{\rm e}^{2\nu t} \|\bar{u}\cdot\nabla\bar{\theta}\|_{L^2}^2\leq {\rm e}^{2\nu t}\left(\|\bar{u}^2\|_{L^\infty}+\|\bar{u}^3\|_{L^\infty}\right)^2\|\nabla \bar{\theta}\|_{L^2}^2\leq CE_2^2\|{\rm e}^{\nu t}\nabla \bar{\theta}\|_{L^2}^2
\end{align*}
and
\begin{align*}
\|{\rm e}^{2\nu t} \overline{u_{\neq}\cdot\nabla\theta_{\neq}}\|_{L^2L^2}^2\leq \|{\rm e}^{4\epsilon\nu^{1/3}t}\overline{u_{\neq}\cdot\nabla\theta_{\neq}}\|_{L^2L^2}^2\leq C\nu^{-1}E_{3,0}^2E_{4,1}^2,
\end{align*}
which along with the fact that $\|\nabla f\|_{L^2}^2\geq (\pi/2)^2 \|f\|_{L^2}^2$ gives that
\begin{align}
& {\rm e}^{2\nu t} \|\nabla\bar{\theta} (t) \|_{L^2}^2+\nu {\rm e}^{2\nu t} \|\Delta \bar{\theta}\|_{L^2L^2}^2\nonumber \\
\leq & \|\nabla \bar{\theta}_{in}\|_{L^2}^2+ C\nu^{-1}E_2^2\|{\rm e}^{\nu t}\nabla \bar{\theta}\|_{L^2L^2}^2 +C\nu^{-2}E_{3,0}^2E_{4,1}^2 \nonumber\\
\leq & \|\nabla \bar{\theta}_{in}\|_{L^2}^2+ C\nu^{-2}E_2^2\left(\|\bar{\theta} (0)\|_{L^2}^2+ C\nu^{-2}E_{3,0}^2E_{4,1}^2\right) +C\nu^{-2}E_{3,0}^2E_{4,1}^2.\label{eqthe0-nal}
\end{align}
From \eqref{eqthe0-3}, we have
\begin{align*}
&\frac{d}{dt}({\rm e}^{2\nu t}\|\partial_z\nabla\bar{\theta}\|_{L^2}^2)+\nu {\rm e}^{2\nu t}\|\partial_z\Delta \bar{\theta}\|_{L^2}^2\\
\leq& 2\nu {\rm e}^{2\nu t}\|\partial_z\nabla\bar{\theta}\|_{L^2}^2+C\nu^{-1} {\rm e}^{2\nu t} \left(\|\partial_z(\bar{u}\cdot\nabla\bar{\theta})\|_{L^2}^2+\|\partial_z(\overline{u_{\neq}\cdot\nabla\theta_{\neq}})\|_{L^2}^2\right).
\end{align*}
It is easy to deduce that
\begin{align*}
&{\rm e}^{2\nu t} \|\partial_z(\bar{u}\cdot\nabla\bar{\theta})\|_{L^2}^2\\
\leq& {\rm e}^{2\nu t}\left(\|\bar{u}^2\|_{L^\infty}+\|\bar{u}^3\|_{L^\infty}\right)^2\|\partial_z\nabla \bar{\theta}\|_{L^2}^2+{\rm e}^{2\nu t}\left(\|\partial_z\bar{u}^2\|_{L^\infty}+\|\partial_z\bar{u}^3\|_{L^\infty}\right)^2\|\nabla \bar{\theta}\|_{L^2}^2\\
\leq& CE_2^2\|{\rm e}^{\nu t}\partial_z\nabla \bar{\theta}\|_{L^2}^2+\left(\|\partial_z\bar{u}^2\|_{L^\infty}+\|\partial_z\bar{u}^3\|_{L^\infty}\right)^2\|{\rm e}^{\nu t}\nabla \bar{\theta}\|_{L^2}^2
\end{align*}
and
\begin{align*}
\|{\rm e}^{2\nu t} \partial_z\overline{u_{\neq}\cdot\nabla\theta_{\neq}}\|_{L^2L^2}^2\leq \|{\rm e}^{4\epsilon\nu^{1/3}t}\partial_z\overline{u_{\neq}\cdot\nabla\theta_{\neq}}\|_{L^2L^2}^2\leq C\nu^{-1}E_{3,0}^2E_{4,1}^2.
\end{align*}
By \eqref{eqthe0-nal}, one has
\begin{align}
&{\rm e}^{2\nu t}\|\partial_z\nabla\bar{\theta}\|_{L^2}^2(t)+\nu {\rm e}^{2\nu t}\|\partial_z\Delta \bar{\theta}\|_{L^2L^2}^2 \nonumber \\
\leq& \|\partial_z\nabla \bar{\theta}_{in}\|_{L^2}^2+ C\nu^{-1}E_2^2\|{\rm e}^{\nu t}\partial_z\nabla \bar{\theta}\|_{L^2L^2}^2 +C\nu^{-2}E_{3,0}^2E_{4,1}^2\nonumber\\
&+C\nu^{-1}\left(\|\partial_z\bar{u}^2\|_{L^2L^\infty}+\|\partial_z\bar{u}^3\|_{L^2L^\infty}\right)^2\|{\rm e}^{\nu t}\nabla \bar{\theta}\|_{L^\infty L^2}^2\nonumber\\
\leq& (1+\nu^{-2}E_2^2)\|\bar{\theta}_{in}\|_{H^2}^2 +C\nu^{-2}(1+\nu^{-2}E_2^2)E_{3,0}^2E_{4,1}^2.\label{eqthe-0-zn2}
\end{align}
Therefore, \eqref{dynamics-3} follows from \eqref{eqthe0-nal}, \eqref{eqthe-0-zn2} and estimates for $E_2$, $E_{3,0}$ and $E_{4,1}$.

By similar arguments as that for $\bar{u}^1$ in \cite{chenwz}, we have
\begin{align*}
\|{\rm e}^{\nu t}\Delta \bar{u}^1\|_{L^\infty L^2}^2\leq & \|u_{in}\|_{H^2}^2+C\nu^{-1}E_1^2(\|u_{in}\|_{H^2}^2+\nu^{-\frac{8}{3}}E_3^4)+C\nu^{-1}\|{\rm e}^{\nu t}\nabla \bar{u}^2\|_{L^2L^2}^2\\
\leq& C\nu^{-1} \left(\|u_{in}\|_{H^2}+\nu^{-1}\|\theta_{in}\|_{H^2}\right),
\end{align*}
which gives that
\begin{align*}
\|\bar{u}^1\|_{H^2}\leq C\|\Delta \bar{u}^1\|_{L^2}\leq C {\rm e}^{-\nu t}\nu^{-1} \left(\|u_{in}\|_{H^2}+\nu^{-1}\|\theta_{in}\|_{H^2}\right).
\end{align*}

On the other hands,
by the Lemma \ref{lemu1-0}, one obtains that
\begin{align*}
\|\bar{u}^1\|_{H^2}\leq CE_1(\nu^\frac{2}{3}+\nu t)\leq C\nu^{-1}(\nu^\frac{2}{3}+\nu t) \left(\|u_{in}\|_{H^2}+\nu^{-1}\|\theta_{in}\|_{H^2}\right).
\end{align*}
Thus, we have
\begin{align*}
\|\bar{u}^1\|_{L^\infty}\leq \|\bar{u}^1\|_{H^2}\leq
C\nu^{-1}\min\{\nu^\frac{2}{3}+\nu t, {\rm e}^{-\nu t}\} \left(\|u_{in}\|_{H^2}+\nu^{-1}\|\theta_{in}\|_{H^2}\right),
\end{align*}
which implies \eqref{dynamics-1}.

For the definition of $E_{4,1}$, we have
\begin{align*}
{\rm e}^{\epsilon \nu^{\frac{1}{3}} t}\|\partial_x^2\theta_{\neq}\|_{L^2}(t)+\|\partial_z^2\theta_{\neq}\|_{L^2}(t)\leq C {\rm e}^{-2\epsilon \nu^{\frac{1}{3}} t}\nu\left(\|u_{in}\|_{H^2}+\nu^{-1}\|\theta_{in}\|_{H^2}\right).
\end{align*}

For the definition of $E_{3,0}$, one has
\begin{align*}
&\|u_{\neq}(t)\|_{L^2}+\|(\partial_x, \partial_z)\nabla u^2_{\neq}(t)\|_{L^2}+\|(\partial_x^2+\partial_z^2)u_{\neq}^3(t)\|_{L^2}
+\|(\partial_x,\partial_z)\partial_x u_{\neq}(t)\|_{L^2}\\
\leq& C(\|(\partial_x, \partial_z)\nabla u^2_{\neq}(t)\|_{L^2}+\|(\partial_x^2+\partial_z^2)u_{\neq}^3(t)\|_{L^2}) \leq E_{3,0}\leq C\left(\|u_{in}\|_{H^2}+\nu^{-1}\|\theta_{in}\|_{H^2}\right).
\end{align*}
By Proposition \ref{proE30}, one has
\begin{align*}
&\nu^{\frac{1}{2}} {\rm e}^{4\epsilon \nu^{\frac{1}{3}}t}\|\Delta u_{\neq}^2(t)\|_{L^2}^2\leq \|u_{\neq}^2\|_{M_{2\epsilon}}^2\\
\leq &
C\|u(0)\|_{H^2}^2+C\left(\nu^{-2}E_3^4+\nu^{-2}E_2^2E_3^2+E_1^2E_3E_5+E_1^2E_3^\frac{3}{2}E_5^\frac{1}{2}+\nu^{-\frac{4}{3}}E_{4,1}^2\right)\\
\leq& C\left(\|u_{in}\|_{H^2}+\nu^{-1}\|\theta_{in}\|_{H^2}\right).
\end{align*}
By the definition of $E_{3,1}$, we have
\begin{align*}
{\rm e}^{2\epsilon \nu^{\frac{1}{3}}t}\|\nabla \omega_{\neq}^2(t)\|_{L^2}\leq C \nu^{-\frac{1}{3}}E_{3,1}\leq C\nu^{-\frac{1}{3}}\left(\|u_{in}\|_{H^2}+\nu^{-1}\|\theta_{in}\|_{H^2}\right).
\end{align*}
Then, using the fact that $\|\nabla(\partial_x,\partial_z)(u_{\neq}^1,u_{\neq}^3)\|_{L^2}^2=\|\nabla \omega_{\neq}^2\|_{L^2}^2+\|\nabla \partial_y u_{\neq}^2\|_{L^2}^2$, we have
\begin{align*}
&\|(u_{\neq}^1,u_{\neq}^3)(t)\|_{H^1}\leq C\|\nabla\partial_x(u_{\neq}^1,u_{\neq}^3)\|_{L^2}\leq C\nu^{-\frac{1}{3}} {\rm e}^{-2\epsilon \nu^{\frac{1}{3}}t}\left(\|u_{in}\|_{H^2}+\nu^{-1}\|\theta_{in}\|_{H^2}\right),\\
&\nu^{\frac{1}{4}}\|u_{\neq}^2(t)\|_{H^2}\leq C\left(\|u_{in}\|_{H^2}+\nu^{-1}\|\theta_{in}\|_{H^2}\right),\\
&\nu^{\frac{1}{2}}\|t(u_{\neq}^1,u_{\neq}^3)\|_{L^2L^2}\leq \nu^{\frac{1}{2}}\|t{\rm e}^{-2\epsilon \nu^{\frac{1}{3}}t}\|_{L^2(0,\infty)}\|u_{\neq}\|_{L^\infty L^2}\leq C\left(\|u_{in}\|_{H^2}+\nu^{-1}\|\theta_{in}\|_{H^2}\right),\\
&\|\nabla u_{\neq}^2\|_{L^\infty L^2}+\|\nabla u_{\neq}^2\|_{L^2L^2}\leq CE_{3,0}\leq C\left(\|u_{in}\|_{H^2}+\nu^{-1}\|\theta_{in}\|_{H^2}\right).
\end{align*}

 By the Lemma 16.2 of \cite{chenwz}, we have
\begin{align*}
\|u_{\neq}^2(t)\|_{L^\infty}\leq & C\|(\partial_x,\partial_z)\partial_xu_{\neq}^2(t)\|_{L^2}^\frac{1}{2}\|(\partial_x,\partial_z)\nabla u_{\neq}^2(t)\|_{L^2}^\frac{1}{2}\\
\leq & C{\rm e}^{-2\epsilon \nu^{\frac{1}{3}}t}\left(\|u_{in}\|_{H^2}+\nu^{-1}\|\theta_{in}\|_{H^2}\right),\\
\|u_{\neq}^j(t)\|_{L^\infty}\leq & C\|(\partial_x,\partial_z)\partial_xu_{\neq}^j(t)\|_{L^2}^\frac{1}{2}\|(\partial_x,\partial_z)\nabla u_{\neq}^j(t)\|_{L^2}^\frac{1}{2}\\ \leq & C\nu^{-\frac{1}{6}} {\rm e}^{-2\epsilon \nu^{\frac{1}{3}}t}\left(\|u_{in}\|_{H^2}+\nu^{-1}\|\theta_{in}\|_{H^2}\right),
\end{align*}
here $j\in \{1,3\}$.

It remains to show the estimate \eqref{dynamics-5}.
Recall that $\theta_{\neq}$ satisfies
\begin{align*}
\partial_t\theta_{\neq}-\nu \Delta \theta_{\neq}+y\partial_x\theta_{\neq}+\bar{u}^1\partial_x\theta_{\neq}
+(\bar{u}^2\partial_y+\bar{u}^3\partial_z)\theta_{\neq}+u_{\neq}\cdot\nabla\bar{\theta}
+(u_{\neq}\cdot\nabla \theta_{\neq})_{\neq}=0,
\end{align*}
then by Proposition \ref{space-na-y}, one has
\begin{align*}
&\|{\rm e}^{2\epsilon \nu^\frac{1}{3}t}\partial_y \theta_{\neq}\|_{L^\infty L^2}^2+\nu\|{\rm e}^{2\epsilon \nu^\frac{1}{3}t}\nabla \partial_y \theta_{\neq}\|_{L^2L^2}^2\\
\leq &  C\|\theta_{in}\|_{H^2}^2+C\nu^{-1}\bigg(\left\|{\rm e}^{2\epsilon \nu^\frac{1}{3}t}\left[\bar{u}^1\partial_x\theta_{\neq}
+(\bar{u}^2\partial_y+\bar{u}^3\partial_z)\theta_{\neq}\right]\right\|_{L^2L^2}^2\\
&+\left\|{\rm e}^{2\epsilon \nu^\frac{1}{3}t}\left[u_{\neq}\cdot\nabla\bar{\theta}
+(u_{\neq}\cdot\nabla \theta_{\neq})_{\neq}\right]\right\|_{L^2L^2}^2\bigg)
\end{align*}
and
\begin{align*}
&\|{\rm e}^{2\epsilon \nu^\frac{1}{3}t}(\partial_x,\partial_z)\partial_y \theta_{\neq}\|_{L^\infty L^2}^2+\nu\|{\rm e}^{2\epsilon \nu^\frac{1}{3}t}\nabla (\partial_x,\partial_z)\partial_y \theta_{\neq}\|_{L^2L^2}^2\\
\leq &  C\|\theta_{in}\|_{H^2}^2+C\nu^{-1} \bigg(\left\|{\rm e}^{2\epsilon \nu^\frac{1}{3}t}(\partial_x,\partial_z)\left[\bar{u}^1\partial_x\theta_{\neq}
+(\bar{u}^2\partial_y+\bar{u}^3\partial_z)\theta_{\neq}\right]\right\|_{L^2L^2}^2\\
&+\left\|{\rm e}^{2\epsilon \nu^\frac{1}{3}t}(\partial_x,\partial_z)\left[u_{\neq}\cdot\nabla\bar{\theta}
+(u_{\neq}\cdot\nabla \theta_{\neq})_{\neq}\right]\right\|_{L^2L^2}^2\bigg).
\end{align*}

For $j\in \{2,3\}$, by Lemma \ref{lemu1-0} and Lemma \ref{lemu0-2}, one has
 \begin{align*}
& \left\|{\rm e}^{2\epsilon \nu^\frac{1}{3}t}\left[\bar{u}^1\partial_x\theta_{\neq}
+(\bar{u}^2\partial_y+\bar{u}^3\partial_z)\theta_{\neq}\right]\right\|_{L^2}
\\
\leq &\|\bar{u}^1\|_{L^\infty }\|{\rm e}^{2\epsilon \nu^\frac{1}{3}t}\partial_x\theta_{\neq}\|_{L^2}+\|\bar{u}^j\|_{ L^\infty}\|{\rm e}^{2\epsilon \nu^\frac{1}{3}t}\nabla \theta_{\neq}\|_{L^2}\\
\leq &CE_1 \nu^{\frac{2}{3}}(1+\nu^\frac{1}{3}t){\rm e}^{2\epsilon \nu^\frac{1}{3}t}\|\partial_x\theta_{\neq}\|_{L^2}+CE_2\|{\rm e}^{2 \epsilon \nu^\frac{1}{3}t}\nabla \theta_{\neq}\|_{L^2}\\
\leq &CE_1 \nu^{\frac{2}{3}}{\rm e}^{\frac{5}{2}\epsilon \nu^\frac{1}{3}t}\|\partial_x\theta_{\neq}\|_{L^2}+CE_2\|{\rm e}^{2 \epsilon \nu^\frac{1}{3}t}\nabla \theta_{\neq}\|_{L^2},
 \end{align*}
 \begin{align*}
 &\left\|{\rm e}^{2\epsilon \nu^\frac{1}{3}t}\partial_x\left[\bar{u}^1\partial_x\theta_{\neq}
+(\bar{u}^2\partial_y+\bar{u}^3\partial_z)\theta_{\neq}\right]\right\|_{L^2}
\\
\leq &\|\bar{u}^1\|_{L^\infty }\|{\rm e}^{2\epsilon \nu^\frac{1}{3}t}\partial_x^2\theta_{\neq}\|_{L^2}+\|\bar{u}^j\|_{ L^\infty}\|{\rm e}^{2\epsilon \nu^\frac{1}{3}t}\partial_x\nabla \theta_{\neq}\|_{L^2}\\
\leq &CE_1 \nu^{\frac{2}{3}}{\rm e}^{\frac{5}{2}\epsilon \nu^\frac{1}{3}t}\|\partial_x^2\theta_{\neq}\|_{L^2}+CE_2\|{\rm e}^{2 \epsilon \nu^\frac{1}{3}t}\partial_x\nabla \theta_{\neq}\|_{L^2},
 \end{align*}
 and
 \begin{align*}
& \left\|{\rm e}^{2\epsilon \nu^\frac{1}{3}t}\partial_z\left[\bar{u}^1\partial_x\theta_{\neq}
+(\bar{u}^2\partial_y+\bar{u}^3\partial_z)\theta_{\neq}\right]\right\|_{L^2}
\\
\leq &\|\bar{u}^1\|_{L^\infty }\|{\rm e}^{2\epsilon \nu^\frac{1}{3}t}\partial_x\partial_z\theta_{\neq}\|_{L^2}+\|\bar{u}^j\|_{ L^\infty}\|{\rm e}^{2\epsilon \nu^\frac{1}{3}t}\partial_z\nabla \theta_{\neq}\|_{L^2}\\
&+\|\partial_z\bar{u}^1\|_{L^\infty_y L^2_z}\|{\rm e}^{2\epsilon \nu^\frac{1}{3}t}\partial_x\theta_{\neq}\|_{L^2_{x,y}L^\infty_z}
+
\|\partial_z\bar{u}^j\|_{L^\infty_y L^2_z}\|{\rm e}^{2\epsilon \nu^\frac{1}{3}t}\nabla\theta_{\neq}\|_{L^2_{x,y}L^\infty_z}\\
\leq &CE_1 \nu^{\frac{2}{3}}(1+\nu^\frac{1}{3}t){\rm e}^{2\epsilon \nu^\frac{1}{3}t}\|\partial_x(\partial_x,\partial_z)\theta_{\neq}\|_{L^2}+CE_2\|{\rm e}^{2 \epsilon \nu^\frac{1}{3}t}(\partial_z,\partial_x)\nabla \theta_{\neq}\|_{L^2}\\
\leq &CE_1 \nu^{\frac{2}{3}}{\rm e}^{\frac{9}{4}\epsilon \nu^\frac{1}{3}t}(\|\partial_x^2\theta_{\neq}\|_{L^2}^\frac{1}{2}\|\partial_z^2\theta_{\neq}\|_{L^2}^\frac{1}{2}+\|\partial_x^2\theta_{\neq}\|_{L^2})+CE_2\|{\rm e}^{2 \epsilon \nu^\frac{1}{3}t}(\partial_x,\partial_z)\nabla \theta_{\neq}\|_{L^2},
 \end{align*}
 which imply that
 \begin{align*}
 &\left\|{\rm e}^{2\epsilon \nu^\frac{1}{3}t}(\partial_x,1)\left[\bar{u}^1\partial_x\theta_{\neq}
+(\bar{u}^2\partial_y+\bar{u}^3\partial_z)\theta_{\neq}\right]\right\|_{L^2L^2}\\
\leq& CE_1\nu^{\frac{2}{3}}\|{\rm e}^{-\frac{1}{2}\epsilon \nu^\frac{1}{3}t}\|_{L^2}\|{\rm e}^{3\epsilon \nu^\frac{1}{3}t}\partial_x(\partial_x,1)\theta_{\neq}\|_{L^\infty L^2}+CE_2\|{\rm e}^{2\epsilon \nu^\frac{1}{3}t}\nabla(\partial_x,1)\theta_{\neq}\|_{L^2L^2}\\
\leq& CE_1\nu^{\frac{1}{2}}E_{4,1}+C\nu^{-\frac{1}{2}}E_2E_{4,1},
 \end{align*}
 and
 \begin{align*}
 &\left\|{\rm e}^{2\epsilon \nu^\frac{1}{3}t}\partial_z\left[\bar{u}^1\partial_x\theta_{\neq}
+(\bar{u}^2\partial_y+\bar{u}^3\partial_z)\theta_{\neq}\right]\right\|_{L^2L^2}\\
\leq& CE_1\nu^{\frac{2}{3}}\|{\rm e}^{-\frac{1}{4}\epsilon \nu^\frac{1}{3}t}\|_{L^2}\|{\rm e}^{3\epsilon \nu^\frac{1}{3}t}\partial_x^2\theta_{\neq}\|_{L^\infty L^2}^\frac{1}{2}\|{\rm e}^{2\epsilon \nu^\frac{1}{3}t}\partial_z^2\theta_{\neq}\|_{L^\infty L^2}^\frac{1}{2}\\
&+CE_2\|{\rm e}^{2\epsilon \nu^\frac{1}{3}t}\nabla(\partial_x,\partial_z)\theta_{\neq}\|_{L^2L^2}\\
\leq& CE_1\nu^{\frac{1}{2}}E_{4,1}+C\nu^{-\frac{1}{2}}E_2E_{4,1}.
 \end{align*}

 Thanks to Lemma \ref{lemuthe-11} and Lemma \ref{lemuthe-10}, it holds that
\begin{align*}
\|{\rm e}^{2\epsilon \nu^{1/3}t}(\partial_z,1)(u_{\neq}\cdot \nabla \theta_{\neq})\|_{L^2L^2}\leq C\nu^{-\frac{1}{2}}E_{3,0}E_{4,1},
\end{align*}
and
\begin{align*}
\|{\rm e}^{2\epsilon \nu^\frac{1}{3}t}(\partial_z,1)(u_{\neq}\cdot\nabla \bar{\theta})\|_{L^2L^2}\leq C\nu^{-\frac{1}{2}}E_{3,0}E_{4,0}.
\end{align*}

Therefore, we get
\begin{align*}
&\|{\rm e}^{2\epsilon \nu^\frac{1}{3}t}\partial_y \theta_{\neq}\|_{L^\infty L^2}^2+\nu\|{\rm e}^{2\epsilon \nu^\frac{1}{3}t}\nabla \partial_y \theta_{\neq}\|_{L^2L^2}^2\\
\leq& C\|\theta_{in}\|_{H^1}^2+CE_1^2E_{4,1}^2+C\nu^{-2}E_2^2E_{4,1}^2+C\nu^{-2}E_{3,0}^2E_{4,1}^2
+C\nu^{-2}E_{3,0}^2E_{4,0}^2\\
\leq& C\nu^2 \left(\|u_{in}\|_{H^2}+\nu^{-1}\|\theta_{in}\|_{H^2}\right)^2,
\end{align*}
which implies that
\begin{align*}
\|\partial_y\theta_{\neq}(t)\|_{L^2}\leq C\nu{\rm e}^{-2\epsilon \nu^\frac{1}{3}t}\left(\|u_{in}\|_{H^2}+\nu^{-1}\|\theta_{in}\|_{H^2}\right).
\end{align*}

For $j\in \{1,3\}$, one has
 \begin{align*}
 \|\partial_x(u_{\neq}^j\partial_j \theta_{\neq})\|_{L^2}\leq
 & \|\partial_xu_{\neq}^j\partial_j\theta_{\neq}\|_{L^2}+\|u_{\neq}^j\partial_j\partial_x\theta_{\neq}\|_{L^2}\\
\leq& \|\partial_xu_{\neq}\|_{L^\infty_zL^2_{y,x}}\|\partial_j\theta_{\neq}\|_{L^2_zL^\infty_{y,x}}
+\|u_{\neq}^j\|_{L^\infty_{x,z}L^2_y}\|\partial_x\partial_j\theta_{\neq}\|_{L^2_{x,z}L^\infty_y}\\
\leq& C\left(\|\partial_z\partial_xu_{\neq}\|_{L^2}+\|(\partial_x,1)u_{\neq}\|_{L^2}\right)\|(\partial_x^2+\partial_z^2)\nabla \theta_{\neq}\|_{L^2}\\
\leq& C\left(\|(\partial_x,\partial_z)\nabla u_{\neq}^2\|_{L^2}+\|(\partial_x^2+\partial_z^2)u_{\neq}^3\|_{L^2} \right)\|(\partial_x^2+\partial_z^2)\nabla \theta_{\neq}\|_{L^2},
 \end{align*}
 \begin{align*}
 \|\partial_x(u_{\neq}^2\partial_y\theta_{\neq})\|_{L^2}\leq & \|\partial_xu_{\neq}^2\partial_y\theta_{\neq}\|_{L^2}+
 \|u_{\neq}^2\partial_y\partial_x\theta_{\neq}\|_{L^2}\\
 \leq& \|\partial_xu_{\neq}^2\|_{L^\infty_y L^2_{x,z}}\|\partial_y\theta_{\neq}\|_{L^2_yL^\infty_{x,z}}
 +\|u_{\neq}^2\|_{L^\infty_{x,y} L^2_{z}}\|\partial_y\theta_{\neq}\|_{L^2_{x,y}L^\infty_{z}}\\
 \leq& C\|(\partial_x,\partial_z)\nabla u_{\neq}^2\|_{L^2}\|(\partial_x^2+\partial_z^2)\nabla \theta_{\neq}\|_{L^2},
 \end{align*}
 and
 \begin{align*}
 &\|\partial_x u_{\neq}\cdot\nabla \bar{\theta}\|_{L^2}\leq \|\partial_xu_{\neq}^2\partial_y\bar{\theta}\|_{L^2}+\|\partial_xu_{\neq}^3\partial_z\bar{\theta}\|_{L^2}\\
 \leq& \|\partial_xu_{\neq}^2\|_{L^\infty_{y}L^2_{x,z}}\|\partial_y\bar{\theta}\|_{L^2_yL^\infty_z}
 +\|\partial_xu_{\neq}^3\|_{L^2_{x,y} L^\infty_z }\|\partial_z\bar{\theta}\|_{L^\infty_y L^2_z}\\
 \leq& \left(\|(\partial_x,\partial_z)\nabla u_{\neq}^2\|_{L^2}+\|(\partial_x^2+\partial_z^2)u_{\neq}^3\|_{L^2} \right)\left(\|\nabla\bar{\theta}\|_{L^2}+\|\partial_y\partial_z\bar{\theta}\|_{L^2}\right).
 \end{align*}

Therefore, we have
 \begin{align*}
 \|{\rm e}^{2\epsilon \nu^{\frac{1}{3}}t}\partial_x(u_{\neq}\cdot \nabla \theta_{\neq})\|_{L^2L^2}\leq C\nu^{-\frac{1}{2}}E_{3,0}E_{4,1},
 \end{align*}
 and
 \begin{align*}
 \|{\rm e}^{2\epsilon \nu^{\frac{1}{3}}t}\partial_x(u_{\neq}\cdot \nabla \bar{\theta})\|_{L^2L^2}\leq C\nu^{-\frac{1}{2}}E_{3,0}E_{4,0}.
 \end{align*}
 Therefore, we have
 \begin{align*}
 &\|{\rm e}^{2\epsilon \nu^\frac{1}{3}t}(\partial_x,\partial_z)\partial_y \theta_{\neq}\|_{L^\infty L^2}^2+\nu\|{\rm e}^{2\epsilon \nu^\frac{1}{3}t}\nabla (\partial_x,\partial_z)\partial_y \theta_{\neq}\|_{L^2L^2}^2\\
 \leq& \|\theta_{in}\|_{H^2}^2+CE_1^2E_{4,1}^2+C\nu^{-2}E_2^2E_{4,1}^2+C\nu^{-2}E_{3,0}^2E_{4,1}^2
+C\nu^{-2}E_{3,0}^2E_{4,0}^2\\
\leq& C\nu^2 \left(\|u_{in}\|_{H^2}+\nu^{-1}\|\theta_{in}\|_{H^2}\right)^2,
\end{align*}
which implies that
\begin{align*}
\|\partial_y(\partial_x,\partial_z)\theta_{\neq}(t)\|_{L^2}\leq C\nu{\rm e}^{-2\epsilon \nu^\frac{1}{3}t}\left(\|u_{in}\|_{H^2}+\nu^{-1}\|\theta_{in}\|_{H^2}\right).
\end{align*}
 It follows from the Lemma \ref{lemP0f-1}, one has
 \begin{align*}
 \|\theta_{\neq}(t)\|_{L^\infty}\leq & C\|(\partial_x,\partial_z)\partial_x\theta_{\neq}\|_{L^2}^\frac{1}{2}
 \|(\partial_x,\partial_z)\nabla\theta_{\neq}\|_{L^2}^\frac{1}{2}\\
 \leq & C\nu {\rm e}^{-2\epsilon \nu^\frac{1}{3}t}\left(\|u_{in}\|_{H^2}+\nu^{-1}\|\theta_{in}\|_{H^2}\right),
 \end{align*}
 which gives \eqref{dynamics-5}.

\appendix

\section{Some useful lemmas}

An operator $A$ in a Hilbert space $H$ is accretive if ${\bf Re}\langle Af, f\rangle\geq 0$ for all $f\in D(A)$, or equivalently $\|(\lambda+A)f\|\geq \lambda \|f\|$ for all $f\in D(A)$ and all $\lambda>0$. The operator $A$ is called m-accretive if in addition any $\lambda<0$ belongs to the resolvent set of $A$. We define
\begin{align*}
\Psi(A)=\inf\{\|(A-{\rm i} \lambda)f\|:f\in D(A), \lambda\in \mathbb{R}, \|f\|=1\}.
\end{align*}

\begin{lemma}\label{Gearhart}
Let $A$ be an m-accretive operator in a Hilbert space $H$. Then we have
\begin{align*}
\|{\rm e}^{-tA}\|\leq {\rm e}^{-t\Psi(A)+\pi/2},
\end{align*}
for any $t\geq 0$.
\end{lemma}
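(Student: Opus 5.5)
The statement to prove is Lemma \ref{Gearhart}, the Gearhart--Prüss type bound $\|{\rm e}^{-tA}\|\le {\rm e}^{-t\Psi(A)+\pi/2}$ for m-accretive $A$. This is a known result (Wei, "Diffusion and mixing in fluid flow via the resolvent estimate"). I would reproduce its proof via a Laplace/Fourier-transform representation of the semigroup combined with a Plancherel argument.

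\textbf{Step 1: reductions.} Since $A$ is m-accretive, $-A$ generates a contraction semigroup, so $\|{\rm e}^{-tA}\|\le 1$. The bound is therefore trivial when $t\Psi(A)\le\pi/2$, and we may assume $\Psi:=\Psi(A)>0$ and $t>\pi/(2\Psi)$. Accretivity together with $\|(A-{\rm i}\lambda)f\|\ge\Psi\|f\|$ makes $A-{\rm i}\lambda$ injective with closed range for every real $\lambda$. Since $\Psi(A^*)=\Psi(A)$ (standard for m-accretive operators, using the numerical range), its range is dense, so ${\rm i}\mathbb{R}\subset\rho(A)$ with $\|(A-{\rm i}\lambda)^{-1}\|\le\Psi^{-1}$. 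Combining this with accretivity, one gets $\|(A-z)^{-1}\|\le 1/(\Psi-{\rm Re}\,z)$ for ${\rm Re}\,z<\Psi$: the resolvent identity gives an analytic continuation across the strip, and a Neumann series controls the growth.

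\textbf{Step 2: weighted Plancherel.} Fix $f,g\in H$ and a cutoff $\varphi(s)$ supported in $[0,t]$, say linear ramps. Write
\[
\tfrac{d}{ds}\big(\varphi(s){\rm e}^{-sA}f\big)=\varphi'(s){\rm e}^{-sA}f-\varphi(s)A{\rm e}^{-sA}f.
\]
The tempered function $u(s)={\rm e}^{\mu s}\varphi(s){\rm e}^{-sA}f$, with $0<\mu<\Psi$, solves $(\partial_s+A-\mu)u=\varphi'{\rm e}^{\mu s}{\rm e}^{-sA}f$. Taking the Fourier transform in $s$ and applying the resolvent bound from Step 1 on the line ${\rm Re}\,z=\mu$, Plancherel gives
\[
\|u\|_{L^2_s}\le (\Psi-\mu)^{-1}\|\varphi'{\rm e}^{\mu s}{\rm e}^{-sA}f\|_{L^2_s}.
\]
Contractivity then bounds the right side by an explicit integral in $\|f\|$.

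\textbf{Step 3: extracting pointwise decay.} Use the semigroup property ${\rm e}^{-tA}f={\rm e}^{-(t-s)A}{\rm e}^{-sA}f$ and average over $s$ in a window. This yields
\[
\|{\rm e}^{-tA}\|\lesssim \frac{{\rm e}^{-\mu(t-\cdot)}}{\Psi-\mu}\times(\text{cutoff factors}).
\]
Optimizing $\mu$ and the cutoff length (roughly $\mu=\Psi-1/t'$) gives a bound of the form ${\rm e}^{-t\Psi+C}$.

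\textbf{Main obstacle.} The hard part is obtaining the sharp constant $\pi/2$ rather than a generic $C$. I expect this to need the specific optimal cutoff from Wei's paper: a sine profile $\varphi(s)=\sin(\pi s/T)$ on $[0,T]$, whose Poincaré/Wirtinger constant produces the factor $\pi$. One applies this to ${\rm e}^{-s(A-\Psi')}$ with $\Psi'<\Psi$ and lets $\Psi'\to\Psi$. For the purposes of this paper, any constant $C$ in place of $\pi/2$ suffices in Lemma \ref{lemH}, so if the sharp optimization proves delicate I would settle for ${\rm e}^{-t\Psi+C}$, or simply cite the known result.
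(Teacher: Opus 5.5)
The paper gives no argument of its own for this lemma (it cites Wei), so the only question is whether your outline proves it. It does not. The failure is in Step 3, and it affects more than the value of the constant.

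\textbf{Step 3 cannot give ${\rm e}^{-\Psi t+C}$.} The weighted Plancherel bound costs exactly $(\Psi-\mu)^{-1}$, and averaging with the semigroup property does not remove this factor. Your own formula contains $(\Psi-\mu)^{-1}$; with your choice $\mu=\Psi-1/t$ it equals $t$. So the optimized outcome is $\|{\rm e}^{-tA}\|\lesssim \Psi t\,{\rm e}^{-\Psi t}={\rm e}^{-\Psi t+\log(C\Psi t)}$, not ${\rm e}^{-\Psi t+C}$ with $C$ independent of $t$.

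\textbf{Step 2 alone carries no decay information.} As you state it (cutoff supported in $[0,t]$, a single weight with $\mu\ge\Psi/2$), put $\chi={\rm e}^{\mu s}\varphi$. Then $\int{\rm e}^{2\mu s}|\varphi'|^2\,ds=\int|\chi'|^2\,ds+\mu^2\int\chi^2\,ds\ge(\Psi-\mu)^2\int{\rm e}^{2\mu s}\varphi^2\,ds$. Hence an orbit of constant norm would satisfy every such inequality, and contractivity too. Gearhart--Pr\"uss arguments of Helffer--Sj\"ostrand type avoid this by taking a cutoff equal to $1$ near $+\infty$ and a Paley--Wiener argument using the resolvent on the whole half-plane. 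Even so, they still lose $(\Psi-\mu)^{-1}$.

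\textbf{The sine profile does not cure this.} Applied to ${\rm e}^{-s(A-\Psi')}$, the only available resolvent bound is $(\Psi-\Psi')^{-1}$, which degenerates as $\Psi'\to\Psi$. Wirtinger's constant is not where $\pi/2$ comes from.

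\textbf{What is missing} is a step that uses the \emph{unweighted} inequality ($\mu=0$) with a cutoff adapted to the solution. For $f\in D(A)$, let $w(s)=\|{\rm e}^{-sA}f\|$, which is non-increasing, and set $m=-w'/w\ge0$. Taking $\varphi=\chi/w$ with $\chi(0)=0$ in your Plancherel step gives
\[
\Psi^2\int_0^\infty\chi^2\,ds\le\int_0^\infty(\chi'+m\chi)^2\,ds .
\]
So this quadratic form has no conjugate point. In Pr\"ufer variables $\chi=R\sin\theta$, $\chi'+m\chi=\Psi R\cos\theta$, the Jacobi equation becomes $\theta'=\Psi-m\sin 2\theta$ with $\theta(0)=0$, and $\theta$ may never reach $\pi$. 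Since $\theta'\ge\Psi$ whenever $\theta\in[\pi/2,\pi]$, $\theta$ must in fact stay below $\pi/2$ for all time. Combined with $\theta'\ge\Psi-m$, this gives $\int_0^t m\,ds\ge\Psi t-\pi/2$, i.e. $w(t)\le{\rm e}^{-\Psi t+\pi/2}w(0)$, which is exactly the statement.

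Your fallback remark is right that Lemma~\ref{lemH} only needs something weaker. For example, the unweighted inequality with a trapezoidal cutoff gives $\|{\rm e}^{-tA}\|\le 4/(\Psi t)$. The semigroup property then gives $\|{\rm e}^{-tA}\|\le 2{\rm e}^{-c\Psi t}$, which suffices there after shrinking $a$. But that is not the lemma as stated, and ``optimizing $\mu$'' does not bridge the two.
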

\begin{proof}
See \cite{Wei} for details.
\end{proof}

\begin{lemma}
\label{pxy}
If $f=f(y,z)$, then we have
\begin{align*}
\|\partial_y^2f\|_{L^2}+\|\partial_z^2f\|_{L^2} \leq C\left(\|\Delta f\|_{L^2}+\|\partial_z\partial_y f\|_{L^2}\right).
\end{align*}
\end{lemma}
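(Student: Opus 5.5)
The estimate is a statement about functions of two variables $(y,z)\in[-1,1]\times\mathbb{T}$. We prove it with a Fourier decomposition in $z$ and a one-dimensional argument in $y$. The only real content is to control $\|\partial_y^2 f\|_{L^2}$, since
\begin{align*}
\|\partial_z^2 f\|_{L^2}\leq \|\Delta f\|_{L^2}+\|\partial_y^2 f\|_{L^2}.
\end{align*}

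Write $f=\sum_{l}f_l(y){\rm e}^{{\rm i}lz}$. Then $\Delta f$ has coefficients $f_l''-l^2f_l$, and $\partial_y\partial_z f$ has coefficients ${\rm i}lf_l'$. For a fixed $l$ we bound $\|f_l''\|_{L^2_y}$ by $\|f_l''-l^2f_l\|_{L^2_y}+|l|\,\|f_l'\|_{L^2_y}$, treating two cases.

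\textbf{Case $|l|\geq1$.} Here $l^2\|f_l\|_{L^2_y}$ must be controlled without any boundary condition. We use the one-dimensional interpolation (Gagliardo--Nirenberg on a bounded interval, which holds without boundary conditions):
\begin{align*}
\|g\|_{L^2}\lesssim \|g'\|_{L^2}+\|g\|_{L^2}.
\end{align*}
This alone does not give a bound on $l^2\|f_l\|_{L^2}$ in terms of $|l|\,\|f_l'\|_{L^2}$. Instead we use the identity $f_l''=(f_l''-l^2f_l)+l^2f_l$ and bound $l^2\|f_l\|_{L^2}$ through the equation $g''-l^2g=h$ on $[-1,1]$. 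The correct step is the following. Let $g_0$ be the Dirichlet solution of $g_0''-l^2g_0=h$, which satisfies $\|g_0''\|+l^2\|g_0\|\lesssim\|h\|$. The remainder $f_l-g_0$ is a combination $A{\rm e}^{ly}+B{\rm e}^{-ly}$. For such exponential solutions $w$ one has $\|w''\|_{L^2}=|l|\,\|w'\|_{L^2}$. Consequently
\begin{align*}
\|w''\|\leq |l|\,\|f_l'\|+|l|\,\|g_0'\|\lesssim |l|\,\|f_l'\|+\|h\|,
\end{align*}
since $|l|\,\|g_0'\|\lesssim\|h\|$ by energy. Combining, $\|f_l''\|\lesssim\|h\|+|l|\,\|f_l'\|$.

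\textbf{Case $l=0$.} Here $\partial_y^2f_0=(\Delta f)_0$ directly.

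Summing over $l$ with Plancherel gives
\begin{align*}
\|\partial_y^2f\|_{L^2}\lesssim \|\Delta f\|_{L^2}+\|\partial_y\partial_z f\|_{L^2},
\end{align*}
and the bound for $\partial_z^2 f$ follows from the first display.

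The main obstacle is the homogeneous (exponential) part, since no boundary condition is assumed. It is handled by the identity $\|w''\|=|l|\,\|w'\|$ for $w\in\operatorname{span}\{{\rm e}^{\pm ly}\}$, which holds because $w''=l^2w$ and $w'$ is again a combination of ${\rm e}^{\pm ly}$ with comparable norm. Concretely, $\|w''\|^2=l^4\|w\|^2$ and $\|w'\|^2$ is comparable to $l^2\|w\|^2$ up to the cross term $\pm 4l^2AB$, which must be checked. If that direct comparison fails, we would instead integrate by parts on $\int |f_l''|^2$ and absorb the boundary terms using the trace inequality
\begin{align*}
|f_l'(\pm1)|^2\lesssim \|f_l'\|\,\|f_l''\|+\|f_l'\|^2.
\end{align*}
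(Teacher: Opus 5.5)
Your proof is correct once one misstated step is repaired. The paper gives no argument of its own here; it only cites Lemma 16.7 of \cite{chenwz}. So your mode-by-mode splitting $f_l=g_0+w$, with $g_0$ the Dirichlet solution of $g_0''-l^2g_0=h$ and $w\in{\rm span}\{{\rm e}^{\pm ly}\}$, stands on its own as a self-contained proof.

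The parts that are already fine are the energy bounds $l^2\|g_0\|+|l|\,\|g_0'\|\leq 2\|h\|$ and $\|g_0''\|\leq 2\|h\|$, the $l=0$ case, and the Plancherel summation.

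The step to repair is the claim $\|w''\|_{L^2}=|l|\,\|w'\|_{L^2}$. As an equality it is false. For $w=\cosh(ly)$ one has $\|w''\|^2=l^4\|\cosh(ly)\|^2=l^4(\|\sinh(ly)\|^2+2)>l^2\|w'\|^2$. You only need $\|w''\|\leq C|l|\,\|w'\|$ with $C$ independent of $|l|\geq 1$, and the cross-term check you deferred does close. Write $w=A{\rm e}^{ly}+B{\rm e}^{-ly}$ and $S_l=\int_{-1}^1{\rm e}^{2ly}\,dy=\sinh(2l)/l$. Then
\begin{align*}
\|w\|^2=(|A|^2+|B|^2)S_l+4\,{\rm Re}(A\overline{B}),\qquad l^{-2}\|w'\|^2=(|A|^2+|B|^2)S_l-4\,{\rm Re}(A\overline{B}).
\end{align*}
Two facts finish it: $4|{\rm Re}(A\overline{B})|\leq 2(|A|^2+|B|^2)$, and $S_l$ is increasing in $|l|$ with $S_1=\sinh 2>2$. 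Hence
\begin{align*}
\|w''\|^2=l^4\|w\|^2\leq \frac{S_l+2}{S_l-2}\,l^2\|w'\|^2\leq \frac{\sinh 2+2}{\sinh 2-2}\,l^2\|w'\|^2\leq 4\,l^2\|w'\|^2 .
\end{align*}
The reason this works is that ${\rm e}^{ly}$ and ${\rm e}^{-ly}$ have inner product $2$ in $L^2(-1,1)$ but squared norms $S_l\geq\sinh 2$. They therefore stay uniformly non-parallel, which makes $w$ and $w'/l$ comparable. Consequently $\|w''\|\leq 2|l|(\|f_l'\|+\|g_0'\|)\leq 2|l|\,\|f_l'\|+2\|h\|$, and $\|f_l''\|\leq 4\|h\|+2|l|\,\|f_l'\|$ uniformly in $l$, which is exactly what you sum.

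Two clean-up remarks. First, delete the ``interpolation'' $\|g\|\lesssim\|g'\|+\|g\|$; it is vacuous and plays no role. Second, the integration-by-parts fallback is unnecessary. It would also close, since for $|l|\geq1$ each boundary product $l^2|f_l(\pm1)|\,|f_l'(\pm1)|$ splits by the trace bounds and Young's inequality into $\delta(\|f_l''\|^2+l^4\|f_l\|^2)+C_\delta l^2\|f_l'\|^2$. Your decomposition is the cleaner route.
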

\begin{proof}
See Lemma 16.7 of \cite{chenwz}.
\end{proof}

\begin{lemma}
\label{lemP0f-1}
If $\mathbb{P}_0 f=0$, it holds that
\begin{align*}
\|f\|_{L^\infty}^2\leq C\|(\partial_x,\partial_z)\partial_zf\|_{L^2}\|(\partial_x,\partial_z)\nabla f\|_{L^2}.
\end{align*}
\end{lemma}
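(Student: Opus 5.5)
As worded, with first factor $\|(\partial_x,\partial_z)\partial_z f\|_{L^2}$, the inequality cannot hold, so I will first record a counterexample and then prove the version the paper actually uses, with $\partial_x$ in place of $\partial_z$ in the first factor.

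\emph{Counterexample.} Take $f(x,y,z)=\sin x\,\varphi(y)$ with $0\neq\varphi\in H^1_0(-1,1)$. Then $\mathbb{P}_0 f=0$ and $\partial_z f\equiv 0$, so the right-hand side vanishes while $\|f\|_{L^\infty}>0$.

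\emph{Version to be proved.} In every application in the proof of Theorem \ref{the1} (the bounds for $u^2_{\neq}$, $u^j_{\neq}$ and $\theta_{\neq}$), the first factor is $\|(\partial_x,\partial_z)\partial_x f\|_{L^2}$. I will therefore prove
\begin{align*}
\|f\|_{L^\infty}^2\leq C\|(\partial_x,\partial_z)\partial_x f\|_{L^2}\|(\partial_x,\partial_z)\nabla f\|_{L^2},\qquad \mathbb{P}_0 f=0,
\end{align*}
which is the statement read with $\partial_x$ in place of $\partial_z$ in the first factor.

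\emph{Step 1: Fourier expansion.} Expand in $(x,z)$ as $f=\sum_{k\neq0,\,l\in\mathbb{Z}}f_{k,l}(y){\rm e}^{{\rm i}(kx+lz)}$. The restriction $k\neq0$ is exactly $\mathbb{P}_0f=0$. Set $\eta=\sqrt{k^2+l^2}\geq1$. Pointwise we have $|f(x,y,z)|\leq\sum_{k,l}\|f_{k,l}\|_{L^\infty_y}$.

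\emph{Step 2: one-dimensional Agmon bound.} For each mode, the 1-D Agmon inequality $\|g\|_{L^\infty_y}^2\leq C\|g\|_{L^2_y}\big(\|g\|_{L^2_y}+\|g'\|_{L^2_y}\big)$, together with $\eta\geq1$, gives
\begin{align*}
\|f_{k,l}\|_{L^\infty_y}\leq C\|f_{k,l}\|_{L^2}^{1/2}\|(\partial_y,\eta)f_{k,l}\|_{L^2}^{1/2}.
\end{align*}

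\emph{Step 3: weighted regrouping.} Write the right-hand side of Step 2 as $C\,A_{k,l}^{1/2}B_{k,l}^{1/2}w_{k,l}$, where
\begin{align*}
A_{k,l}=|k|\eta\|f_{k,l}\|_{L^2},\qquad B_{k,l}=\eta\|(\partial_y,\eta)f_{k,l}\|_{L^2},\qquad w_{k,l}=(|k|\eta^2)^{-1/2}.
\end{align*}

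\emph{Step 4: Hölder over the modes.} Hölder with exponents $(4,4,2)$ gives
\begin{align*}
\sum A^{1/2}B^{1/2}w\leq\Big(\sum A^2\Big)^{1/4}\Big(\sum B^2\Big)^{1/4}\Big(\sum w^2\Big)^{1/2}.
\end{align*}
By Plancherel, $\sum A^2\simeq\|(\partial_x,\partial_z)\partial_xf\|_{L^2}^2$ and $\sum B^2\simeq\|(\partial_x,\partial_z)\nabla f\|_{L^2}^2$.

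\emph{Step 5: the weight sum.} The one point to check is that the weight sum is finite:
\begin{align*}
\sum_{k\neq0}\sum_{l}\frac{1}{|k|(k^2+l^2)}\leq\sum_{k\neq0}\frac{C}{|k|\,|k|}<\infty,
\end{align*}
using $\sum_l(k^2+l^2)^{-1}\leq C/|k|$. This is precisely where $k\neq0$, i.e.\ $\mathbb{P}_0f=0$, is needed. Squaring the result of Step 4 then yields the displayed inequality.

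The only real obstacle is the mismatch in the literal statement. The counterexample shows that no argument can control $\|f\|_{L^\infty}$ when the first factor carries $\partial_z$ alone on zero-$z$-frequency modes. Consequently the lemma must be read, and used, with $\partial_x$ in the first factor, as it is in every application above.
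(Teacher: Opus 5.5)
Your counterexample is correct, so the lemma as printed, with $\partial_z$ in the first factor, is false. The intended statement has $\|(\partial_x,\partial_z)\partial_x f\|_{L^2}$ there; this is the form of Lemma 16.2 of \cite{chenwz} that the paper cites, and the form actually used for $u^2_{\neq}$, $u^j_{\neq}$ and $\theta_{\neq}$ in the proof of Theorem \ref{the1}.

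The paper gives no argument beyond that citation. Your proof of the corrected inequality is correct and is the natural self-contained proof of that lemma: one-dimensional Agmon on each $(k,l)$-mode, then H\"older with the weight $\sum_{k\neq0,\,l}(|k|\eta^2)^{-1}<\infty$, which is exactly where $\mathbb{P}_0 f=0$ enters. The absolute convergence needed for $|f|\le\sum\|f_{k,l}\|_{L^\infty_y}$ is itself supplied by the finite bound you obtain in Steps 4--5.
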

\begin{proof}
See Lemma 16.2 of \cite{chenwz}.
\end{proof}

\section*{Acknowledgments}
J. Li is supported in part by the National Natural Science Foundation of China (No. 12371204), the Key Project of National Natural Science Foundation of China (No. 12131010), and the Guangdong Basic and Applied Basic Research Foundation (No. 2026A1515010778).
 Z. Lin is supported by the National Natural Science Foundation of China (No. 12401292), and the Guangdong Basic and Applied Basic Research Foundation (No. 2026A1515010778).

\end{document}